\documentclass[11pt]{article}
\usepackage[cm]{fullpage}

\usepackage{amsmath,amssymb,amsthm}

\usepackage[table]{xcolor}
\usepackage{graphicx}
\usepackage{arydshln}
\usepackage{subcaption}
\usepackage[]{mathtools}
\usepackage[numbers,compress]{natbib}
\usepackage{algorithmicx}
\usepackage{algorithm}
\usepackage{soul}
\usepackage[toc,page]{appendix}
\usepackage[]{algpseudocode}
\algnewcommand\algorithmicinput{\textbf{Input:}}
\algnewcommand\INPUT{\item[\algorithmicinput]}
\usepackage{comment}
\usepackage{multirow}

\usepackage{tikz} 
\usetikzlibrary{backgrounds,fit,shapes,snakes,arrows,shapes.geometric,positioning}
\usetikzlibrary{intersections,patterns,shapes.misc}
\usetikzlibrary{decorations.pathmorphing}

\tikzstyle{block} = [rectangle, rounded corners, minimum width=3cm, minimum height=1cm,text centered, draw=black, fill=red!30]
\tikzstyle{new} = [rectangle, rounded corners, minimum width=1cm, minimum
height=1cm,text centered, draw=black, fill=blue!10!white, dashed]
\tikzstyle{arrow} = [thick,->,>=stealth]
\usetikzlibrary{calc, quotes}

\tikzstyle{fblock} = [rectangle, draw, fill=gray!20, 
text width=8em, text centered, rounded corners, minimum height=4em, minimum width = 8em]
\tikzstyle{line} = [draw, -latex']

\usepackage{fontawesome}
\DeclareFontFamily{OT1}{pzc}{}
\DeclareFontShape{OT1}{pzc}{m}{it}{<-> s * [1.200] pzcmi7t}{}
\DeclareMathAlphabet{\mathpzc}{OT1}{pzc}{m}{it}

\AtBeginDocument{%
  \DeclareMathAlphabet\PazoBB{U}{fplmbb}{m}{n}%
}

\renewcommand{\ttdefault}{cmtt}
\usepackage{dsfont}
\usepackage{microtype}

\usepackage{diagbox}
\usepackage{multirow}
\usepackage{dashbox}

\usepackage{sidecap}
\usepackage{soul}

\newcommand{\norm}[1]{\lVert#1\rVert}

\DeclareMathOperator{\Sym}{\mathbb{S}}

\def \PSD{\mathbb{S}_{+}}


\DeclareMathOperator{\tr}{tr}
\DeclareMathOperator{\Span}{span}
\DeclareMathOperator{\Diag}{Diag}

\def \transpose{^\mathsf{T}}


\def \pinv {\dagger}
\DeclareMathOperator{\rank}{rank}
\DeclareMathOperator{\SymOp}{Sym}

\DeclareMathOperator{\kernel}{ker}


\DeclareMathOperator{\image}{image}








\DeclareMathOperator{\Stiefel}{St}
\DeclareMathOperator{\Real}{\mathbb{R}}


\DeclareMathOperator{\rgrad}{grad}
\DeclareMathOperator{\Hess}{Hess}
\DeclareMathOperator{\proj}{{Proj}}
\DeclareMathOperator{\Precon}{Precon}
\DeclareMathOperator{\Retr}{Retr}




\DeclareMathOperator{\Orthogonal}{O}
\DeclareMathOperator{\SOd}{SO}
\DeclareMathOperator{\SE}{SE}

\DeclareMathOperator{\BDiag}{BlockDiag}
\DeclareMathOperator{\SymBlockDiag}{SymBlockDiag}

\DeclareMathOperator{\SBD}{SBD}  

\DeclareMathOperator{\vectorize}{vec}

\DeclareMathOperator{\Langevin}{Langevin}
\newcommand{\ST}{S} 

\newcommand{\Rset}{\mathbb{R}}

\newcommand{\Scal}{\mathcal{S}}
\newcommand{\Ecal}{\mathcal{E}}
\newcommand{\Gcal}{\mathcal{G}}
\newcommand{\Lcal}{\mathcal{L}}

\newcommand{\Mcal}{\mathcal{M}}
\newcommand{\Acal}{\mathcal{A}}

\newcommand{\Ocal}{\mathcal{O}}

\newcommand{\Runder}{\underline{R}}

\newcommand{\tunder}{\underline{t}}

\newcommand{\Rtilde}{\widetilde{R}}
\newcommand{\ttilde}{\widetilde{t}}

\newcommand{\mhat}{\widehat{m}}
\newcommand{\fhat}{\widehat{f}}
\newcommand{\DirEdges}{{\mathcal{E}}}
\newcommand{\Vcal}{\mathcal{V}}

\newcommand{\ConLap}{L_\text{R}}
\newcommand{\ConLapT}{Q}
\newcommand{\ConLapM}{Q_\text{R}}
\newcommand{\ZR}{Z_\text{R}}
\newcommand{\ZT}{Z}
\newcommand{\MLE}{\diamond}
\newcommand{\RBCD}{\text{RBCD}}
\newcommand{\ARBCD}{\text{RBCD}\texttt{++}}
\newcommand{\BlockUpdate}{\textsc{BlockUpdate}}
\newcommand{\Manifold}{\mathcal{M}_{\text{\normalfont\tiny PGO}}}
\newcommand{\vnull}{{v}_0}

\newcommand{\edit}[2]{{\color{black} #2}}

\def \AlgName{DC2-PGO}

\usepackage{bbm}
\usepackage[sc]{mathpazo}
\usepackage{microtype}
\usepackage[T1]{fontenc}
\usepackage{setspace}
\usepackage[affil-it]{authblk} 
\usepackage{etoolbox}
\usepackage{lmodern}
\usepackage{tocloft}
\newcommand{\listtodotitle}{List of Comments}
\newlistof{todos}{tod}{\listtodotitle}
\newcommand{\todolist}[1]{%
		\refstepcounter{todos}
		\addcontentsline{tod}{todos}
{\protect\numberline{\thetodos} #1}}
\usepackage{hyperref}
\hypersetup{bookmarksopen,bookmarksnumbered,colorlinks=true, linkcolor=blue, citecolor = blue, urlcolor = cyan, filecolor=black , pagebackref=false, hypertexnames=false, plainpages=false, pdfpagelabels}

\begin{document}

\theoremstyle{plain}
\newtheorem{theorem}{Theorem}
\newtheorem{lemma}{Lemma}
\newtheorem{proposition}{Proposition}
\newtheorem{corollary}{Corollary}
\newtheorem{innercustomthm}{Theorem}
\newenvironment{customthm}[1]
{\renewcommand\theinnercustomthm{#1}\innercustomthm}
{\endinnercustomthm}

\theoremstyle{definition}
\newtheorem{definition}{Definition}
\newtheorem{assumption}{Assumption}
\newtheorem{problem}{Problem}
\newtheorem{remark}{Remark}

\newcommand\Ycomment[1]{\todolist{#1} \textcolor{green!50!black}{@Y: #1}}
\newcommand\Kcomment[1]{\todolist{#1} \textcolor{red}{@K: #1}}

\newcommand\why[1]{&& {\color{gray!80!black} ( #1)}}

\renewcommand{\ttdefault}{cmtt}

\makeatletter
\patchcmd{\@maketitle}{\LARGE \@title}{\fontsize{16}{19.2}\selectfont\@title}{}{}
\makeatother

\title{\bf
 Distributed Certifiably Correct Pose-Graph Optimization
}

\author{Yulun Tian, Kasra Khosoussi, David M. Rosen, and Jonathan P. How
\thanks{The authors are with the Laboratory for Information and Decision Systems (LIDS), Massachusetts Institute of Technology, Cambridge, MA. {\tt\small \{yulun, kasra, dmrosen, jhow\}@mit.edu}}%
}

\maketitle


\begin{abstract}
This paper presents the first \emph{certifiably correct} algorithm for \emph{distributed} pose-graph optimization (PGO), the backbone of modern collaborative simultaneous localization and mapping (CSLAM) and camera network localization (CNL) systems.  Our method is based upon a sparse semidefinite relaxation that we prove provides globally-optimal PGO solutions under moderate measurement noise (matching the guarantees enjoyed by state-of-the-art centralized methods), but is amenable to distributed optimization using the low-rank Riemannian Staircase framework.  To implement the Riemannian Staircase in the distributed setting, we develop \emph{Riemannian block coordinate descent} (RBCD), a novel method for (locally) minimizing a function over a product of Riemannian manifolds.  We also propose the first distributed solution verification and saddle escape methods to certify the global optimality of critical points recovered via RBCD, and to descend from suboptimal critical points (if necessary).  All components of our approach are inherently decentralized: they require only local communication, provide privacy protection, and are easily parallelizable.  Extensive evaluations on synthetic and  real-world datasets demonstrate that the proposed method correctly recovers globally optimal solutions under moderate noise, and outperforms alternative distributed techniques in terms of solution precision and convergence speed.

\end{abstract}

\clearpage
\tableofcontents

\section{Introduction}
Collaborative multi-robot missions require \emph{consistent} \emph{collective} spatial
perception across the entire team. In unknown GPS-denied environments, this is achieved by
\emph{collaborative} simultaneous localization and mapping (CSLAM), in which a team of agents \emph{jointly} constructs a \emph{common} model of an environment via exploration.
At the heart of CSLAM, robots must solve a \emph{pose-graph optimization} (PGO)
problem (also known as \emph{pose synchronization}) to estimate their trajectories based on noisy relative \emph{inter-robot} and \emph{intra-robot} measurements.

While several prior approaches to CSLAM have appeared in the literature, to date
no method has been proposed that is capable of \emph{guaranteeing} the recovery
of an optimal solution in the distributed setting.  One general line of prior
work
\citep{schmuck2018ccm,deutsch2016framework,morrison2016moarslam,kim2010multiple} proposes to solve CSLAM in a \emph{centralized} manner. While this approach is
straightforward (as it enables the use of off-the-shelf methods for PGO), it
imposes several practically-restrictive requirements, including: a central node
that is capable of solving the \emph{entire team's} SLAM problem, a sufficiently
reliable communication channel that connects the central node to the team, and
sufficient resources (particularly energy and bandwidth) to regularly relay the
team's (raw or processed) observations to the central node. Moreover, these
schemes \emph{by construction} are unable to protect the \emph{spatial privacy}
of individual robots, and lack robustness due to having a single point of
failure.  An alternative line of work proposes fully distributed algorithms
\cite{Choudhary17IJRR,fan2020majorization,CunninghamDDFSAM2,cunningham2010ddf}; however, at present these methods are all based upon applying distributed \emph{local} optimization methods to the \emph{nonconvex} PGO problem \cite{Choudhary17IJRR,Fan2019ISRR}, which renders them vulnerable to convergence to significantly suboptimal solutions \cite{Carlone2015Verification}.

In parallel, over the last several years recent  work has developed a novel class of \emph{certifiably correct} estimation methods that \emph{are} capable of efficiently recovering \emph{provably globally optimal} solutions of generally-intractable estimation problems within a restricted (but still practically-relevant) operational regime \cite{Bandeira2016Certifiable}.  In particular, \citet{Rosen19IJRR} developed SE-Sync, a certifiably correct algorithm for pose-graph optimization.  SE-Sync is based upon a (convex) semidefinite relaxation whose minimizer is \emph{guaranteed} to provide a \emph{globally optimal} PGO solution whenever the noise on the available measurements falls below a certain critical threshold; moreover, in the (typical) case that this occurs, it is possible to \emph{computationally verify} this fact \emph{a posteriori}, thereby \emph{certifying} the correctness (optimality) of the recovered estimate.  However, SE-Sync is not directly amenable to a \emph{decentralized} implementation because its semidefinite relaxation generically has \emph{dense} data matrices, and it solves this relaxation using a second-order optimization scheme, both of which would require an impractical degree of communication among the agents in a distributed setting.

In this paper, we advance the state of the art in CSLAM by proposing the first PGO algorithm that is both \emph{fully distributed} and \emph{certifiably correct}.   Our method leverages the same semidefinite relaxation strategy that underpins  current state-of-the-art (centralized) certifiably correct PGO algorithms \cite{Rosen19IJRR}, but employs novel decentralized optimization and solution verification techniques that enable these relaxations to be  solved efficiently in the distributed setting.  Specifically, we make the following contributions:

\begin{itemize}
 \item We prove that a sparse semidefinite relaxation of PGO employed by \citet{Briales17CartanSync} enjoys the same exactness guarantees as the one used in SE-Sync \cite{Rosen19IJRR}: namely, that its minimizers are \emph{low-rank} and provide \emph{exact} solutions of the original PGO problem under moderate measurement noise.
 
 \item We describe an efficient low-rank optimization scheme to solve this semidefinite relaxation in the distributed setting.  Specifically, we employ a \emph{distributed} Riemannian Staircase approach \cite{BoumalBlockDiagonal}, and propose \emph{Riemannian block coordinate descent} (RBCD), a novel method for minimizing a function over a product of Riemannian manifolds, to solve the resulting low-rank subproblems in the distributed setting.  We prove that RBCD converges to first-order critical points with a \emph{global} sublinear rate under standard (mild) conditions,  and that these are in particular \emph{always} satisfied for the low-rank PGO subproblems.  We also describe Nestorov-accelerated variants of RBCD that significantly improve its convergence speed in practice.
 
 \item We propose the first \emph{distributed solution verification} and \emph{saddle escape} methods to certify the optimality of low-rank critical points recovered via RBCD, and to descend from suboptimal critical points (if necessary). 
 
\item Finally, we describe simple distributed procedures for initializing the distributed Riemannian Staircase optimization, and for \emph{rounding} the resulting low-rank factor to extract a final PGO estimate.
\end{itemize}

Each of these algorithmic components has the same communication, computation, and privacy properties enjoyed by current distributed CSLAM methods \citep{Choudhary17IJRR,cunningham2010ddf,CunninghamDDFSAM2}, including:

\begin{enumerate}
	\item \emph{Communication and computational efficiency:} Robots need only 
	communicate with their neighbors in the pose graph. 
	To this end, the minimum requirement is that robots form a \emph{connected} network, so that information can flow between any pair of robots (possibly relayed by intermediate robots).
	The payload size in each round of communication is only $\Ocal(m_\text{inter})$ where
	$m_\text{inter}$ is the number of inter-robot loop closures.
	Moreover, local updates in RBCD can be performed efficiently and in
	\emph{parallel}, and the solution is produced in an
	anytime fashion.
	\item \emph{Spatial privacy protection:} Robots are not required to 
	reveal \emph{any} information about their own observations or their \emph{private} poses
	(those poses that are not \emph{directly} observed by other robots).
\end{enumerate}

Our overall algorithm, \emph{Distributed Certifiably Correct Pose-Graph Optimization} (\AlgName), thus preserves the desirable computational properties of existing state-of-the-art CSLAM methods while enabling the recovery of \emph{provably globally optimal} solutions  in the distributed setting.

The rest of this paper is organized as follows. 
In the remainder of this section we introduce necessary notations and mathematical preliminaries.
In Section~\ref{sec:related_work}, we review state-of-the-art centralized and
distributed PGO solvers, as well as recent advances in block-coordinate optimization methods.
In Section~\ref{sec:background}, we formally define the distributed PGO problem and present its sparse SDP relaxation.
We present a distributed procedure to solve this SDP via the Riemannian Staircase framework.
On the theoretical front, we establish formal \emph{exactness} guarantees for the SDP relaxation. 
In Section~\ref{sec:bcd}, we present our distributed local search method to solve the rank-restricted SDPs using block-coordinate descent. 
In Section~\ref{sec:convergence}, we prove convergence of the proposed local search method and analyze its global convergence rate.
In Section~\ref{sec:verification}, we present a distributed solution verification procedure that checks the global optimality of our local solutions, and enables us to \emph{escape} from suboptimal critical points, if necessary. 
We discuss distributed initialization and rounding in Section~\ref{sec:initialization_and_rounding}. 
Finally, we conclude with extensive experimental evaluations in Section~\ref{sec:experiments}.


\subsection*{Notations and Preliminaries}

\subsubsection*{General Notations}
Unless stated otherwise, lowercase and uppercase letters are generally used for vectors and matrices, respectively.
We define $[n] \triangleq \{1, 2, \hdots, n\}$. 

\subsubsection*{Linear Algebra}

$\Sym^d$ and $\PSD^d$ denote the set of $d \times d$ symmetric and symmetric positive semidefinite matrices, 
respectively. 
$0_{d\times d}, I_{d \times d} \in \Real^{d \times d}$ are the zero and identity matrix, 
and $0_d, 1_d \in \Real^d$ represent the vectors of all zeros and all ones, respectively.
To ease the burden of notations, we also drop the subscript $d$ when the dimension is clear from context.
For a matrix $A$, we use $A_{(i,j)}$ to index its $(i,j)$-th entry.
$A^\dagger$ denotes the Moore-Penrose inverse of $A$.
Given a ($d \times d$)-block-structured matrix $Z \in \Real^{dn \times dn}$,
$Z_{[i,j]} \in \Real^{d \times d}$ refers to its $(i,j)$-th block. 
\edit{
	Following \cite{Rosen19IJRR}, we define
	$\BDiag(M)$ as the linear operator that extracts the diagonal blocks of $M$ and zeros out all remaining blocks,
	and $\SymBlockDiag_d(M)$
	as its symmetric version;
	see \cite[Equations~(4)-(5)]{Rosen19IJRR}.}{}
$\proj_\Scal$ denotes the orthogonal projection operator onto a given set $\Scal$ with respect to the Frobenius norm.

We define several linear operators that will be useful in the rest of this paper. 
Given a list of square matrices $A_1, \hdots, A_n$ (possibly with different dimensions), $\Diag$ assembles them into a block-diagonal matrix: 
\begin{equation}
	\Diag(A_1, \hdots, A_n) \triangleq \begin{bmatrix}
	A_1 &        &    \\
	    & \ddots &    \\
	    &        & A_n
	\end{bmatrix}.
\end{equation}
For an arbitrary square matrix $A$, $\SymOp$ returns its symmetric part $\SymOp(A) \triangleq (A + A^\top)/2$. 
For a $[d \times d]$-block-structured matrix $Z \in \Real^{dn \times dn}$, 
we define the following linear operator that outputs another $[d \times d]$-block-diagonal matrix as follows,
\begin{equation}
\SymBlockDiag_d (Z)_{[i,j]} \triangleq
\begin{cases}
\SymOp(Z_{[i,i]}), & \text{if } i = j, \\
0_{d \times d}, & \text{otherwise}.
\end{cases}
\label{eq:SymBlockDiag}
\end{equation}
In addition, for a $[(d+1) \times (d+1)]$-block-structured matrix $Z \in \Real^{(d+1)n \times (d+1)n}$, we define a similar linear operator:
\begin{equation}
\SymBlockDiag_d^{+} (Z)_{[i,j]} \triangleq
\begin{cases}
\begin{bmatrix}
\SymOp(Z_{[i,i](1:d,1:d)}) & 0_d \\
0_d^\top & 0
\end{bmatrix}, & \text{if } i = j, \\
0_{(d+1) \times (d+1)}, & \text{otherwise}.
\end{cases}
\label{eq:SymBlockDiagPlus}
\end{equation}

\subsubsection*{Differential Geometry of Riemannian Manifolds}
\edit{}
{
We refer the reader to \citep{boumal2020intromanifolds,absil2009optimization}
for outstanding introductions to Riemannian optimization on matrix submanifolds. 	
In general, we use $\Mcal$ to denote a smooth matrix submanifold of a real Euclidean space.
$T_{x} \Mcal$ (or $T_x$ for brevity) denotes
the tangent space at $x \in \Mcal$. 
The tangent space is endowed with the
standard Riemannian metric induced from the ambient (Euclidean) space, 
i.e., $\langle \eta_1, \eta_2 \rangle \triangleq \tr(\eta_1^\top \eta_2)$,
and the induced norm is $\norm{\eta} \triangleq \sqrt{\langle \eta, \eta \rangle}$. 
$\Retr$ denotes a retraction operator, with $\Retr_x: T_{x} \Mcal \to \Mcal$ being its restriction to $T_x \Mcal$.
For a function $f: \Mcal \to \Real$, we use $\nabla f(x)$ and $\rgrad f(x)$ to denote the Euclidean and Riemannian gradients of $f$ at $x \in \Mcal$.
For matrix submanifolds, the Riemannian gradient is obtained as the orthogonal projection of the Euclidean gradient onto the associated tangent space: 
\begin{equation}
	\rgrad f(x) = \proj_{T_x} (\nabla f(x)).
	\label{eq:egrad_to_rgrad}
\end{equation}
We call $x^\star \in \Mcal$ a \emph{first-order critical point} if $\rgrad f(x^\star) = 0$. 
For the second-order geometry, we are primarily concerned with the Riemannian Hessian of a function $f: \Mcal \to \Real$. For matrix submanifolds, the Riemannian Hessian is a linear mapping on the tangent space which captures the directional derivative of the Riemannian gradient:
\begin{equation}
	\begin{aligned}
	\Hess f(x) : \; & T_x \Mcal \to T_x \Mcal, \\
	& \eta \mapsto \proj_{T_x} (\mathrm{D} \rgrad f(x) [\eta]).
	\end{aligned}
	\label{eq:riemannian_hessian}
\end{equation}
Above, the operator $\mathrm{D}$ denotes the standard directional derivative in the Euclidean space; see \cite[Chapter~5]{absil2009optimization}. 

\subsubsection*{Matrix submanifolds used in this work}

In SLAM, standard matrix submanifolds that frequently appear include the special orthogonal group $\SOd(d) \triangleq \{ R \in \Real^{d \times d} | R^\top R = I_d, \det(R) = 1\}$ and the special Euclidean group $\SE(d) \triangleq \{(R, t) | R \in \SOd(d), t \in \Real^d\}$.
We also make use of the Stiefel manifold $\Stiefel(d,r) \triangleq \{Y \in \Real^{r \times d} \mid Y^\top Y = I_d\}$. The geometry of these manifolds can be found in standard textbooks (e.g.\ \cite{absil2009optimization}). Given a matrix $A \in \Real^{r \times d}$, if $A = U \Sigma V^\top$ is the singular value decomposition (SVD) of $A$, the projection of $A$ onto $\Stiefel(d,r)$ can be obtained as:
\begin{equation}
    \proj_{\Stiefel(d,r)} (A) = U V^\top.
    \label{eq:projection_to_stiefel}
\end{equation}
In this work, the following product manifold is also used extensively and we give it a specific name:
\begin{equation}
    \Manifold(r,n) \triangleq (\Stiefel(d,r) \times \Real^r)^n \subset \Real^{r \times (d+1)n}.
	\label{eq:my_manifold}
\end{equation}
In our notation \eqref{eq:my_manifold}, we highlight the constants $r$ and $n$
and omit the constant $d$, as the latter is essentially fixed (i.e., $d \in
\{2,3\}$).
As a product manifold, we can readily characterize the first-order geometry of $\Manifold(r,n)$ using those of $\Stiefel(d,r)$ and $\Real^r$. 
In particular, the tangent space of $\Manifold(r,n)$ is given as the Cartesian product of the tangent spaces of individual components. 
In matrix form, we can write the tangent space as:
\begin{equation}
	T_X \Manifold(r,n) = \{\dot{X} \in \Real^{r \times (d+1)n} \mid \SymBlockDiag_d^+(\dot{X}^\top X) = 0\}. 
	\label{eq:tangent_space}
\end{equation}
The normal space is the orthogonal complement of $T_X \Manifold(r,n)$ in the ambient space. It can be shown that the normal space takes the form, 
\begin{equation}
	T_X^{\perp} \Manifold(r,n) = \{ XS \mid S = \Diag(S_1, 0, \hdots, S_n, 0), S_i \in \Sym^d \}.
	\label{eq:normal_space}
\end{equation} 
Finally, given a matrix $U \in \Real^{r \times (d+1)n}$ in the ambient space, 
the orthogonal projection onto the tangent space at $X$ \eqref{eq:tangent_space} is given by:
\begin{equation}
\begin{aligned}
	\proj_{T_X}: & \Real^{r \times (d+1)n} \to T_X \Manifold(r,n), \\
	           & U \mapsto  U - X \SymBlockDiag_d^{+} (X^\top U). 
\end{aligned}
\label{eq:projection_onto_tangent_space}
\end{equation}
}

\section{Related Work}
\label{sec:related_work}

\subsection{Centralized Pose-Graph Optimization Algorithms}
In prior work \citet{Rosen19IJRR} developed SE-Sync, a state-of-the-art certifiably correct algorithm for pose-graph optimization.  SE-Sync is based upon a (convex) semidefinite relaxation that its authors prove admits a \emph{unique, low-rank} minimizer providing an \emph{exact, globally-optimal} solution to the original PGO problem whenever the noise on the available measurements is not too large; moreover, in the (typical) case that exactness obtains, it is possible to \emph{verify} this fact \emph{a posteriori} \cite{Carlone2015Verification}, thereby \emph{certifying} the correctness (optimality) of the recovered estimate.  To solve the resulting semidefinite relaxation efficiently, SE-Sync employs the Riemannian Staircase \citep{BoumalBlockDiagonal}, which leverages symmetric low-rank (Burer-Monteiro) factorization \cite{Burer2003ANP} to directly search for a symmetric low-rank factor of the SDP solution, and implements this low-dimensional search using the truncated-Newton \emph{Riemannian trust-region} (RTR) method \citep{absil2007trust,absil2009optimization}.  This combination of \emph{low-rank factorization} and \emph{fast local search} (via truncated-Newton RTR) enables SE-Sync to recover \emph{certifiably globally optimal} PGO solutions at speeds comparable to (and frequently significantly faster than) standard state-of-the-art \emph{local} search methods (e.g.\ Gauss-Newton) \cite{Rosen19IJRR}.  

Unfortunately, the specific computational synthesis proposed in SE-Sync does not admit a straightforward distributed implementation.  In particular, the semidefinite relaxation underlying SE-Sync is obtained after exploiting the separable structure of PGO to \emph{analytically eliminate} the translational variables \cite{KasraTRO}; while this has the advantage of reducing the problem to a lower-dimensional optimization over a compact search space, it also means that the objective matrix of the resulting SDP is generically \emph{fully dense}.  Interpreted in the setting of CSLAM, this reduction has the effect of requiring all poses to be \emph{public} poses; i.e., \emph{every pose} must be available to \emph{every agent} in the team.  In addition to violating privacy, implementing this approach in a distributed setting would thus require an intractable degree of communication among the agents.

A similar centralized solver, Cartan-Sync, is proposed by \citet{Briales17CartanSync}. 
The main difference between SE-Sync and Cartan-Sync is that the latter directly relaxes the pose-graph optimization problem \emph{without} first analytically eliminating the translations; consequently, the resulting relaxation  retains the sparsity present in the original PGO problem.  However, this alternative SDP relaxation (and consequently Cartan-Sync itself) has \emph{not} previously been shown to enjoy any exactness guarantees; in particular, its minimizers, and their relation to solutions of PGO, have not previously been characterized.  As one of the main contributions of this work, we derive sharp correspondences between minimizers of Cartan-Sync's relaxation and the original relaxation employed by SE-Sync (Theorem \ref{thm:sdp_equivalence}); in particular, this correspondence enables us to \emph{extend} the exactness guarantees of the latter to cover the former, thereby justifying its use as a basis for distributed certifiably correct PGO algorithms.

As a related note, similar SDP relaxations
\citep{SingerPhaseSync2011,Bandeira2017Angular,eriksson2019rotation,Dellaert2020Shonan} have also
been proposed for \emph{rotation averaging} \citep{Hartley2013}.
This problem arises in a number of important applications
such as CNL
\citep{tron2012distributed,Tron2014,Tron2016}, structure from motion 
\citep{Hartley2013}, and other domains such as cryo-electron microscopy in structural biology
\citep{SingerCryoEM}.  Mathematically, rotation averaging can be derived as a \emph{specialization} of PGO obtained by setting the measurement precisions for the translational observations to $0$ (practically, \emph{ignoring} translational observations and states); consequently, the algorithms proposed in this work immediately apply, \emph{a fortiori}, to distributed rotation averaging.

\subsection{Decentralized Pose-Graph Optimization Algorithms}

The work by \citet{Choudhary17IJRR} is currently
the state of the art in distributed PGO solvers and has been recently
used by modern decentralized CSLAM systems
\citep{cieslewski2018data,lajoie2019door}. 
\citet{Choudhary17IJRR} propose a two-stage approach for
finding approximate solutions to PGO. 
The first stage approximately solves the underlying rotation averaging problem by 
relaxing the non-convex $\SOd(d)$ constraints, solving the
resulting (unconstrained) linear least squares problem, and projecting the results back to
$\SOd(d)$.
The rotation estimates are then used in the second stage to initialize a single
Gauss-Newton iteration on the full PGO problem.
In both stages, iterative and distributable linear solvers such as 
Jacobi over-relaxation (JOR) and successive over-relaxation (SOR) 
\citep{bertsekas1989parallel} are used to solve the normal equations. 
The experimental evaluations presented in \citep{Choudhary17IJRR}
demonstrate that this approach significantly outperforms prior techniques
\citep{cunningham2010ddf,CunninghamDDFSAM2}. Nonetheless, the proposed approach
is still performing incomplete local search on a non-convex problem, and thus
cannot offer any performance guarantees.

In another line of work,
Tron~et~al.~\cite{tron2012distributed,Tron2014,Tron2016} propose a multi-stage distributed
Riemannian consensus protocol for CNL based on distributed 
execution of Riemannian gradient descent over $\Mcal^n$ where $\Mcal = \SOd(3)
\times \Rset^3$ and $n$ is the number of cameras (agents). CNL can be seen as a
special instance of collaborative PGO where each agent owns a \emph{single}
pose rather than an entire trajectory. In these works, the authors establish convergence to
critical points and, under \emph{perfect} (noiseless) measurements, convergence to
globally optimal solutions. See Remark~\ref{rem:bcm} for a specialized
form of our algorithm suitable for solving CNL.

Recently, Fan and Murphy \cite{Fan2019ISRR,Fan2020Arxiv} proposed a majorization-minimization approach to solve distributed PGO. 
Each iteration constructs a quadratic upper bound on the cost function, and minimization of this upper bound is carried out in a distributed and parallel fashion. 
The core benefits of this approach are that it is \emph{guaranteed} to converge to a first-order critical point of the PGO problem, and that it allows one to incorporate Nesterov's acceleration technique, which provides significant empirical speedup on typical PGO problems. 
In this work, we propose a different local search method that performs block-coordinate descent over Riemannian manifolds.
Similar to \cite{Fan2019ISRR,Fan2020Arxiv}, we achieve significant empirical speedup by adapting Nesterov's accelerated coordinate descent scheme \cite{Nesterov2012ACD} while ensuring global convergence using adaptive restart \cite{Odonoghue2012}.
However, compared to \cite{Fan2020Arxiv}, our method enjoys the computational advantage that each iteration requires only an inexpensive \emph{approximate} solution of each local subproblem (as opposed to fully solving the local subproblem to a first-order critical point).
Lastly, while all distributed methods \cite{Choudhary17IJRR,tron2012distributed,Tron2014,Tron2016,Fan2019ISRR,Fan2020Arxiv} reviewed thus far are \emph{local} search techniques, our approach is the first \emph{global} and \emph{certifiably correct} distributed PGO algorithm.

\subsection{Block-Coordinate Descent Methods}
Block-coordinate descent (BCD) methods (also known as Gauss-Seidel-type
methods) are classical techniques
\citep{bertsekas1989parallel,Wright2015Survey} that have recently regained popularity in
large-scale machine learning and numerical optimization
\citep{Nutini2017BCD,marevcek2015distributed,beck2013convergence,nesterov2012efficiency}.
These methods are popular due to their simplicity, cheap iterations,
and flexibility in the parallel and distributed settings \citep{bertsekas1989parallel}.

BCD is a natural choice for solving PGO in the distributed setting due to the graphical decomposition of the underlying optimization problem.
In fact, BCD-type 
techniques have been applied in the past
\citep{duckett2000learning,frese2005multilevel} to solve SLAM.
Similarly, in computer vision, variants of the Weiszfeld algorithm have also been used for robust rotation
averaging \citep{Hartley2013,hartley2011l1}. The abovementioned works,
however, use BCD for \emph{local} search and thus cannot
guarantee global optimality.
More recently, \citet{eriksson2019rotation} propose a BCD-type algorithm for
solving the SDP relaxation of rotation averaging. Their
row-by-row (RBR) solver extends the approach of
\citet{wen2009row} from SDPs with diagonal constraints to block-diagonal
constraints. In small problems
(with up to $300$ rotations), RBR is shown to be 
comparable or better than the Riemannian Staircase
approach \citep{BoumalBlockDiagonal} in terms of runtime. This approach,
however, needs to store and manipulate a \emph{dense} $dn \times dn$  matrix,
which is not scalable to SLAM applications (where $n$ is usually one to two orders of magnitude larger than the problems considered in \citep{eriksson2019rotation}). 
Furthermore, although in principle RBR can be executed distributedly, 
it requires each agent to estimate a block-row of the SDP variable, which in the case of rotation averaging corresponds to both public and private rotations of other agents.
In contrast, the method proposed in this work does not leak information about robots' private poses throughout the optimization process. 

This work is originally inspired by recent block-coordinate minimization algorithms 
for solving SDPs with diagonal constraints via the Burer-Monteiro
approach \citep{WangBCM2017,ErdogduBCM2018}. Our recent technical report \citep{Tian2019BCM} extends these
algorithms and the global convergence rate analysis provided by
\citet{ErdogduBCM2018} from the unit sphere
(SDPs with diagonal constraints) to the Stiefel manifold (SDPs with
\emph{block}-diagonal constraints).
In this work, we further extend our initial results by
providing a unified Riemannian BCD algorithm and its global convergence rate
analysis.

\section{Certifiably Correct Pose-Graph Optimization}
\label{sec:background}
\begin{figure*}
		\centering
		\begin{tikzpicture}[node distance = 6.5cm, auto]
				\node [fblock,fill=red!10!white, thick] (se_sync) {\footnotesize
				PGO \\ (Problem~\ref{prob:se_sync})};
				\node[fblock, fill=blue!5!white, right of=se_sync, thick] (se_sdp)
				{\footnotesize SDP Relaxation\\ (Problem~\ref{prob:se_sdp})};
				\node[fblock, fill=green!10!white, right=4.5cm of se_sdp, densely dashed,
				thick] (se_sync_riemannian) {\footnotesize Rank-restricted SDP\\ (Problem~\ref{prob:se_sync_riemannian})};
				\node[fblock, fill=green!10!white, above right of=se_sync_riemannian, node
				distance = 0.2cm, densely dashed, thick] (se_sync_riemannian2)
				{\footnotesize Rank-restricted SDP\\ (Problem~\ref{prob:se_sync_riemannian})};
				\node[fblock, fill=green!10!white, above right of=se_sync_riemannian2,node
				distance = 0.2cm, thick]
				(se_sync_riemannian3) {\footnotesize Rank-$r$ restriction of SDP\\ (Problem~\ref{prob:se_sync_riemannian})};
				\draw [line] (se_sync.5) -- node[above]{\small SDP relaxation} (se_sdp.175);
				\draw [line, dashed] (se_sdp.185) -- node[below] (thm1) {\footnotesize low-noise guarantees} node[below of=thm1,node distance = 0.7cm] {\small (Theorem~\ref{thm:tightness_informal})} (se_sync.-5);
				\draw [line] (se_sdp.5) -- node[above]{\small Dist.\ Riemannian
				Staircase} (se_sync_riemannian.175);
				\draw [line, dashed] (se_sync_riemannian.185) -- node[below]
				(thm5) {\footnotesize convergence guarantees} node[below of=thm5,node distance =
				0.7cm] {\small
				(Theorem~\ref{Convergence_of_first_order_Riemannian_staircase_Theorem})} (se_sdp.-5);			
		\end{tikzpicture}
		\caption
		{\small	Relations between problems considered in this work. From the MLE formulation of PGO (Problem~\ref{prob:se_sync}), applying semidefinite relaxation yields the SDP (Problem~\ref{prob:se_sdp}). 
				Applying a distributed implementation of the Riemannian staircase
				algorithm \cite{BoumalBlockDiagonal} and Burer-Monteiro (BM) factorization \cite{Burer2003ANP} on the SDP
				then yields a set of rank-restricted problems
				(Problem~\ref{prob:se_sync_riemannian}) which can be \emph{locally} optimized using
				our distributed Riemannian local search method
				(Section~\ref{sec:bcd}). After local search, the \emph{global} optimality of the recovered first-order critical points of Problem~\ref{prob:se_sync_riemannian} as solutions of the original SDP (Problem~\ref{prob:se_sdp})	can then be checked via \emph{post hoc} verification, and if necessary, a descent direction can be constructed from a suboptimal critical point to continue the search (Section~\ref{sec:verification}).
				Finally, under sufficiently low noise, SDP relaxations are guaranteed to find \emph{global} minimizers of PGO (Theorem~\ref{thm:tightness_informal}).
		}
		\label{fig:flow_chart}
\end{figure*}
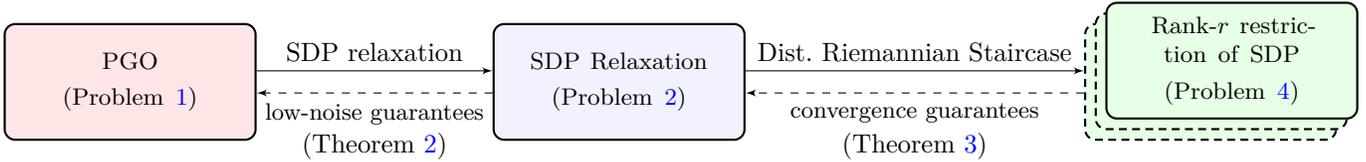

In this section, we formally introduce the pose-graph optimization problem, its semidefinite relaxation, and our certifiably correct algorithm for solving these in the distributed setting.
Figure~\ref{fig:flow_chart} summarizes the problems introduced in this section and how they relate to each other.

\subsection{Pose-Graph Optimization}
\emph{Pose-graph optimization} (PGO) is the problem of estimating unknown poses
from noisy relative measurements. 
PGO can be modeled as a directed graph (a \emph{pose graph})
$\Gcal = (\Vcal, \DirEdges)$, 
where $\Vcal = [n]$ and $\DirEdges \subseteq \Vcal \times \Vcal$ correspond to
the sets of unknown poses and relative measurements, respectively.
In the rest of this paper,
we make the standard assumption that $\Gcal$ is weakly connected.
Let $T_1, T_2, \hdots, T_n \in \SE(d)$ denote the poses that need to be
		estimated, where each $T_i = (R_i, t_i)$ consists of a rotation
		component $R_i \in \SOd(d)$ and a translation component $t_i \in \Real^d$. 
Following \cite{Rosen19IJRR}, we assume that for each edge $(i,j) \in \DirEdges$, the 
corresponding relative measurement $\widetilde{T}_{ij} = (\widetilde{R}_{ij}, \widetilde{t}_{ij})$ from pose $T_i$ to $T_j$ is generated according to:
\begin{align} 
\widetilde{R}_{ij} &= \Runder_{ij}R^{\epsilon}_{ij}, \; R^{\epsilon}_{ij} \sim  \Langevin(I_d, \kappa_{ij}),
\label{eq:rotation_noise_model} \\
\widetilde{t}_{ij} &= \tunder_{ij} + t^{\epsilon}_{ij}, \; t^{\epsilon}_{ij} \sim \mathcal{N}(0, \tau_{ij}^{-1} I_d).
\label{eq:translation_noise_model}
\end{align}
Above, $\Runder_{ij} \triangleq \Runder_i^\top \Runder_j$ and $\tunder_{ij} \triangleq \Runder_i^\top(\tunder_j - \tunder_i)$ denote the true (noiseless) relative rotation and translation, respectively.
Under the noise model
\eqref{eq:rotation_noise_model}-\eqref{eq:translation_noise_model}, it can be
shown that a maximum likelihood estimate (MLE) is obtained as a minimizer of the following non-convex optimization problem \cite{Rosen19IJRR}:

\begin{problem}[Pose-Graph Optimization]
	\normalfont
	\begin{subequations}
		\label{eq:se_sync}
		\begin{align}
		\underset{\{R_i\}, \{t_i\}}{\text{minimize}}
		& \quad \sum_{(i,j) \in \DirEdges} \kappa_{ij} \norm{R_j - R_i \widetilde{R}_{ij}}_F^2 
		+ \tau_{ij}\norm{t_j - t_i - R_i \ttilde_{ij}}^2_2,
		\label{eq:se_sync_cost}  \\
		\text{subject to} 
		&  \quad R_i \in \SOd(d), \, t_i \in \Real^d, \, \forall i \in [n]. 
		\label{eq:se_sync_const}
		\end{align}
	\end{subequations}
	\label{prob:se_sync}
\end{problem}
\vspace{-0.5cm}

\subsubsection*{Collaborative PGO}
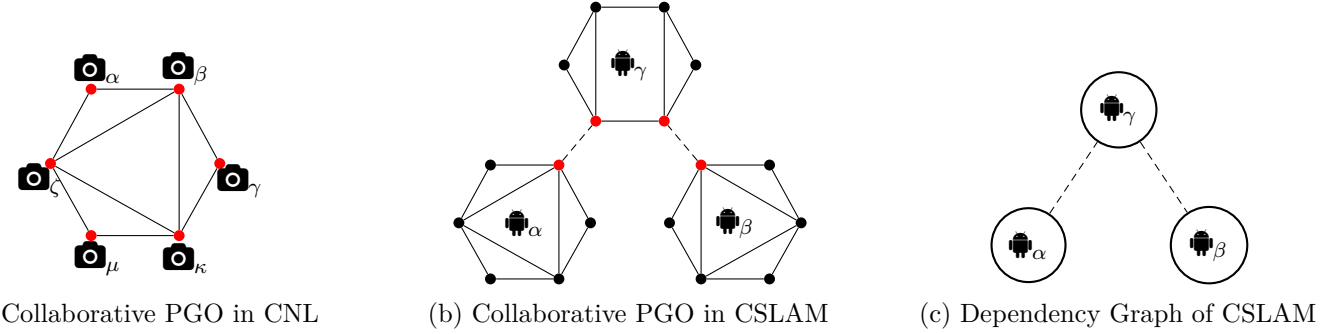
\begin{figure*}[t]
	\centering
	\begin{subfigure}[t]{0.30\textwidth}
		\centering
		\begin{tikzpicture}[scale=0.9]
		\tikzstyle{vertex}=[circle,fill,scale=0.4]
		\tikzstyle{special vertex}=[circle,fill=red,scale=0.4]
		\tikzstyle{square vertex}=[rectangle,fill,scale=0.5,draw]
		\tikzstyle{diamond vertex}=[regular polygon,regular polygon
		sides=3,rotate=45,fill,scale=0.3,draw]
		\node[special vertex] at (0.85,2.598076) (v1) {};
		\node[special vertex] at (2.15,2.598076) (v2) {};
		\node[special vertex] at (2.75,1.499037) (v3) {};
		\node[special vertex] at (2.15, 0.433013) (v4) {};
		\node[special vertex] at (0.85, 0.433013) (v5) {};
		\node[special vertex] at (0.25,1.499037) (v6) {};
		\node (c1) at (0.95,2.90) [] {\faCamera$_\alpha$};
		\node (c2) at (2.25,2.90) [] {\faCamera$_\beta$};
		\node (c4) at (3.05,1.25) [] {\faCamera$_\gamma$};
		\node (c6) at (2.25,0.10) [] {\faCamera$_\kappa$};
		\node (c5) at (0.95,0.10) [] {\faCamera$_\mu$};
		\node (c3) at (0.10,1.25) [] {\faCamera$_\zeta$};
		\draw[](v1) -- (v2);
		\draw[](v2) -- (v3);
		\draw[](v3) -- (v4);
		\draw[](v4) -- (v5);
		\draw[](v5) -- (v6);
		\draw[](v6) -- (v1);
		\draw[](v2) -- (v4);
		\draw[](v2) -- (v6);
		\draw[](v4) -- (v6);
		\end{tikzpicture}
		\captionsetup{justification=centering}
		\caption{Collaborative PGO in CNL}
		\label{fig:cnl}
	\end{subfigure}
	\hspace*{\fill}
	\begin{subfigure}[t]{0.30\textwidth}
		\centering
		\begin{tikzpicture}[scale=0.7]
				\tikzstyle{vertex}=[circle,fill,scale=0.4]
				\tikzstyle{special vertex}=[circle,fill=red,scale=0.4]
				\tikzstyle{square vertex}=[rectangle,fill,scale=0.5,draw]
				\tikzstyle{diamond vertex}=[regular polygon,regular polygon
				sides=3,rotate=45,fill,scale=0.3,draw]
				\node[vertex] at (0.85,2.598076) (Av1) {};
				\node[special vertex] at (2.15,2.598076) (Av2) {};
				\node[vertex] at (2.75,1.499037) (Av3) {};
				\node[vertex] at (2.15, 0.433013) (Av4) {};
				\node[vertex] at (0.85, 0.433013) (Av5) {};
				\node[vertex] at (0.25,1.499037) (Av6) {};

				\node[special vertex] at (4.85,2.598076) (Bv1) {};
				\node[vertex] at (6.15,2.598076) (Bv2) {};
				\node[vertex] at (6.75,1.499037) (Bv3) {};
				\node[vertex] at (6.15, 0.433013) (Bv4) {};
				\node[vertex] at (4.85, 0.433013) (Bv5) {};
				\node[vertex] at (4.25,1.499037) (Bv6) {};

				\node[vertex] at (2.85,5.598076) (Cv1) {};
				\node[vertex] at (4.15,5.598076) (Cv2) {};
				\node[vertex] at (4.75,4.499037) (Cv3) {};
				\node[special vertex] at (4.15, 3.433013) (Cv4) {};
				\node[special vertex] at (2.85, 3.433013) (Cv5) {};
				\node[vertex] at (2.25,4.499037) (Cv6) {};

				\node (r1) at (1.5,1.5) [] {\faAndroid$_{\alpha}$};
				\node (r2) at (5.5,1.5) [] {\faAndroid$_{\beta}$};
				\node (r3) at (3.5, 4.5) [] {\faAndroid$_{\gamma}$};
				\draw[densely dashed](Av2) -- (Cv5);
				\draw[densely dashed](Bv1) -- (Cv4);

				\draw[](Av1) -- (Av2);
				\draw[](Av2) -- (Av3);
				\draw[](Av3) -- (Av4);
				\draw[](Av4) -- (Av5);
				\draw[](Av5) -- (Av6);
				\draw[](Av6) -- (Av1);
				\draw[](Av2) -- (Av4);
				\draw[](Av2) -- (Av6);
				\draw[](Av4) -- (Av6);

				\draw[](Bv1) -- (Bv2);
				\draw[](Bv2) -- (Bv3);
				\draw[](Bv3) -- (Bv4);
				\draw[](Bv4) -- (Bv5);
				\draw[](Bv5) -- (Bv6);
				\draw[](Bv6) -- (Bv1);
				\draw[](Bv1) -- (Bv5);
				\draw[](Bv1) -- (Bv3);
				\draw[](Bv5) -- (Bv3);

				\draw[](Cv1) -- (Cv2);
				\draw[](Cv2) -- (Cv3);
				\draw[](Cv3) -- (Cv4);
				\draw[](Cv4) -- (Cv5);
				\draw[](Cv5) -- (Cv6);
				\draw[](Cv6) -- (Cv1);
		%
				\draw[](Cv2) -- (Cv4);
				\draw[](Cv1) -- (Cv5);
		\end{tikzpicture}
		\captionsetup{justification=centering}
		\caption{Collaborative PGO in CSLAM}
		\label{fig:cslam}
	\end{subfigure}
	\hspace*{\fill}
	\begin{subfigure}[t]{0.30\textwidth}
		\centering
		\begin{tikzpicture}[scale=0.6]
				\tikzstyle{robot}=[circle,scale=1,draw,thick]
				\tikzstyle{special vertex}=[circle,fill=red,scale=0.4]
				\tikzstyle{square vertex}=[rectangle,fill,scale=0.5,draw]
				\tikzstyle{diamond vertex}=[regular polygon,regular polygon
				sides=3,rotate=45,fill,scale=0.3,draw]

				\node[robot] (r1) at (1.5,1.5) [] {\faAndroid$_{\alpha}$};
				\node[robot] (r2) at (5.5,1.5) [] {\faAndroid$_{\beta}$};
				\node[robot] (r3) at (3.5, 4.5) [] {\faAndroid$_{\gamma}$};
				\draw[densely dashed](r3) -- (r2);
				\draw[densely dashed](r1) -- (r3);

		\end{tikzpicture}
		\captionsetup{justification=centering}
		\caption{Dependency Graph of CSLAM}
		\label{fig:dependency_graph}
	\end{subfigure}
	\caption
	{\small
		Two applications of distributed PGO:
		(a) In CNL,
		a group of cameras need to localize each other in a common reference frame. 
		Each vertex in the pose graph denotes the pose of a single camera. Cameras that share overlapping fields of view are connected by relative measurements.
		(b) In CSLAM, multiple robots need to 
		jointly estimate their trajectories in the same frame.
		Each robot has multiple pose variables that are connected by odometry measurements
		and loop closures.
		We refer to poses that have inter-robot loop closures (dashed edges) as
		\emph{public} (marked in \textcolor{red}{red}),
		and all other poses as \emph{private} (marked in black). (c) Dependency graph
		for the CSLAM pose graph shown in (b). Each vertex corresponds to a robot, and two
		vertices are adjacent if and only if there exists at least one inter-robot loop
		closure between the corresponding robots.
	}
	\label{fig:diagram}
\end{figure*}

In collaborative PGO, multiple robots must collaboratively estimate their trajectories in a common
reference frame by solving the \emph{collective} PGO problem
\emph{distributedly} (i.e., without 
outsourcing data to a single ``central'' node)
using inter-robot collaboration.
Each vertex of the collective pose graph represents the pose of a robot at a certain time step.
Odometry measurements and \emph{intra-robot} loop closures connect poses within a single robot's trajectory.
When two robots visit the same place (not necessarily at the
same time), 
they establish \emph{inter-robot} loop closures that link their respective
poses; see \cite{cieslewski2018data,giamou2017talk,tian2019resource} and 
references therein for resource-efficient distributed inter-robot loop closure
detection techniques. Figure~\ref{fig:diagram} illustrates simple examples based
on CNL and CSLAM.
Inter-robot loop closures induce a natural partitioning of collective pose graph nodes into public and
private poses, marked in red and black in Figure~\ref{fig:diagram},
respectively. Additionally, these inter-agent measurements also create 
dependencies between the robots. This is captured by the \emph{dependency graph}
shown in Figure~\ref{fig:dependency_graph}.
\begin{definition}[Public and private poses]
	Poses that share inter-robot
	loop closures with poses of other robots are called \emph{public} poses
	(or \emph{separators} \cite{CunninghamDDFSAM2}).
	All other poses are \emph{private} poses. 
	\label{def:publicparallel}
\end{definition}

\subsection{SDP Relaxation for PGO}
\label{sec:sdp_relaxation}
Traditionally, Problem~\ref{prob:se_sync} is solved with local search algorithms such as Gauss-Newton. 
However, depending on the noise level and the quality of initialization,
local search algorithms are susceptible to local minima \citep{Carlone2015Verification}. 
To address this critical issue, recent works aim to develop \emph{certifiably
correct} PGO solvers.
In particular, techniques based on SDP relaxation demonstrate empirical state-of-the-art
performance while providing theoretical correctness (global optimality) guarantees under low noise regimes \citep{Rosen19IJRR, Bandeira2017Angular, eriksson2019rotation}.

In this section, we present a semidefinite relaxation of Problem~\ref{prob:se_sync} that was first studied in \cite{Briales17CartanSync}.
Let $T \triangleq [R_1 \,\, t_1 \,\, \hdots \,\, R_n \,\, t_n] \in (\SOd(d) \times \Real^d)^n$ be the block-row matrix
obtained by aggregating all rotation and translation variables. \citet{Briales17CartanSync} show that
the cost function \eqref{eq:se_sync_cost} in Problem~\ref{prob:se_sync} can be
written in matrix form as $f(T) = \langle \ConLapT, T^\top T \rangle$, where
$\ConLapT \in \Sym^{(d+1)n}$ is a symmetric matrix known as the \emph{connection
Laplacian} formed using all relative measurements.
Consider the ``lifted'' variable $\ZT = T^\top T \in \PSD^{(d+1)n}$. 
Treating $\ZT$ as a $(d+1) \times (d+1)$-block-structured matrix, we see that several necessary conditions for $\ZT$ to satisfy the original constraints \eqref{eq:se_sync_const} in PGO are,
\begin{align}
	&\ZT \succeq 0, \\
	&\rank(\ZT) = d, \label{eq:rank_constraint} \\
	&{\ZT}_{[i,i](1:d, 1:d)} = R_i^\top R_i = I_{d \times d}, \; \forall i \in [n], \\
	&\det({\ZT}_{[i,j](1:d, 1:d)}) = \det(R_i^\top R_j) = 1, \; \forall i, j \in [n], i \neq j.
	\label{eq:determinant_constraint}
\end{align}
Dropping the non-convex rank and determinant constraints
\eqref{eq:rank_constraint} and \eqref{eq:determinant_constraint} yields an SDP relaxation of Problem~\ref{prob:se_sync}.

\begin{problem}[SDP Relaxation for Pose-Graph Optimization \citep{Briales17CartanSync}]
	\label{prob:SDP_relaxation_for_PGO}
	\begin{subequations}
		\label{eq:se_sdp}
		\begin{align}
		\underset{\ZT \in \PSD^{n+dn}}{\text{minimize}}
		& \quad \langle \ConLapT, \ZT \rangle   \\
		\text{subject to} 
		& \quad  {\ZT}_{[i,i](1:d,1:d)} = I_{d \times d}, \forall i \in [n]. \label{eq:se_sdp_const}
		\end{align}
	\end{subequations}
	\label{prob:se_sdp}
\end{problem}
\vspace{-0.5cm}

The original SE-Sync algorithm \citep{Rosen19IJRR}
employs a different SDP relaxation for Problem~\ref{prob:se_sync}, obtained by first exploiting the so-called \emph{separable} structure of PGO \citep{KasraTRO} to analytically eliminate
the translation variables,
and \emph{then} performing convex relaxation over the resulting rotation-only problem.
This approach yields:
\begin{problem}[Rotation-only SDP Relaxation for Pose-Graph Optimization \citep{Rosen19IJRR}]
	\begin{subequations}
		\label{eq:se_sdp_marg}
		\begin{align}
		\underset{\ZR \in \PSD^{dn}}{\text{minimize}}
		& \quad \langle \ConLapM , \ZR \rangle \label{eq:se_sdp_marg_cost}  \\
		\text{subject to} 
		& \quad  {\ZR}_{[i,i]} = I_{d \times d}, \forall i \in [n], \label{eq:se_sdp_marg_const}
		\end{align}
	\end{subequations}
where $\ConLapM$ is obtained by computing a 
generalized Schur complement of the connection Laplacian $\ConLapT$ (Appendix~\ref{SDP_equivalence_appendix}).
	\label{prob:se_sdp_marg}
\end{problem}

\begin{remark}[Choosing the right SDP for Distributed PGO]
	Problem~\ref{prob:se_sdp_marg} has several advantages over
	Problem~\ref{prob:se_sdp}, including a compact search space and better numerical 
	conditioning. Nevertheless, unlike $\ConLapT$ in Problem~\ref{prob:se_sdp}, the cost matrix $\ConLapM$ in
	Problem~\ref{prob:se_sdp_marg} is generally \emph{dense} (Appendix~\ref{sec:equivalence_proof}).
	In graphical terms, eliminating the translation variables makes the underlying dependency graph 
	\emph{fully connected}. This is a major drawback in the
	distributed setting, since it corresponds to making all of the poses \emph{public}, thereby substantially increasing the required communication.
	As we shall see in the following sections, our proposed algorithm
	relies on and exploits the \emph{sparse} graphical structure (both
	intra-robot and inter-robot) of the problem  to
	achieve computational and communication efficiency, and to preserve the privacy
	of participating robots.
	Therefore, in this work we seek to solve Problem~\ref{prob:se_sdp} as a sparse convex
	relaxation to PGO.
	\label{rem:sdp}
\end{remark}

However, in contrast to the SE-Sync relaxation (Problem~\ref{prob:se_sdp_marg}) \cite{Rosen19IJRR}, Problem~\ref{prob:se_sdp} has \emph{not} previously been shown to enjoy any exactness guarantees.  We now present new results to characterize the
connection between the solutions of these problems, thereby \emph{extending} the guarantee of exactness from Problem~\ref{prob:se_sdp_marg} to  Problem~\ref{prob:se_sdp}.
\begin{theorem}[Connection between Problems~\ref{prob:se_sdp} and
		\ref{prob:se_sdp_marg}]
	Problem~\ref{prob:se_sdp} admits a minimizer $\ZT^\star$ with $\rank(\ZT^\star) = r$
	\emph{if and only if} 
	Problem~\ref{prob:se_sdp_marg} admits a minimizer $\ZR^\star$ with the same rank. 
	Furthermore, $\langle \ConLapT, \ZT^\star \rangle = \langle \ConLapM,
	\ZR^\star \rangle$ for \emph{all} minimizers of
	Problem~\ref{prob:se_sdp} and Problem~\ref{prob:se_sdp_marg}. 
	\label{thm:sdp_equivalence}
\end{theorem}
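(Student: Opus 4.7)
The plan is to exploit the Schur-complement relation $\ConLapM = Q_{RR} - Q_{RT} Q_{TT}^\dagger Q_{RT}^\top$ derived in the appendix, where $Q_{RR}, Q_{RT}, Q_{TT}$ denote the rotation-rotation, rotation-translation, and translation-translation blocks of $\ConLapT$ obtained after permuting the rows and columns of $\ZT$ so that all rotation blocks precede all translation blocks. This permutation leaves both the objective and the constraints of Problem~\ref{prob:se_sdp} invariant, so I may work in this rearranged form. The technical heart of the argument is a \emph{partial-minimization lemma}: for every $\ZR \in \PSD^{dn}$ satisfying the diagonal constraints \eqref{eq:se_sdp_marg_const}, the minimum of $\langle \ConLapT, \ZT \rangle$ over all PSD completions $\ZT$ with rotation block equal to $\ZR$ is exactly $\langle \ConLapM, \ZR \rangle$, and it is attained by some $\ZT^\star$ with $\rank(\ZT^\star) = \rank(\ZR)$.

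To prove the lemma I would factor $\ZR = Y_R^\top Y_R$ with $Y_R \in \Real^{r \times dn}$ and $r = \rank(\ZR)$, then set $Y_t \triangleq -Y_R Q_{RT} Q_{TT}^\dagger$ and define $\ZT^\star \triangleq [Y_R \ Y_t]^\top [Y_R \ Y_t]$. Direct algebra---leveraging the pseudoinverse identity $Q_{TT} Q_{TT}^\dagger Q_{TT} = Q_{TT}$ together with the translation-gauge invariance $Q_{RT} 1_n = 0$---yields $\langle \ConLapT, \ZT^\star \rangle = \tr(Y_R \ConLapM Y_R^\top) = \langle \ConLapM, \ZR \rangle$, and since the columns of $Y_t$ are linear combinations of those of $Y_R$, one has $\rank(\ZT^\star) = \rank(Y_R) = r$. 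For the lower bound $\langle \ConLapT, \ZT \rangle \geq \langle \ConLapM, \ZR \rangle$ on an arbitrary PSD completion, I would factor such $\ZT$ via Burer-Monteiro and apply the same stationarity argument: the unconstrained minimum of the resulting quadratic in the translation factor is exactly $\tr(Y_R \ConLapM Y_R^\top)$.

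With the lemma established, minimizing both sides over feasible $\ZR$ yields $\min_{\ZT\text{ feas.}} \langle \ConLapT, \ZT \rangle = \min_{\ZR\text{ feas.}} \langle \ConLapM, \ZR \rangle$, which immediately gives the cost equality claim for all minimizers. For the rank correspondence, the ``if'' direction is immediate from the lemma's explicit construction. For the ``only if'' direction, the rotation block of a rank-$r$ minimizer of Problem~\ref{prob:se_sdp} is a minimizer of Problem~\ref{prob:se_sdp_marg}, and selecting the minimum-norm representative for the translation factor (rather than any completion augmented by a translational-kernel direction) ensures the rank is preserved. The main obstacle I anticipate is rigorously handling the kernel of $Q_{TT}$, which corresponds to the global translation gauge of PGO: I must verify that $Y_R Q_{RT}$ always lies in the range of $Q_{TT}$ and that $Q_{RT} 1_n = 0$. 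Both properties encode the invariance of the PGO objective under uniform translation of all poses, and both are essential to making the Schur-complement identity exact rather than approximate---so this is the step that will require the most careful bookkeeping.
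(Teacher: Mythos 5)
Your route is essentially the paper's own: the paper likewise factors the minimizers (Burer--Monteiro), analytically eliminates the translations, and relies on exactly the two gauge facts you single out ($Q_{RT}1_n = \widetilde{V}^\top 1_n = 0$ and $\kernel L(W^\tau) = \Span\{1_n\}$) to make the Schur-complement identity exact; it merely packages the computation as a critical-point correspondence for the rank-restricted factored problems (Lemma~\ref{lem:rank_restricted_SDP_equivalence}) rather than as your completion-level partial-minimization lemma. Your lemma, the resulting equality of optimal values, and the direction from a rank-$r$ minimizer of Problem~\ref{prob:se_sdp_marg} to a rank-$r$ minimizer of Problem~\ref{prob:se_sdp} are all sound, and your explicit minimum-norm choice $Y_t = -Y_R Q_{RT} Q_{TT}^{\dagger}$ is precisely what makes the rank claim go through in that direction (a choice the paper's proof leaves implicit).

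The gap is in the converse direction, and your proposed fix does not repair it. Starting from a rank-$r$ minimizer $\ZT^\star$ of Problem~\ref{prob:se_sdp}, the translation factor is part of the given data, not something you may select: writing $\ZT^\star = [Y_R \;\, Y_t]^\top [Y_R \;\, Y_t]$, optimality forces only $Y_t = -Y_R Q_{RT} Q_{TT}^{\dagger} + c\,1_n^\top$ for some $c \in \Real^{r}$, and if $c$ has a nonzero component orthogonal to the column space of $Y_R$, then the rotation block $\ZR = Y_R^\top Y_R$ has rank $r-1$, not $r$. This is not a removable bookkeeping issue. In the low-noise regime Problem~\ref{prob:se_sdp_marg} has a \emph{unique} minimizer $\ZR^\star$, of rank $d$ (the setting of Theorem~\ref{thm:tightness_informal}); padding its factor with a zero row and completing with $c$ along the new coordinate direction yields --- by your own lemma, since the objective is independent of $c$ thanks to $Q_{TT}1_n = 0$ and $Q_{RT}1_n = 0$ --- a minimizer of Problem~\ref{prob:se_sdp} of rank $d+1$, even though Problem~\ref{prob:se_sdp_marg} then has no minimizer of rank $d+1$ at all. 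So the exact-rank ``only if'' implication cannot be established the way you propose; the translational gauge genuinely enlarges the set of ranks attainable by minimizers of Problem~\ref{prob:se_sdp} by one, and a correct treatment must either quotient out this gauge, restrict attention to minimum-rank minimizers, or weaken the claim to ``rank $r$ or $r-1$.'' For reference, the paper's own proof is exposed at the very same step --- it asserts ``$\ZR^\star$ must be an optimal solution with rank $r^\star$'' without justification --- and the portion of the theorem actually used downstream (equality of optimal values, plus existence of a low-rank minimizer of Problem~\ref{prob:se_sdp} given one of Problem~\ref{prob:se_sdp_marg}) is the portion you do prove correctly.
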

Theorem~\ref{thm:sdp_equivalence} indicates that relaxing the additional translational variables when forming Problem \ref{prob:SDP_relaxation_for_PGO} does not weaken the relaxation versus the SE-Sync relaxation (Problem \ref{prob:se_sdp_marg}), \emph{nor} (crucially) introduce any additional minimizers that do \emph{not} correspond to PGO solutions.  In particular, Theorem~\ref{thm:sdp_equivalence} and \cite[Proposition~2]{Rosen19IJRR} together imply the following \emph{exactness} guarantee for Problem~\ref{prob:SDP_relaxation_for_PGO} under low measurement noise. 

\begin{theorem}[Exact recovery via Problem~\ref{prob:se_sdp}]
	Let $\underline{\ConLapT}$ be the connection Laplacian in Problem~\ref{prob:se_sdp}
	, constructed using the true (latent) relative transformations $(\Runder_{ij}, \tunder_{ij})$.
	There exists a constant $\delta > 0$ such that
	if $\norm{\ConLapT - \underline{\ConLapT}}_2 < \delta$, 
	{every minimizer} $\ZT^\star$ to Problem~\ref{prob:se_sdp}
	has its first $d \times (n+dn)$ block row 
	given by,
	\begin{equation}
		{\ZT^\star}_{(1:d,:)} = \begin{bmatrix}
		R_1^\star & t_1^\star & \hdots 	& R_n^\star & t_n^\star
		\end{bmatrix},
	\end{equation}
	where 
	$\{R_i^\star, t_i^\star\}$ 
	is an optimal solution to Problem~\ref{prob:se_sync}.
	\label{thm:tightness_informal}
\end{theorem}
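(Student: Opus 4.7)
The plan is to transfer the existing exactness guarantee for the SE-Sync relaxation (Problem~\ref{prob:se_sdp_marg}) to our sparser relaxation (Problem~\ref{prob:se_sdp}) via Theorem~\ref{thm:sdp_equivalence}. First, I would invoke \cite[Proposition~2]{Rosen19IJRR}, which supplies a threshold $\delta_R > 0$ such that whenever $\norm{\ConLapM - \underline{\ConLapM}}_2 < \delta_R$, Problem~\ref{prob:se_sdp_marg} admits a \emph{unique} minimizer $\ZR^\star$ of rank exactly $d$ whose first block row encodes an MLE set of rotations (in $\SOd(d)$) for PGO. Because $\ConLapM$ is the generalized Schur complement of $\ConLapT$ (Appendix~\ref{SDP_equivalence_appendix}), the map $\ConLapT \mapsto \ConLapM$ is continuous near $\underline{\ConLapT}$, so I would choose $\delta > 0$ small enough that $\norm{\ConLapT - \underline{\ConLapT}}_2 < \delta$ implies $\norm{\ConLapM - \underline{\ConLapM}}_2 < \delta_R$.

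Next, Theorem~\ref{thm:sdp_equivalence} provides a rank-to-rank correspondence between minimizers of Problems~\ref{prob:se_sdp} and \ref{prob:se_sdp_marg}. Combined with the uniqueness and rank-$d$ property of $\ZR^\star$, this forces \emph{every} minimizer $\ZT^\star$ of Problem~\ref{prob:se_sdp} to have rank exactly $d$: a minimizer of different rank would yield, via the theorem, a minimizer of Problem~\ref{prob:se_sdp_marg} of different rank, contradicting uniqueness. I would then factor $\ZT^\star = T^{\star\top} T^\star$ with $T^\star \in \Rset^{d \times (d+1)n}$, partition $T^\star = [R_1^\star \; t_1^\star \; \hdots \; R_n^\star \; t_n^\star]$, and use $\ZT^\star_{[i,i](1:d,1:d)} = I_d$ to deduce each $R_i^\star \in \Orthogonal(d)$. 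A gauge transformation (left-multiplying $T^\star$ by $(R_1^\star)^\top$, which preserves $T^{\star\top} T^\star$) then normalizes the first rotation to identity and produces the canonical form $\ZT^\star_{(1:d,:)} = T^\star$ asserted in the theorem.

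The last---and, I expect, most delicate---step is to upgrade each $R_i^\star$ from $\Orthogonal(d)$ to $\SOd(d)$ and to conclude that $\{R_i^\star, t_i^\star\}$ actually minimizes Problem~\ref{prob:se_sync}. For the determinant constraint, the explicit correspondence built inside the proof of Theorem~\ref{thm:sdp_equivalence} pairs the ``rotation sub-block'' of $\ZT^\star$ with $\ZR^\star$, whose SE-Sync exactness already guarantees proper rotations; alternatively, one can argue by continuity in $\ConLapT$ that the factorization stays close (up to gauge) to the noiseless $\underline{T}$ and then invoke the locally constant nature of $\det$ on the two connected components of $\Orthogonal(d)$. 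With $R_i^\star \in \SOd(d)$ in hand, $\{R_i^\star, t_i^\star\}$ is feasible for Problem~\ref{prob:se_sync}; since $\langle \ConLapT, \ZT^\star \rangle$ equals the SDP optimum which lower-bounds the PGO minimum, $T^\star$ attains the PGO minimum. I anticipate this determinant argument to be the main obstacle, as it is the only place that cannot be deduced purely from the rank correspondence and the SE-Sync exactness guarantee.
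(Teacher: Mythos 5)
Your proposal is correct, and it follows the same overall strategy as the paper's proof: import \cite[Proposition~2]{Rosen19IJRR} for the rotation-only relaxation, transfer it to Problem~\ref{prob:se_sdp} through the equivalence machinery, and close with a continuity argument relating the two noise thresholds. Within that strategy your execution differs in two places, both legitimately. First, you force $\rank(\ZT^\star)=d$ at the outset by combining the rank correspondence of Theorem~\ref{thm:sdp_equivalence} with uniqueness of the rank-$d$ minimizer of Problem~\ref{prob:se_sdp_marg}, and then work with a $d\times(d+1)n$ factor $T^\star$; the paper never pins the rank first, but instead takes an arbitrary rank-$r$ factorization $X\in\Manifold(r,n)$ of $\ZT^\star$, shows via Lemma~\ref{lem:rank_restricted_SDP_equivalence} and uniqueness that its Stiefel part satisfies $Y^\top Y = {R^\MLE}^\top R^\MLE$, and extracts $Y=AR^\MLE$ with $A\in\Stiefel(d,r)$ by a thin SVD. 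Second, your endgame is more economical: your first route for the determinant constraint (the rotation blocks of any minimizer form, by the construction inside the proof of Theorem~\ref{thm:sdp_equivalence}, the unique minimizer ${R^\MLE}^\top R^\MLE$ of Problem~\ref{prob:se_sdp_marg}, so the gauge-normalized rotations ${R_1^\star}^\top R_i^\star = {R_1^\MLE}^\top R_i^\MLE$ lie in $\SOd(d)$) is exactly the paper's mechanism, but you then conclude optimality by the standard certificate argument that a feasible point attaining the SDP lower bound must be a global minimizer, which avoids any explicit treatment of the translations; the paper instead invokes Lemma~\ref{lem:rank_restricted_SDP_equivalence}(i) to write $p = At^\MLE + c1_n^\top$ and verifies by direct computation that the first block row is a rigid-body transformation of $(R^\MLE,t^\MLE)$, then appeals to gauge symmetry. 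Your version trades the explicit structural description for brevity; the paper's version yields the transformation explicitly, but both are complete.

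One place where your sketch is looser than the paper: continuity of the map $\ConLapT\mapsto\ConLapM$ is not automatic, because the generalized Schur complement involves the pseudoinverse $L(W^\tau)^{\dagger}$, which is discontinuous under arbitrary symmetric perturbations of $\ConLapT$. The paper resolves this by observing that measurement noise leaves the block $L(W^\tau)$ (a fixed, weight-dependent graph Laplacian) untouched, so that $\ConLapM$ is a continuous function of the noise perturbations $\Delta_{12},\Delta_{22}$ alone. Your argument goes through once you restrict to this structured class of perturbations, which is implicit in the theorem's hypothesis that both $\ConLapT$ and $\underline{\ConLapT}$ are connection Laplacians built on the same graph with the same measurement weights; this refinement should be made explicit in a full write-up.
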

Theorem~\ref{thm:tightness_informal} provides a crucial missing piece for achieving
certifiably correct \emph{distributed} PGO solvers: under low noise (quantified by the deviation in spectral norm of the connection Laplacian $\ConLapT$ from its latent value), 
one can directly read off a global minimizer to PGO (Problem~\ref{prob:se_sync})
from the first block row of any solution $\ZT^\star$ of the sparse SDP
relaxation (Problem~\ref{prob:se_sdp}).
As empirically shown in \cite{Rosen19IJRR,Briales17CartanSync}, both SDP
relaxations are exact in real-world scenarios (see Section~\ref{sec:experiments}
for additional empirical evidence).

\subsection{Solving the Relaxation: The Distributed Riemannian Staircase}

In typical CSLAM scenarios, the dimension of the SDP relaxation can be quite large (e.g.\ $\dim(\ZT) > 10^4$),
and thus it is often impractical to solve Problem~\ref{prob:se_sdp} using standard (interior-point) methods.
In a seminal paper, \citet{Burer2003ANP} proposed a more scalable approach to search for \emph{low-rank} solutions $\ZT^\star$ in particular: \emph{assume} that some solution admits a symmetric low-rank factorization of the form 
$\ZT^\star = {X^\star}^\top X^\star$, where $X^\star \in \Real^{r \times n}$ and $r \ll n$, and then directly search for the low-rank factor $X^\star$. This substitution has the two-fold effect of (i) dramatically reducing the dimension of the search space, and (ii) rendering the positive semidefiniteness constraint on $\ZT$ \emph{redundant}, since $X^\top X \succeq 0$ for \emph{any} $X \in \Real^{r \times n}$.  In consequence, the \emph{rank-restricted} version of the original semidefinite program obtained by performing the substitution $\ZT = X^\top X$ is actually a lower-dimensional \emph{nonlinear program}, and so can be processed much more efficiently using standard (local) NLP methods.

For SDPs with block-diagonal constraints,
\citet{BoumalBlockDiagonal} extends the general approach of \citet{Burer2003ANP} 
by further exploiting the \emph{geometric} structure of the constraints in the Burer-Monteiro-factored problem.
The result is an elegant algorithm known as the \emph{Riemannian Staircase},
which is used to solve the (large-scale) semidefinite relaxations in SE-Sync \cite{Rosen19IJRR} and Cartan-Sync \citep{Briales17CartanSync}.

\begin{algorithm}[t]
	\caption{\textsc{\small Distributed Riemannian Staircase }}
	\label{alg:riemannian_staircase}
	\begin{algorithmic}[1]
		\renewcommand{\algorithmicrequire}{\textbf{Input:}}
		\renewcommand{\algorithmicensure}{\textbf{Output:}}
		\Require
		\Statex - Initial point $X \in \Manifold(r_0, n)$.
		\Ensure
		\Statex - A minimizer $X^\star \in \Manifold(r,n)$ of Problem~\ref{prob:se_sync_riemannian} such that $Z^\star = (X^\star)^\top (X^\star)$ is a solution to Problem~\ref{prob:se_sdp}.
		\vspace{0.1cm}
		
		\For{$r=r_0, r_0 + 1, \dotsc$}
		
		\State 
		Compute first-order critical point of Problem~\ref{prob:se_sync_riemannian}: $X^\star \leftarrow \textsc{RBCD}(X)$ (Section~\ref{sec:bcd}). 
		\label{alg:rs_local_search}
		
		\State Lift to first-order critical point at next level: $
		X^\star \leftarrow [
		(X^\star)^\top \,\, 0]^\top$ as in \eqref{lifted_suboptimal_critical_point}.
		\label{alg:rs_lift_rank}
		
		\State Construct corresponding dual certificate matrix $\ST(X^\star)$ in \eqref{verification_equations}.
		\label{alg:rs_dual_certificate}
		
		\State {Compute minimum eigenpair: $(\lambda, v) \leftarrow \Call{MinEig}{\ST(X^\star)}$ (Algorithm \ref{alg:MinEig}).} \label{alg:min_eigenpair_computation}
		
		\If{$\lambda \geq 0$}  \label{alg:min_eig_nonnegativity_check}
		\State \textbf{Return} $X^\star$. \label{alg:rs_return}
		\Else
		\State {Construct second-order descent direction $\dot{X}_{+}$ in \eqref{second_order_descent_direction}.} \label{alg:descent_direction}
		\State {Descend from $X^\star$: $X \leftarrow \textsc{EscapeSaddle}(X^\star, \dot{X}_{+})$ (Algorithm \ref{alg:EscapeSaddle}).}
		\label{alg:rs_escape}
		\EndIf
		\EndFor
	\end{algorithmic}
\end{algorithm}

In this work, we show how to implement the Riemannian Staircase approach in a \emph{distributed} manner, thereby enabling us to solve collaborative PGO.  Algorithm~\ref{alg:riemannian_staircase} presents our distributed Riemannian Staircase algorithm. 
 In each iteration of the Riemannian Staircase, we assume a symmetric rank-$r$ factorization $\ZT = X^\top X$ where $X \in \Real^{r \times (n+dn)}$. 
Writing the blocks of $X$ as $X = [Y_1 \,\, p_1 \,\, \hdots \,\, Y_n \,\,p_n]$, 
the SDP constraints \eqref{eq:se_sdp_const} require that $Y_i \in \Stiefel(d,r)$ and $p_i \in \Real^r$, for all $i \in [n]$.
Equivalently, the aggregate variable $X$ is constrained to live on the product manifold $\Manifold(r,n) \triangleq (\Stiefel(d,r)
\times \Real^r)^n$.
Imposing the rank-$r$ factorization thus transforms the original SDP into the following \emph{rank-restricted} problem:

\begin{problem}[Rank-restricted SDP for Pose-Graph Optimization]
	\label{prob:Rank_restricted_SDP}
	\begin{equation}
	\label{eq:se_sync_riemannian}
	\underset{X \in \Manifold(r,n)}{\text{minimize}}
	\quad \langle \ConLapT, X^\top X \rangle .
	\end{equation}
	\label{prob:se_sync_riemannian}
\end{problem}
\vspace{-0.5cm}

In Section~\ref{sec:bcd}, we develop a novel local search algorithm, \emph{Riemannian block coordinate descent} ($\RBCD$), which we will use to recover first-order critical points of the rank-restricted SDP (Problem \ref{prob:Rank_restricted_SDP}) in the distributed setting.
Inspired by Nesterov's accelerated coordinate descent \cite{Nesterov2012ACD}, we also propose an accelerated variant, $\ARBCD$.
In Section~\ref{sec:convergence}, we establish global first-order convergence guarantees for both $\RBCD$ and $\ARBCD$. 

From a first-order critical point $X^\star$, we ultimately wish to recover a solution to the SDP relaxation. 
To do so, we first lift $X^\star$ to the next level of the staircase (i.e., increment rank $r$ by one).
This can be trivially done by padding $X^\star$ with a row of zeros (line~\ref{alg:rs_lift_rank}). 
The motivations behind this operation will become clear later. 
By construction, the matrix $\ZT = {X^\star}^\top X^\star$ is feasible in Problem~\ref{prob:se_sdp}. 
We may verify the global optimality of $Z$ by checking the (necessary and sufficient) Karush-Kuhn-Tucker (KKT) conditions; for a first-order critical point $X^\star$, this amounts to verifying that a certain dual certificate matrix $S(X^\star)$ is positive semidefinite (see line~\ref{alg:rs_dual_certificate}).  
In Section~\ref{sec:verification}, we present the first distributed procedure to carry out this verification. 
If the dual certificate has a negative eigenvalue, then $\ZT$ is \emph{not} a minimizer of the SDP, and $X^\star$ is in fact a \emph{saddle point} to Problem~\ref{prob:se_sync_riemannian}. 
Fortunately, in this case, the procedure in Section~\ref{sec:verification} also returns a  descent direction, with which we can escape the saddle point (line~\ref{alg:rs_escape}) and restart distributed local search.

\begin{remark}[First- vs.\ second-order optimization in the Riemannian Staircase]
The formulation of the Riemannian Staircase presented in Algorithm
\ref{alg:riemannian_staircase} differs \emph{slightly} from its original
presentation in \cite{BoumalBlockDiagonal}: specifically, the latter presupposes
access to an algorithm that is capable of computing \emph{second}-order critical
points of Problem \ref{prob:Rank_restricted_SDP}, whereas the Riemannian block
coordinate descent method we employ in line \ref{alg:rs_local_search}  only
guarantees convergence to \emph{first}-order critical points.  This has
implications for the convergence properties of the overall algorithm: while one
can show that the second-order version of the Riemannian Staircase \cite[Alg.\
1]{BoumalBlockDiagonal}  is guaranteed to terminate at a level $r \le n$ when
applied to Problem \ref{prob:SDP_relaxation_for_PGO} \cite[Thm.\
3.8]{BoumalBlockDiagonal},\footnote{Strictly speaking, the finite-termination
guarantees provided by \cite[Thm.\ 3.8]{BoumalBlockDiagonal} only hold for the
SE-Sync relaxation Problem \ref{prob:se_sdp_marg} (cf.\ \cite[Prop.\
3]{Rosen19IJRR});  however, we can extend these guarantees to Problem
\ref{prob:SDP_relaxation_for_PGO} by exploiting the correspondence between
critical points of the low-rank factorizations of Problems
\ref{prob:SDP_relaxation_for_PGO} and \ref{prob:se_sdp_marg} that we establish
in Lemma \ref{lem:rank_restricted_SDP_equivalence} (Appendix
\ref{SDP_equivalence_appendix}). }  the weaker (first-order) guarantees provided
by RBCD are reflected in a correspondingly weaker set of convergence guarantees
for our (first-order) Algorithm \ref{alg:riemannian_staircase} provided in the
following theorem.

\begin{theorem}[Convergence of Algorithm \ref{alg:riemannian_staircase}]
\label{Convergence_of_first_order_Riemannian_staircase_Theorem}
Let $\lbrace X^{(r)} \rbrace$ denote the sequence of low-rank factors generated
by Algorithm \ref{alg:riemannian_staircase} in line \ref{alg:rs_lift_rank} using
a particular saddle escape procedure described in Appendix~\ref{staircase_convergence_appendix}. Then exactly one of the following two cases holds:
\begin{itemize}
 \item [$(i)$]  Algorithm \ref{alg:riemannian_staircase} terminates after finitely many iterations and returns a symmetric factor $X^{(r)}$ for a minimizer $Z^\star = (X^{(r)})^\top X^{(r)}$ of Problem \ref{prob:se_sdp} in line \ref{alg:rs_return}.
 \item [$(ii)$]  Algorithm \ref{alg:riemannian_staircase} generates an infinite sequence $\lbrace X^{(r)} \rbrace$ satisfying  $f(X^{(r_2)}) < f(X^{(r_1)})$ for all $r_2 > r_1$, with
 \begin{equation}
\lim_{r \to \infty} f(X^{(r)}) = f_\textnormal{SDP}^\star,
\label{subsequence_converging_to_optimal_value}
 \end{equation}
and there exists an infinite subsequence $\lbrace X^{(r_k)} \rbrace \subset \lbrace X^{(r)} \rbrace$ satisfying:
\begin{equation}
\label{subsequence_converging_to_nonnegative_min_eig}
\lim_{k \to \infty} \lambda_{\textnormal{min}}\left(S(X^{(r_k)}) \right) = 0.
\end{equation}
\end{itemize}
\end{theorem}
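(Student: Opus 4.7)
The plan is to argue by a dichotomy on whether the eigenvalue test in line~\ref{alg:min_eig_nonnegativity_check} of Algorithm~\ref{alg:riemannian_staircase} ever succeeds. Case $(i)$ follows directly from the dual-certificate theory of Section~\ref{sec:verification}: if the test succeeds at some rank $r$, then $S(X^{(r)}) \succeq 0$ (dual feasibility), while first-order criticality of $X^{(r)}$ (guaranteed by RBCD) implies $X^{(r)} S(X^{(r)}) = 0$ and hence the complementary-slackness condition $\langle S(X^{(r)}), (X^{(r)})^\top X^{(r)}\rangle = 0$; primal feasibility of $Z^\star = (X^{(r)})^\top X^{(r)}$ is automatic. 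These three conditions exhaust the KKT system for the convex SDP in Problem~\ref{prob:se_sdp}, so $Z^\star$ is a global minimizer, giving conclusion $(i)$.

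For case $(ii)$, I would first establish strict monotone decrease $f(X^{(r_2)}) < f(X^{(r_1)})$ for all $r_2 > r_1$. Lifting a first-order critical point by appending a zero row leaves $X^\top X$, and hence $f$, unchanged, so the monotonicity comes entirely from the escape step followed by RBCD. The \textsc{EscapeSaddle} procedure of Appendix~\ref{staircase_convergence_appendix} produces a strict decrease along the negative-curvature direction $\dot{X}_+$, and the subsequent RBCD call can only lower $f$ further. Since $f$ is bounded below by $f_\text{SDP}^\star$, the monotone sequence $\{f(X^{(r)})\}$ converges to some limit $f^\infty \geq f_\text{SDP}^\star$. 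The quantitative ingredient is a standard negative-curvature descent bound derived from the Taylor expansion of $f$ along a retraction of $\dot{X}_+$ together with a uniform third-derivative bound on sublevel sets, yielding
\begin{equation}
f(X^{(r)}) - f(X^{(r+1)}) \;\geq\; c\,\bigl|\lambda_{\min}(S(X^{(r)}))\bigr|^\alpha
\label{eq:proof_plan_neg_curv}
\end{equation}
for constants $c,\alpha > 0$ independent of $r$. Telescoping \eqref{eq:proof_plan_neg_curv} against the finite total decrement $f(X^{(r_0)}) - f^\infty$ forces $\lambda_{\min}(S(X^{(r_k)})) \to 0$ along a subsequence, establishing \eqref{subsequence_converging_to_nonnegative_min_eig}.

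To finish by showing $f^\infty = f_\text{SDP}^\star$, I would construct an asymptotically-dual-feasible sequence whose dual value matches the primal value in the limit. Via the correspondence between Problem~\ref{prob:SDP_relaxation_for_PGO} and Problem~\ref{prob:se_sdp_marg} (Theorem~\ref{thm:sdp_equivalence}), extended to the rank-restricted setting by Lemma~\ref{lem:rank_restricted_SDP_equivalence}, this argument is cleanest on the SE-Sync relaxation, where the constraints are full $d\times d$ identity blocks: first-order criticality at $X^{(r_k)}$ produces Lagrange multipliers $\Lambda_i^{(r_k)}$ with $\SR(X^{(r_k)}) = \ConLapM - \BDiag(\Lambda_1^{(r_k)},\ldots,\Lambda_n^{(r_k)})$ and $\langle \SR(X^{(r_k)}), \ZR^{(r_k)}\rangle = 0$, and the uniform shift $\widetilde{\Lambda}_i^{(r_k)} := \Lambda_i^{(r_k)} + \lambda_{\min}(\SR(X^{(r_k)}))\,I_d$ is \emph{dual-feasible} with dual value $f(X^{(r_k)}) + dn\,\lambda_{\min}(\SR(X^{(r_k)}))$. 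Weak duality together with $\lambda_{\min}(\SR(X^{(r_k)})) \to 0$ then forces $f(X^{(r_k)}) \to f_\text{SDP}^\star$, and monotonicity extends this to the full sequence, giving \eqref{subsequence_converging_to_optimal_value}. The main obstacle I anticipate is establishing \eqref{eq:proof_plan_neg_curv} with constants that are uniform in $r$: this requires showing that \textsc{EscapeSaddle} finds an admissible step using only locally-computable information (e.g., a distributable Armijo backtracking rule) and that the iterates remain in a bounded sublevel set---the latter in particular necessitating a gauge-fixing of the translational blocks to preclude escape-to-infinity behavior, after which the smoothness constants of $f$ on that bounded set are uniform across the staircase.
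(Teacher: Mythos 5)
Your case $(i)$ argument coincides with the paper's (it is just Theorem~\ref{thm:verification}(a), i.e., the KKT conditions for the convex SDP). For case $(ii)$, however, your route is genuinely different. The paper argues by compactness: the feasible set of the rotation-only relaxation (Problem~\ref{prob:se_sdp_marg}) is compact, so the sequence $\ZR^{(r)} = (Y^{(r)})^\top Y^{(r)}$ obtained via Lemma~\ref{lem:rank_restricted_SDP_equivalence} admits a convergent subsequence; the associated certificates converge to a limit $S^\star$, and a dichotomy on $S^\star \succeq 0$ finishes the proof---the PSD case yields \eqref{subsequence_converging_to_optimal_value} (the limit point is then optimal for Problem~\ref{prob:se_sdp_marg}) and \eqref{subsequence_converging_to_nonnegative_min_eig} (by eigenvalue continuity), while the non-PSD case is excluded by precisely the uniform escape-descent bound you propose, established with exponent $\alpha = 3$ via the cubic Taylor/third-derivative machinery after the same translation gauge-fixing you identify. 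Your plan instead telescopes that descent bound directly, which gives \eqref{subsequence_converging_to_nonnegative_min_eig} for the \emph{whole} sequence (not merely a subsequence) with no compactness argument, and then recovers \eqref{subsequence_converging_to_optimal_value} from an explicit dual-feasible shift plus weak duality, which has the added benefit of a quantitative suboptimality estimate $0 \le f(X^{(r_k)}) - f^\star_{\textnormal{SDP}} \le dn\,\lvert \lambda_{\min}(\SR(X^{(r_k)}))\rvert$. Both routes share the same hard ingredient (iterate- and rank-independent constants $\kappa, \tau$ on the gauge-fixed initial sublevel set), and your remark that the escape step must satisfy a sufficient-decrease rule---rather than the bare acceptance test of Algorithm~\ref{alg:EscapeSaddle}---is exactly why the theorem is stated for ``a particular saddle escape procedure.''

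There is, though, one concrete unbridged step. Your telescoping argument controls $\lambda_{\min}(S(X^{(r_k)}))$ for the \emph{full} certificate---necessarily so, since the escape direction and the descent bound are built from $S$---whereas your weak-duality step requires $\lambda_{\min}(\SR(X^{(r_k)})) \to 0$ for the \emph{rotation-only} certificate, because the shift trick works only there: the dual multipliers of Problem~\ref{prob:se_sdp} act only on the $d \times d$ rotation blocks of the diagonal, so the shifted matrix $S(X) - \lambda_{\min}(S(X))\,P$, with $P$ the coordinate projection onto rotation blocks, need not be positive semidefinite (this is exactly why you retreat to the SE-Sync relaxation). Lemma~\ref{lem:rank_restricted_SDP_equivalence}(iv) supplies only the qualitative equivalence $S(X) \succeq 0 \Leftrightarrow \SR(Y) \succeq 0$, not a quantitative comparison of minimum eigenvalues, so as written the hypothesis of your duality step is unproven. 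The gap is fixable: in the permuted block form of $S(X)$ used in the proof of Lemma~\ref{lem:rank_restricted_SDP_equivalence}(iv), the blocks $L(W^\tau)$ and $\widetilde{V}$ are data matrices independent of the iterate, and $1_n^\top \widetilde{V} = 0$ guarantees $\widetilde{V}y \in \operatorname{range}\bigl(L(W^\tau)\bigr)$ for every $y$; hence for unit $y$, minimizing the quadratic form of $S(X^{(r_k)})$ over the translation coordinates (attained at $x^\ast = -L(W^\tau)^{\dagger}\widetilde{V}y$, with $\lVert x^\ast \rVert_2 \le \lVert L(W^\tau)^{\dagger}\widetilde{V} \rVert_2$) gives
\begin{equation*}
y^\top \SR(Y^{(r_k)})\, y
\;=\;
\begin{bmatrix} x^\ast \\ y \end{bmatrix}^\top S(X^{(r_k)}) \begin{bmatrix} x^\ast \\ y \end{bmatrix}
\;\ge\;
\lambda_{\min}\bigl(S(X^{(r_k)})\bigr)\,\bigl(1 + \lVert L(W^\tau)^{\dagger}\widetilde{V}\rVert_2^2\bigr),
\end{equation*}
where the last inequality uses $\lambda_{\min}(S(X^{(r_k)})) \le 0$. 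This yields $\lambda_{\min}(\SR) \ge \bigl(1 + \lVert L(W^\tau)^{\dagger}\widetilde{V}\rVert_2^2\bigr)\lambda_{\min}(S)$ with an iterate-independent constant, completing the transfer. Without some such argument, your proof of \eqref{subsequence_converging_to_optimal_value} does not close.
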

\end{remark}

\noindent In a nutshell, Theorem
\ref{Convergence_of_first_order_Riemannian_staircase_Theorem} states that
Algorithm \ref{alg:riemannian_staircase}---with a particular version of saddle
		escape procedure described in
		Appendix~\ref{staircase_convergence_appendix}---either terminates after a finite number of iterations, or generates an infinite sequence of factors $\lbrace X^{(r)} \rbrace$ that monotonically strictly decrease the objective to the optimal value $f_\textnormal{SDP}^\star$ \emph{and} that can arbitrarily well-approximate the satisfaction of the KKT condition $\lambda_{\textnormal{min}}(S(X^{(r)})) \ge 0$. We prove this theorem in Appendix \ref{staircase_convergence_appendix}.

We remark that while the convergence guarantees of Theorem \ref{Convergence_of_first_order_Riemannian_staircase_Theorem} are \emph{formally} weaker than those achievable using a second-order local search method, as a \emph{practical} matter these differences are inconsequential.  In any numerical implementation of the Riemannian Staircase framework, both the second-order criticality of a stationary point (in the second-order version) and the nonnegativity of the minimum eigenvalue $\lambda$ (in Algorithm \ref{alg:riemannian_staircase}) are checked subject to some numerical tolerance $\epsilon_{\textnormal{tol}} > 0$; this accounts for both the finite precision of real-world computers, and the fact that the low-rank factors $X$ computed via local search in line \ref{alg:rs_local_search} are \emph{themselves} only \emph{approximations} to critical points, as they are obtained using iterative local optimization methods.  In particular, practical implementations of Algorithm \ref{alg:riemannian_staircase} (including ours) would replace line \ref{alg:min_eig_nonnegativity_check} with a termination condition of the form  ``$\lambda \ge -\epsilon_{\textnormal{tol}}$''\footnote{This is analogous to the standard stopping criterion $\lVert \nabla f(x) \rVert < \epsilon_{\textnormal{tol}}$ for local optimization methods.}, and \eqref{subsequence_converging_to_nonnegative_min_eig} guarantees that this condition is satisfied after \emph{finitely} many iterations for \emph{any} $\epsilon_{\textnormal{tol}} > 0$.  As a practical matter,  the behavior of Algorithm \ref{alg:riemannian_staircase} is far from the pessimistic case described in part $(ii)$ of Theorem \ref{Convergence_of_first_order_Riemannian_staircase_Theorem}; as we show empirically in Section \ref{sec:experiments}, in real-world applications typically only 1-3 iterations suffice.

\subsection{The Complete Algorithm}
The distributed Riemannian Staircase (Algorithm~\ref{alg:riemannian_staircase}) is the core computational procedure of our overall algorithm.  Nevertheless, to implement a \emph{complete} distributed method for solving the original PGO problem (Problem~\ref{prob:se_sync}), we must still specify procedures for (i) initializing the Riemannian Staircase by constructing an initial
point $X \in \Manifold(r_0, n)$, and (ii) \emph{rounding} the low-rank factor $X^\star$ returned by the
Riemannian Staircase to extract a feasible solution $T \in \SE(d)^n$ of the PGO problem. 
We discuss the details of both distributed initialization and rounding in Section~\ref{sec:initialization_and_rounding}. 
Combining these procedures produces our complete distributed certifiably correct algorithm,  \AlgName\ (Algorithm~\ref{alg:dpgo}).

\begin{algorithm}[t]
	\caption{\textsc{\small Distributed Certifiably Correct Pose Graph Optimization (\AlgName) }}
	\label{alg:dpgo}
	\begin{algorithmic}[1]
		\renewcommand{\algorithmicrequire}{\textbf{Input:}}
		\renewcommand{\algorithmicensure}{\textbf{Output:}}
		\Require
		\Statex - Initial rank $r_0 \geq d$ for the Riemannian Staircase.
		\Ensure
		\Statex - A feasible solution $T \in \SE(d)^n$ to Problem~\ref{prob:se_sync} and the lower bound $f^\star_\text{SDP}$ on Problem~\ref{prob:se_sync}'s optimal value.
		\vspace{0.1cm}
		
		\State 
		Obtain initial point $X \in \Manifold(r,n)$ through distributed initialization.
		\label{alg:initialization}
		
		\State $X^\star \leftarrow \text{DistributedRiemannianStaircase}(X)$.
		
		\State Recover optimal value of the SDP relaxation $f^\star_\text{SDP} = \langle \ConLapT, {X^\star}^\top X^\star \rangle$. 
		
		\State From $X^\star$, obtain feasible $T \in \SE(d)^n$ through distributed rounding.
		\label{alg:rounding}
		
		\State \Return $T, f_\text{SDP}^\star$.
	\end{algorithmic}
\end{algorithm}

Since the SDP (Problem~\ref{prob:se_sdp}) is a convex relaxation of PGO, its optimal value 
$f^\star_\text{SDP}$ is necessarily a lower bound on the global minimum of PGO. 
Using this fact, we may obtain an \emph{upper} bound on the suboptimality of the solution returned by \AlgName.
Specifically, let $f(T)$ denote the objective achieved by the final estimate, and let $f^\star_\text{MLE}$ denote the optimal value of Problem~\ref{prob:se_sync}. Then: 
\begin{equation}
	f(T) - f^\star_\text{MLE} \leq f(T) - f^\star_\text{SDP}. 
	\label{eq:certificate}
\end{equation}
In particular, if $f(T) = f^\star_\text{SDP}$, then the SDP relaxation is \emph{exact}, and $f(T) = f^\star_\text{MLE}$. In this case, \eqref{eq:certificate} serves as a \emph{certificate} of the \emph{global} optimality of $T$.

\section{Distributed Local Search via Riemannian Block-Coordinate Descent}
\label{sec:bcd}

In this section, we introduce a new \emph{distributed} local search algorithm to
identify a first-order critical point of the rank-restricted SDP 
relaxation (Problem~\ref{prob:se_sync_riemannian}), which is needed by the
Distributed Riemannian Staircase framework
(Algorithm~\ref{alg:riemannian_staircase}, line~\ref{alg:rs_local_search}).
Our algorithm is applicable to a broad class of smooth
optimization problems defined over the Cartesian product of matrix manifolds:
\begin{equation}
\label{eq:manopt_problem}
\underset{X \in \Mcal}{\text{minimize}}
\quad f(X), \quad \Mcal \triangleq \Mcal_1 \times \hdots \times \Mcal_N.
\end{equation}
The above problem contains Problem~\ref{prob:se_sync_riemannian} as a special case.
Specifically, in distributed PGO, each block $b$ exactly corresponds to a robot and 
$\Mcal_b = \Manifold(r, n_b) \triangleq (\Stiefel(d,r) \times
\Real^r)^{n_b}$ corresponds to the search space of this robot's trajectory. 
Here, $n_b$ is the number of poses owned by the robot
associated to block $b$, and $N$ is the total number of robots. 
For this reason, unless otherwise mentioned, in this section we use the words ``block'' and ``robot'' interchangeably.

To solve \eqref{eq:manopt_problem}, we leverage the product structure of the underlying manifold and propose a distributed \emph{block-coordinate descent} algorithm that we call $\RBCD$ (Algorithm~\ref{alg:BCD}).
In each iteration of $\RBCD$, a block $b \in [N]$ is selected to be optimized.
Specifically, let $X_b \in \Mcal_b$ be the component of $X$ corresponding to the
selected block, and let $\widehat{X}_{[N] \setminus \{b\}}$ be the (fixed) values of remaining blocks.
We update $X_b$ by minimizing the following \emph{reduced cost function}.
\begin{equation}
\begin{aligned}
& \underset{X_b \in \Mcal_b}{\text{minimize}}
&& f_b(X_b) \triangleq f(X_b, \widehat{X}_{[N]\setminus \{b\}}).
\end{aligned}
\label{eq:subproblem_b}
\end{equation}
For the rank-restricted SDP (Problem~\ref{prob:se_sync_riemannian}) in PGO, the reduced problem for block $b$ takes the form,
\begin{equation}
f_b(X_b) = \langle {\ConLapT}_{b}, X_b^\top X_b \rangle + 2 \langle F_b, X_b \rangle + \text{const}.
\label{eq:reduced_cost}
\end{equation}
In the above equation, ${\ConLapT}_{b}$ is the submatrix of $\ConLapT$ formed with the rows and columns that correspond to block $b$ (i.e., the trajectory of robot $b$), and 
$F_b \in \Real^{r \times (d+1)n_b}$ is a constant matrix that depends on the
(fixed) public variables of robot $b$'s neighbors in the pose graph. 

\begin{remark}[Communication requirements of $\RBCD$]
	$\RBCD$ is designed such that it can be easily implemented by a network of robots. 
	At each iteration, the team first coordinates to select the next block (robot) to update (Algorithm~\ref{alg:BCD}, line~\ref{alg:bcd_block_selection}). 
	Then, to update the selected block (Algorithm~\ref{alg:BCD}, line~\ref{alg:bcd_block_update}), the robot corresponding to this block receives public variables from its neighboring robots in the pose graph.
	Afterwards, this robot forms and solves its local optimization problem~\eqref{eq:subproblem_b}, which does not require further communications.
	Finally, to determine when to terminate $\RBCD$ (Algorithm~\ref{alg:BCD}, line~\ref{alg:bcd_termination}), robots need to collaboratively evaluate their total gradient norm. In practice, checking the termination condition may be done periodically (instead of after every iteration) to save communication resources. 
\end{remark}

\begin{remark}[Block-coordinate minimization on product manifolds]
	Prior works (e.g., \cite{WangBCM2017,ErdogduBCM2018,Tian2019BCM}) have proposed similar \emph{block-coordinate minimization} (BCM) algorithms to solve low-rank factorizations of SDPs with diagonal or block-diagonal constraints. 
	Our approach generalizes these methods in two major ways.
	First, while prior methods are explicitly designed for problems over the product of spheres \cite{WangBCM2017,ErdogduBCM2018} or Stiefel manifolds \cite{Tian2019BCM}, 
	our algorithm is applicable to the product of \emph{any} matrix submanifolds.
	Secondly, prior works \cite{WangBCM2017,ErdogduBCM2018,Tian2019BCM} require
	that the cost function to have a certain quadratic form, so that exact
	minimization of each variable block admits a closed-form solution. In
	contrast, our algorithm does not seek to perform exact minimization, but
	instead computes an inexpensive \emph{approximate} update that achieves a \emph{sufficient reduction} of the objective (see Section~\ref{sec:multiple_pose_update}).
	This makes our method more general and applicable 
	to a much broader class of smooth cost functions that satisfy a Lipschitz-type condition. 
	We discuss this point in greater detail in Section~\ref{sec:convergence}.
\end{remark}

The rest of this section is organized to discuss each step of $\RBCD$ in detail.
We begin in Section~\ref{sec:block_selection_rule} by discussing \emph{block
selection rules}. These rules determine how blocks are selected at each
iteration (Algorithm~\ref{alg:BCD}, line~\ref{alg:bcd_block_selection}).
In Section~\ref{sec:multiple_pose_update}, we propose a general
\emph{block update rule} (Algorithm~\ref{alg:BCD},
line~\ref{alg:bcd_block_update}) based on approximate minimization of trust-region subproblems \cite{absil2007trust}. 
In Section~\ref{sec:acceleration}, we further develop an \emph{accelerated}
variant of $\RBCD$ based on Nesterov's celebrated accelerated coordinate-descent
algorithm \cite{Nesterov2012ACD}, which greatly speeds up convergence near critical points.
Finally, in Section~\ref{sec:parallel_execution} with show that, using a
slight modification of our blocking scheme, we can allow multiple robots to update their
coordinates in parallel, thereby speeding up our distributed local search.

\begin{algorithm}[t]
	\caption{\textsc{\small Riemannian Block-Coordinate Descent (RBCD)} }
	\label{alg:BCD}
	\edit{}{
	\begin{algorithmic}[1]
		\renewcommand{\algorithmicrequire}{\textbf{Input:}}
		\renewcommand{\algorithmicensure}{\textbf{Output:}}
		\Require
		\Statex - Global cost function $f: \Mcal \triangleq \Mcal_1 \times \hdots \times \Mcal_{N} \to \Real$.
		\Statex - Initial solution $X^0 \in \Mcal$.
		\Statex - Stopping condition on gradient norm $\epsilon$.
		\Ensure
		\Statex - First-order critical point $X^\star$.
		\vspace{0.1cm}
		\State $k\leftarrow 0$.
		\While{$\norm{\rgrad f(X^k)} > \epsilon$} \label{alg:bcd_termination}
		\State Select next block $b_k \in [N]$.
		\label{alg:bcd_block_selection}
		\State Update the selected block $X^{k+1}_{b_t} \leftarrow \textsc{BlockUpdate}(f_{b_k}, X_{b_k}^t)$.
		\label{alg:bcd_block_update}
		\State Carry over all other blocks $X^{k+1}_{{b'}} = X^{k}_{{b'}}, \, \forall b' \neq b_k$.
		\State $k \leftarrow k+1$.
		\EndWhile
		\State \Return $X^\star = X^k$.
	\end{algorithmic}
	}
\end{algorithm}

\subsection{Block Selection Rules}
\label{sec:block_selection_rule}
In this section, 
we describe three mechanisms for selecting which block to update at each
iteration of $\RBCD$ (Algorithm~\ref{alg:BCD}, line~\ref{alg:bcd_block_selection}).
We note that similar rules have been proposed in the past; see, e.g.,
\citep{Nutini2017BCD, Javanmard16PNAS, ErdogduBCM2018}.
\begin{itemize}
	\item \textbf{Uniform Sampling}. 
	The first rule is based on the idea of uniform sampling.
	At each iteration, each block $b \in [N]$ is selected with equal probability $p_b = 1 / N$.
	\item \textbf{Importance Sampling}. 
	In practice, it is often the case that selecting certain blocks leads to
	significantly better performance compared to others \cite{Nutini2017BCD}. 
	Therefore, it is natural to assign these blocks higher weights during the sampling process. 
	We refer to this block selection rule as \emph{importance sampling}. 
	In this work, we set the probability of selecting each block to be proportional to the squared gradient norm,
	i.e., $p_b \propto \norm{\rgrad_{b} f (X)}^2, \forall b \in [N]$.
	Here, $\rgrad_b f(X)$ denotes the component of the Riemannian gradient of $f(X)$ that corresponds to block $b$.
	Under Lipschitz-type conditions, the squared gradient norm can be used to construct a lower bound on the achieved cost decrement; see Lemma~\ref{lem:sufficient_decrease}.
	\item \textbf{Greedy (Gauss-Southwell)}. 
	We can also modify importance sampling into a {deterministic} strategy that simply selects the block with the largest squared
	gradient norm, i.e., $b \in \arg\max \norm{\rgrad_{b} f(X)}^2$. We refer to
	this strategy as \emph{greedy selection} or the \emph{Gauss-Southwell (GS)} rule \citep{Nutini2017BCD}.
	Recent works also propose other variants of greedy selection such as 
	Gauss-Southwell-Lipschitz (GSL) and  Gauss-Southwell-Quadratic (GSQ) \citep{Nutini2017BCD}. 
	However, such rules require additional knowledge about the block Lipschitz constants that are hard to obtain in our application. 
	For this reason, we restrict our deterministic selection rule to GS. 
	Despite its simplicity, empirically the GS rule exhibits satisfactory performance; see
	Section~\ref{sec:experiments}. 
\end{itemize}

\begin{remark}[Communication requirements of different block selection rules]
	In practice, uniform sampling does not incur communication overhead, and can be approximately implemented using 
	synchronized clocks on each robot (to conduct and coordinate BCD rounds) 
	and a common random seed for the pseudorandom number generator (to agree on
	which robot should update in the next round).
	In contrast, importance sampling
	and greedy selection require additional communication overhead at each round, as
	robots need to evaluate and exchange local gradient norms. In particular, the greedy
	selection rule can be implemented via flooding gradient norms; see, e.g., the FloodMax
	algorithm for leader election in general synchronized networks
	\cite[Chapter 4]{lynch1996distributed}. This requires robots to have unique IDs
	and communicate in synchronized rounds.
	While greedy and importance rules have higher communication overhead than uniform
	sampling, they also produce more effective iterations and thus
	converge faster (see Section~\ref{sec:experiments}). 
\end{remark}

\subsection{Computing a Block Update}
\label{sec:multiple_pose_update}

\begin{algorithm}[h]
	\caption{\textsc{\small BlockUpdate} }
	\label{alg:BlockUpdate}
	\edit{}{
		\begin{algorithmic}[1]
			\renewcommand{\algorithmicrequire}{\textbf{Input:}}
			\renewcommand{\algorithmicensure}{\textbf{Output:}}
			\Require
			\Statex - Reduced cost function $f_b: \Mcal_b \to \Real$.
			\Statex - Current block estimate $X^k_b \in \Mcal_b$.
			\Statex - User-specified mapping on tangent space $H: T_{X^k_b} \to T_{X^k_b}$ (default to Riemannian Hessian).
			\Statex - Initial trust-region radius $\Delta_0$.
			\Ensure
			\Statex - Updated block estimate $X^{k+1}_b \in \Mcal_b$.
			\vspace{0.1cm}
			\State $\Delta \leftarrow \Delta_0$.
			\State Form model function $\hat{m}_b(\eta_b) = f_b(X_b^k) + \langle \rgrad f_b(X_b^k), \eta_b \rangle + \frac{1}{2}
			\langle \eta_b, H [\eta_b] \rangle$.
			\While{true}
				\State Compute an \emph{approximate} solution $\eta_b^\star \in T_{X_b^k} \Mcal_b$ to the trust-region subproblem \eqref{eq:trust_region_subproblem}.
				\If{$\rho(\eta_b^\star) > 1/4$}
				\label{alg:block_update_termination}
					\State \Return $X_b^{k+1} = \Retr_{X_b^k}(\eta_b^\star)$.
				\Else
					\State Decrease trust-region radius $\Delta \leftarrow \Delta / 4$.
				\EndIf
			\EndWhile
		\end{algorithmic}
	}
\end{algorithm}

Note that since \eqref{eq:subproblem_b} is in general a nonconvex minimization, computing a block update by \emph{exactly} solving this problem is intractable.  In this section, we describe how to implement a cheaper approach that permits the use of \emph{approximate} solutions of \eqref{eq:subproblem_b} by requiring only that they produce a \emph{sufficient decrease} of the objective.
In Section~\ref{sec:convergence}, we show that under mild conditions, such approximate updates are sufficient to ensure global first-order convergence of $\RBCD$.
While there are many options to achieve sufficient descent, in this work we propose to (approximately) solve a single trust-region subproblem using the truncated preconditioned conjugate gradient method \cite{absil2007trust,absil2009optimization}. 
Compared to a full minimization that would solve \eqref{eq:subproblem_b} to first-order critical point, our approach greatly reduces the computational cost.
On the other hand, unlike other approximate update methods such as Riemannian gradient descent, our method allows us to leverage (local) second-order information of the reduced cost which leads to more effective updates.

Let $b$ be the block that we select to update, and denote the current value of this block (at iteration $k$) as $X_b^k$. We define the \emph{pullback} of the reduced
cost function \eqref{eq:subproblem_b} as follows \cite{absil2009optimization,Boumal2018Convergence},
\begin{equation}
	\begin{aligned}
		\widehat{f}_b : T_{X^k_b} & \to \Real,  \\
		\eta_b & \mapsto f_b \circ \Retr_{X^k_b} (\eta_b).
	\end{aligned}
	\label{eq:reduced_pullback}
\end{equation}
Note that the pullback is conveniently defined on the tangent space which itself is a vector space. 
However, since directly minimizing the pullback is nontrivial, it is
approximated with a quadratic \emph{model} function \cite{absil2009optimization,Boumal2018Convergence}, as defined below.
\begin{equation}
	\begin{aligned}
	\widehat{m}_b(\eta_b) \triangleq f_b(X_b^k) + \langle \rgrad f_b(X_b^k), \eta_b \rangle + \frac{1}{2}
	\langle \eta_b, H [\eta_b] \rangle.
	\end{aligned}
	\label{eq:reduced_model}
\end{equation}
In \eqref{eq:reduced_model}, $H: T_{X_b^k} \to T_{X_b^k}$ is a user-specified mapping on the tangent space. 
By default, we use the Riemannian Hessian $H = \Hess f_b(X_b^k)$ so that the model function is a second-order approximation of the pullback. 
Then, we compute an update direction $\eta_b^\star$ on the tangent space by approximately solving the following trust-region subproblem,
\begin{equation}
	\begin{aligned}
				\text{minimize}_{\eta_b \in T_{X_b^k} \Mcal_b} \;\;
				\mhat_{b}(\eta_b) \;\;
				\text{subject to} \;\;
				\norm{\eta_b} \leq \Delta.
		\end{aligned}
		\label{eq:trust_region_subproblem}
\end{equation}

To ensure that the obtained update direction yields sufficient descent on the original pullback, we follow standard procedure \cite{absil2007trust,absil2009optimization} and evaluate the following ratio that quantifies the
\emph{agreement} between model decrease (predicted reduction) and pullback
decrease (actual reduction),
\begin{equation}
\rho(\eta_b^\star) 
\triangleq
\frac{\fhat_b(0) - \fhat_{b}(\eta_{b}^\star)}{\mhat_b(0) - \mhat_{b}(\eta_{b}^\star)}
=
\frac{f_b(X_b^k) - \fhat_{b}(\eta_{b}^\star)}{f_b(X_b^k) - \mhat_{b}(\eta_{b}^\star)}.
\label{eq:rtr_quotient}
\end{equation}
If the above ratio is larger than a constant threshold (default
to $1/4$), we accept the current update direction and set $X_b^{k+1} = \Retr_{X_b^k} (\eta_b^\star)$ as the updated value of this block.
Otherwise, we reduce the trust-region radius $\Delta$ and solve the trust-region subproblem again. 
Algorithm~\ref{alg:BlockUpdate} gives the pseudocode for the entire block update procedure.

In Appendix~\ref{apx:convergence_proof}, we prove that under mild conditions, we can always find an update direction (the so-called \emph{Cauchy step} \cite{absil2007trust,absil2009optimization}) that satisfies the required termination condition (Algorithm~\ref{alg:BlockUpdate}, line~\ref{alg:block_update_termination}). 
Furthermore, the returned solution is guaranteed to produce \emph{sufficient descent} on the cost function, which is crucial to establish global convergence rate of $\RBCD$ (Algorithm~\ref{alg:BCD}). We discuss the details of convergence analysis in Section~\ref{sec:convergence}.

\begin{remark}[Solving the trust-region subproblem \eqref{eq:trust_region_subproblem}]
		Following \cite{absil2007trust,absil2009optimization}, we also use the
		truncated conjugate-gradient (tCG) method to solve the trust-region subproblem \eqref{eq:trust_region_subproblem} inside Algorithm~\ref{alg:BlockUpdate}. 
	tCG is an efficient ``inverse-free'' method, i.e., it does not require
	inverting the Hessian itself, and instead only requires evaluating Hessian-vector products. 
	Furthermore, tCG can be significantly accelerated using a suitable \emph{preconditioner}.
	Formally, a preconditioner is a linear, symmetric, and positive-definite operator on the tangent space
	that approximates the inverse of the Riemannian Hessian.
	Both SE-Sync \cite{Rosen17IROS,Rosen19IJRR} and Cartan-Sync \cite{Briales17CartanSync} have already proposed empirically effective preconditioners for problems similar  to \eqref{eq:reduced_cost}.\footnote{The only difference is the additional linear terms in our cost functions, as a result of anchoring variables owned by other robots.}
	Drawing similar intuitions from these works, we design our preconditioner as,
	\begin{align}
			\Precon f(X_b^k): T_{X_b^k} &\to T_{X_b^k}, \nonumber \\ \eta_b
			&\mapsto \proj_{T_{X_b^k}} (\eta_b ({\ConLapT}_{b} + \lambda I)^{-1}).
	\label{eq:pose_preconditioner}
	\end{align}
	The small constant $\lambda > 0$ ensures that the proposed preconditioner is
	positive-definite. 
	In practice, we can store and reuse the Cholesky decomposition of ${\ConLapT}_{b} + \lambda I$ for improved numerical efficiency.
\end{remark}

\begin{remark}[Block update via exact minimization]
		Algorithm~\ref{alg:BlockUpdate} employs RTR \cite{absil2007trust} to
		\emph{sufficiently reduce} the cost function along a block coordinate.
		It is worth noting that in some special cases, one can \emph{exactly} minimize the cost
		function along any single block coordinate \cite{ErdogduBCM2018,Tian2019BCM}. In the context of PGO, this is the
		case when every block consists only of a \emph{single} pose which
		naturally arises in CNL.
		We provide a quick sketch in the following. First, note that the reduced
		problem \eqref{eq:reduced_cost} is an unconstrained convex quadratic
		over the Euclidean component of $\Mcal_b = \Stiefel(d,r) \times
		\Real^r$ (i.e., the so-called \emph{lifted} translation component). We
		can thus first eliminate this component
		analytically by minimizing the reduced
		cost over the lifted translation vector, thereby further reducing \eqref{eq:reduced_cost} to an optimization problem
		over $\Stiefel(d,r)$. Interestingly, the resulting problem too
		admits a closed-form solution via the projection operator onto $\Stiefel(d,r)$ provided
		in \eqref{eq:projection_to_stiefel} (see also \cite[Sec.~2]{Tian2019BCM} for
		a similar approach). Finally, using this solution we can recover the
		optimal value for the Euclidean component via linear least
		squares (see, e.g., \cite{KasraTRO}).
		\label{rem:bcm}
\end{remark}

\subsection{Accelerated Riemannian Block-Coordinate Descent}
\label{sec:acceleration}

\begin{algorithm}[t]
	\caption{\textsc{\small Accelerated Riemannian Block-Coordinate Descent ($\ARBCD$)}}
	\label{alg:ARBCD}
	\edit{}{
		\begin{algorithmic}[1]
			\renewcommand{\algorithmicrequire}{\textbf{Input:}}
			\renewcommand{\algorithmicensure}{\textbf{Output:}}
			\Require
			\Statex - Global cost function $f: \Mcal \triangleq \Mcal_1 \times \hdots \times \Mcal_{N} \to \Real$.
			\Statex - Initial solution $X^0 \in \Mcal$.
			\Statex - Stopping condition on gradient norm $\epsilon$.
			\Statex - Restart constant $c_1 > 0$.
			\Ensure
			\Statex - First-order critical point $X^\star$.
			\vspace{0.1cm}
			\State $k\leftarrow 0, V^0 \leftarrow X^0, \gamma_{-1} \leftarrow 0$.
			\While{$\norm{\rgrad f(X^k)} > \epsilon$}
			\State 
			$\gamma_k \leftarrow (1 + \sqrt{1 + 4N^2{\gamma_{k-1}}^2})/{2N}, \; \alpha_k \leftarrow 1/{\gamma_k N}$.
			\label{alg:abcd_scalar_updates}
			\State {\color{gray} // Y update}
			\State $Y^k \leftarrow \proj_{\Mcal}((1-\alpha_k) X^k + \alpha_k V^k)$.
			\label{alg:abcd_y_update}
			\State {\color{gray} // X update}
			\State Select next block $b_k \in [N]$.
			\State Update the selected block $X^{k+1}_{b_k} \leftarrow \textsc{BlockUpdate}(f_{b_k}, Y_{b_k}^k)$ \label{alg:abcd_x_update}.
			\State Carry over all other blocks $X^{k+1}_{{b'}} \leftarrow Y^{k}_{{b'}}, \, \forall b' \neq b_k$.
			\State {\color{gray} // V update}
			\State $V^{k+1} \leftarrow \proj_{\Mcal} ( V^k + \gamma_k (X^{k+1} - Y^k))$.
			\label{alg:abcd_v_update}
			\State {\color{gray} // Adaptive restart}
			\If {$f(X^{k}) - f(X^{k+1}) < c_1 \norm{\rgrad_{b_k} f(X^k)}^2 $} \label{alg:restart_start}
					\State {\color{gray} // Use default block update}
					\label{alg:actual_restart_start}
				\State $X^{k+1}_{b_k} \leftarrow \textsc{BlockUpdate}(f_{b_k}, X_{b_k}^k)$.
				\State Carry over all other blocks $X^{t+1}_{{b'}} \leftarrow X^{k}_{{b'}}, \, \forall b' \neq b_k$.
				\State {\color{gray} // Reset Nesterov's acceleration}
				\State $V^{k+1} \leftarrow X^{k+1}$.
				\State $\gamma_k = 0$.
					\label{alg:actual_restart_end}
			\EndIf \label{alg:restart_end}
			\State $k \leftarrow k+1$.
			\EndWhile
			\State \Return $X^\star = X^k$.
		\end{algorithmic}
	}
\end{algorithm}

In practice, many PGO problems are poorly conditioned. 
Critically, this means that a generic first-order algorithm can suffer from slow convergence as the iterates approach a first-order critical point. Such slow convergence is also manifested by the typical \emph{sublinear} convergence rate, e.g., for Riemannian gradient descent as shown in \cite{Boumal2018Convergence}.
To address this issue, Fan and Murphy \cite{Fan2019ISRR,Fan2020Arxiv} recently developed a majorization-minimization algorithm for PGO. 
Crucially, their approach can be augmented with a generalized version of Nesterov's acceleration that significantly speeds up empirical convergence.

Following the same vein of ideas, we show that it is possible to significantly speed up $\RBCD$ by adapting the celebrated accelerated coordinate-descent method (ACDM), originally developed by Nesterov \cite{Nesterov2012ACD} to solve smooth convex optimization problems.
Compared to the standard randomized coordinate descent method, ACDM enjoys an accelerated convergence rate of $\mathcal{O}(1/k^2)$.
Let $N$ denote the dimension (number of coordinates) in the problem. 
ACDM updates two scalar sequences $\{\gamma_k\}, \{\alpha_k\}$ and
three sequences of iterates $\{x^k\}, \{y^k\}, \{v^k\} \in \Real^N$.
\begin{align}
	\gamma_k &= (1 + \sqrt{1 + 4N^2{\gamma_{k-1}}^2})/{2N}, \label{eq:gamma_update}\\
	\alpha_k &= 1/(\gamma_k N), \\
	y^k &= (1-\alpha_k)x^k + \alpha_k v^k, \label{eq:y_update} \\
	x^{k+1} &= y^k - 1/L_{b_k} \nabla_{b_k} f(y^k), \label{eq:x_update} \\
	v^{k+1} &= v^k + \gamma_k (x^{k+1} - y^k). \label{eq:v_update}
\end{align}
In \eqref{eq:x_update}, $L_{b_k}$ is the Lipschitz constant of the gradient that corresponds to coordinate $b_k$. 
Note that compared to standard references (e.g., \cite{Nesterov2012ACD,Wright2015Survey}), we have slightly changed the presentation of ACDM, so that later it can be extended to our Riemannian setting in a more straightforward manner.  
Still, it can be readily verified that \eqref{eq:gamma_update}-\eqref{eq:v_update} are equivalent to the original algorithm.\footnote{For example, 
we can recover \eqref{eq:gamma_update}-\eqref{eq:v_update} from \cite[Algorithm
4]{Wright2015Survey}, by setting the strong convexity parameter $\sigma$ to zero.}

In Algorithm~\ref{alg:ARBCD}, we adapt the ACDM iterations to design an accelerated variant of $\RBCD$, which we call $\ARBCD$.
We leverage the fact that
our manifolds of interest are naturally embedded within some linear space.
This allows us to first perform the additions and subtractions as stated in  \eqref{eq:gamma_update}-\eqref{eq:v_update} in the linear ambient space, and subsequently \emph{project} the result back to the manifold. 
For our main manifold of interest $\Manifold(r,n)$, the projection operation only requires computing the SVD for each Stiefel component, as shown in \eqref{eq:projection_to_stiefel}. 
Note that the original ACDM method performs a coordinate descent step \eqref{eq:x_update} at each iteration. 
	In $\ARBCD$, we generalize \eqref{eq:x_update} by employing the $\BlockUpdate$ procedure (Algorithm~\ref{alg:BlockUpdate}) to perform a descent step on a \emph{block coordinate} $Y_b \in \Mcal_b$ (Algorithm~\ref{alg:ARBCD}, line~\ref{alg:abcd_x_update}).

Unlike the convex case, it is unclear how to prove convergence of the above acceleration scheme subject to the non-convex manifold constraints.
	Fortunately, convergence can be guaranteed by adding \emph{adaptive restart} \cite{Odonoghue2012}, which has also been employed in recent works \cite{Fan2019ISRR,fan2020majorization}.
The underlying idea is to ensure that each $\ARBCD$ update (specifically on the
$\{X^t\}$ variables) yields a \emph{sufficient reduction} of the overall cost function.
This is quantified by comparing the descent with the squared
gradient norm at the selected block (Algorithm~\ref{alg:ARBCD}, line~\ref{alg:restart_start}), where the constant $c_1 > 0$ specifies the minimum amount of descent enforced at each iteration.
If this criterion is met, the algorithm simply continues to the next iteration.
If not, the algorithm switches to the default block update method (same as $\RBCD$), and restarts the acceleration scheme from scratch. 
Empirically, we observe that setting $c_1$ close to zero (corresponding to a permissive acceptance criterion) gives the best performance.

\begin{remark}[Adaptive vs. fixed restart schemes]
	\label{rem:restart}
	Our	adaptive restart scheme requires aggregating information from
	all robots to evaluate the cost function and gradient norm
	(Algorithm~\ref{alg:ARBCD}, line~\ref{alg:restart_start}). This step 
	may become the communication bottleneck of the whole algorithm. 
	While in theory we need adaptive restart to guarantee convergence (see
	Section~\ref{sec:convergence}), a practical remedy is to employ a
	\emph{fixed restart} scheme \cite{Odonoghue2012} whereby we simply restart acceleration
	(Algorithm~\ref{alg:ARBCD},
	lines~\ref{alg:actual_restart_start}-\ref{alg:actual_restart_end}) periodically in fixed
	intervals.  Our empirical results in Section~\ref{sec:experiments} show that the fixed restart
	scheme also achieves significant acceleration, although is 
	inferior to adaptive restart scheme.
\end{remark}

\begin{remark}[Communication requirements of $\ARBCD$]
	With fixed restart, the communication pattern of $\ARBCD$ is identical to $\RBCD$.
	In particular, with synchronized clocks, robots can update the scalars $\gamma_k$ and $\alpha_k$ (line~\ref{alg:abcd_scalar_updates}) locally in parallel.
	Similarly, the ``$Y$ update'' (line~\ref{alg:abcd_y_update}) and ``$V$ update'' steps (line~\ref{alg:abcd_v_update}) do not require communication, since both only involve local linear combinations and projections to manifold.
	The main communication happens before the ``$X$ update'' (line~\ref{alg:abcd_x_update}), where each robot communicates the public components of their $Y$ variables with neighbors in the global pose graph.
	Finally, if adaptive restart is used, robots need to communicate and aggregate global cost and gradient norms to evaluate the restart condition (line~\ref{alg:restart_start}).
\end{remark}

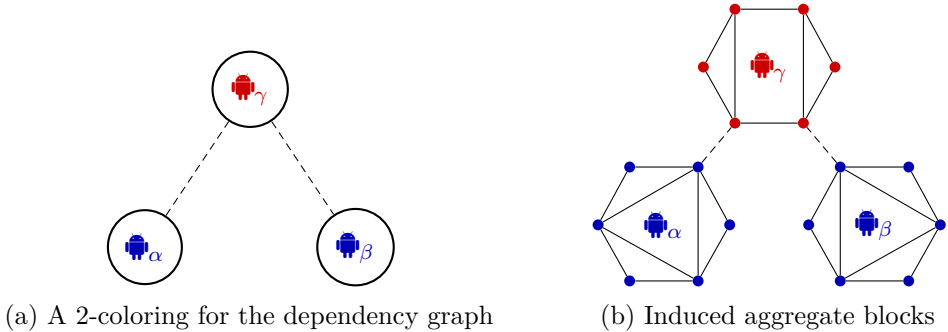
\begin{figure}[t]
	\centering
	\begin{subfigure}[t]{0.35\textwidth}
		\centering
		\begin{tikzpicture}[scale=0.7]
		\tikzstyle{robot}=[circle,scale=1,draw,thick]
		\tikzstyle{special vertex}=[circle,fill=red,scale=0.4]
		\tikzstyle{square vertex}=[rectangle,fill,scale=0.5,draw]
		\tikzstyle{diamond vertex}=[regular polygon,regular polygon
		sides=3,rotate=45,fill,scale=0.3,draw]
		\node[robot] (r1) at (1.5,1.5) [] {\color{blue!70!black}\faAndroid$_{\alpha}$};
		\node[robot] (r2) at (5.5,1.5) [] {\color{blue!70!black}\faAndroid$_{\beta}$};
		\node[robot] (r3) at (3.5, 4.5) [] {\color{red!80!black}\faAndroid$_{\gamma}$};
		\draw[densely dashed](r3) -- (r2);
		\draw[densely dashed](r1) -- (r3);
		
		\end{tikzpicture}
		\captionsetup{justification=centering}
		\caption{A $2$-coloring for the dependency graph}
		\label{fig:parcslam}
	\end{subfigure}
	~
	\begin{subfigure}[t]{0.35\textwidth}
		\centering
		\begin{tikzpicture}[scale=0.7]
		\tikzstyle{vertex}=[circle,fill,scale=0.4]
		\tikzstyle{red vertex}=[circle,fill=red!80!black,scale=0.4]
		\tikzstyle{blue vertex}=[circle,fill=blue!70!black,scale=0.4]
		\tikzstyle{special3
			vertex}=[rectangle,fill=blue!70!black,scale=0.8]
		\tikzstyle{special vertex}=[star,line width=0.5mm,fill=blue!70!black,scale=0.6]
		\tikzstyle{special2 vertex}=[star,fill=red!80!black,line
		width=0.5mm,scale=0.6]
		\tikzstyle{square vertex}=[rectangle,fill,scale=0.5,draw]
		\tikzstyle{diamond vertex}=[regular polygon,regular polygon
		sides=5,rotate=45,fill,scale=0.5,draw]
		
		\node[blue vertex] at (0.85,2.598076) (Av1) {};
		\node[blue vertex] at (2.15,2.598076) (Av2) {};
		\node[blue vertex] at (2.75,1.499037) (Av3) {};
		\node[blue vertex] at (2.15, 0.433013) (Av4) {};
		\node[blue vertex] at (0.85, 0.433013) (Av5) {};
		\node[blue vertex] at (0.25,1.499037) (Av6) {};
		
		\node[blue vertex] at (4.85,2.598076) (Bv1) {};
		\node[blue vertex] at (6.15,2.598076) (Bv2) {};
		\node[blue vertex] at (6.75,1.499037) (Bv3) {};
		\node[blue vertex] at (6.15, 0.433013) (Bv4) {};
		\node[blue vertex] at (4.85, 0.433013) (Bv5) {};
		\node[blue vertex] at (4.25,1.499037) (Bv6) {};
		
		\node[red vertex] at (2.85,5.598076) (Cv1) {};
		\node[red vertex] at (4.15,5.598076) (Cv2) {};
		\node[red vertex] at (4.75,4.499037) (Cv3) {};
		\node[red vertex] at (4.15, 3.433013) (Cv4) {};
		\node[red vertex] at (2.85, 3.433013) (Cv5) {};
		\node[red vertex] at (2.25,4.499037) (Cv6) {};

		\node (r1) at (1.5,1.5) [] {\color{blue!70!black}\faAndroid$_{\alpha}$};
		\node (r2) at (5.5,1.5) [] {\color{blue!70!black}\faAndroid$_{\beta}$};
		\node (r3) at (3.5, 4.5) [] {\color{red!80!black}\faAndroid$_{\gamma}$};
		\draw[densely dashed](Av2) -- (Cv5);
		\draw[densely dashed](Bv1) -- (Cv4);
		
		\draw[](Av1) -- (Av2);
		\draw[](Av2) -- (Av3);
		\draw[](Av3) -- (Av4);
		\draw[](Av4) -- (Av5);
		\draw[](Av5) -- (Av6);
		\draw[](Av6) -- (Av1);
		\draw[](Av2) -- (Av4);
		\draw[](Av2) -- (Av6);
		\draw[](Av4) -- (Av6);
		
		\draw[](Bv1) -- (Bv2);
		\draw[](Bv2) -- (Bv3);
		\draw[](Bv3) -- (Bv4);
		\draw[](Bv4) -- (Bv5);
		\draw[](Bv5) -- (Bv6);
		\draw[](Bv6) -- (Bv1);
		\draw[](Bv1) -- (Bv5);
		\draw[](Bv1) -- (Bv3);
		\draw[](Bv5) -- (Bv3);
		
		\draw[](Cv1) -- (Cv2);
		\draw[](Cv2) -- (Cv3);
		\draw[](Cv3) -- (Cv4);
		\draw[](Cv4) -- (Cv5);
		\draw[](Cv5) -- (Cv6);
		\draw[](Cv6) -- (Cv1);
		%
		\draw[](Cv2) -- (Cv4);
		\draw[](Cv1) -- (Cv5);
		\end{tikzpicture}
		\captionsetup{justification=centering}
		\caption{Induced aggregate blocks}
		\label{fig:parcslam2}
	\end{subfigure}
	\caption
	{\small Parallel updates for 
		the collective pose graph shown in Figure~\ref{fig:diagram}:
		(a) First, we find a coloring for the corresponding dependency graph
		such that adjacent robots have different colors; (b) The
		$2$-coloring induces two ``aggregate blocks'' $
		{\Acal_{\color{blue!70!black}1}},
		{\Acal_{\color{red!80!black}2}}$ where
		$\Acal_{\color{blue!70!black}1}$ and $\Acal_{\color{red!80!black}2}$
		consist of all blue and red vertices, respectively. In each iteration,
		we select a color and update the corresponding variables. Note that 
		$\Acal_{\color{blue!70!black}1}$ cotains variables from both
		$\text{\faAndroid}_\alpha$ and
		$\text{\faAndroid}_\beta$, and therefore these robots can update their variables in
		parallel when blue is selected.
	}
	\label{fig:pardiagram}
\end{figure}

{ 
	\subsection{Parallel Riemannian Block-Coordinate Descent}
	\label{sec:parallel_execution}
	Thus far in each round of $\RBCD$ and $\ARBCD$ (Algorithms~\ref{alg:BCD} and \ref{alg:ARBCD}),
	exactly one robot performs \textsc{BlockUpdate}
	(Algorithm~\ref{alg:BlockUpdate}). However, after a slight modification
	of our blocking scheme, \emph{multiple} robots may update their variables
	\emph{in parallel} as long as they are \emph{not} neighbours in the dependency graph
	(i.e., do not share an inter-robot
	loop closure; see Figure~\ref{fig:dependency_graph}).
	This is achieved by leveraging the natural graphical decomposition of objectives
	in Problem \ref{prob:se_sync_riemannian} (inherited from Problem
	\ref{prob:se_sync}). Updating variables in parallel can significantly speed up the local
	search.
	
	Gauss-Seidel-type updates can be executed in parallel using a classical
	technique known as red-black coloring (or, more generally, multicoloring
	schemes) \citep{bertsekas1989parallel}. 
	We apply this technique to PGO (Figure~\ref{fig:pardiagram}):
	
	\begin{enumerate}
		\item   First, we find a coloring for the set of
		\emph{robots}
		such that \emph{adjacent robots} in the dependency graph have
		different colors (Figure~\ref{fig:parcslam}). Although finding a vertex coloring with the \emph{smallest} number of
		colors is NP-hard, simple greedy approximation algorithms
		can produce a $(\Delta + 1)$-coloring, where $\Delta$ is the
		maximum degree of the dependency graph; see \cite{barenboim2009distributed,schneider2010new} and the
		references therein for distributed algorithms.
		Note that $\Delta$ is often bounded by a small constant due to
		the sparsity of the CSLAM dependency graph. 
		\item  In each iteration, we select a color (instead of a single
		robot) by adapting the block selection
		rules presented in Section~\ref{sec:block_selection_rule}.
		The robots that have the selected color then update their
		variables in parallel.
	\end{enumerate}
	
	Implementing the (generalized) importance sampling and
	greedy rules (Section~\ref{sec:block_selection_rule}) with coloring
	requires additional coordination between the robots. In
	particular, the greedy rule requires computing the sum of squared gradient
	norms for each color at the beginning of each iteration. Similar to
	Section~\ref{sec:block_selection_rule}, a na\"{i}ve approach would
	be to flood the network with the current squared gradient norms such that after
	a sufficient number of rounds (specifically, the diameter of the dependency
	graph), every robot aggregates all squared gradient norm information for every
	color. Robots can then independently compute the sum of squared gradient norms
	for every color and update their block only if their color has the largest gradient
	norm among all colors. We conclude this part by noting that it is also possible to
	allow \emph{all} robots to update their \emph{private} variables in \emph{all}
	iterations (irrespective of the selected color)
	because private variables are separated from each other by public variables.
}

\section{Convergence Analysis for Riemannian Block-Coordinate Descent}
\label{sec:convergence}

In this section, we formally establish first-order convergence guarantees for RBCD (Algorithm~\ref{alg:BCD}) and its accelerated variant $\ARBCD$ (Algorithm~\ref{alg:ARBCD}), for generic optimization problems of the form 
\eqref{eq:manopt_problem} defined on the Cartesian product of smooth matrix submanifolds.
Then, at the end of this section, we remark on how our general convergence guarantees apply when solving the rank-restricted SDPs of PGO (Remark~\ref{rem:convergence_pgo}), with detailed proofs and discussions in Appendix~\ref{sec:convergence_pgo}. Our global convergence proofs extend the recent work of Boumal et al.~\citep{Boumal2018Convergence}. 
Similar to \citep{Boumal2018Convergence}, we begin by listing and discussing several mild technical assumptions.

\begin{assumption}[Lipschitz-type gradient for pullbacks]
	In the optimization problem \eqref{eq:manopt_problem}, consider the reduced cost $f_b$ and its pullback $\fhat_b$ for an arbitrary block $b \in [N]$.
	There exists a constant $c_b \geq 0$ such that 
	at any iterate $X^k$ generated by a specified algorithm, 
	the following inequality holds for all $\eta_b \in T_{X^k_b} \Mcal_b$,
	\begin{equation}
	\big|\fhat_b(\eta_b) - [f_b(X^k_b) + \langle \eta_b, \rgrad f_b(X^k_b) \rangle]\big| \leq \frac{c_b}{2} \norm{\eta_b}^2_2.
	\label{eq:lipschitz_gradient_pullback}
	\end{equation}
	\label{as:lipschitz_gradient_pullback}
\end{assumption}

In \cite{Boumal2018Convergence}, Boumal et al. use  \eqref{eq:lipschitz_gradient_pullback} as a convenient generalization of the classical property of Lipschitz continuous gradient. 
With this property, the authors show that it is straightforward to establish global first-order convergence guarantees, e.g., for the well-known Riemannian gradient descent algorithm.
Furthermore, it is shown that \eqref{eq:lipschitz_gradient_pullback} holds under mild conditions in practice, e.g., whenever the cost function has Lipschitz continuous gradients in the ambient Euclidean space and the underlying submanifold is compact \citep[Lemma~2.7]{Boumal2018Convergence}. 
In Appendix~\ref{sec:convergence_pgo}, we will generalize this result to show that Assumption~\ref{as:lipschitz_gradient_pullback} holds when running \RBCD\ and \ARBCD\ in the context of PGO.  

\begin{assumption}[Global radial linearity of $H$]
	In \BlockUpdate\ (Algorithm~\ref{alg:BlockUpdate}), 
	the user-specified map $H$ \eqref{eq:reduced_model} is globally radially linear, i.e., for any block $b \in [N]$,
	\begin{equation}
	H[c \eta_b] = c H[\eta_b], \, \text{ for all } \eta_b \in T_{X_b} \Mcal_b \text{ and } c \geq 0.
	\label{eq:global_radial_linearity}
	\end{equation}
	\label{as:global_radial_linearity}
\end{assumption}

\begin{assumption}[Boundedness of $H$]
	In \BlockUpdate\ (Algorithm~\ref{alg:BlockUpdate}), 
	the user-specified map $H$ \eqref{eq:reduced_model} is bounded along the iterates of local search,
	i.e., 
	there exists $c_0 \geq 0$ such that for at any iterate $X^k$ generated by a specified algorithm, the following inequality holds for any block $b \in [N]$, 
	\begin{equation}
	\max_{\eta_b \in T_{X^k_b} \Mcal_b, \norm{\eta_b} \leq 1} |\langle \eta_b, H[\eta_b] \rangle| \leq c_0.
	\label{eq:H_bounded}
	\end{equation}
	\label{as:H_bounded}
\end{assumption}

\begin{assumption}[Lower bound on initial trust-region radius]
	In \BlockUpdate\ (Algorithm~\ref{alg:BlockUpdate}), 
	the initial trust-region radius $\Delta_0$ is bounded away from zero by,
	\begin{equation}
	\Delta_0 \geq \lambda_{b} \norm{\rgrad_{X_b} f_b},
	\end{equation}
	where $X_b$ is the input block estimate to Algorithm~\ref{alg:BlockUpdate}
	and $\lambda_b$ is a block-specific constant defined as, 
	\begin{equation}
	\lambda_b \triangleq \frac{1}{8(c_b + c_0)}. 
	\label{eq:lambda_b}
	\end{equation}
	\label{as:trust_region_radius_bound}
\end{assumption}

Assumptions~\ref{as:global_radial_linearity}-\ref{as:trust_region_radius_bound} concern the execution of \BlockUpdate\ (Algorithm~\ref{alg:BlockUpdate}), and are once again fairly lax in practice. 
In particular, the simplest choice of $H$ that satisfies 
radial linearity \eqref{eq:global_radial_linearity} and boundedness \eqref{eq:H_bounded} is the identity mapping. 
In our PGO application, 
we use the Riemannian Hessian $H = \Hess f_b(X_b)$ to leverage local second-order information for faster convergence.
In this case, $H$ is still radially linear, and in Appendix~\ref{sec:convergence_pgo} we show that $H$ is also bounded along the sequence of iterates generated by \RBCD\ or \ARBCD. 
Finally, Assumption~\ref{as:trust_region_radius_bound} can be easily satisfied by using a sufficiently large initial trust-region radius. 
We are now ready to establish an important theoretical result, which states that each block update (Algorithm~\ref{alg:BlockUpdate}) yields \emph{sufficient decrease} on the corresponding reduced cost function.

\begin{lemma}[Sufficient descent property of Algorithm~\ref{alg:BlockUpdate}]
	Under Assumptions~\ref{as:lipschitz_gradient_pullback}-\ref{as:trust_region_radius_bound}, applying \textsc{BlockUpdate} (Algorithm~\ref{alg:BlockUpdate}) on a block $b \in [N]$ with an input value $X_b^k$ decreases the reduced cost by at least,
	\begin{equation}
	f_{b}(X^{k}_{b}) - f_{b}(X^{k+1}_{b}) \geq \frac{1}{4} \lambda_{b} \norm{\rgrad f_{b}(X_{b}^k)}^2,
	\end{equation}
	where $\lambda_{b}$ is the block-specific constant corresponding to the selected block defined in \eqref{eq:lambda_b}.
	\label{lem:sufficient_decrease}
\end{lemma}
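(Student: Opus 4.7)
The plan is to follow the classical trust-region sufficient-decrease analysis \cite{absil2007trust,Boumal2018Convergence}, specialized to the block-coordinate pullback $\widehat{f}_b$ and the user-specified model operator $H$. Write $g \triangleq \rgrad f_b(X_b^k)$; if $g = 0$ the claim is immediate, so assume $g \neq 0$ throughout.

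First, I would establish a Cauchy-type lower bound on the model reduction produced by any ``approximate'' solution $\eta_b^\star$ of the trust-region subproblem \eqref{eq:trust_region_subproblem}. Global radial linearity of $H$ (Assumption~\ref{as:global_radial_linearity}) combined with Assumption~\ref{as:H_bounded} implies the quadratic-form bound $|\langle \eta_b, H[\eta_b]\rangle| \leq c_0 \norm{\eta_b}^2$ for all $\eta_b \in T_{X_b^k}\Mcal_b$. A one-dimensional minimization of $\widehat{m}_b$ along the ray $-\alpha g$ then yields
\[
\widehat{m}_b(0) - \widehat{m}_b(\eta_b^\star) \;\geq\; \tfrac{1}{2}\norm{g}\min\!\left(\Delta,\tfrac{\norm{g}}{c_0}\right),
\]
provided the truncated CG solver matches at least the Cauchy reduction, which is standard from a zero-initialized truncated CG pass.

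Second, I would control the gap between the model and the pullback. Combining Assumption~\ref{as:lipschitz_gradient_pullback} with the same $H$-boundedness and the triangle inequality gives
\[
|\widehat{f}_b(\eta_b) - \widehat{m}_b(\eta_b)| \;\leq\; \tfrac{c_b+c_0}{2}\norm{\eta_b}^2 \;\leq\; \tfrac{c_b+c_0}{2}\Delta^2.
\]
Feeding this into the acceptance ratio \eqref{eq:rtr_quotient} and using the Cauchy bound from the first step, a short computation shows that $\rho(\eta_b^\star) \geq 1/4$ is \emph{guaranteed} whenever $\Delta \leq 6\lambda_b \norm{g}$, with $\lambda_b = 1/(8(c_b+c_0))$ as defined in \eqref{eq:lambda_b}.

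Third, I would combine the quartering rule in Algorithm~\ref{alg:BlockUpdate} with the initial-radius bound $\Delta_0 \geq \lambda_b \norm{g}$ (Assumption~\ref{as:trust_region_radius_bound}) to lower-bound the accepted radius $\Delta^\star$. If $\Delta_0 \leq 6\lambda_b \norm{g}$, acceptance occurs on the first attempt, giving $\Delta^\star = \Delta_0 \geq \lambda_b \norm{g}$; otherwise, iterated quartering must eventually drive $\Delta$ below $6\lambda_b \norm{g}$, and since the last rejected radius still exceeded this threshold, the accepted one satisfies $\Delta^\star > (3/2)\lambda_b \norm{g}$. Either way, $\Delta^\star \geq \lambda_b \norm{g}$ (and in particular $\Delta^\star \leq \norm{g}/c_0$, since $\lambda_b \leq 1/c_0$). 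Substituting this back into the Cauchy bound and using the acceptance inequality $\widehat{f}_b(0) - \widehat{f}_b(\eta_b^\star) \geq \tfrac{1}{4}(\widehat{m}_b(0) - \widehat{m}_b(\eta_b^\star))$ yields the claimed descent, up to refinement of the leading constant. The main obstacle is precisely this constant bookkeeping---carrying the factors of $1/2$ from the Cauchy bound, $1/4$ from the acceptance threshold, and the quartering lower bound on $\Delta^\star$ through the calculation tightly enough to recover exactly $\lambda_b/4$ rather than a weaker multiple. A secondary technical point is the mild edge case $c_0 = 0$, where the Cauchy bound degenerates to $\tfrac{1}{2}\norm{g}\Delta$ but the conclusion becomes cleaner.
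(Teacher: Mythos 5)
Your skeleton matches the paper's proof (both are the Boumal-style trust-region sufficient-decrease argument: Cauchy decrease on the model, the $(c_b+c_0)$-bound on the model-versus-pullback gap, and a quartering/initial-radius argument to pin down the accepted radius; you even handle the "lucky" acceptance at a large radius, which the paper glosses over). But as written your proof does not establish the lemma: your final chain gives $f_b(X_b^k)-f_b(X_b^{k+1}) \geq \tfrac{1}{4}\cdot\tfrac{1}{2}\lambda_b\norm{g}^2 = \tfrac{1}{8}\lambda_b\norm{g}^2$, where $g \triangleq \rgrad f_b(X_b^k)$ --- a factor of $2$ short of the stated $\tfrac{1}{4}\lambda_b$ --- and you explicitly leave this unresolved. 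The loss is not incidental bookkeeping; it comes from a structural choice: you bound the actual-to-predicted ratio by the \emph{algorithmic} acceptance threshold $1/4$ uniformly over all accepted radii, and then pair that with the worst-case accepted radius $\lambda_b\norm{g}$. Those two worst cases cannot occur together, and exploiting that is exactly what recovers the constant.

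The paper's fix is to never invoke the $1/4$ test in the small-radius regime. Your Step 2 already implies the graded bound $\lvert \rho - 1\rvert \leq (c_b+c_0)\Delta/\norm{g} = \Delta/(8\lambda_b\norm{g})$ whenever $\Delta \leq \norm{g}/c_0$; hence on the \emph{tighter} window $\Delta \leq 4\lambda_b\norm{g}$ one gets the \emph{proven} ratio $\rho \geq 1/2$. (This also strictly exceeds the algorithm's test $\rho > 1/4$, fixing a boundary defect in your version: at $\Delta = 6\lambda_b\norm{g}$ you only guarantee $\rho \geq 1/4$, so acceptance is not actually guaranteed there.) The window $[\lambda_b\norm{g},\,4\lambda_b\norm{g}]$ is invariant under quartering from above and contains $\Delta_0$ when $\Delta_0\le 4\lambda_b\norm g$, so the accepted radius either lies in this window --- giving descent at least $\tfrac{1}{2}\cdot\tfrac{1}{2}\lambda_b\norm{g}^2 = \tfrac{1}{4}\lambda_b\norm{g}^2$ --- or acceptance occurred at some $\Delta > 4\lambda_b\norm{g}$, in which case the crude $1/4$ ratio suffices because the Cauchy decrease scales with $\Delta$: the descent is at least $\tfrac{1}{4}\cdot\tfrac{1}{2}\min(\Delta,\norm{g}/c_0)\norm{g} \geq \tfrac{1}{4}\cdot\tfrac{1}{2}\cdot 4\lambda_b\norm{g}^2 = \tfrac{1}{2}\lambda_b\norm{g}^2$, using $4\lambda_b \leq 1/c_0$. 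In short: couple the ratio bound to the radius --- strong ratio when the radius is small, proportionally large model decrease when it is big. One last small repair: your parenthetical "$\Delta^\star \leq \norm{g}/c_0$" is false in the lucky-acceptance branch; what you actually need there is only $\lambda_b \leq 1/c_0$, so that $\min(\Delta^\star, \norm{g}/c_0) \geq \lambda_b\norm{g}$, which always holds.
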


Lemma~\ref{lem:sufficient_decrease} states that after each iteration of $\RBCD$, we are guaranteed to decrease the corresponding reduced cost function. 
Furthermore, the amount of reduction is \emph{lower bounded} by some constant times the squared gradient norm at the selected block.  
Thus, if we execute $\RBCD$ for long enough, intuitively we should expect the iterates to converge to a solution where the gradient norm is zero (i.e., a first-order critical point). The following theorem formalizes this result.

\begin{theorem}[Global convergence rate of $\RBCD$]
	Let  $f^\star$ denote the global minimum of the optimization problem
	\eqref{eq:manopt_problem}.
	Denote the iterates of $\RBCD$ (Algorithm~\ref{alg:BCD}) as 
	$X^0, X^1, \hdots, X^{K-1}$, and the corresponding block selected at each iteration as $b_0, \hdots, b_{K-1}$.
	Under Assumptions~\ref{as:lipschitz_gradient_pullback}-\ref{as:trust_region_radius_bound}, $\RBCD$ with \emph{uniform sampling} or \emph{importance sampling}
	have the following guarantees,
	\begin{equation}
	\min_{0 \leq k \leq K-1} \mathbb{E}_{b_{0:k-1}}\norm{\rgrad f(X^k)}^2 \leq \frac{4N (f(X^0) - f^\star)}{K \cdot \min_{b \in [N]}\lambda_b }.
	\label{eq:general_convergence_uniform}
	\end{equation}
	In addition, $\RBCD$ with \emph{greedy selection}
	yields the following \emph{deterministic} guarantee,
	\begin{equation}
	\min_{0 \leq k \leq K-1} \norm{\rgrad f(X^k)}^2 \leq \frac{4N(f(X^0) -
		f^\star)}{K \cdot \min_{b \in [N]}\lambda_b }.
	\label{eq:general_convergence_greedy}
	\end{equation}
	\label{thm:general_convergence}
\end{theorem}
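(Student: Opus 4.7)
The plan is to combine the per-iteration sufficient-descent guarantee of Lemma~\ref{lem:sufficient_decrease} with a telescoping argument over $k=0,\ldots,K-1$. The crucial starting observation is that because $\RBCD$ only touches block $b_k$ in iteration $k$, we have $f(X^k)-f(X^{k+1}) = f_{b_k}(X^k_{b_k}) - f_{b_k}(X^{k+1}_{b_k})$, so Lemma~\ref{lem:sufficient_decrease} immediately upgrades to
\begin{equation*}
f(X^k) - f(X^{k+1}) \;\geq\; \tfrac{1}{4}\,\lambda_{b_k}\,\|\rgrad_{b_k} f(X^k)\|^2.
\end{equation*}
Since $\Mcal$ is a product manifold, the Riemannian gradient decomposes blockwise and $\|\rgrad f(X)\|^2 = \sum_{b=1}^N \|\rgrad_b f(X)\|^2$, which is the key identity that will let us convert blockwise descent into descent on the full objective.

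Next I would treat each of the three block-selection rules in turn, producing in every case a lower bound of the form $c_N\,\|\rgrad f(X^k)\|^2$ with $c_N \triangleq (\min_b \lambda_b)/(4N)$ on either $f(X^k)-f(X^{k+1})$ (greedy) or its conditional expectation $\mathbb{E}_{b_k}[\cdot\mid X^k]$ (uniform and importance). For uniform sampling, $\mathbb{E}_{b_k}[\tfrac{1}{4}\lambda_{b_k}\|\rgrad_{b_k} f\|^2] = \tfrac{1}{4N}\sum_b \lambda_b \|\rgrad_b f\|^2 \geq c_N \|\rgrad f\|^2$. For greedy selection, $b_k \in \arg\max_b \|\rgrad_b f\|^2$ gives the deterministic $\|\rgrad_{b_k} f\|^2 \geq N^{-1}\|\rgrad f\|^2$ by the max-versus-average inequality, and then $\lambda_{b_k} \geq \min_b \lambda_b$ closes the bound. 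Importance sampling is the subtlest case: with $p_b = \|\rgrad_b f\|^2/\|\rgrad f\|^2$ the expected descent becomes $\frac{1}{4\|\rgrad f\|^2}\sum_b \lambda_b\|\rgrad_b f\|^4$, and I would use $\lambda_b \geq \min_b \lambda_b$ together with the Cauchy--Schwarz inequality $\sum_b a_b^2 \geq N^{-1}\bigl(\sum_b a_b\bigr)^2$ applied to $a_b = \|\rgrad_b f\|^2$ to recover the same $c_N \|\rgrad f\|^2$ bound.

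The proof then closes by telescoping. Summing the per-iteration bound from $k=0$ to $K-1$ and using the tower property (iterating $\mathbb{E}_{b_{0:k-1}}$ over the random block indices in the stochastic cases) yields
\begin{equation*}
f(X^0) - f^\star \;\geq\; \mathbb{E}_{b_{0:K-1}}\!\bigl[f(X^0) - f(X^K)\bigr] \;\geq\; c_N \sum_{k=0}^{K-1} \mathbb{E}_{b_{0:k-1}}\|\rgrad f(X^k)\|^2,
\end{equation*}
after which bounding the minimum by the average gives $\min_{0\leq k\leq K-1} \mathbb{E}_{b_{0:k-1}}\|\rgrad f(X^k)\|^2 \leq (f(X^0)-f^\star)/(K c_N)$, which is \eqref{eq:general_convergence_uniform}; the deterministic version \eqref{eq:general_convergence_greedy} for greedy selection follows by dropping the expectations.

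The main obstacle I anticipate is the importance-sampling bound: verifying that the mismatch between the sampled weights $\|\rgrad_b f\|^2$ and the factor $\lambda_b$ can be absorbed without a worst-case blowup requires the correct deployment of Cauchy--Schwarz (or equivalently the power-mean inequality) to yield the clean constant $1/N$. The remaining delicate point is careful bookkeeping of the filtration generated by $b_{0},\ldots,b_{k-1}$ so that the sufficient-descent inequality applied at step $k$ can be consistently inserted under the outer expectation when telescoping.
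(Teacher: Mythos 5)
Your proposal is correct and follows essentially the same route as the paper's proof: the sufficient-descent bound from Lemma~\ref{lem:sufficient_decrease} lifted to the global cost, the per-rule lower bounds (equal probabilities for uniform sampling, Cauchy--Schwarz on the squared block-gradient norms for importance sampling, the max-versus-average inequality for greedy), and the telescoping sum with the tower property followed by bounding the minimum by the average. No gaps to flag.
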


Theorem~\ref{thm:general_convergence} establishes a \emph{global} sublinear convergence rate for $\RBCD$.\footnote{
	We note that in our current analysis, uniform sampling and importance sampling share the same convergence rate estimate. 
	In practice, however, it is usually the case that importance sampling yields much faster empirical convergence (see Section~\ref{sec:experiments}). 
	This result suggests that it is possible to further improve the convergence guarantees for importance sampling. We leave this for future work.}
Specifically, as the number of iterations $K$ increases, the squared gradient norm decreases at the rate of $\mathcal{O}(1/K)$.
Using the same proof technique, we can establish a similar convergence guarantee for the accelerated version $\ARBCD$. 

\begin{theorem}[Global convergence rate of $\ARBCD$]
	Let  $f^\star$ denote the global minimum of the optimization problem
	\eqref{eq:manopt_problem}.
	Denote the iterates of $\ARBCD$ (Algorithm~\ref{alg:ARBCD}) as 
	$X^0, X^1, \hdots, X^{K-1}$, and the corresponding block selected at each iteration as $b_0, \hdots, b_{K-1}$.
	Define the constant, 
	\begin{equation}
	C \triangleq \min\left(c_1, \min_{b \in [N]} \lambda_b/4\right).
	\end{equation}
	Under Assumptions~\ref{as:lipschitz_gradient_pullback}-\ref{as:trust_region_radius_bound}, $\ARBCD$ with \emph{uniform sampling} or \emph{importance sampling}
	have the following guarantees,
	\begin{equation}
	\min_{0 \leq k \leq K-1} \mathbb{E}_{b_{0:k-1}}\norm{\rgrad f(X^k)}^2 \leq {N (f(X^0) - f^\star)}/{CK}.
	\label{eq:accelerated_convergence_uniform}
	\end{equation}
	In addition, $\ARBCD$ with \emph{greedy selection}
	yields the following \emph{deterministic} guarantee,
	\begin{equation}
	\min_{0 \leq k \leq K-1} \norm{\rgrad f(X^k)}^2 \leq {N(f(X^0) - f^\star)}/{CK}.
	\label{eq:accelerated_convergence_greedy}
	\end{equation}
	\label{thm:accelerated_convergence}
\end{theorem}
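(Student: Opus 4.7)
The plan is to reduce the proof of Theorem~\ref{thm:accelerated_convergence} to essentially the same telescoping argument that underlies Theorem~\ref{thm:general_convergence}, by first establishing a \emph{uniform} sufficient-descent inequality that holds on every iteration of $\ARBCD$, regardless of which branch of the adaptive-restart logic is taken.

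First I would analyze the two possible branches of a single $\ARBCD$ iteration (Algorithm~\ref{alg:ARBCD}). When the adaptive-restart test at line~\ref{alg:restart_start} is \emph{not} triggered, the accelerated update has by construction produced a decrease $f(X^k) - f(X^{k+1}) \geq c_1 \norm{\rgrad_{b_k} f(X^k)}^2$. When the test \emph{is} triggered, the algorithm discards the accelerated step and re-invokes \textsc{BlockUpdate} on $X_{b_k}^k$; Lemma~\ref{lem:sufficient_decrease} (applicable under Assumptions~\ref{as:lipschitz_gradient_pullback}--\ref{as:trust_region_radius_bound}) then yields $f(X^k) - f(X^{k+1}) \geq \tfrac{1}{4}\lambda_{b_k}\norm{\rgrad f_{b_k}(X_{b_k}^k)}^2$. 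Since the Riemannian gradient decomposes block-wise on the product manifold, $\rgrad f_{b_k}(X_{b_k}^k)=\rgrad_{b_k} f(X^k)$, so in either branch
\begin{equation}
f(X^k)-f(X^{k+1}) \;\geq\; C\,\norm{\rgrad_{b_k} f(X^k)}^2,
\label{eq:unified_descent_proposal}
\end{equation}
with $C = \min(c_1,\min_{b\in[N]}\lambda_b/4)$ exactly as in the theorem.

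Next I would relate the single-block gradient norm to the full one, using $\norm{\rgrad f(X^k)}^2 = \sum_{b=1}^{N}\norm{\rgrad_b f(X^k)}^2$. For uniform sampling, $\mathbb{E}_{b_k}[\norm{\rgrad_{b_k} f(X^k)}^2\mid X^k] = \tfrac{1}{N}\norm{\rgrad f(X^k)}^2$. For importance sampling with $p_b \propto \norm{\rgrad_b f(X^k)}^2$, a short Cauchy--Schwarz estimate gives $\mathbb{E}_{b_k}[\norm{\rgrad_{b_k} f(X^k)}^2 \mid X^k] = \sum_b \norm{\rgrad_b f(X^k)}^4/\norm{\rgrad f(X^k)}^2 \geq \norm{\rgrad f(X^k)}^2 / N$. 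For greedy selection, $\norm{\rgrad_{b_k} f(X^k)}^2 = \max_b\norm{\rgrad_b f(X^k)}^2 \geq \norm{\rgrad f(X^k)}^2/N$ deterministically. These are the same bounds used in the proof of Theorem~\ref{thm:general_convergence}.

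Finally I would telescope \eqref{eq:unified_descent_proposal} over $k=0,\ldots,K-1$. In the stochastic cases, taking the tower expectation and applying the per-step bound above yields
\begin{equation}
\tfrac{C}{N}\sum_{k=0}^{K-1}\mathbb{E}_{b_{0:k-1}}\norm{\rgrad f(X^k)}^2 \;\leq\; \mathbb{E}\bigl[f(X^0)-f(X^K)\bigr] \;\leq\; f(X^0)-f^\star,
\end{equation}
and bounding the sum by $K$ times its minimum entry gives \eqref{eq:accelerated_convergence_uniform}. The deterministic estimate \eqref{eq:accelerated_convergence_greedy} follows from exactly the same telescoping, with expectations removed. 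The main obstacle is the careful verification of \eqref{eq:unified_descent_proposal} in the restart branch: one must check that the implementation feeds $X_{b_k}^k$ (not $Y_{b_k}^k$) into the fallback \textsc{BlockUpdate} so that the gradient appearing in Lemma~\ref{lem:sufficient_decrease}'s bound matches the gradient used in the restart criterion at line~\ref{alg:restart_start}; this is precisely the role of lines~\ref{alg:actual_restart_start}--\ref{alg:actual_restart_end} in Algorithm~\ref{alg:ARBCD}. Once this alignment is established, the remainder of the argument is a direct adaptation of the $\RBCD$ analysis.
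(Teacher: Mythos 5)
Your proposal is correct and follows essentially the same route as the paper's proof: both establish the unified per-iteration descent bound $f(X^k)-f(X^{k+1}) \geq C\,\norm{\rgrad_{b_k} f(X^k)}^2$ by splitting on whether the adaptive-restart test fires (the non-restart branch descends by definition of the test, the restart branch by Lemma~\ref{lem:sufficient_decrease}), and then reuse the selection-rule bounds and telescoping argument from Theorem~\ref{thm:general_convergence}. Your explicit check that the restart branch calls \textsc{BlockUpdate} on $X_{b_k}^k$ so that $\rgrad f_{b_k}(X_{b_k}^k)=\rgrad_{b_k} f(X^k)$ is exactly the alignment the paper relies on implicitly.
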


\begin{remark}[Convergence on Problem~\ref{prob:Rank_restricted_SDP}]
	\label{rem:convergence_pgo}
	So far, we have shown that under mild conditions, \RBCD\ and \ARBCD\ are guaranteed to converge when solving general optimization problems defined on the Cartesian product of smooth matrix submanifolds.
	Recall that in the specific application of PGO, 
	we are using \RBCD\ and \ARBCD\ to solve a sequence of rank-restricted semidefinite relaxations (Problem~\ref{prob:Rank_restricted_SDP}).
	In Appendix~\ref{sec:convergence_pgo}, we show that Assumptions~\ref{as:lipschitz_gradient_pullback}-\ref{as:trust_region_radius_bound} are satisfied in this case, 
	and hence \RBCD\ and \ARBCD\ retain their respective convergence guarantees.
	In particular, we show that although Assumption~\ref{as:lipschitz_gradient_pullback} (Lipschitz-type gradient) and Assumption~\ref{as:H_bounded} (bounded Hessian) do not hold globally (due to the non-compact translation search space), 
	they still hold within the \emph{sublevel set} of the cost function determined by the initial iterate $X^0$.
	Since \RBCD\ and \ARBCD\ are inherently \emph{descent} methods,
	it suffice to restrict our attention to this initial sublevel set as future iterates will not leave this set.
	The reader is referred to Appendix~\ref{sec:convergence_pgo} for 
	more details.
\end{remark}

\begin{remark}[Convergence under parallel executions]
	With the parallel iterations described in Section~\ref{sec:parallel_execution}, 
	we can further improve the global convergence rates of RBCD and $\ARBCD$
	in Theorems~\ref{thm:general_convergence} and
	\ref{thm:accelerated_convergence}.
	For example, the constant $N$ that appears in the rate estimates of uniform sampling and importance sampling \eqref{eq:general_convergence_uniform} can be replaced by the number of aggregate blocks (colors) in the dependency graph.
	Recall that in practice, this is typically bounded by a small constant,
	e.g., the maximum degree in the sparse robot-level dependency graph (see
	Section~\ref{sec:parallel_execution}).
	\label{rem:parallel_convergence}
\end{remark}

\section{Distributed Verification}
\label{sec:verification}
In this section we address the problem of \emph{solution verification} \cite{Carlone2015Verification} in the distributed setting.  Concretely, we propose distributed solution verification and saddle escape algorithms to certify the optimality of a first-order critical point $X$ of the rank-restricted relaxation \eqref{eq:se_sync_riemannian} as a global minimizer $Z = X^\top X$ of problem \eqref{eq:se_sdp}, and for \emph{escaping} from suboptimal stationary points after ascending to the next level of the Riemannian Staircase (Algorithm \ref{alg:riemannian_staircase}).  {To the best of our knowledge, these are the first distributed solution verification algorithms to appear in the literature.}

Our approach is based upon the following simple theorem of the alternative, which is a specialization of  \cite[Theorem 4]{Rosen2020Scalable} to problems \eqref{eq:se_sdp} and \eqref{eq:se_sync_riemannian}:
\begin{theorem}[Solution verification and saddle escape]
\label{thm:verification}
Let $X \in \Manifold(r,n)$ be a first-order critical point of the rank-restricted semidefinite relaxation \eqref{eq:se_sync_riemannian}, and define:
\begin{subequations}
\label{verification_equations}
\begin{equation}
\label{Lagrange_multipliers}
 \Lambda(X) \triangleq \SymBlockDiag_d^{+}(X^\top X \ConLapT),
\end{equation}
\begin{equation}
\label{certificate_matrix}
 S(X) \triangleq \ConLapT - \Lambda(X).
\end{equation}
\end{subequations}
Then exactly one of the following two cases holds:
\begin{enumerate}
 \item [$(a)$] $S(X) \succeq 0$ and $Z = X^\top X$ is a global minimizer of \eqref{eq:se_sdp}.
 \item [$(b)$] There exists $v \in \Real^{(d+1)n}$ such that $v^\top S(X) v <
		 0$, and in that case:
 \begin{equation}
 \label{lifted_suboptimal_critical_point}
  X_{+} \triangleq \begin{bmatrix}
           X \\
           0
          \end{bmatrix} \in \Manifold(r+1, n)
 \end{equation}
is a first-order critical point of \eqref{eq:se_sync_riemannian} attaining the same objective value as $X$, and
\begin{equation}
\label{second_order_descent_direction}
 \dot{X}_{+} \triangleq 
 \begin{bmatrix} 0 \\
 v^\top 
 \end{bmatrix} \in T_{X_{+}}(\Manifold(r+1, n))
\end{equation}
is a second-order direction of descent from $X_{+}$. In particular, taking
$v$ to be the eigenvector corresponding to the smallest eigenvalue of $S(X)$
satisfies the above conditions.
\end{enumerate}
\end{theorem}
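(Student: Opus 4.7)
The plan is to establish this dichotomy by translating everything into the KKT optimality conditions of the convex SDP \eqref{eq:se_sdp}, using the Burer--Monteiro substitution $Z = X^\top X$ to rewrite the first-order criticality of $X$ on $\Manifold(r,n)$ as an algebraic identity involving $S(X)$. First I would write down the Lagrangian of Problem~\ref{prob:se_sdp}: the equality constraints ${Z}_{[i,i](1:d,1:d)} = I_d$ pair with symmetric $d \times d$ multiplier blocks $\Lambda_i$, and the resulting KKT system requires primal feasibility, dual feasibility $S := \ConLapT - \Lambda \succeq 0$ (where $\Lambda$ is block diagonal with the $\Lambda_i$ embedded in the top-left corners), and complementary slackness $SZ = 0$. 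Because \eqref{eq:se_sdp} is a convex program satisfying Slater's condition, these conditions are necessary and sufficient for global optimality.

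Next I would identify $\Lambda(X) = \SymBlockDiag_d^{+}(X^\top X \ConLapT)$ as the natural candidate multiplier. The key computation is to combine the Euclidean gradient $\nabla f(X) = 2X\ConLapT$ (of $f(X) = \langle \ConLapT, X^\top X\rangle$) with the tangent-space projector \eqref{eq:projection_onto_tangent_space} to obtain $\rgrad f(X) = 2X\bigl(\ConLapT - \Lambda(X)\bigr) = 2XS(X)$. First-order criticality therefore reduces to $XS(X) = 0$, which immediately yields $S(X)Z = S(X)X^\top X = 0$. By construction $\Lambda(X)$ has the correct block-diagonal support, so $(Z, \Lambda(X))$ satisfies every KKT condition \emph{except possibly} dual feasibility $S(X) \succeq 0$. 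This is exactly the alternative that Theorem~\ref{thm:verification} splits on: in case $(a)$ dual feasibility holds and KKT $+$ convexity deliver global optimality of $Z$.

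For case $(b)$ I would carry out three short verifications. (i) $X_+$ is first-order critical for the rank-$(r+1)$ problem: since padding with a zero row leaves $X_+^\top X_+ = X^\top X$ unchanged, $\Lambda(X_+) = \Lambda(X)$ and $S(X_+) = S(X)$, so $\rgrad f(X_+) = 2X_+ S(X_+) = 0$ follows from $XS(X) = 0$ and the fact that the appended row of $X_+$ is zero; the objective value is also preserved. (ii) $\dot{X}_+ \in T_{X_+}\Manifold(r+1,n)$: a direct block-multiplication shows $\dot{X}_+^\top X_+ = 0$, which trivially satisfies the tangent-space condition in \eqref{eq:tangent_space}. (iii) $\dot{X}_+$ is a direction of negative curvature. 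Since $\dot{X}_+$ is tangent, $\langle \dot{X}_+, \Hess f(X_+)[\dot{X}_+]\rangle = \langle \dot{X}_+, \mathrm{D}\rgrad f(X_+)[\dot{X}_+]\rangle$; differentiating $\rgrad f(Y) = 2Y S(Y)$ produces a term $2\dot{X}_+ S(X_+)$ and a term $2 X_+ \mathrm{D}[S](X_+)[\dot{X}_+]$, and the latter inner-products against $\dot{X}_+$ to zero because $\dot{X}_+^\top X_+ = 0$. Using $\dot{X}_+^\top \dot{X}_+ = vv^\top$ and $S(X_+) = S(X)$, the remaining term evaluates to $2v^\top S(X) v < 0$.

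The main obstacle, as is typical for these Burer--Monteiro arguments, is the Hessian calculation in step (iii): one has to be careful to differentiate both factors in $\rgrad f(Y) = 2YS(Y)$ and then exploit precisely the structural cancellation $\dot{X}_+^\top X_+ = 0$ to kill the $\mathrm{D}[\Lambda]$ term, which is why the ``lift by a row of zeros plus place $v^\top$ in the new row'' construction is chosen. Everything else --- the KKT derivation, the identification of $\Lambda(X)$, and the preservation of criticality and objective value under lifting --- is a direct, essentially mechanical consequence of the projection formula \eqref{eq:projection_onto_tangent_space} and the block structure of $\SymBlockDiag_d^{+}$.
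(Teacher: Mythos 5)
Your proof is correct and takes essentially the same route as the paper: the paper obtains this result as a specialization of \cite[Theorem 4]{Rosen2020Scalable} and sketches precisely your argument in Remark~\ref{saddle_escape_remark} --- part $(a)$ is the standard KKT/complementary-slackness certificate for the convex SDP (unlocked by the identity $\rgrad f(X) = 2XS(X)$, which gives $S(X)X^\top X = 0$ at critical points), and part $(b)$ is the negative-curvature computation $\langle \dot{X}_{+}, \Hess f(X_{+})[\dot{X}_{+}] \rangle = 2\,v^\top S(X) v < 0$, where the derivative-of-$S$ term is killed by $\dot{X}_{+}^\top X_{+} = 0$. Your write-up simply carries out in full, intrinsically via the projection formula \eqref{eq:projection_onto_tangent_space}, the verification that the paper delegates to the cited reference.
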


\begin{remark}[Interpretation of Theorem \ref{thm:verification}]  Let us provide a bit of intuition for what Theorem \ref{thm:verification} conveys. Part (a)  is simply the standard (necessary and sufficient) conditions for $Z = X^\top X$ to be the solution of the (convex) semidefinite program \eqref{eq:se_sdp} \cite{Vandenberghe1996Semidefinite}.  In the event that these conditions are \emph{not} satisfied (and therefore $Z$ is \emph{not} optimal in \eqref{eq:se_sdp}), there must exist a direction of descent $\dot{Z} \in \Sym^{(d+1)n}$ from $Z$ that is \emph{not} captured in the low-rank factorization \eqref{eq:se_sync_riemannian}, \emph{at least to first order} (since $X$ is  stationary).  This could be because $X$ is a saddle point of the nonconvex problem \eqref{eq:se_sync_riemannian} (in which case there may exist a \emph{second}-order direction of descent from $X$), or because the descent direction $\dot{Z}$ is towards a set of higher-rank matrices than the rank-$r$ factorization  used in \eqref{eq:se_sync_riemannian} is able to capture.  Part (b) of Theorem \ref{thm:verification} provides an approach that enables us to address \emph{both} of these potential obstacles simultaneously, by using a negative eigenvector of the certificate matrix $S(X)$ to construct a \emph{second}-order direction of descent $\dot{X}_{+}$ from $X_{+}$, the lifting of $X$ to the next (higher-rank) ``step'' of the Riemannian Staircase.  Geometrically, this construction is based upon the (easily verified) fact that $S(X)$ is the Hessian of the Lagrangian $\nabla_{X}^2 \mathcal{L}$ of the extrinsic (constrained) form of \eqref{eq:se_sync_riemannian}, and therefore $\langle \dot{X}_{+}, \nabla_X^2 \mathcal{L} \: \dot{X}_{+} \rangle = \langle v, S(X)v \rangle < 0$, so that $\dot{X}_{+}$ is indeed a direction of second-order descent from the lifted stationary point $X_{+}$  \cite{Rosen2020Scalable, Boumal2016NIPS,Journee2010LowRank}.
\label{saddle_escape_remark}
 \end{remark}

 In summary, Theorem \ref{thm:verification} enables us to determine whether a first-order critical point $X$ of \eqref{eq:se_sync_riemannian} corresponds to a minimizer $Z = X^\top X$ of \eqref{eq:se_sdp}, and to \emph{descend} from $X$ if necessary, by computing the minimum eigenpair $(\lambda, v)$ of the certificate matrix $S(X)$ defined in \eqref{verification_equations}.  In the original SE-Sync algorithm, the corresponding minimum-eigenvalue computation is performed by means of spectrally-shifted Lanczos iterations \cite{Rosen19IJRR, Rosen17IROS}; while this works well for the centralized SE-Sync method, adopting the Lanczos algorithm in the distributed setting would require an excessive degree of communication among the agents.  Therefore, in the next subsection, we investigate alternative strategies for computing the minimum eigenpair that are more amenable to a distributed implementation.
  
 \subsection{Distributed Minimum-eigenvalue Computation}
 \label{sec:distributed_minimum_eigenpair_computation}
In this subsection we describe an efficient distributed algorithm for computing the minimum eigenpair of the certificate matrix $S(X)$ required in Theorem \ref{thm:verification}.  We begin with a brief review of eigenvalue methods.

 In general, the Lanczos procedure is the preferred technique for computing a small number of extremal (maximal or minimal) eigenpairs of a symmetric matrix $A \in \Sym^n$ \cite[Chp.\ 9]{Golub1996Matrix}.  In brief, this method proceeds by approximating $A$ using its orthogonal projection $P_k \triangleq V_k^\top A V_k$ onto the \emph{Krylov subspace}:
\begin{equation}
 \label{Krylov_subspace_definition}
 \mathcal{K}(A, x_0, k) \triangleq \Span \left \lbrace x_0, Ax_0, \dotsc, A^{k-1} x_0 \right \rbrace = \image V_k,
\end{equation}
where $x_0 \in \Real^n$ is an initial vector and $V_k \in \Stiefel(k, n)$ is a matrix whose columns (called \emph{Lanczos vectors}) provide an orthonormal basis for $\mathcal{K}(A, x_0, k)$.  Eigenvalues $\theta_i$ of the (low-dimensional) approximation $P_k$, called \emph{Ritz values}, may then be taken as approximations for eigenvalues of $A$.  The $k$-dimensional Krylov subspace $\mathcal{K}(A, x_0, k)$ in \eqref{Krylov_subspace_definition} is iteratively expanded  as the algorithm runs (by computing additional matrix-vector products), thereby providing increasingly better approximations of $A$'s extremal eigenvalues (in accordance with the Courant-Fischer variational characterization of eigenvalues \cite[Thm.\ 8.1.2]{Golub1996Matrix}). The Lanczos procedure thus provides an efficient means of estimating a subset of $A$'s spectrum (particularly its extremal eigenvalues) to high accuracy at the cost of only a relatively small number (compared to $A$'s dimension $n$) of matrix-vector products, especially if the initial vector $x_0$ lies close to an eigenvector of $A$.  In particular, if $\lambda(A) = \lambda_1 > \lambda_2 > \dotsb > \lambda_n$ and $\phi_1$ is the eigenvector associated to $\lambda_1$, it is well-known that the error in the eigenvector estimate $y_1$ from the maximal Ritz pair $(\theta_1, y_1)$ decays asymptotically according to \cite[eq.\ (2.15)]{Saad1980Rates}:
\begin{equation}
\label{Lanczos_convergence_rate}
\sin(\phi_1, y_1) \sim \tau_1^{-k},
\end{equation}
where
\begin{subequations}
\label{Lanczos_convergence_rate_parameters}
\begin{equation}
\tau_1 = \rho_1 + \sqrt{\rho_1^2 - 1},
\end{equation}
\begin{equation}
\rho_1 = 1 + 2\frac{\lambda_1 - \lambda_2}{\lambda_2 - \lambda_n}.
\end{equation}
\end{subequations}

However, while the Lanczos procedure is the method of choice for computing a few extremal eigenpairs in the \emph{centralized} setting, it is unfortunately not well-suited to \emph{distributed} computations when inter-node communication is a bottleneck.  This is because the Lanczos vectors $V_k$ must be periodically re-orthonormalized in order to preserve the accuracy of the estimated eigenpairs $(\theta_i, v_i)$.  While several strategies have been proposed for performing this reorthonormalization, all of them essentially involve computing a QR decomposition of (possibly a subset of columns from) $V_k$ (see \cite[Sec.\ 9.2]{Golub1996Matrix} and the references therein).  Constructing this decomposition in the distributed setting would require frequent synchronized all-to-all message passing, which is impractical when inter-node communication is expensive or unreliable.

We are therefore interested in exploring alternatives to the Lanczos method that require less coordination in the distributed setting.  Many commonly-used eigenvalue methods can  be viewed as attempts to simplify the ``gold-standard'' Lanczos procedure, achieving a reduction in storage and per-iteration computation at the cost of a slower convergence rate.  An extreme example of this is the well-known \emph{power method} \cite[Sec.\ 8.2]{Golub1996Matrix}, which can be viewed as a simplification that retains only the final generator $A^{k-1} x_0$ of the Krylov subspace $\mathcal{K}(A, x_0, k)$ in \eqref{Krylov_subspace_definition} at each iteration.  This leads to a very simple iteration scheme, requiring only matrix-vector products:\footnote{Note that while the power method iteration is commonly written in the normalized form $x_{k+1} = Ax_k / \lVert A x_k \rVert$, normalization is only actually required to compute the Ritz value $\theta_k = x_k^\top A x_k / \lVert x_k \rVert^2$ associated with $x_k$. }
\begin{equation}
\label{power_method_iteration}
 x_{k+1} = A x_k.
\end{equation}
Note that with $A = S(X)$, the matrix-vector product in \eqref{power_method_iteration} can be computed using the same inter-agent communication pattern already employed in each iteration of the RBCD method developed in Section \ref{sec:bcd}, and so is well-suited to a distributed implementation.  However, the power method's simplicity comes at the expense of a reduced convergence rate.  In particular, if $\lambda_1$ and $\lambda_2$ are the two largest-magnitude eigenvalues of $A$ (with $\lvert \lambda_1 \rvert > \lvert \lambda_2 \rvert$), then the vector $y_1$ in the Ritz estimate $(\theta_1, y_1)$ computed via the power method converges to the dominant eigenvector $\phi_1$ of $A$ according to \cite[Thm.\ 8.2.1]{Golub1996Matrix}:
\begin{equation}
\label{power_method_convergence_rate}
\sin(\phi_1, y_1) \sim \left\lvert \frac{\lambda_1}{\lambda_2} \right \rvert^{-k}.
\end{equation}

Let us compare the rates \eqref{Lanczos_convergence_rate} and \eqref{power_method_convergence_rate} for the case in which the eigengap $\gamma \triangleq \lambda_1 - \lambda_2$ is small relative to the diameter $D$ of the spectrum of $A$ (which is of the same order order as $\lambda_2 - \lambda_n$ and $\lambda_2$ for the Lanczos and power methods, respectively); intuitively, this is the regime in which the problem is hard, since it is difficult to distinguish $\lambda_1$ and $\lambda_2$ (and consequently their associated eigenspaces).  In particular, let us compute the number of iterations $k(\epsilon)$ required to reduce the angular error in the dominant Ritz vector $y_1$ by the factor $\epsilon \in (0,1)$.  For the Lanczos method, using \eqref{Lanczos_convergence_rate} and \eqref{Lanczos_convergence_rate_parameters} and assuming $\gamma \ll D$, we estimate:
\begin{equation}
k_l(\epsilon) = - \frac{\log \epsilon}{\log \tau_1} \approx -\frac{\log \epsilon}{\log \left(1 + 2\frac{\gamma}{D} + \sqrt{4 \frac{\gamma}{D} + 4 \frac{\gamma^2}{D^2} } \right)} \approx -\frac{\log \epsilon}{\log \left(1 + 2 \sqrt{\frac{\gamma}{D}} \right)} \approx -\frac{\log \epsilon}{2 \sqrt{\frac{\gamma}{D}}}.
\end{equation}
Similarly, using \eqref{power_method_convergence_rate}, the analogous estimate for the power method is:
\begin{equation}
k_p(\epsilon) = - \frac{\log \epsilon}{\log \left(\frac{\lambda_1}{\lambda_2}\right)} =  -\frac{\log \epsilon}{\log \left(1 + \frac{\gamma}{\lambda_2}\right)} \approx - \frac{\log \epsilon}{\gamma / D}.
\end{equation}
In our target application (certifying the optimality of a first order-critical
point $X$), the minimum eigenvalue we must compute will \emph{always} belong to
a tight cluster whenever $X$ is a global minimizer,\footnote{This is an
immediate consequence of the (extrinsic) first-order criticality condition for
\eqref{eq:se_sync_riemannian}, which requires $S(X)X^\top = 0$, i.e., that
\emph{each row} of $X$ be an eigenvector of $S(X)$ with eigenvalue $0$
\cite[Sec.\ III-C]{Rosen17IROS}.} so the power method's $\Ocal(1/\gamma)$
dependence upon the eigengap $\gamma$ translates to a substantial reduction in
performance versus the Lanczos method's $\Ocal(1/\sqrt{\gamma})$ rate.

In light of these considerations, we propose to adopt the recently-developed \emph{accelerated power method} \cite{deSa2018Accelerated} as our distributed eigenvalue algorithm of choice.  In brief, this method modifies the standard power iteration \eqref{power_method_iteration} by adding a Polyak momentum term, producing the iteration:
\begin{equation}
\label{accelerated_power_iteration}
x_{k+1} = Ax_k - \beta x_{k-1},
\end{equation}
where $\beta \in \Real_{+}$ is a \emph{fixed} constant.  We note that because
$\beta$ is constant, the iteration \eqref{accelerated_power_iteration} has the
same communication pattern as the standard power method
\eqref{power_method_iteration}, and so is well-suited to implementation in the
distributed setting.  Furthermore, despite the simplicity of the modification
\eqref{accelerated_power_iteration} versus \eqref{power_method_iteration}, the
addition of momentum  actually allows the accelerated power method to
\emph{match} the $\Ocal(1/\sqrt{\gamma})$ dependence of the Lanczos method on the dominant eigengap for a well-chosen parameter $\beta$.  More precisely, we have the following result:

\begin{theorem}[Theorem 8 of \cite{deSa2018Accelerated}]
\label{accelerated_power_method_convergence_theorem}
Let $A \in \PSD^n$  with eigenvalues $\lambda_1 > \lambda_2 \ge \dotsb \ge \lambda_n \ge 0$, and $\phi_1$ be the eigenvector associated with the maximum eigenvalue $\lambda_1$.   Given $\beta \in \Real_{+}$ satifying $\beta < \lambda_1^2 / 4$ and an initial Ritz vector $y_0 \in \Real^n$, after $k$ accelerated power iterations \eqref{accelerated_power_iteration} the angular error in the Ritz vector $y_k$ satisfies:
\begin{equation}
\label{accelerated_power_iteration_error_bound}
 \sin(\phi_1, y_k) \le \frac{\sqrt{1 - (\phi_1^\top y_0)^2}}{\lvert \phi_1^\top y_0 \rvert}  \cdot 
 \begin{cases}
  2 \left( \frac{2\sqrt{\beta}}{\lambda_1 + \sqrt{\lambda_1^2 - 4 \beta}}
  \right)^{k} & \beta > {\lambda_2^2}/{4} \\[0.5cm]
  \left(\frac{\lambda_2 + \sqrt{\lambda_1^2 - 4\beta}}{\lambda_1 +
  \sqrt{\lambda_1^2 - 4\beta}}\right)^{k} & \beta \le {\lambda_2^2}/{4}
  \end{cases}
  .
\end{equation}
\end{theorem}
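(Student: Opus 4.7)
The plan is to diagonalize in the eigenbasis of $A$ and reduce the matrix recurrence \eqref{accelerated_power_iteration} to a collection of independent scalar recurrences. Since $A$ is symmetric positive semidefinite, I expand $y_0 = \sum_{i=1}^n c_{0,i}\phi_i$ in an orthonormal eigenbasis $\{\phi_i\}$ of $A$; applying \eqref{accelerated_power_iteration} componentwise produces scalar sequences $\{c_{k,i}\}$ governed by the linear recurrence $c_{k+1,i} = \lambda_i c_{k,i} - \beta c_{k-1,i}$, whose characteristic polynomial $t^2 - \lambda_i t + \beta = 0$ has roots $t_{i,\pm} = (\lambda_i \pm \sqrt{\lambda_i^2 - 4\beta})/2$. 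The behaviour of $c_{k,i}$ splits into two regimes according to whether $\beta \le \lambda_i^2/4$ (two distinct real roots) or $\beta > \lambda_i^2/4$ (complex conjugate roots of common modulus $\sqrt{\beta}$).

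For the dominant direction $i=1$, the hypothesis $\beta < \lambda_1^2/4$ places us in the real-root regime, so solving the recurrence with initial data $c_{0,1}$ and $c_{1,1} = \lambda_1 c_{0,1}$ yields a closed form whose leading term grows like $t_{1,+}^k \, |c_{0,1}|$. For the suppressed directions $i \ge 2$, I bound $|c_{k,i}|$ above by splitting on the regime: when $\beta \le \lambda_i^2/4$ the closed form gives $|c_{k,i}| \le C_i \cdot t_{i,+}^k$, and monotonicity of $t \mapsto (t + \sqrt{t^2 - 4\beta})/2$ in $t$ allows me to replace $t_{i,+}$ by the worst case $t_{2,+}$; when $\beta > \lambda_i^2/4$ the two roots have common modulus $\sqrt{\beta}$, yielding $|c_{k,i}| \le C'_i \cdot (\sqrt{\beta})^k$. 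Since $\sin(\phi_1, y_k) \le \tan(\phi_1, y_k) = \sqrt{\sum_{i\ge 2} c_{k,i}^2}/|c_{k,1}|$, dividing the worst-case numerator growth by the denominator growth produces exactly the two cases of \eqref{accelerated_power_iteration_error_bound}: the ratio reduces to $\sqrt{\beta}/t_{1,+} = 2\sqrt{\beta}/(\lambda_1 + \sqrt{\lambda_1^2 - 4\beta})$ when $\beta > \lambda_2^2/4$, and to the form quoted in the second branch (obtained by further over-estimating $\sqrt{\lambda_2^2 - 4\beta}$ by $\sqrt{\lambda_1^2 - 4\beta}$ in the numerator) when $\beta \le \lambda_2^2/4$. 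The prefactor $\sqrt{1 - (\phi_1^\top y_0)^2}/|\phi_1^\top y_0|$ is precisely $\tan$ of the initial angle between $\phi_1$ and $y_0$, arising from the ratio of tangential to radial components of $y_0$ after aggregating the $\ell^2$-norms of the $\{c_{0,i}\}_{i \ge 2}$.

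The main obstacle I expect is the complex-root regime. When $\beta > \lambda_i^2/4$, the closed-form solution $c_{k,i} = \alpha_i t_{i,+}^k + \gamma_i t_{i,-}^k$ involves complex roots, and I must verify that the \emph{real} sequence produced by the recurrence is bounded by a constant (independent of $i$) times $(\sqrt{\beta})^k$ uniformly across all suppressed indices whose $\lambda_i$ straddle the threshold $2\sqrt{\beta}$. The cleanest approach is to write $t_{i,\pm} = \sqrt{\beta} \, e^{\pm \mathrm{i}\theta_i}$ with $\cos\theta_i = \lambda_i/(2\sqrt{\beta})$, so that $c_{k,i} = (\sqrt{\beta})^k (a_i \cos k\theta_i + b_i \sin k\theta_i)$ for real constants $a_i, b_i$ determined by $c_{0,i}$ and $c_{1,i} = \lambda_i c_{0,i}$; the factor of $2$ in the first branch of \eqref{accelerated_power_iteration_error_bound} reflects the $\sqrt{a_i^2 + b_i^2}$ oscillation envelope, and careful bookkeeping of these envelope constants, combined with the identity $\sum_{i\ge 2} c_{0,i}^2 = 1 - (\phi_1^\top y_0)^2$ used at the aggregation step, delivers the stated prefactor.
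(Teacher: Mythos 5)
A preliminary remark: the paper never proves Theorem \ref{accelerated_power_method_convergence_theorem} --- it is imported verbatim (as ``Theorem 8'') from \cite{deSa2018Accelerated} --- so there is no internal proof to compare you against; the comparison below is with the Chebyshev-polynomial argument that underlies the cited result.

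Your skeleton (eigendecomposition plus scalar three-term recurrences) is the right one, and is in fact the Chebyshev analysis in disguise: with your initial data $c_{0,i}$, $c_{1,i}=\lambda_i c_{0,i}$, the exact solution is $c_{k,i}=c_{0,i}\,\beta^{k/2}\,U_k\bigl(\lambda_i/(2\sqrt{\beta})\bigr)$, where $U_k$ is the Chebyshev polynomial of the \emph{second} kind. The genuine gap is precisely the step you defer to ``careful bookkeeping'': the claim that the oscillation envelope in the complex-root regime is bounded by a universal constant. It is not. Solving for your $a_i,b_i$ gives $\sqrt{a_i^2+b_i^2}=\frac{2\sqrt{\beta}}{\sqrt{4\beta-\lambda_i^2}}\,\lvert c_{0,i}\rvert$, which diverges as $\lambda_i\uparrow 2\sqrt{\beta}$; near that threshold the recurrence is near-resonant and the best uniform statement is $\lvert c_{k,i}\rvert\le (k+1)\,\beta^{k/2}\lvert c_{0,i}\rvert$ (equivalently, $U_k(1)=k+1$), a factor growing with $k$, not a constant. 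Worse, this is not a repairable looseness: under your seeding $x_1=Ax_0$ the stated bound is actually \emph{false}. Take $A=\Diag(5/2,\,2-\epsilon)$ with small $\epsilon>0$ and $\beta=1$ (so $\beta<\lambda_1^2/4$ and $\beta>\lambda_2^2/4$, the first branch), and a unit $y_0$ with $\tan\angle(\phi_1,y_0)=\tau$ small. Then $y_2=(A^2-I)y_0$, so $\sin\angle(\phi_1,y_2)\approx\frac{\lambda_2^2-1}{\lambda_1^2-1}\,\tau\approx\frac{4}{7}\tau$, whereas the claimed bound is $2\left(\frac{2}{5/2+3/2}\right)^2\tau=\frac{1}{2}\tau$. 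So no amount of bookkeeping can finish the proof along your route; the theorem implicitly refers to a different normalization of the iteration.

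That normalization --- and what the proof in \cite{deSa2018Accelerated} actually hinges on --- is the \emph{first-kind} Chebyshev seeding: one analyzes $x_k=\beta^{k/2}\,T_k\bigl(A/(2\sqrt{\beta})\bigr)x_0$, i.e.\ the same recurrence started from $c_{1,i}=\lambda_i c_{0,i}/2$, for which your $b_i$ vanish identically and the suppressed envelope is exactly $\lvert c_{0,i}\rvert$ (this is the classical fact $\lvert T_k\rvert\le 1$ on $[-1,1]$). The constant $2$ in the first branch then arises on the opposite side of the fraction from where you place it: it comes from the \emph{lower} bound on the dominant component, $\lvert c_{k,1}\rvert=\lvert c_{0,1}\rvert\,\frac{1}{2}\bigl(t_{1,+}^k+t_{1,-}^k\bigr)\ge\frac{\lvert c_{0,1}\rvert}{2}\,t_{1,+}^k$ (in your root notation $t_{1,\pm}=(\lambda_1\pm\sqrt{\lambda_1^2-4\beta})/2$), not from the suppressed envelope. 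Your treatment of the second branch is also insufficient: your closed form carries the prefactor $1/\sqrt{\lambda_2^2-4\beta}$, which blows up when $\beta\approx\lambda_2^2/4$, and even in the $T_k$-normalization the naive estimate only yields $2\,(t_{2,+}/t_{1,+})^k$, which does not imply the stated unit-constant bound at small $k$. What is required is the genuine Chebyshev ratio inequality $T_k(u_2)/T_k(u_1)\le\bigl(\frac{u_2+\sqrt{u_1^2-1}}{u_1+\sqrt{u_1^2-1}}\bigr)^k$ for $u_1>u_2\ge 1$ --- a separate lemma that must be proved or cited, not obtained by over-estimating one square root by another.
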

\begin{remark}[Selection of $\beta$]
\label{rem:select_beta}
While the hypotheses of Theorem \ref{accelerated_power_method_convergence_theorem} require that $\beta < \lambda_1^2 / 4$, note that lower bounds on $\lambda_1$ (which provide admissible values of $\beta$) are easy to obtain; indeed, the Courant-Fischer theorem \cite[Thm. 8.1.2]{Golub1996Matrix} implies that $\lambda_1 \ge y^\top A y$ for \emph{any} unit vector $y \in \Real^n$.  We also observe that the bound on the right-hand side of \eqref{accelerated_power_iteration_error_bound} is an \emph{increasing} function of $\beta$ for $\beta > \lambda_2^2 / 4$ and a \emph{decreasing} function for $\beta \le \lambda_2^2/4$, with
\begin{equation}
		\lim_{\beta \to \left({\lambda_2^2}/{4}\right)^{+}}
		\frac{2\sqrt{\beta}}{\lambda_1 + \sqrt{\lambda_1^2 - 4\beta}} =
		\frac{\lambda_2}{\lambda_1 + \sqrt{\lambda_1^2 - \lambda_2^2}} <
		\frac{\lambda_2 + \sqrt{\lambda_1^2 - \lambda_2^2}}{\lambda_1 +
		\sqrt{\lambda_1^2 - \lambda_2^2}} = \lim_{\beta \to \left({\lambda_2^2}/{4}\right)^{-}}  \frac{\lambda_2 + \sqrt{\lambda_1^2 - 4\beta}}{\lambda_1 + \sqrt{\lambda_1^2 - 4 \beta}};
\end{equation}
consequently, the optimal rate is achieved in the limit $\beta \rightarrow (\lambda_2^2 / 4)^{+}$.
\end{remark}

\begin{algorithm}[t]
	\caption{\textsc{\small Minimum eigenpair (MinEig)} }
	\label{alg:MinEig}
	\edit{}{
	\begin{algorithmic}[1]
		\renewcommand{\algorithmicrequire}{\textbf{Input:}}
		\renewcommand{\algorithmicensure}{\textbf{Output:}}
		\Require Certificate matrix $S = S(X)$ from \eqref{certificate_matrix}.
		\Ensure Minimum eigenpair $(\lambda_{\textnormal{min}}, v_{\textnormal{min}})$ of $S$.  
		\vspace{0.1cm}
		\State Compute dominant (maximum-magnitude) eigenpair $(\lambda_{\textnormal{dom}}, v_{\textnormal{dom}})$ of $S$ using power iteration \eqref{power_method_iteration}.  \label{Min_eig_alg_power_method_iteration}
		\If{$\lambda_{\textnormal{dom}} < 0$}
		\State \Return $(\lambda_{\textnormal{dom}}, v_{\textnormal{dom}})$
		\EndIf
		\State Compute maximum eigenpair $(\theta, v)$ of $C \triangleq \lambda_{\textnormal{dom}}I - S$ using accelerated power iteration \eqref{accelerated_power_iteration}.  \label{Min_eig_accelerated_power_method_step}
		\State \Return $(\lambda_{\textnormal{dom}} - \theta, v)$
	\end{algorithmic}
	}
\end{algorithm}

Combining the power and accelerated power methods with the spectral shifting strategy proposed in \cite[Sec.\ III-C]{Rosen17IROS} produces our distributed minimum-eigenvalue method (Algorithm \ref{alg:MinEig}).  In brief, the main idea is to construct a spectrally-shifted version $C$ of the certificate matrix $S(X)$ such that (i) a maximum eigenvector $v$ of $C$ coincides with a \emph{minimum} eigenvector of $S$, and (ii) $C \succeq 0$, so that we can recover the maximum eigenpair $(\theta, v)$ of $C$ using  accelerated power iterations \eqref{accelerated_power_iteration}.  Algorithm \ref{alg:MinEig} accomplishes this by first applying the (basic) power method \eqref{power_method_iteration} to estimate the dominant eigenpair $(\lambda_{\textnormal{dom}}, v_{\textnormal{dom}})$ of $S$ in line \ref{Min_eig_alg_power_method_iteration} (which does \emph{not} require $S \succeq 0$), and then applying the accelerated power method to compute the maximum eigenpair ($\theta, v)$ of $C = \lambda_{\textnormal{dom}}I - S \succeq 0$ in line \ref{Min_eig_accelerated_power_method_step}.  Note that while the minimum eigenvalue of $S(X)$ belongs to a tight cluster whenever $X$ is optimal for \eqref{eq:se_sync_riemannian} (necessitating our use of \emph{accelerated} power iterations in line \ref{Min_eig_accelerated_power_method_step}), the \emph{dominant} eigenvalue of $S$ is typically well-separated, and therefore can be computed to high precision using only a small number of power iterations in line \ref{Min_eig_alg_power_method_iteration}.  

\begin{remark}[Communication requirements of Algorithm~\ref{alg:MinEig}]
	{The bulk of the work in Algorithm~\ref{alg:MinEig} lies in updating the eigenvector estimate via the matrix-vector products 
	\eqref{power_method_iteration} and \eqref{accelerated_power_iteration}.}
	In the distributed regime, these can be implemented by having each robot {estimate the block of the eigenvector that corresponds to its own poses.}
	The communication pattern of this process is determined by the sparsity structure of the underlying matrix, which for our application are the dual certificate $S$ and its spectrally-shifted version $C$. 
	Fortunately, both $S$ and $C$ inherit the sparsity of the connection Laplacian $\ConLapT$.
	{This means that at each iteration of \eqref{power_method_iteration} or \eqref{accelerated_power_iteration}, each robot only needs to communicate with its neighbors in the global pose graph.} 
	Therefore, Algorithm \ref{alg:MinEig} provides an efficient way (in terms of \emph{both} computation \emph{and} communication) to compute a minimum eigenpair of $S$ in the distributed setting.
\end{remark}

\subsection{Descent from Suboptimal Critical Points}
In this subsection we describe a simple procedure for \emph{descending} from a first-order critical point $X \in \Manifold(r,n)$ of Problem \ref{prob:Rank_restricted_SDP} and restarting local optimization in the event that $\ZT = X^\top X$ is \emph{not} a minimizer of Problem \ref{prob:SDP_relaxation_for_PGO} (as determined by $\lambda < 0$, where $(\lambda, v)$ is the minimum eigenpair of the certificate matrix $S(X)$ in Theorem \ref{thm:verification}).  

In this setting, Theorem \ref{thm:verification}(b) shows how to use the minimum
eigenvector $v$ of $S(X)$ to construct a \emph{second}-order direction of
descent $\dot{X}_{+}$ from the lifting $X_{+}$ of $X$ to the next level of the
Riemannian Staircase.  Therefore, we can descend from $X_{+}$ by performing a
simple backtracking line-search along $\dot{X}_{+}$; we summarize this procedure
as Algorithm \ref{alg:EscapeSaddle}.  Note that since $\rgrad f(X_{+}) = 0$ and
$\langle \dot{X}_{+}, \Hess f(X_{+})[\dot{X}_{+}] \rangle < 0$ by Theorem
\ref{thm:verification}(b), letting $X(\alpha) \triangleq \Retr_{X_{+}}(\alpha
\dot{X}_{+})$, there exists a stepsize $\delta > 0$ such that $f(X(\alpha)) <
f(X_{+})$ and $\lVert \rgrad f(X(\alpha)) \rVert > 0$ for all $0 < \alpha <
\delta$, and therefore the loop in line
\ref{backtracking_line_search_termination_criteria} is guaranteed to terminate
after finitely many iterations.  Algorithm \ref{alg:EscapeSaddle} is thus
well-defined.  Moreover, since $\alpha$ decreases at an exponential rate (line
\ref{backtracking_line_search_stepsize_update}), in practice only a handful of
iterations are typically required to identify an acceptable stepsize.
Therefore, even though Algorithm \ref{alg:EscapeSaddle} requires coordination
among all of the agents (to evaluate the objective $f(X(\alpha))$ and gradient
norm $\lVert \rgrad f(X(\alpha)) \rVert$ each trial point $X(\alpha)$, and to
distribute the trial stepsize $\alpha$), it requires a sufficiently small number
of (very lightweight) globally-synchronized messages to remain tractable in the
distributed setting.  Finally, since the point returned by Algorithm
\ref{alg:EscapeSaddle} has nonzero gradient, it provides a nonstationary
initialization for local search at the next level $r+1$ of the Riemannian
Staircase (Algorithm~\ref{alg:riemannian_staircase}), thereby enabling us to continue the search for a low-rank factor in Problem \ref{prob:se_sync_riemannian} corresponding to a \emph{global} minimizer of the SDP relaxation Problem \ref{prob:SDP_relaxation_for_PGO}.

\begin{algorithm}[t]
	\caption{\textsc{\small Descent from a suboptimal critical point $X_{+}$ (EscapeSaddle)} }
	\label{alg:EscapeSaddle}
	\edit{}{
	\begin{algorithmic}[1]
		\renewcommand{\algorithmicrequire}{\textbf{Input:}}
		\renewcommand{\algorithmicensure}{\textbf{Output:}}
		\Require 
		\Statex - Lifted suboptimal critical point $X_{+}$ as defined in \eqref{lifted_suboptimal_critical_point}.
		\Statex - Second-order descent direction $\dot{X}_{+}$ as defined in \eqref{second_order_descent_direction}.
		\Ensure Feasible point $X \in \Manifold(r+1, n)$ satisfying $f(X) < f(X_{+})$, $\lVert \rgrad f(X) \rVert > 0$.
		\vspace{0.1cm}
		\State Set initial stepsize: $\alpha = 1$.
		\State Set initial trial point: $X \leftarrow \Retr_{X_{+}}(\alpha \dot{X}_{+})$
		\While{ $f(X) \ge f(X_{+})$ or $\lVert \rgrad f(X) \rVert = 0$}  \label{backtracking_line_search_termination_criteria}
		\State Halve steplength: $\alpha \leftarrow \alpha / 2$.  \label{backtracking_line_search_stepsize_update}
		\State Update trial point: $X \leftarrow \Retr_{X_{+}}(\alpha \dot{X}_{+})$.  \label{trial_point_computation_in_saddle_escape}
		\EndWhile
		\State \Return $X$.
	\end{algorithmic}
	}
\end{algorithm}

\section{Distributed Initialization and Rounding}
\label{sec:initialization_and_rounding}

\subsection{Distributed Initialization}
\label{sec:initialization}

A distinguishing feature of our approach versus prior distributed PGO methods is that it enables the direct computation of \emph{globally} optimal solutions of the PGO problem \eqref{eq:se_sync} via (convex) semidefinite programming, and therefore does \emph{not} depend upon a high-quality initialization in order to recover a good solution.  Nevertheless, it can still benefit (in terms of reduced computation time) from being supplied with a high-quality initial estimate whenever one is available.
%

Arguably the simplest method of constructing such an initial estimate is
\emph{spanning tree initialization} \cite{konolige2010sparse}.
As the name suggests, we compute the initial pose estimates by propagating the noisy pairwise measurements along an arbitrary spanning tree of the global pose graph.
In the distributed scenario, this technique incurs minimal computation and communication costs, as robots only need to exchange few public poses with their neighbors.

While efficient, the spanning tree initialization is heavily influenced by the noise of selected edges in the pose graph.
A more resilient but also more heavyweight method is \emph{chordal initialization}, originally developed to initialize rotation synchronization \cite{Martinec2007Robust,Carlone2015Initialization}. 
With this technique, one first relaxes rotation synchronization into a linear least squares problem, and subsequently projects the solution back to the rotation group.
For distributed computation, \citet{Choudhary17IJRR} propose to solve the 
resulting linear least squares problem via distributed iterative techniques such as the Jacobi and Gauss-Seidel methods \citep{bertsekas1989parallel}.
One detail to note is that in \citep{Choudhary17IJRR}, the translation estimates are not explicitly initialized but are instead directly optimized during a single distributed Gauss-Newton iteration. 
However, we find that this approach leads to poor convergence for \cite{Choudhary17IJRR} on some real-world datasets. 
To resolve this, in this work we also explicitly initialize the translations by fixing the rotation initialization and using distributed Gauss-Seidel to solve the reduced linear least squares over translations.
On the other hand, to prevent significant communication usage in the initialization stage, we limit the number of Gauss-Seidel iterations to 50 for both rotation and translation initialization. 

Lastly, we note that both initialization approaches return an initial solution $T \in \SE(d)^n$ on the original search space of PGO. Nevertheless, recall from Algorithm~\ref{alg:riemannian_staircase} that the Riemannian Staircase requires a initial point $X \in \Manifold(r_0, n)$ on the search space of the rank-restricted SDP (Problem~\ref{prob:se_sync_riemannian}). We thus need a mechanism that lifts the initial solution from $\SE(d)^n$ to the higher dimensional $\Manifold(r_0, n)$. 
This can be achieved by 
sampling a random point $Y_\text{rand} \in \Stiefel(d,r)$, and subsequently setting $X = Y_\text{rand} T$.

\subsection{Distributed Rounding}
\label{sec:rounding}

After solving the SDP relaxation, we need to 
``round'' the low-rank factor $X^\star \in \Manifold(r,n)$ returned by the Riemannian Staircase to a 
feasible solution to the original PGO problem (see line~\ref{alg:rounding} in Algorithm~\ref{alg:dpgo}). 
In this section, we describe a distributed rounding procedure that incurs minimal computation and communication costs, and furthermore is guaranteed to return a global minimizer to the original PGO (Problem~\ref{prob:se_sync}) provided that the SDP relaxation is \emph{exact}.

Given the output $X^\star$ from the Riemannian Staircase, consider its individual components that correspond to the ``lifted'' rotation and translation variables,
\begin{equation}
	X^\star = \begin{bmatrix}
	Y_1^\star & p_1^\star & \hdots & Y_n^\star & p_n^\star
	\end{bmatrix} \in (\Stiefel(d,r) \times \Real^r)^n.
\end{equation}
In Theorem~\ref{thm:tightness_informal}, we have shown that if the SDP relaxation is exact, then the first block-row of the corresponding SDP solution, which can be written as 
$T^\star \triangleq (Y^\star_1)^\top X^\star$, gives a global minimizer to PGO (Problem~\ref{prob:se_sync}). 
Looking at the rotation and translation of each pose in $T^\star$ separately, 
\begin{equation}
	R_i^\star = (Y_1^\star)^\top Y_i, \; t_i^\star = (Y_1^\star)^\top p_i.
	\label{eq:rounding_exact}
\end{equation}
Equation \eqref{eq:rounding_exact} thus recovers globally optimal rotation and translation estimates.
If the SDP relaxation is not exact, the $R_i^\star$ as computed in \eqref{eq:rounding_exact} may not be a valid rotation. To ensure feasibility, we additionally project it to $\SOd(d)$, 
\begin{equation}
	R_i = \proj_{\SOd(d)} ({Y^\star_1}^\top Y^\star_i).
	\label{eq:rounding_approximate}
\end{equation}
In \eqref{eq:rounding_approximate}, the projection can be carried out by computing the SVD. 

\begin{remark}[Communication requirements of distributed initialization and rounding]
	The distributed chordal initialization \citep{Choudhary17IJRR}
	has a similar communication pattern as distributed local search, where at each iteration robots exchange messages corresponding to their public poses. 
	On the other hand, distributed rounding incurs minimal communication, since agents only need to relay the small $r$-by-$d$ matrix $Y_1^\star$ over the network.
\end{remark}

\section{Experiments}
\label{sec:experiments}
We perform extensive evaluations of the proposed \AlgName\ algorithm on both simulations and benchmark CSLAM datatsets. 
Our simulation consists of multiple robots moving next to each other in a 3D grid with lawn mower trajectories. 
With a given probability (default $0.3$), 
loop closures are added to connect neighboring poses.
For all relative measurements, 
we simulate isotropic Langevin rotation noise according to \eqref{eq:rotation_noise_model} with mode $I_d$ and concentration parameter $\kappa$. To make the process of setting $\kappa$ more intuitive, we first set a desired standard deviation $\sigma_R$ for the \emph{rotation angle} of the rotational noise, and then use the asymptotic approximation shown in SE-Sync \cite[Appendix~A]{Rosen19IJRR} to compute the corresponding concentration parameter $\kappa$.
We also simulate Gaussian translation noise according to \eqref{eq:translation_noise_model} with zero mean and standard deviation $\sigma_t$.
The default noise parameters are $\sigma_R = 3^\circ, \sigma_t = 0.05m$.
See Figure~\ref{fig:example_simulation} for an example simulation together with the certified global minimizer found by \AlgName\ (Algorithm~\ref{alg:dpgo}).
All implementations are written in MATLAB.
All experiments are carried out on a laptop with an Intel i7 CPU and 8 GB RAM.

\begin{figure}[t]
	\centering
	\begin{subfigure}[t]{0.33\textwidth}
		\centering
		\includegraphics[width=\textwidth]{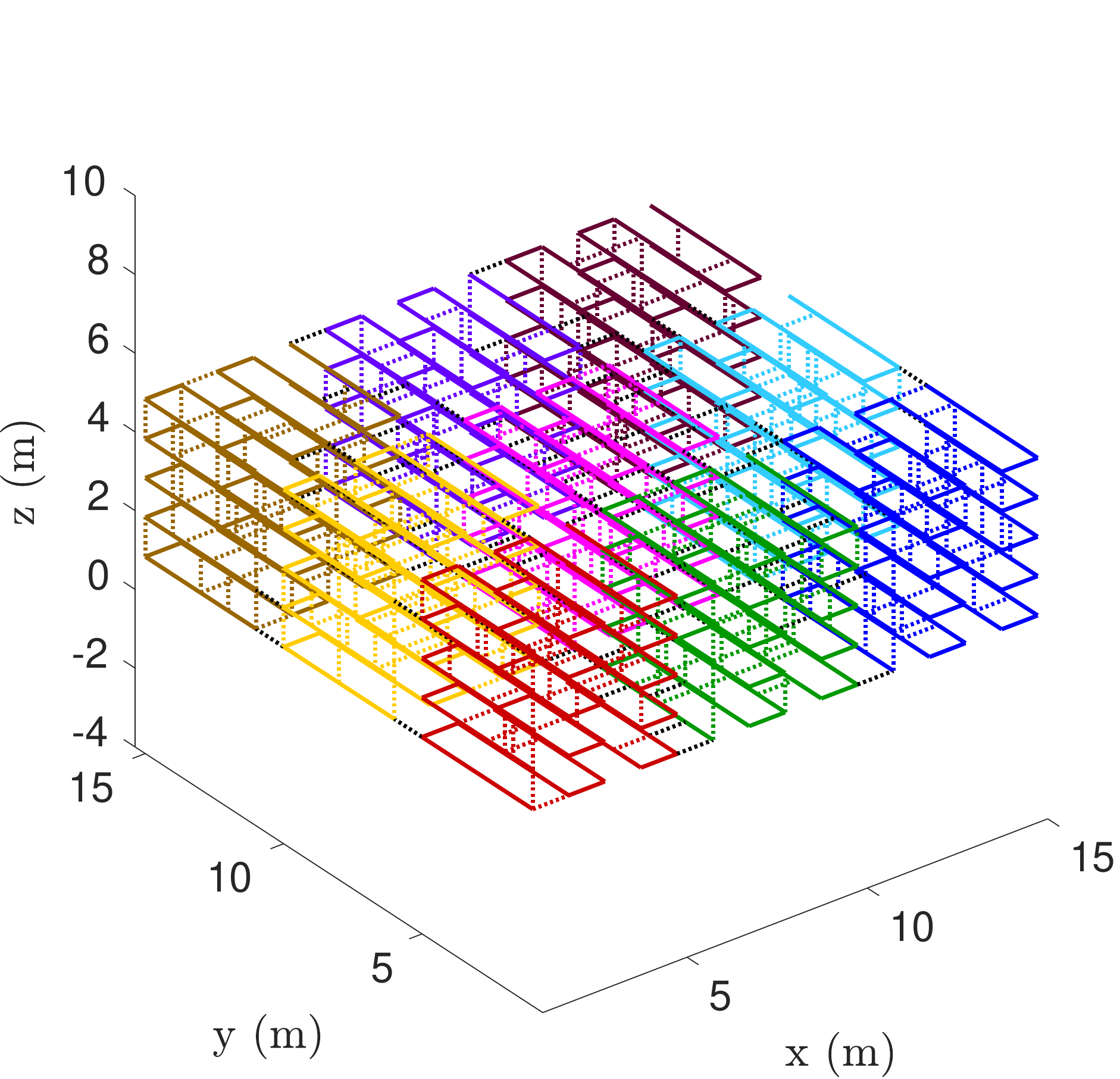}
		\caption{\small Ground truth}
	\end{subfigure}
	\hspace{1cm}
	\begin{subfigure}[t]{0.33\textwidth}
		\centering
		\includegraphics[width=\textwidth]{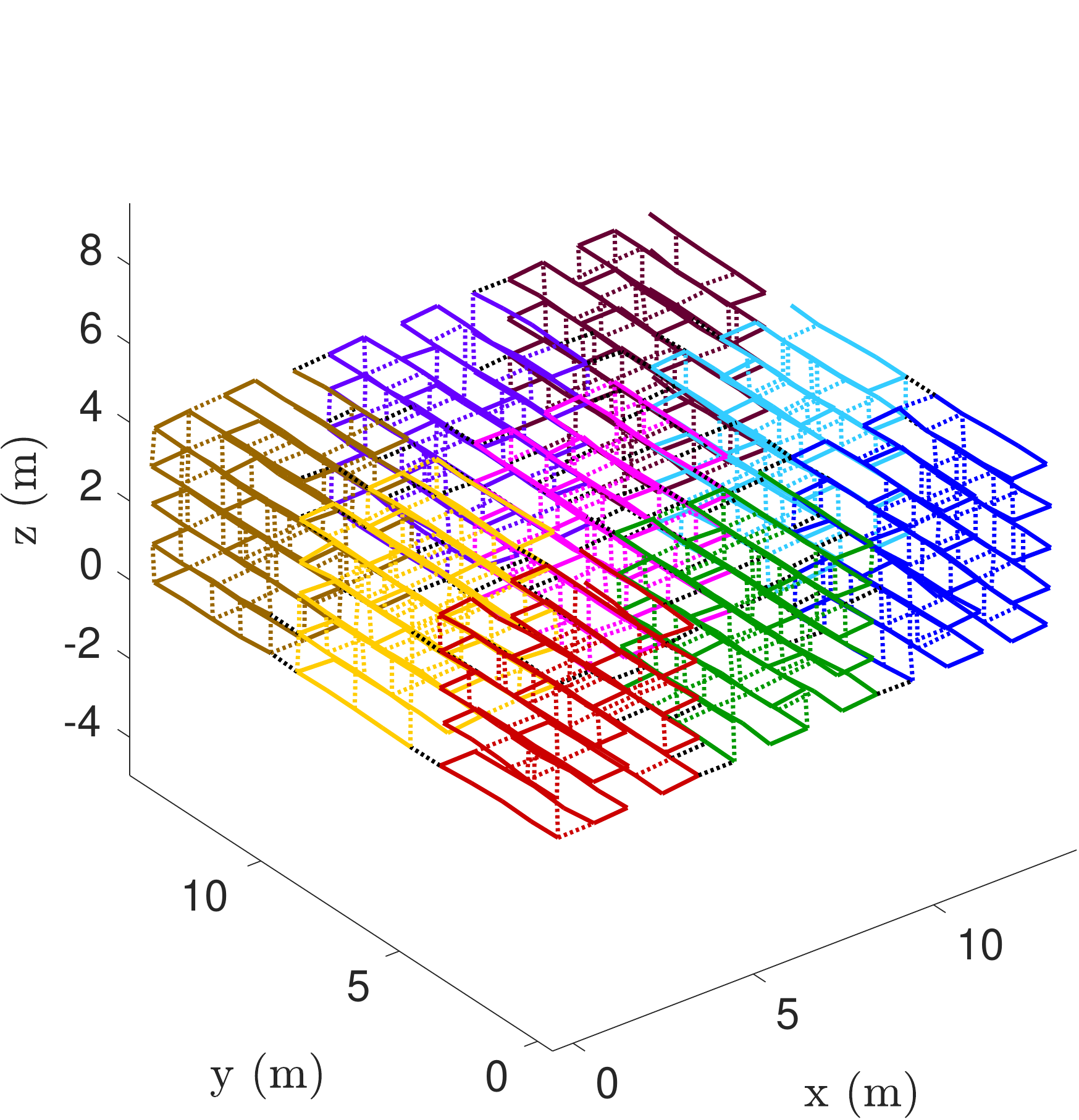}
		\caption{\small Certified solution from Algorithm~\ref{alg:dpgo}}
	\end{subfigure}
	\caption{\small  
		Example simulation consisting of 9 robots (trajectories shown in different colors) where each robot has $125$ poses. 
		Loop closures are drawn as dotted lines.
		(a) Ground truth. (b) Certified global minimizer returned by \AlgName\ (Algorithm~\ref{alg:dpgo}).
	}
	\label{fig:example_simulation}
\end{figure} 

When evaluating our approach and baseline methods, we use the following performance metrics. First, we compute the optimality gap $f - f^\star_\text{SDP}$, where $f^\star_\text{SDP}$ is the optimal value of the centralized SDP relaxation computed using {SE-Sync \cite{Rosen19IJRR}}.
In the (typical) case that the SDP relaxation is exact, the rounded PGO estimates returned by both our approach and SE-Sync will achieve a zero optimality gap (to within numerical tolerances).
Additionally, when evaluating convergence rates of local search methods, we compute the evolution of the Riemannian gradient norm, which quantifies how fast the iterates are converging to a first-order critical point.
Lastly, in some experiments, we also compute the translational and rotational {root mean square errors (RMSE)} with respect to the solution of SE-Sync. 
Given two sets of translations $t, t' \in \Real^{d \times n}$, the translational RMSE is based on the standard $\ell_2$ distance after aligning $t,t'$ in the same frame.
Given two sets of rotations $R, R' \in \SOd(d)^n $, we define the rotational RMSE to be,
\begin{equation}
	\text{RMSE}(R, R') = \sqrt{d_S(R,R')^2/n},
	\label{eq:rotation_RMSE_def}
\end{equation}
where $d_S(R,R')$ is the \emph{orbit distance} for $\SOd(d)^n$ defined in SE-Sync \cite[Appendix C.1]{Rosen19IJRR}. 
Although we measure rotational and translational errors separately, both still give meaningful metrics of the overall PGO solution. 
This is especially true for rotational RMSE, since we know that given rotation estimates, the corresponding optimal translations can be computed in closed-form \cite{Rosen19IJRR}.

The rest of this section is organized as follows. 
In Section~\ref{sec:local_search_experiments}, we evaluate our distributed local search methods, specifically 
$\RBCD$ and its accelerated version $\ARBCD$, when solving the rank-restricted relaxations of PGO. 
Then, in Section~\ref{sec:verification_experiments}, we evaluate the proposed distributed verification scheme.
Lastly, in Section~\ref{sec:full_experiments}, we evaluate our complete distributed certifiably correct PGO algorithm (Algorithm~\ref{alg:dpgo}).

\subsection{Evaluations of Distributed Local Search}
\label{sec:local_search_experiments}

We first evaluate the performance of the proposed $\RBCD$ and $\ARBCD$ algorithms when solving the rank-restricted relaxations (Problem~\ref{prob:se_sync_riemannian}). 
Recall that this serves as the central local search step in our overall Riemannian Staircase algorithm.
By default, we set the relaxation rank to $r = 5$, and enable parallel execution as described in Section~\ref{sec:parallel_execution}.

\begin{figure}[t]
	\centering
	\begin{subfigure}[t]{0.23\textwidth}
		\centering
		\includegraphics[trim=5 10 40 20, clip, width=\textwidth]
		{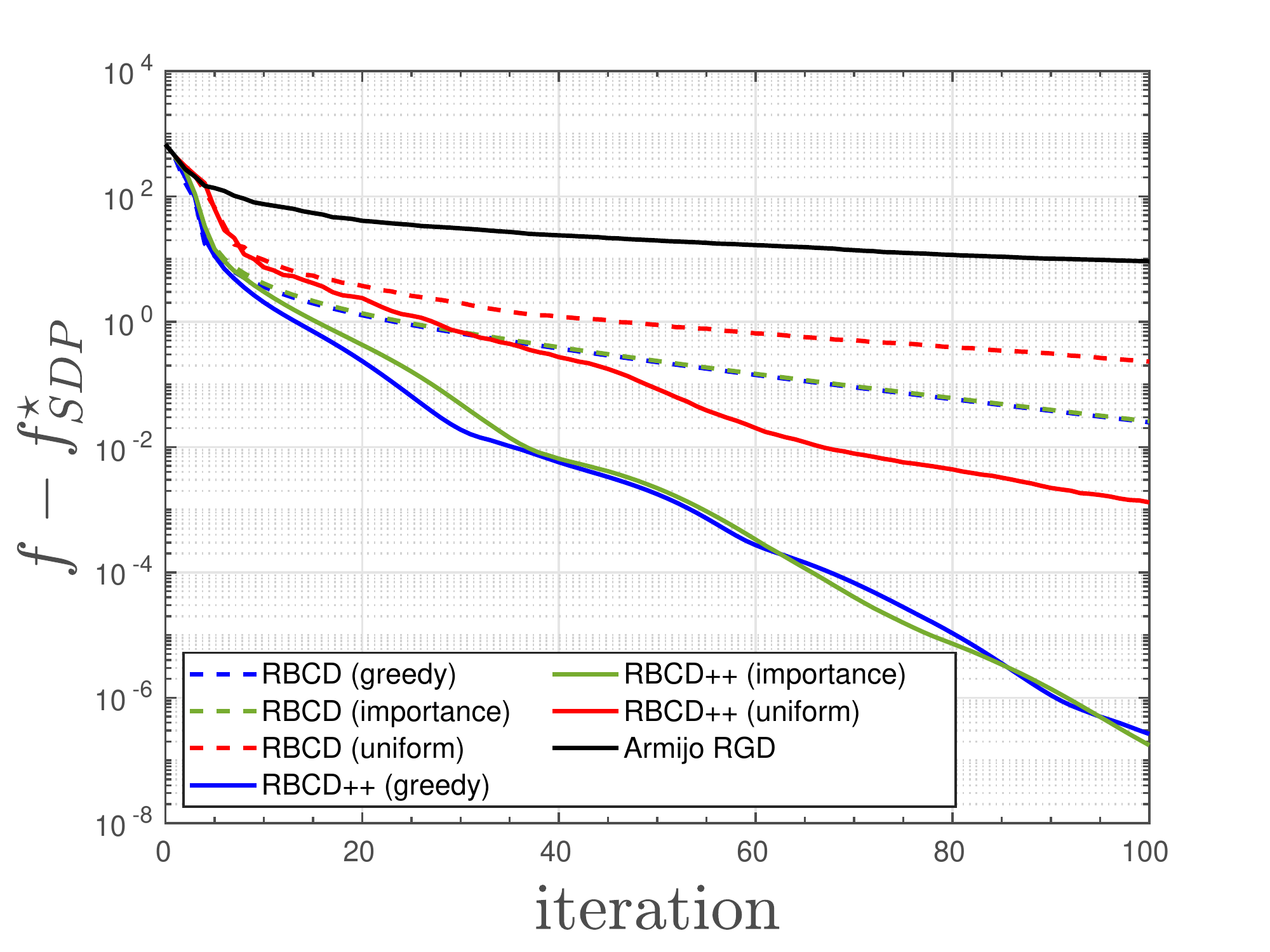}
		\caption{\small Average optimality gap}
		\label{fig:local_search_optgap}
	\end{subfigure}
	~
	\begin{subfigure}[t]{0.23\textwidth}
		\centering
		\includegraphics[trim=5 10 40 20, clip, width=\textwidth]
		{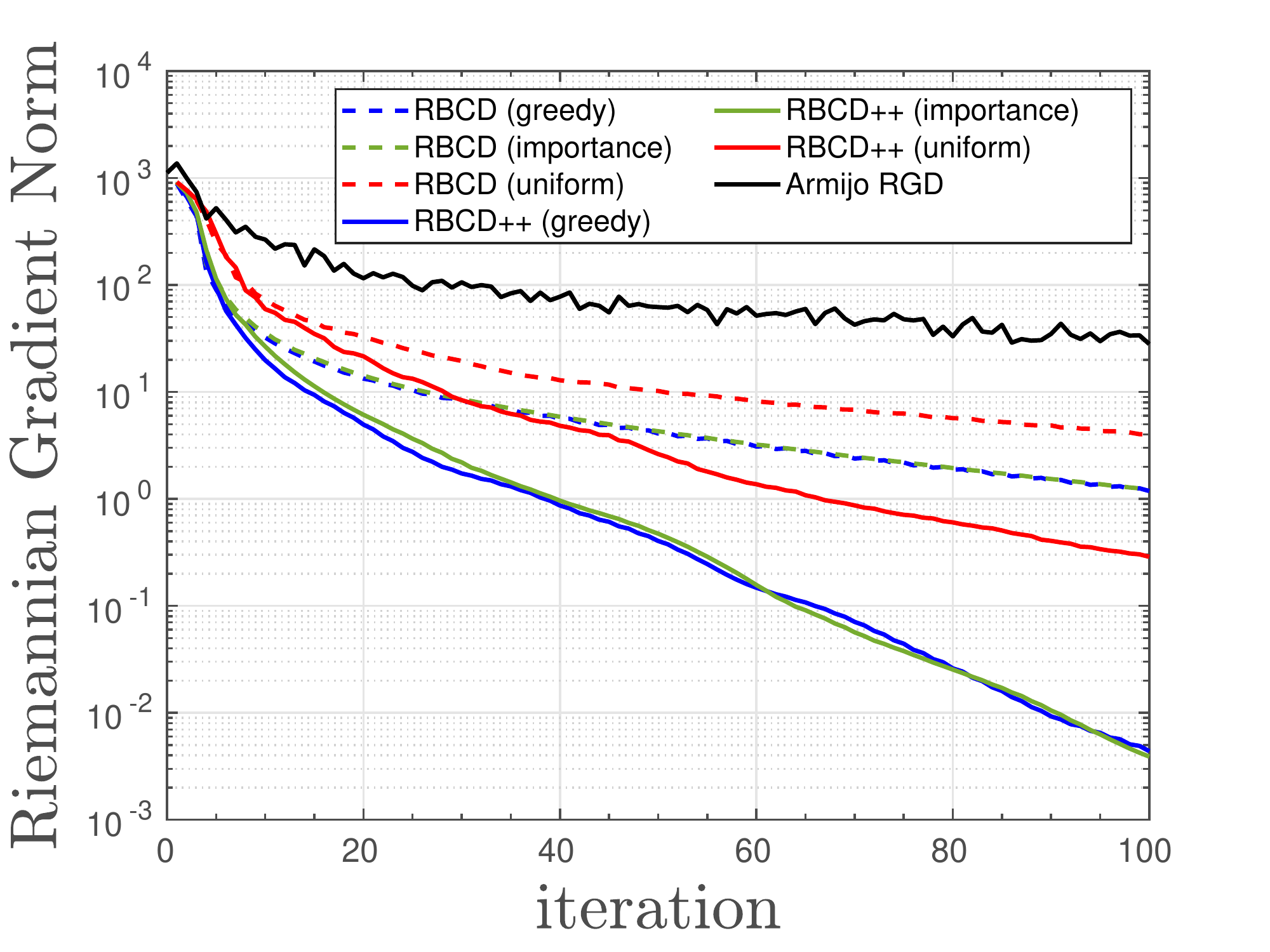}
		\caption{\small Average gradient norm}
		\label{fig:local_search_gradnorm}
	\end{subfigure}
	~
	\begin{subfigure}[t]{0.23\textwidth}
		\centering
		\includegraphics[width=\textwidth]{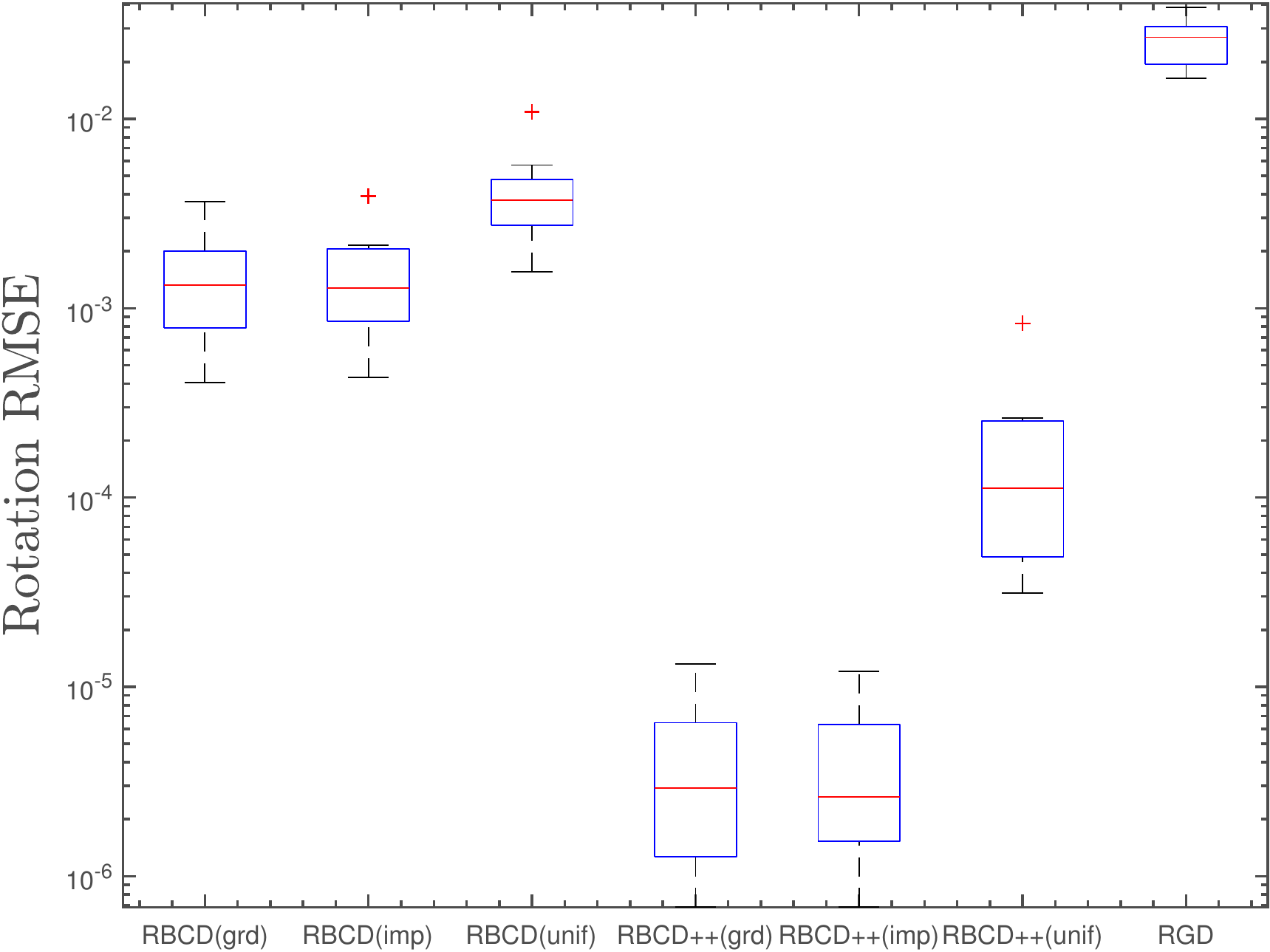}
		\caption{\small rotation RMSE}
		\label{fig:local_search_rotRMSE}
	\end{subfigure}
	~
	\begin{subfigure}[t]{0.24\textwidth}
		\centering
		\includegraphics[width=\textwidth]{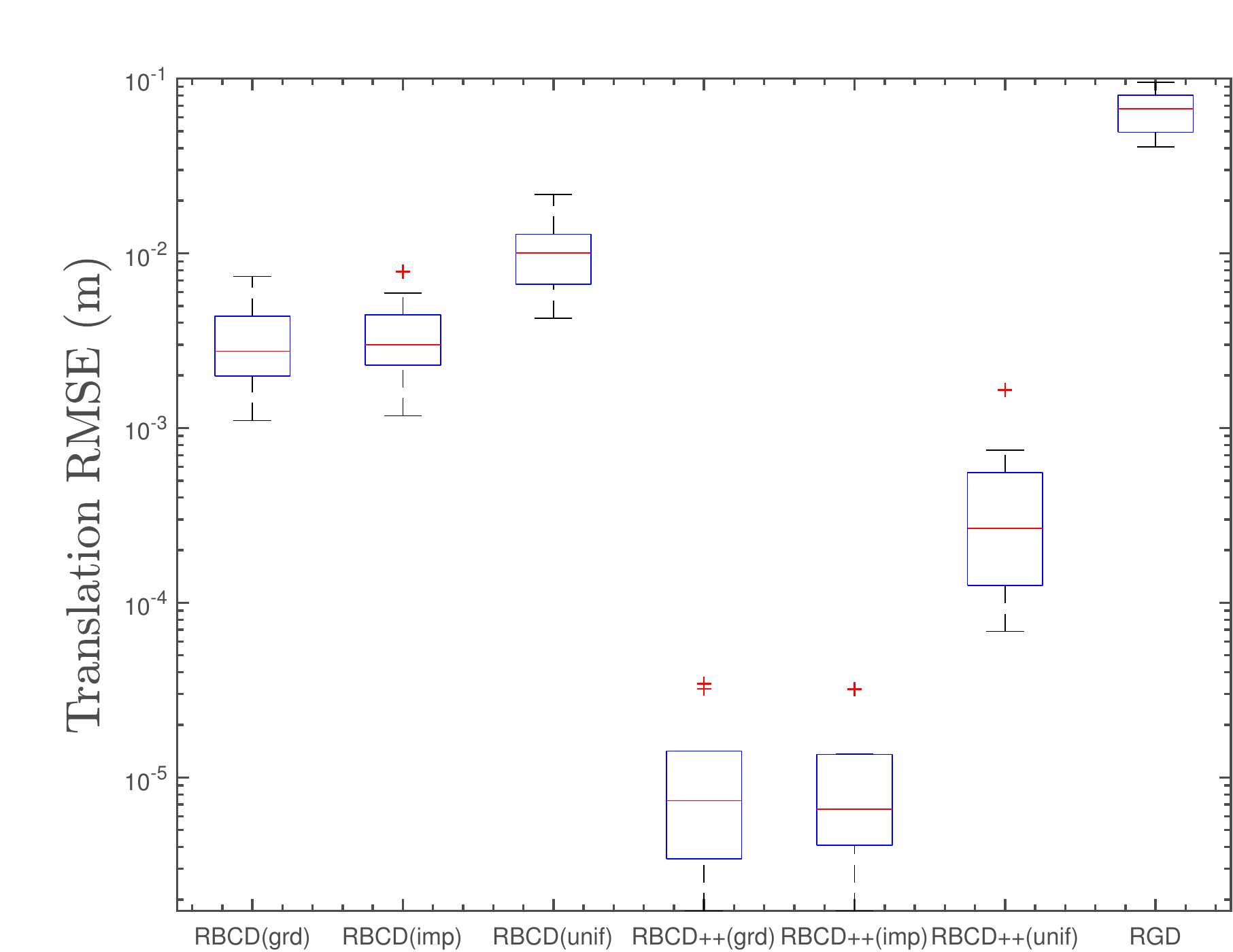}
		\caption{\small translation RMSE}
		\label{fig:local_search_transRMSE}
	\end{subfigure}
	\caption{\small  
		Convergence rates and final estimation errors of $\RBCD$ and $\ARBCD$ with uniform, importance, or greedy selection rules, on 
		the 9 robot simulation shown in Figure~\ref{fig:example_simulation}.
		(a) Evolution of optimality gap averaged averaged over 10 random runs.
		(b) Evolution of Riemannian gradient norm averaged over 10 random runs.
		(c) Boxplot of final rotation RMSE (after rounding) with respect to the global minimizer.
		(d) Boxplot of final translation RMSE (after rounding) with respect to the global minimizer.
	}
	\label{fig:local_search_convergence}
\end{figure}

Figure~\ref{fig:local_search_convergence} shows the performance on our 9 robot scenario shown in Figure~\ref{fig:example_simulation}. 
We report the performance of our proposed methods using all three block selection rules proposed in Section~\ref{sec:block_selection_rule}: uniform sampling, importance sampling, and greedy selection.
For reference, we also compare our performance against the Riemannian gradient descent (RGD) algorithm with Armijo's backtracking line search implemented in Manopt \cite{manopt}.
As the results demonstrate, both $\RBCD$ and $\ARBCD$ dominate the baseline RGD algorithm in terms of convergence speed and solution quality. 
As expected, importance sampling and greedy block selection also lead to faster convergence compared to uniform sampling.
Furthermore, $\ARBCD$ shows significant empirical acceleration and is able to converge to the global minimum with high precision using only 100 iterations. 
Note that in this experiment, we choose to report convergence speed with respect to iteration number, because it is directly linked to the number of communication rounds required during distributed optimization. 
For completeness, we also note that the average runtime of \textsc{BlockUpdate} (implemented based on a modified version of the trust-region solver in Manopt \cite{manopt}) is $0.023$~s.

\begin{figure}[t]
	\centering
	\begin{subfigure}[t]{0.33\textwidth}
		\centering
		\includegraphics[trim=5 10 40 20, clip, width=\textwidth]
		{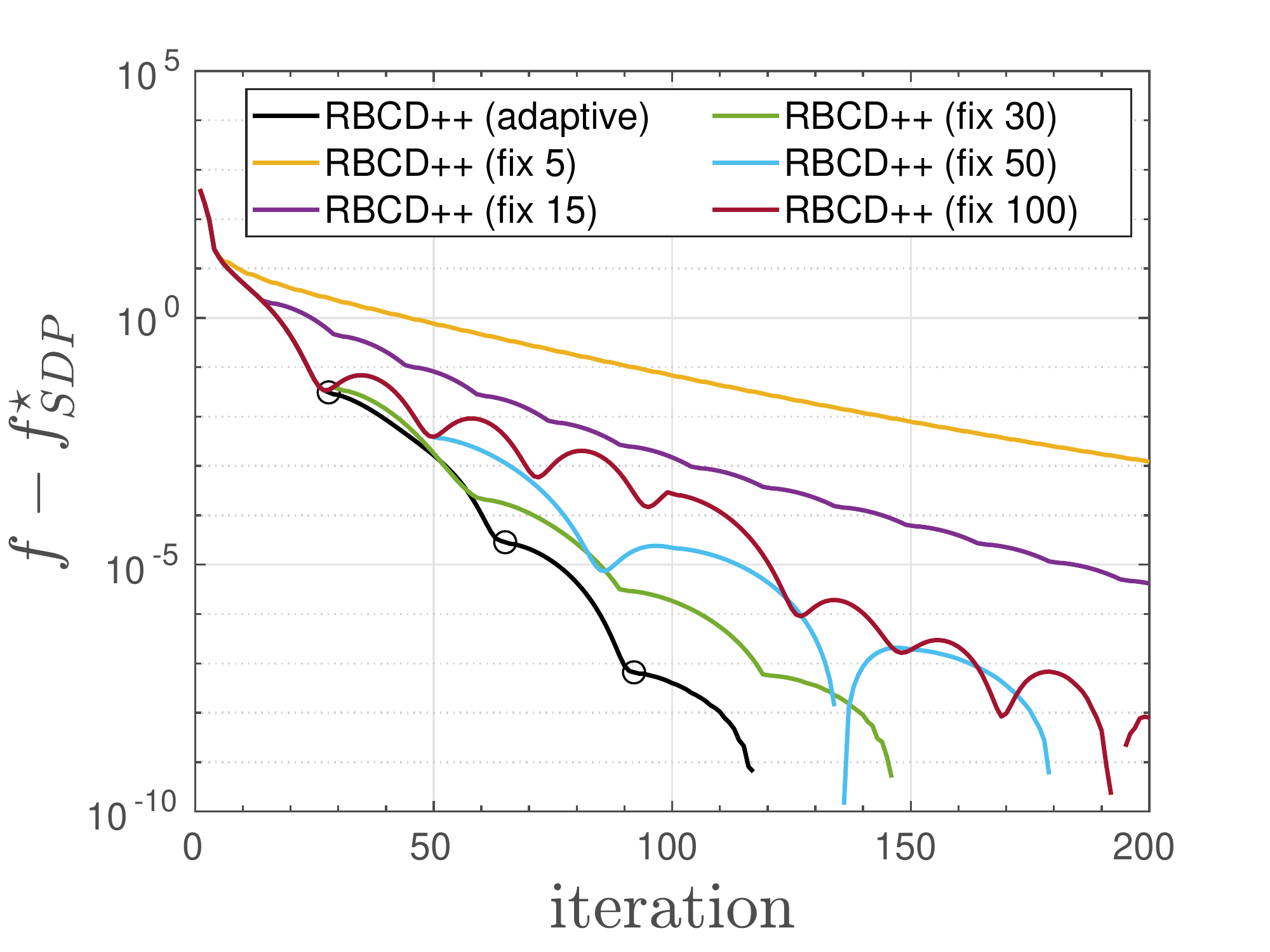}
		\caption{\small Optimality Gap}
		\label{fig:local_search_restart_optgap}
	\end{subfigure}
	\hspace{1cm}
	\begin{subfigure}[t]{0.33\textwidth}
		\centering
		\includegraphics[trim=5 10 40 20, clip, width=\textwidth]
		{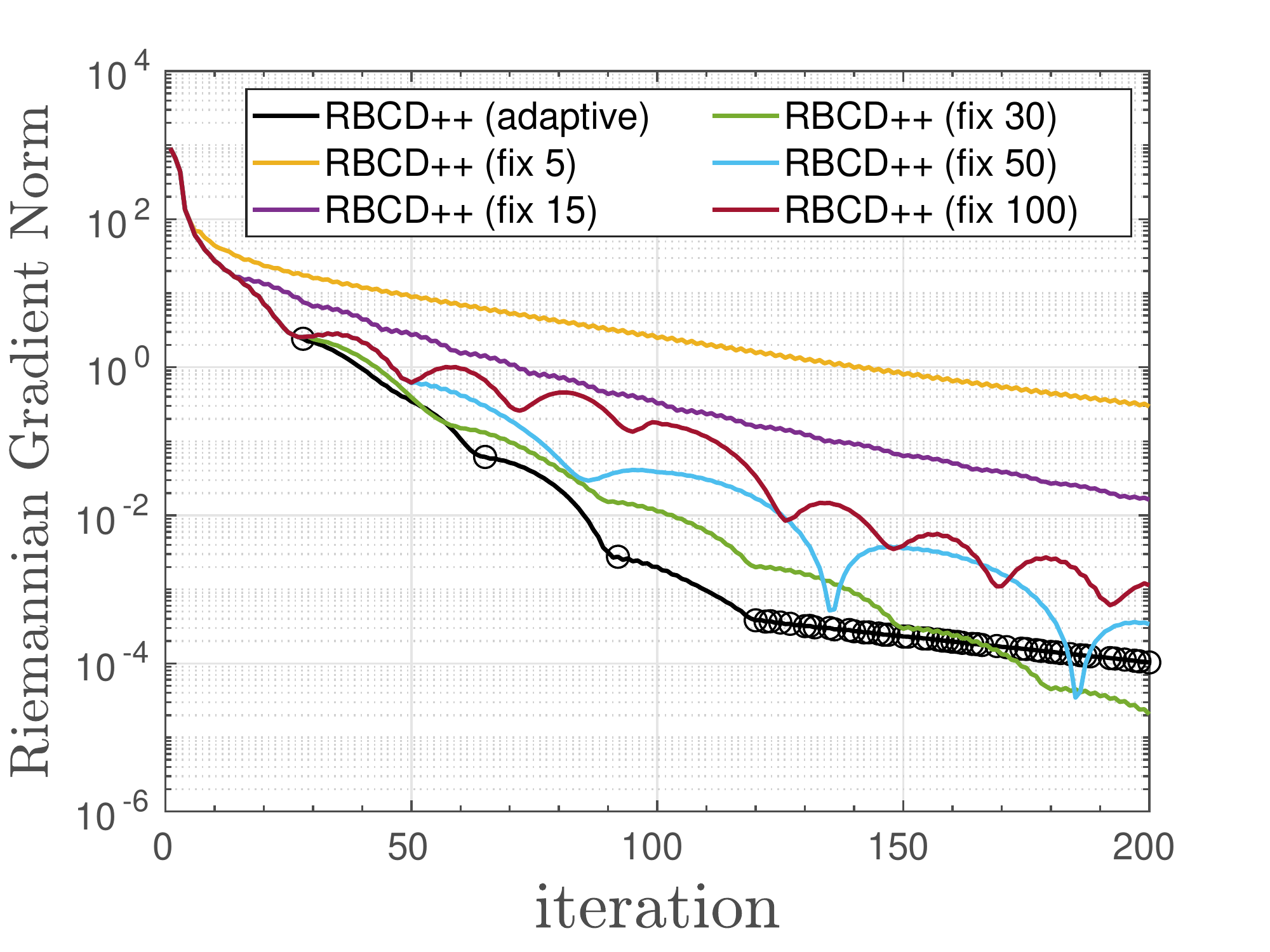}
		\caption{\small  Gradient norm}
		\label{fig:local_search_restart_gradnorm}
	\end{subfigure}
	\caption{\small  
		Adaptive restart vs. fixed restart for $\ARBCD$ (greedy selection) on a random instance of our simulation. 
		For fixed restart, we use restart frequency ranging from every 5 to every 100 iterations. 
		For adaptive restart (black curves), we also highlight iterations where restart is triggered with circle markers.
		(a) Evolution of optimality gaps.
		(b) Evolution of Riemannian gradient norm.
	}
	\label{fig:local_search_restart}
\end{figure} 

In Figure~\ref{fig:local_search_convergence}, we report the performance of $\ARBCD$ with the default adaptive restart scheme (see Algorithm~\ref{alg:ARBCD}). 
As we have discussed in Remark~\ref{rem:restart}, a less expensive and hence more practical restart scheme is simply to reset Nesterov's acceleration after a fixed number of iterations; this scheme is typically referred to as \emph{fixed restart} in the literature.
In Figure~\ref{fig:local_search_restart}, we compare adaptive restart with fixed restart on a random instance of our simulation.
For fixed restart, we use different restart frequency ranging from every 5 to every 100 iterations. 
We observe that with a short restart period (e.g., 5), convergence of $\ARBCD$ is significantly slowed down.
This result is expected, as frequent restarting essentially removes the effect of acceleration from the iterations of $\ARBCD$.
In the extreme case of restarting at every iteration, the algorithm essentially reduces to $\RBCD$.
On the other hand, long restart period (e.g., 100) also has a negative impact, and we observe that the overall convergence displays undesirable oscillations.
Finally, we find that a suitably chosen restart period (e.g., 30) demonstrates a superior convergence rate that is similar to the adaptive restart scheme.

\begin{figure}[t]
	\centering
	\begin{subfigure}[t]{0.23\textwidth}
		\centering
		\includegraphics[trim=5 10 30 20, clip, width=\textwidth]
		{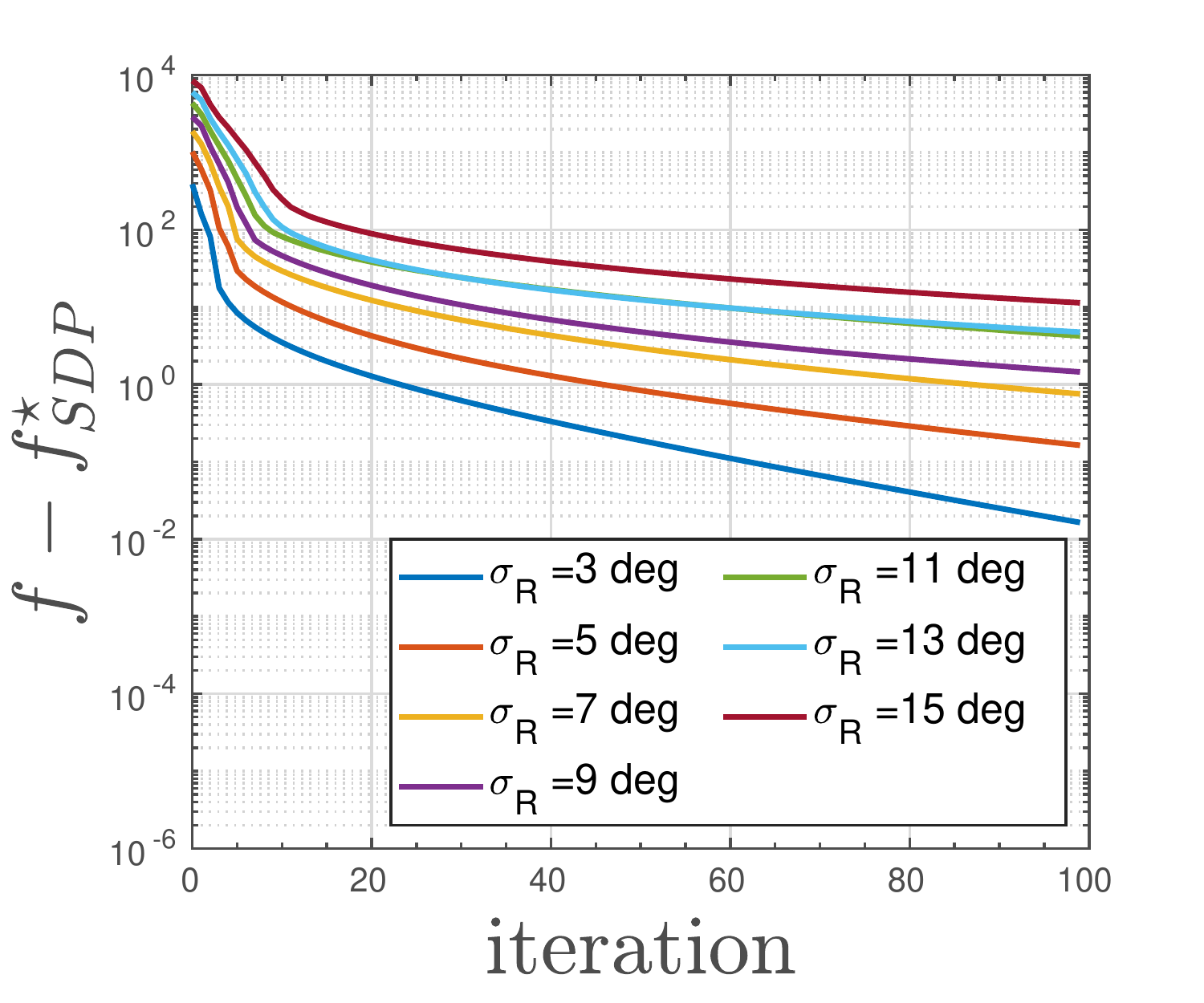}
		\caption{\small $\RBCD$ under increasing rotation noise}
		\label{fig:vary_rotation_noise_BCD}
	\end{subfigure}
	~
	\begin{subfigure}[t]{0.23\textwidth}
		\centering
		\includegraphics[trim=5 10 30 20, clip, width=\textwidth]
		{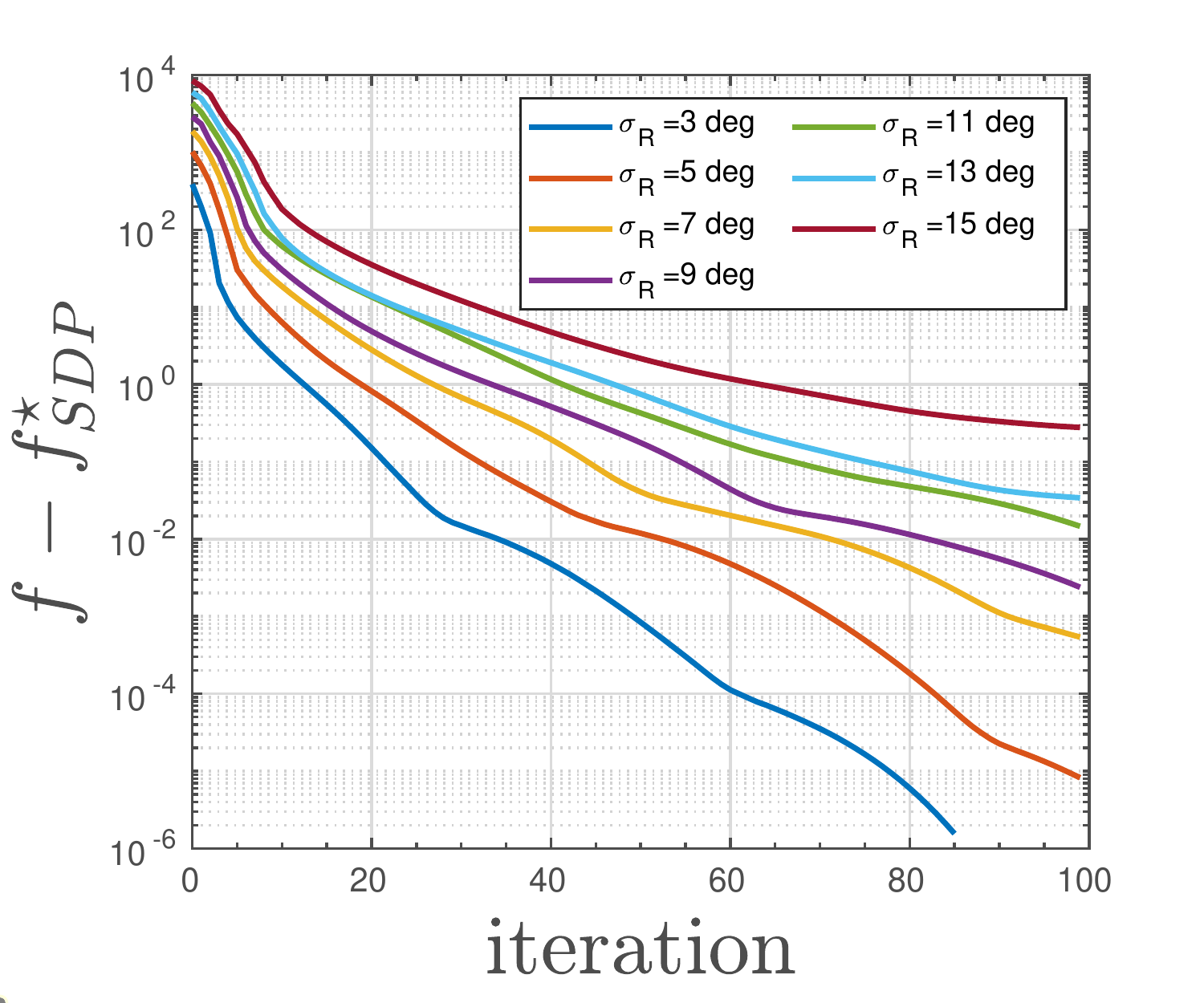}
		\caption{\small  $\ARBCD$ under increasing rotation noise}
		\label{fig:vary_rotation_noise_ABCD}
	\end{subfigure}
	~
	\begin{subfigure}[t]{0.23\textwidth}
		\centering
		\includegraphics[trim=5 10 30 20, clip, width=\textwidth]
		{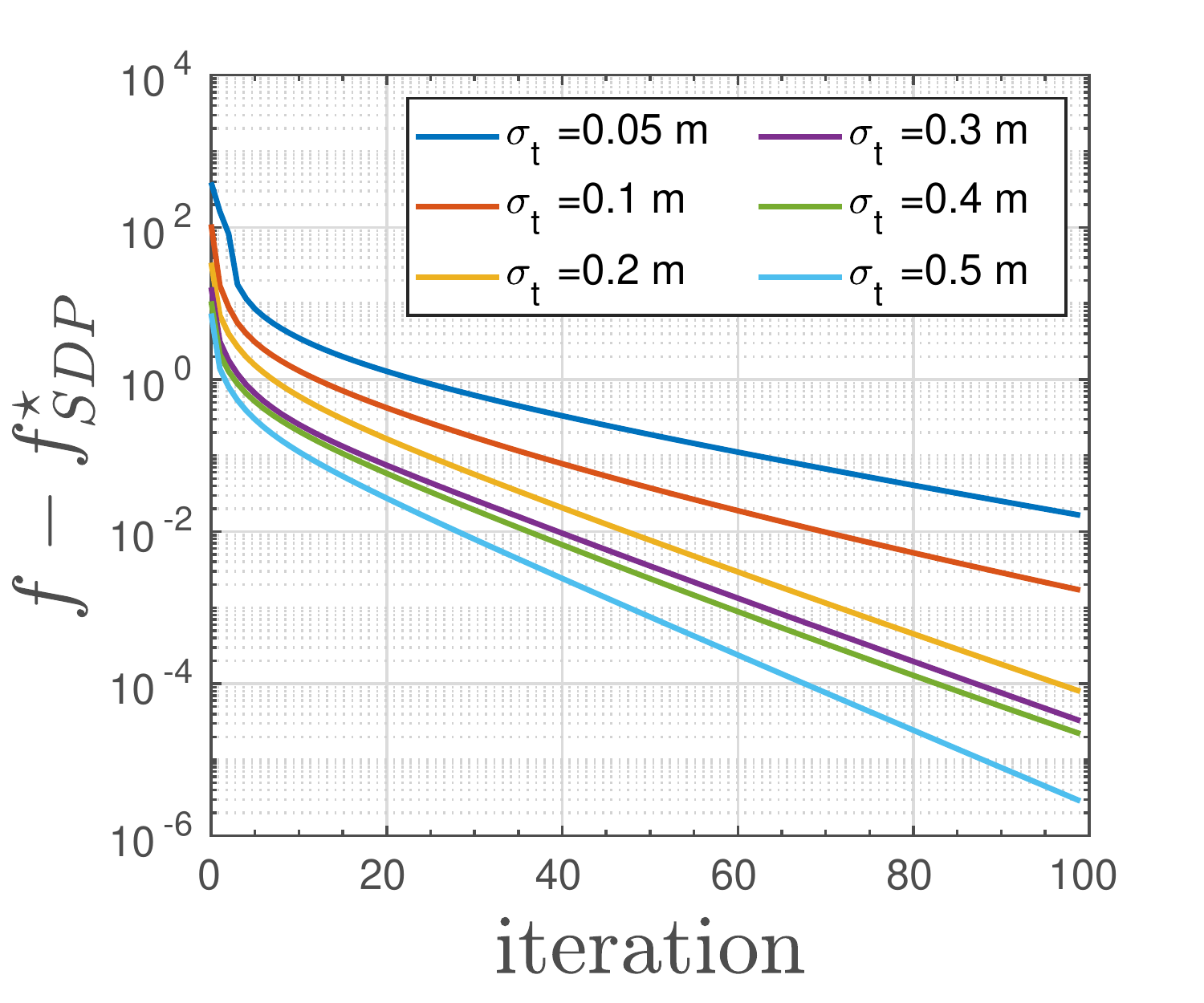}
		\caption{\small  $\RBCD$ under increasing translation noise}
		\label{fig:vary_translation_noise_BCD}
	\end{subfigure}
	~
	\begin{subfigure}[t]{0.23\textwidth}
		\centering
		\includegraphics[trim=5 10 30 20, clip, width=\textwidth]
		{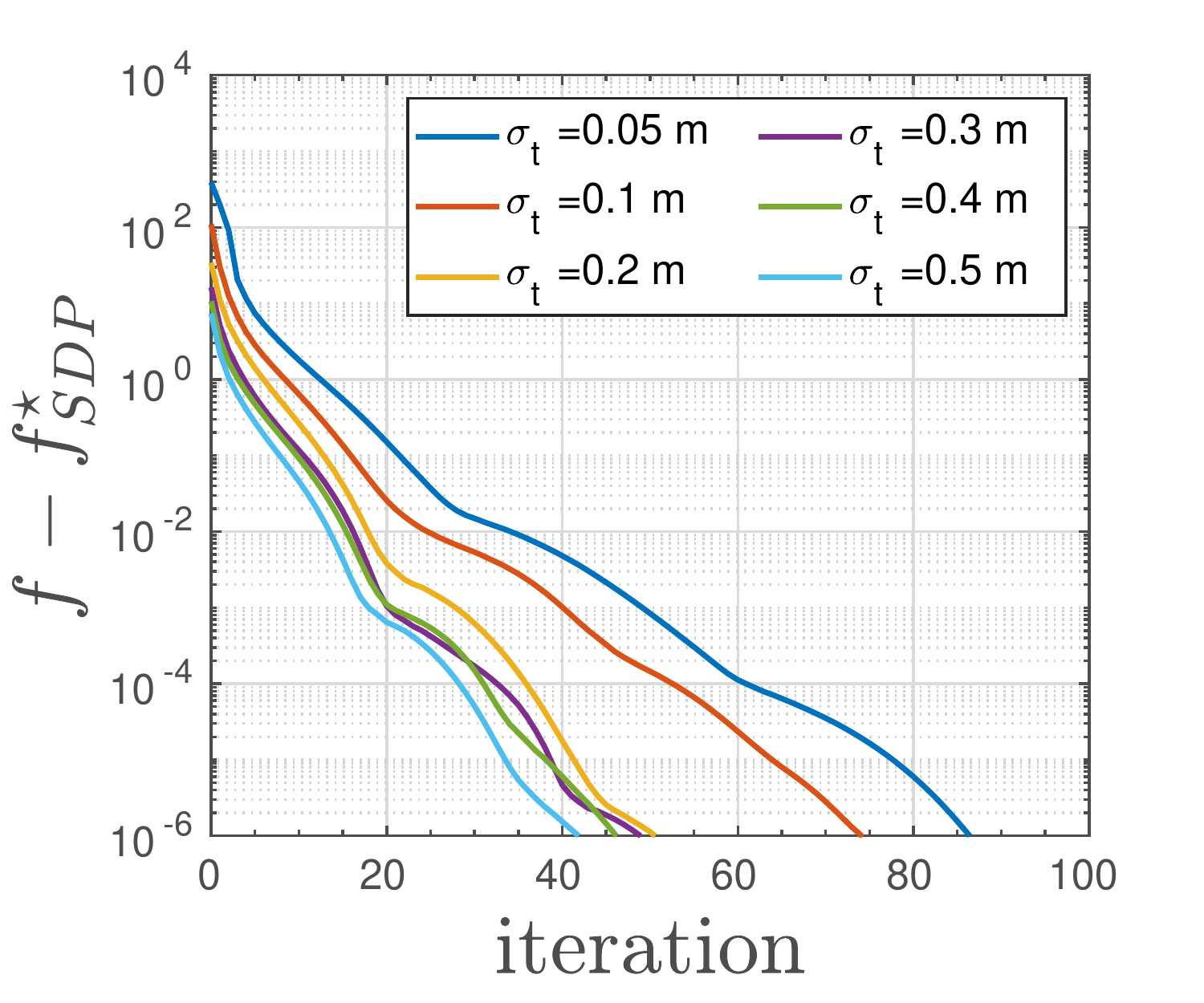}
		\caption{\small  $\ARBCD$ under increasing translation noise}
		\label{fig:vary_translation_noise_ABCD}
	\end{subfigure}
	\caption{\small  
		Convergence of $\RBCD$ and $\ARBCD$ with greedy selection under varying rotation and translation measurement noise. 
		(a)-(b) Convergence of $\RBCD$ and $\ARBCD$ under increasing rotation noise and fixed translation noise of $\sigma_t = 0.05$~m.
		(c)-(d) Convergence of $\RBCD$ and $\ARBCD$ under increasing translation noise and fixed rotation noise of $\sigma_t = 3^\circ$.
		All results are averaged across 10 random runs.
	}
	\label{fig:local_search_noise}
\end{figure}

On the same 9 robot scenario, we also report the convergence of $\RBCD$ and $\ARBCD$ (using greedy selection) under increasing measurement noise, shown in  
Figure~\ref{fig:local_search_noise}.
As expected, as rotation noise increases, the convergence rates of both $\RBCD$ and $\ARBCD$ are negatively impacted. 
On the other hand, we observe that increasing translation noise actually leads to better convergence behavior, as shown in Figure~\ref{fig:vary_translation_noise_BCD}-\ref{fig:vary_translation_noise_ABCD}. 
To explain these observations, 
we conjecture that increasing rotation noise and decreasing translation noise magnify the \emph{ill conditioning} of the optimization problem.  Qualitatively similar results were also reported in \cite{Rosen19IJRR} (decreasing translational noise was observed to \emph{increase} SE-Sync's wall-clock time). 

\begin{figure}[t]
	\centering
	\begin{subfigure}[t]{0.23\textwidth}
		\centering
		\includegraphics[trim=5 10 30 20, clip, width=\textwidth]
		{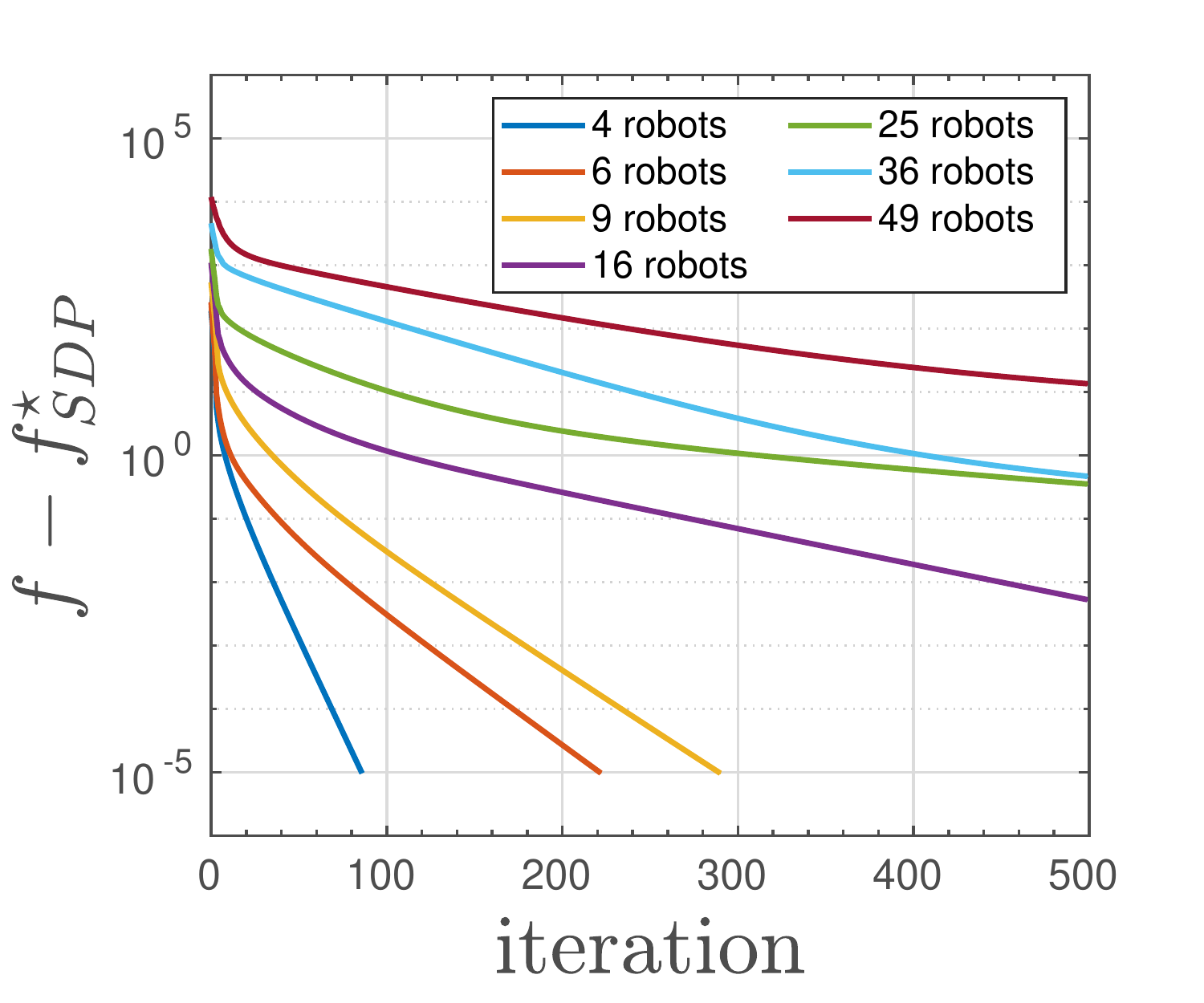}
		\caption{\small $\RBCD$ optimality gap}
		\label{fig:number_robot_BCD_optgap}
	\end{subfigure}
	~
	\begin{subfigure}[t]{0.23\textwidth}
		\centering
		\includegraphics[trim=5 10 30 20, clip, width=\textwidth]
		{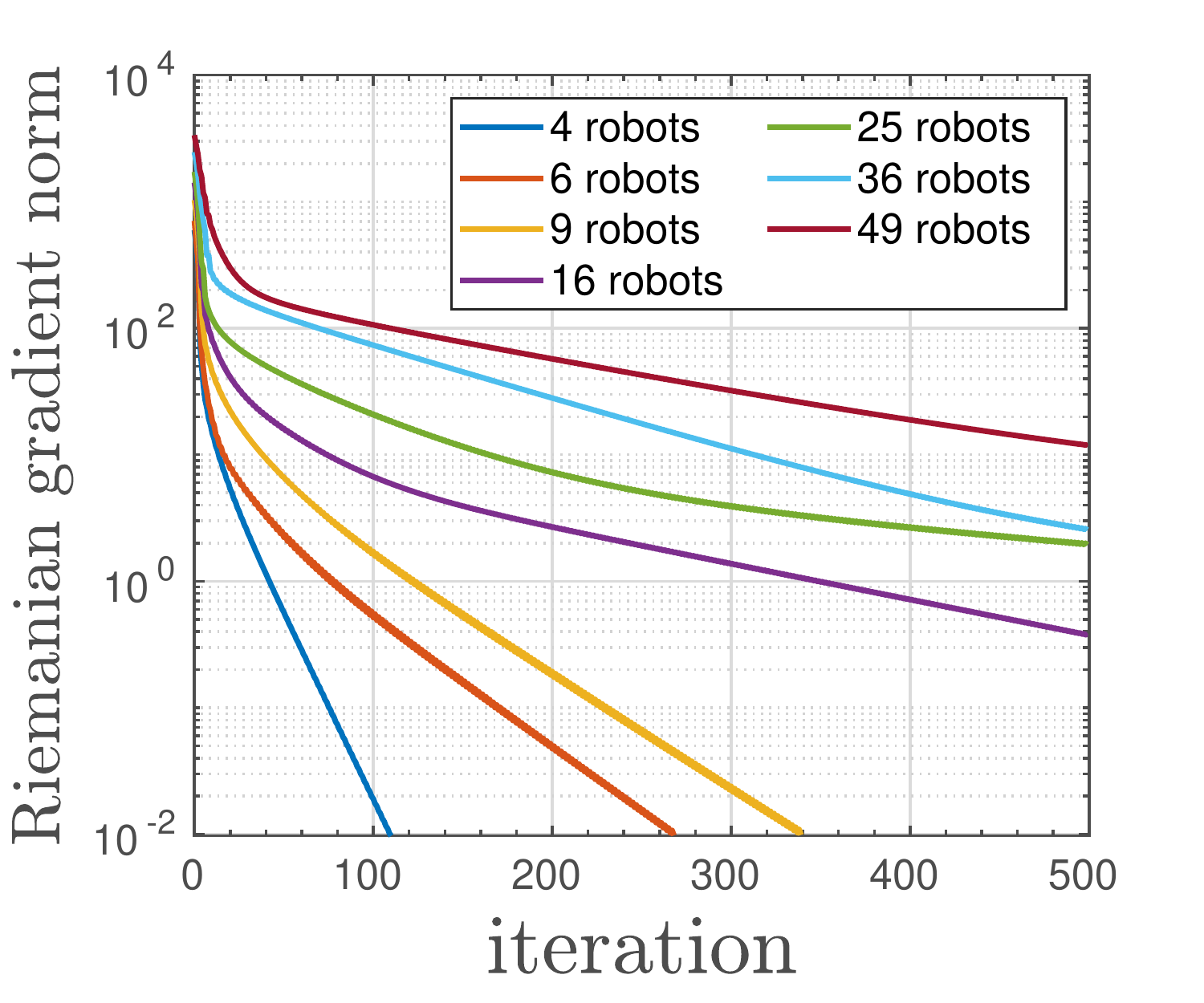}
		\caption{\small $\RBCD$ gradient norm}
		\label{fig:number_robot_BCD_gradnorm}
	\end{subfigure}
	~
	\begin{subfigure}[t]{0.23\textwidth}
		\centering
		\includegraphics[trim=5 10 30 20, clip, width=\textwidth]
		{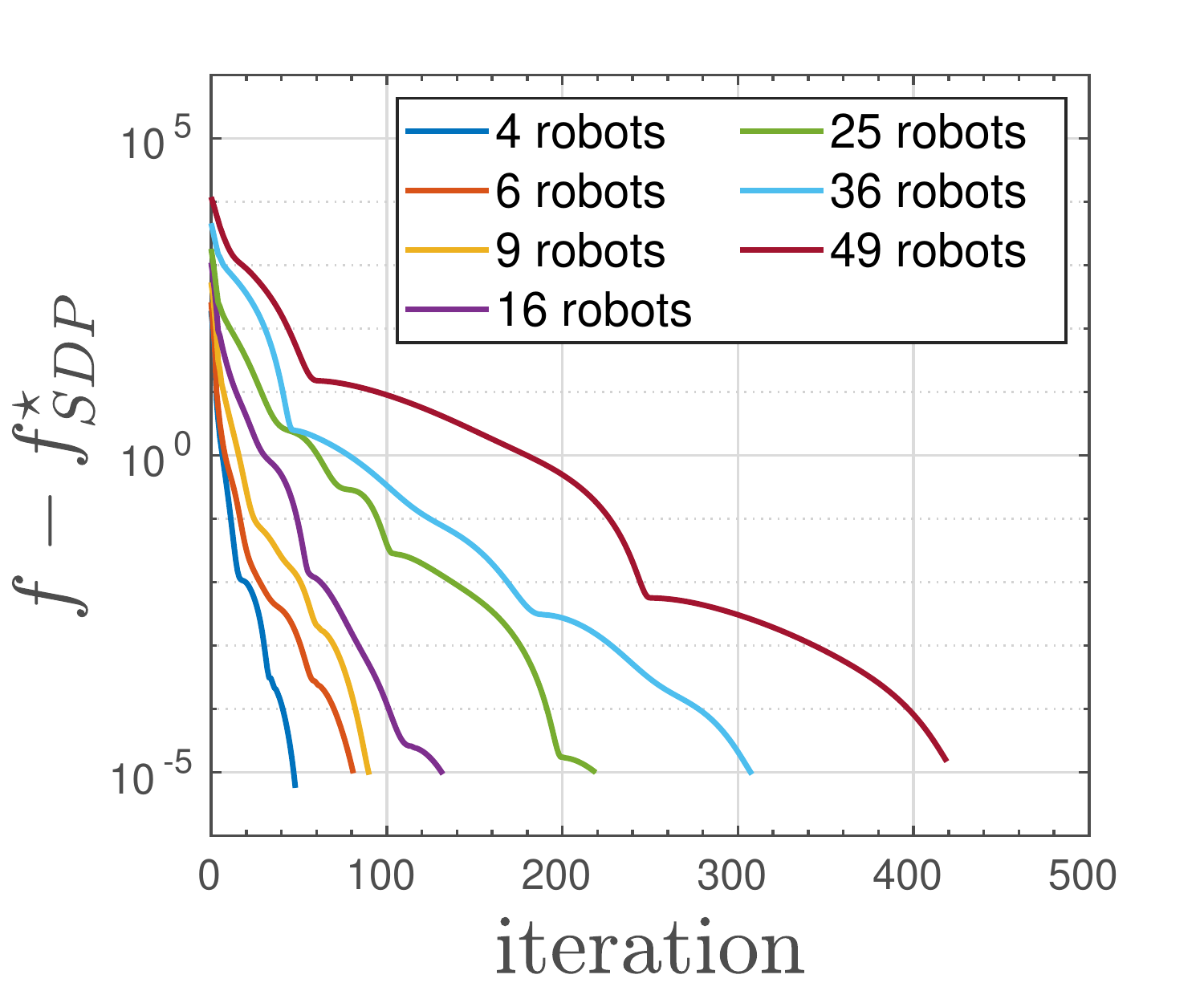}
		\caption{\small $\ARBCD$ optimality gap}
		\label{fig:number_robot_ABCD_optgap}
	\end{subfigure}
	~
	\begin{subfigure}[t]{0.23\textwidth}
		\centering
		\includegraphics[trim=5 10 30 20, clip, width=\textwidth]
		{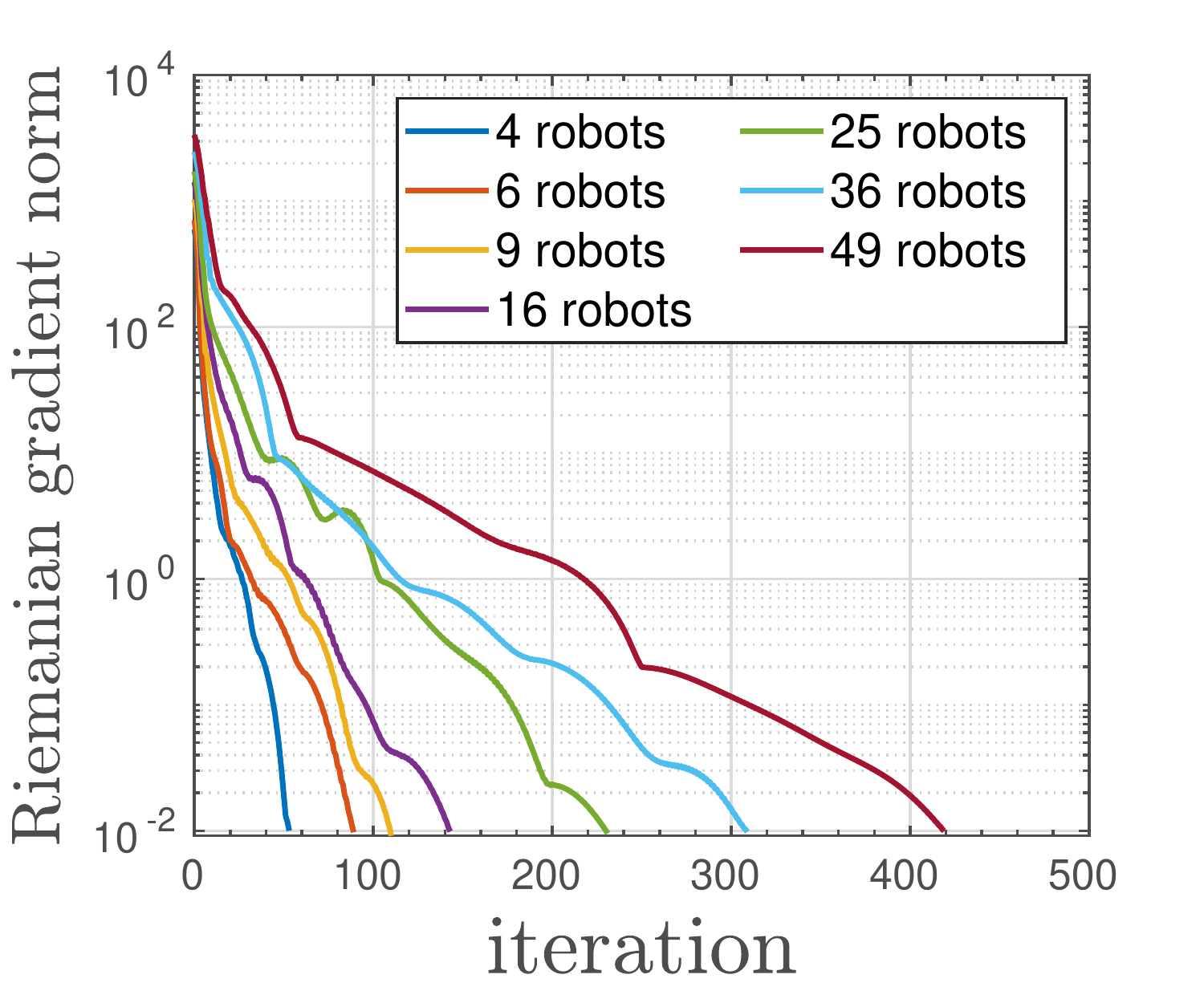}
		\caption{\small $\ARBCD$ gradient norm}
		\label{fig:number_robot_ABCD_gradnorm}
	\end{subfigure}
	\caption{\small  
		Scalability of $\RBCD$ and $\ARBCD$ with greedy selection
		as the number of robots increases.
		Each robot has 125 poses. Convergence speed is measured in terms of the Riemannian gradient norm.
	}
	\label{fig:local_search_scalability}
\end{figure} 

Lastly, we evaluate the scalability of $\RBCD$ and $\ARBCD$ (both with greedy block selection) as the number of robots increases from 4 to 49 in the simulation. 
As each robot has 125 poses, the maximum size of the global PGO problem is 6125.
Figure~\ref{fig:local_search_scalability} reports the convergence speeds measured in Riemannian gradient norm.
Both $\RBCD$ and $\ARBCD$ are reasonably fast for small number of robots. 
Nevertheless, the non-accelerated $\RBCD$ algorithm begins to show slow convergence as the number of robots exceeds 16. 
In comparison, our accelerated $\ARBCD$ algorithm shows superior empirical convergence speed, even in the case of 49 robots.
We note that in this case although $\ARBCD$ uses 400 iterations to achieve a Riemannian gradient norm of $10^{-2}$, the actual optimality gap (Figure~\ref{fig:number_robot_ABCD_optgap}) decreases much more rapidly to $10^{-5}$, which indicates that our solution is very close to the global minimum.

\subsection{Evaluations of Distributed Verification}
\label{sec:verification_experiments}

\begin{figure}[t]
	\centering
	\begin{subfigure}[t]{0.23\textwidth}
		\centering
		\includegraphics[trim=5 10 40 5, clip, width=\textwidth]
		{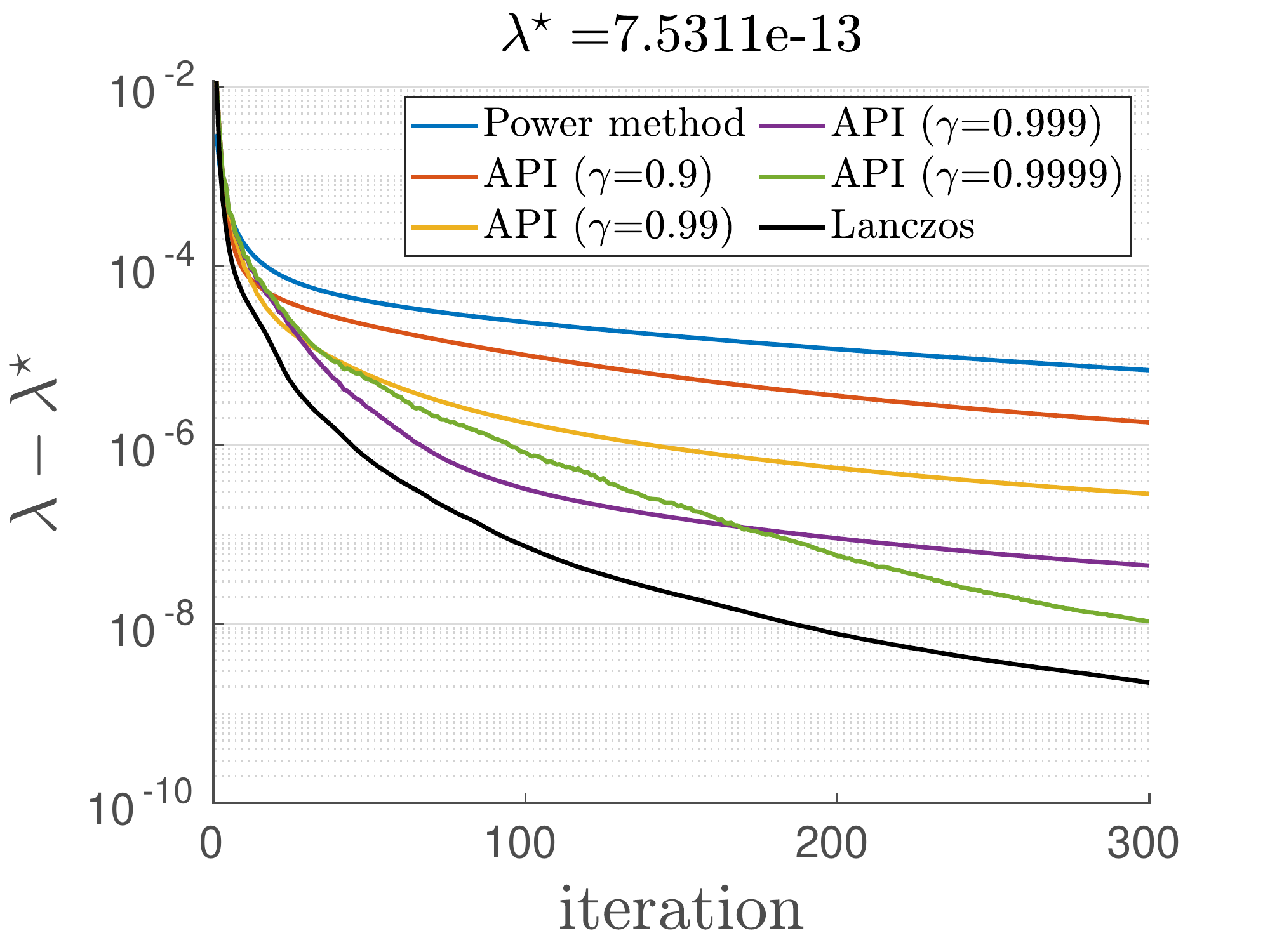}
		\caption{\small Distance to minimum eigenvalue when verifying global minimizer}
		\label{fig:verification_global_min_eigenvalue}
	\end{subfigure}
	~
	\begin{subfigure}[t]{0.23\textwidth}
		\centering
		\includegraphics[trim=5 10 40 20, clip, width=\textwidth]
		{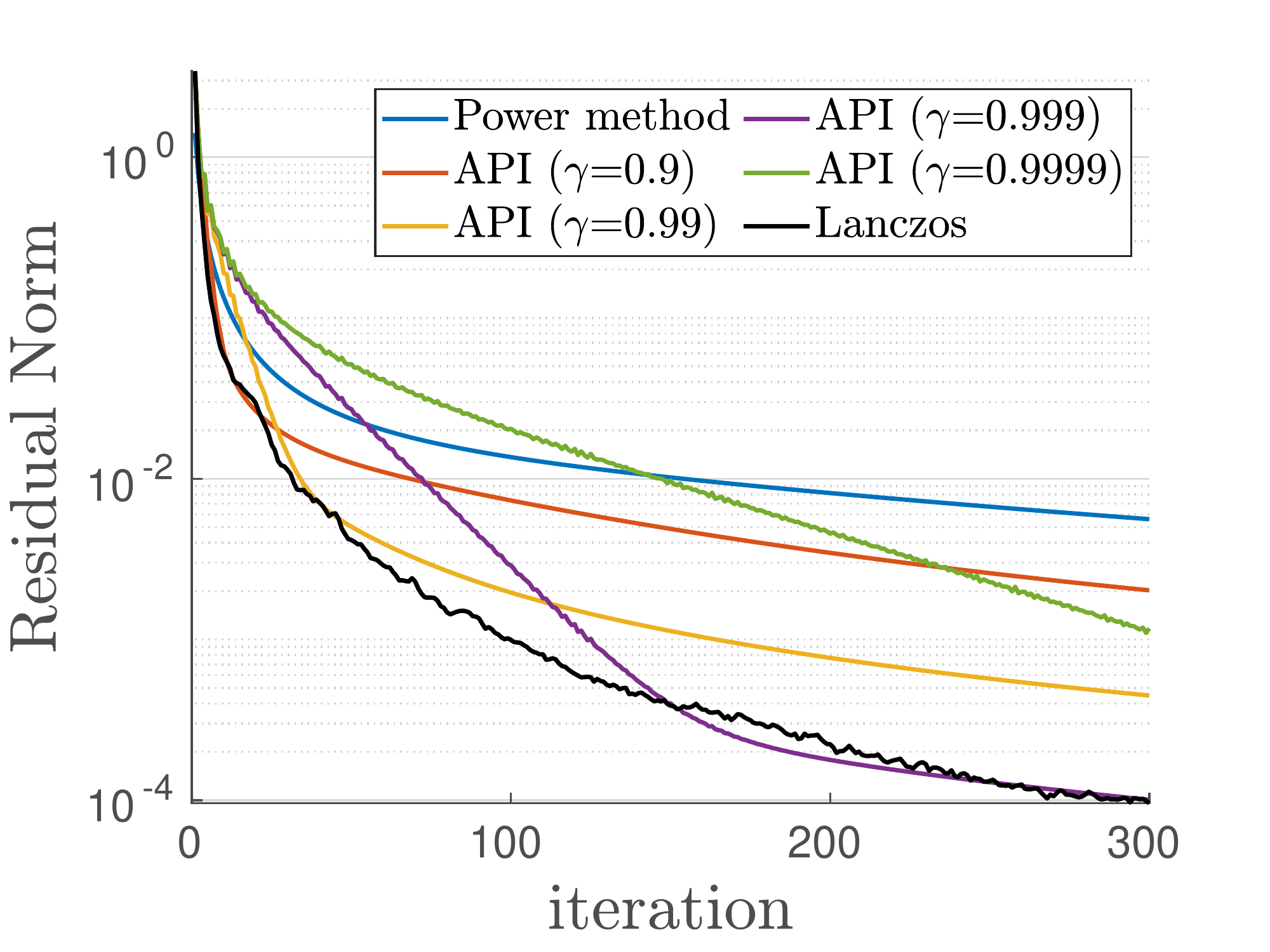}
		\caption{\small Eigenvector residual norm when verifying global minimizer}
		\label{fig:verification_global_min_residual}
	\end{subfigure}
	~
	\begin{subfigure}[t]{0.23\textwidth}
		\centering
		\includegraphics[trim=5 10 40 5, clip, width=\textwidth]
		{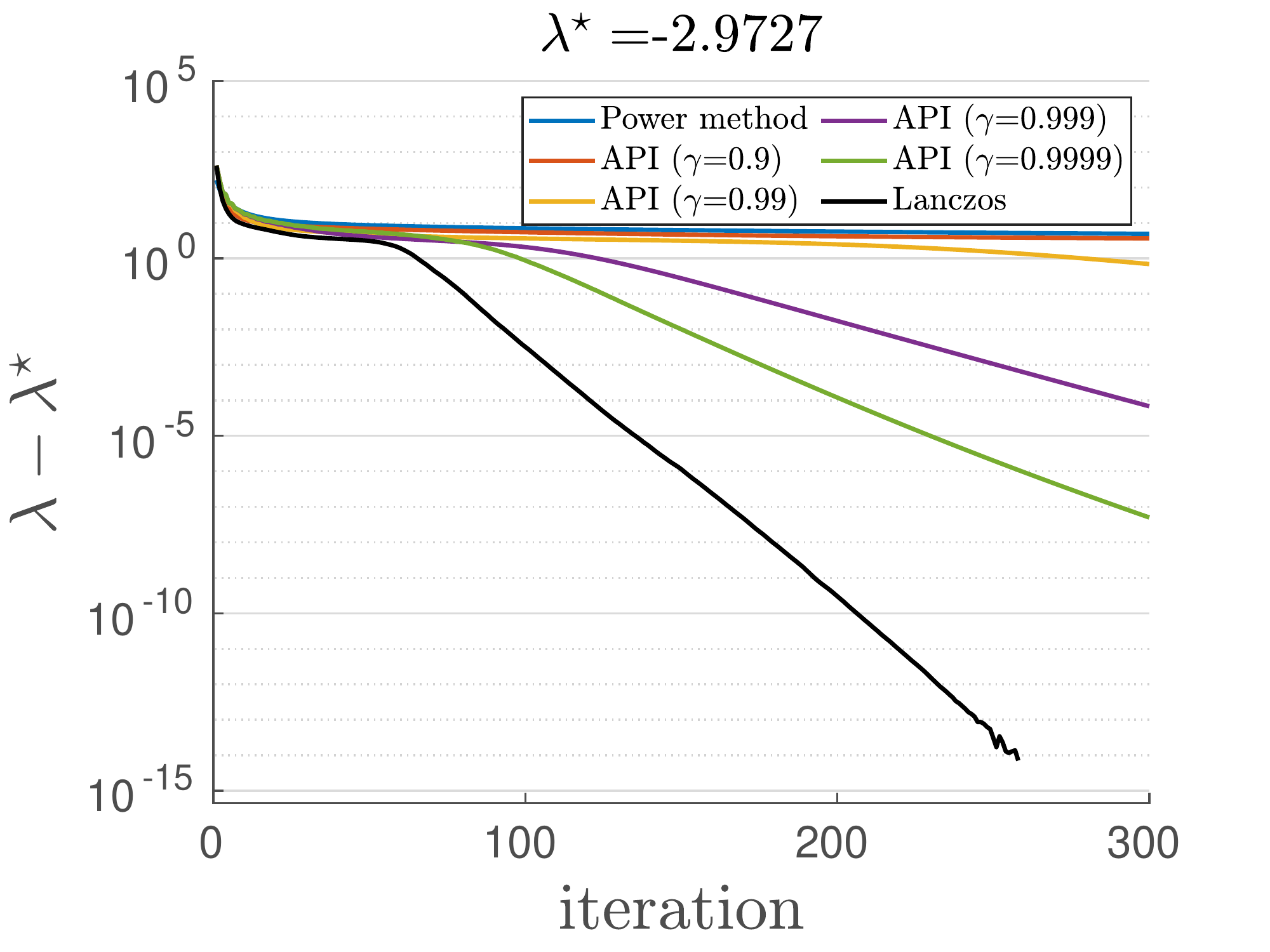}
		\caption{\small Distance to minimum eigenvalue when verifying suboptimal local minimizer}
		\label{fig:verification_local_min_eigenvalue}
	\end{subfigure}
	~
	\begin{subfigure}[t]{0.23\textwidth}
		\centering
		\includegraphics[trim=5 10 40 20, clip, width=\textwidth]
		{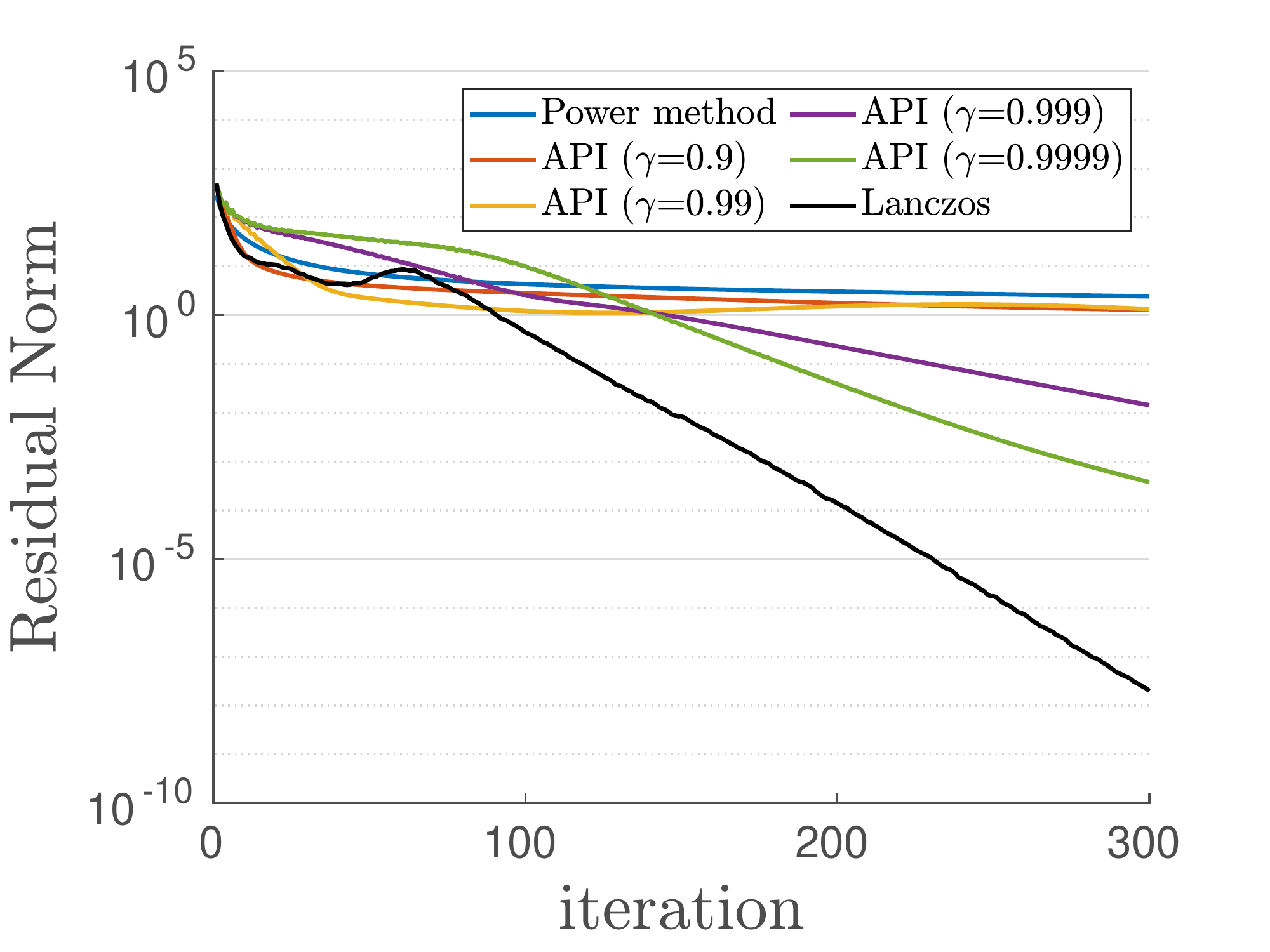}
		\caption{\small Eigenvector residual norm when verifying suboptimal local minimizer}
		\label{fig:verification_local_min_residual}
	\end{subfigure}
	\caption{\small  
		Performance of accelerated power iteration (API) on the \texttt{Killian court} dataset, using different values of $\gamma$ to set the momentum term according to \eqref{eq:select_beta}. 
		(a)-(b) Verification of a global minimizer computed by SE-Sync.
		(c)-(d) Verification of a suboptimal first-order critical point.
		In both cases, 
		the minimum eigenvalue of the dual certificate matrix (denoted as $\lambda^\star$)  is computed using the \texttt{eigs} function in MATLAB. 
	}
	\label{fig:verification_experiment}
\end{figure} 

In this section we evaluate our proposed distributed verification method.
Recall from Section~\ref{sec:verification} that the bulk of work happens when using accelerated power iteration to compute the dominant eigenpair of the spectrally-shifted dual certificate matrix $C \triangleq \lambda_\text{dom} I - S(X)$. 
We thus examine the efficiency of this process, and compare the performance of accelerated power iteration against standard power method and the centralized Lanczos procedure. 
We note that since $C$ and $S(X)$ share the same set of eigenvectors, in our experiment we still report results based on the estimated eigenvalues of $S(X)$.

From Remark~\ref{rem:select_beta}, the accelerated power iteration achieves the theoretical optimal rate when the employed momentum term satisfies $\beta \approx \lambda^2_2/4$, where $\lambda_2$ is the second dominant eigenvalue of $C$. 
Since we know that $\lambda_{\text{dom}}$ belongs to a tight cluster whenever $X$ is globally optimal, we expect that typically $\lambda_2 \approx  \lambda_\text{dom}$.
Using this insight, in our experiment we first estimate $\lambda_2$ by multiplying $\lambda_\text{dom}$ with a factor $\gamma < 1$ that is close to one, i.e., $\widehat{\lambda}_2 = \gamma \lambda_\text{dom}$. 
Subsequently we use this estimated value to set the momentum term, 
\begin{equation}
	\begin{aligned}
	\beta = \widehat{\lambda}^2_2/4 = \gamma^2 \lambda^2_\text{dom} / 4.
	\end{aligned}
	\label{eq:select_beta}
\end{equation}

We design two test cases using the \texttt{Killian court} dataset.
In the first case, we verify the global minimizer computed by SE-Sync \cite{Rosen19IJRR}. 
By Theorem~\ref{thm:verification}, the dual certificate matrix $S(X)$ must be positive semidefinite.
Furthermore, since $S(X)$ always has a nontrivial nullspace spanned by the rows of the corresponding primal solution, we expect the minimum eigenvalue of $S(X)$ to be zero in this case. 
Indeed, when computing this using the \texttt{eigs} function in MATLAB, the final value (denoted as $\lambda^\star$ in Figure~\ref{fig:verification_global_min_eigenvalue}) is close to zero to machine precision.
Figure~\ref{fig:verification_global_min_eigenvalue} shows how fast each method converges to $\lambda^\star$, where we use an initial eigenvector estimate obtained by (slightly) randomly perturbing a row of the global minimizer \cite{Rosen17IROS}. 
Figure~\ref{fig:verification_global_min_residual} shows the corresponding Ritz residual for the estimated eigenvector $v$. Assuming $v$ is normalized, this is given by,
\begin{equation}
	\text{ResidualNorm}(v) = \norm{S(X)v - (v^\top S(X) v) v}_2.
	\label{eq:eigenvector_residual}
\end{equation}
As the results suggest, with a suitable choice of $\gamma$, accelerated power iteration is significantly faster than the standard power method. 
Furthermore, in this case convergence speed is close to the Lanczos procedure. 

In the second case, we verify a suboptimal first-order critical point obtained by running $\ARBCD$ with $r = d$ using random initialization. 
We verify that the minimum eigenvalue of $S(X)$ is negative ($\approx -2.97$), which is consistent with the prediction of Theorem~\ref{thm:verification}.
In this case, we observe that using a \emph{random} initial eigenvector estimate leads to better convergence compared to obtaining the initial estimate from a perturbed row of the primal solution. Intuitively, using the perturbed initial guess would cause the iterates of power method to be ``trapped'' for longer period of time near the zero eigenspace spanned by the rows of $X$.
Figure~\ref{fig:verification_local_min_eigenvalue}-\ref{fig:verification_local_min_residual} shows results generated with random initial eigenvector estimate. 
Note that there is a bigger performance gap between accelerated power iteration and the Lanczos algorithm.
However, we also note that in reality, full convergence is actually not needed in this case.
Indeed, from Theorem~\ref{thm:verification}, we need only identify \emph{some} direction that satisfies $v^\top S(X) v < 0$ in order to escape the current suboptimal solution.

\subsection{Evaluations of Complete Algorithm (Algorithm~\ref{alg:dpgo})}
\label{sec:full_experiments}

So far, we have separately evaluated the proposed local search and verification techniques.
In this section, we evaluate the performance of the complete \AlgName\ algorithm (Algorithm~\ref{alg:dpgo}) that uses  distributed Riemannian Staircase (Algorithm~\ref{alg:riemannian_staircase}) to solve the SDP relaxation of PGO. 
By default, at each level of the Staircase we use $\ARBCD$ with greedy selection to solve the rank-restricted relaxation until the Riemannian gradient norm reaches $10^{-1}$.
Then, we use the accelerated power method to verify the obtained solution. 
To set the momentum term, we employ the same method introduced in the last section with $\gamma = 0.999$ in \eqref{eq:select_beta}. The accelerated power iteration is deemed converged when the eigenvector residual defined in \eqref{eq:eigenvector_residual} reaches $10^{-2}$.

\begin{figure}[t]
	\centering
	\begin{subfigure}[t]{0.25\textwidth}
		\centering
		\includegraphics[trim=5 0 40 10, clip, width=\textwidth]
		{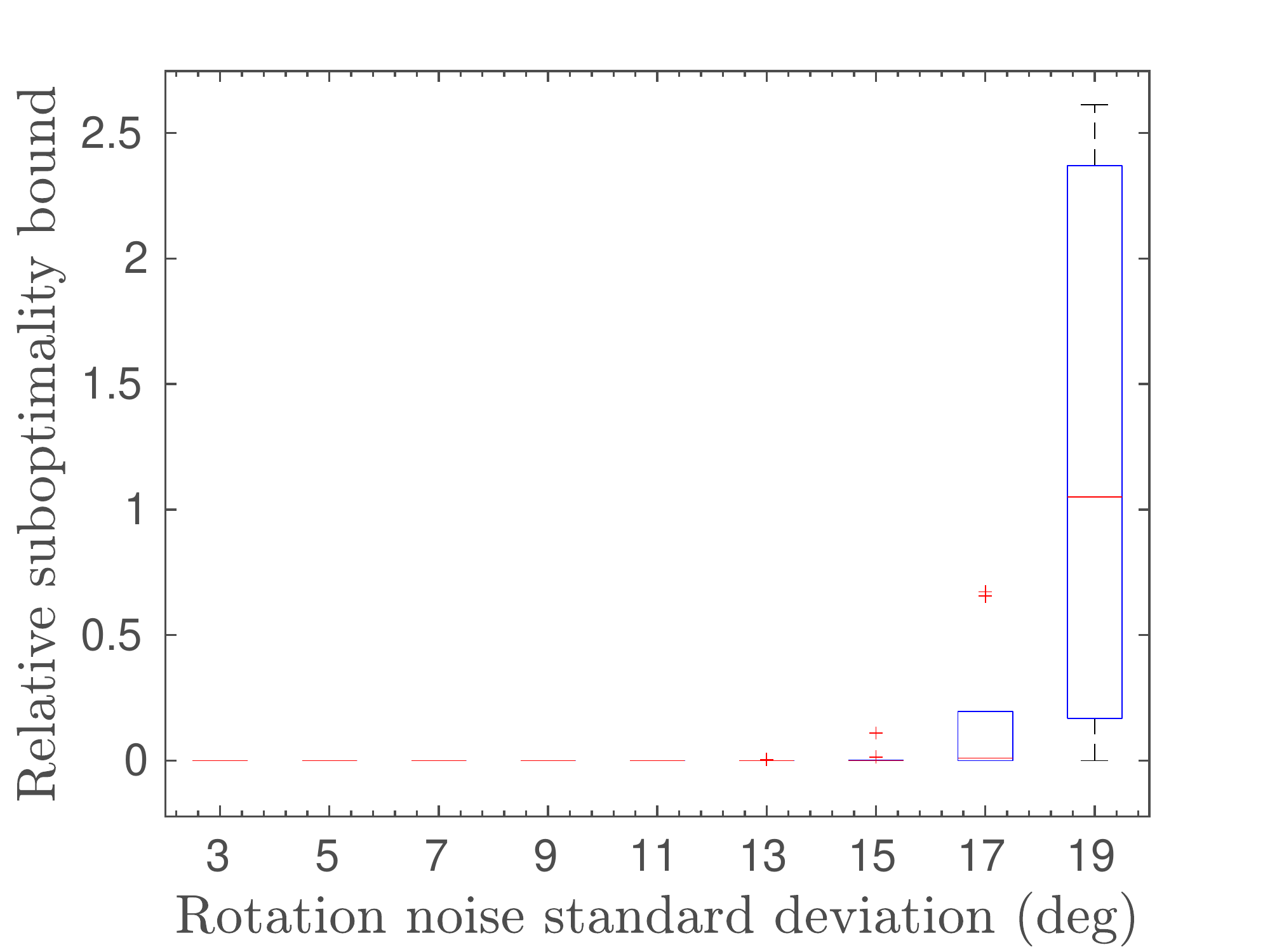}
		\caption{\small  Relative suboptimality bound under increasing rotation noise}
		\label{fig:DRS_vary_rotation_noise_subopt_gap}
	\end{subfigure}
	\hspace{1cm}
	\begin{subfigure}[t]{0.25\textwidth}
		\centering
		\includegraphics[trim=5 0 40 20, clip, width=\textwidth]
		{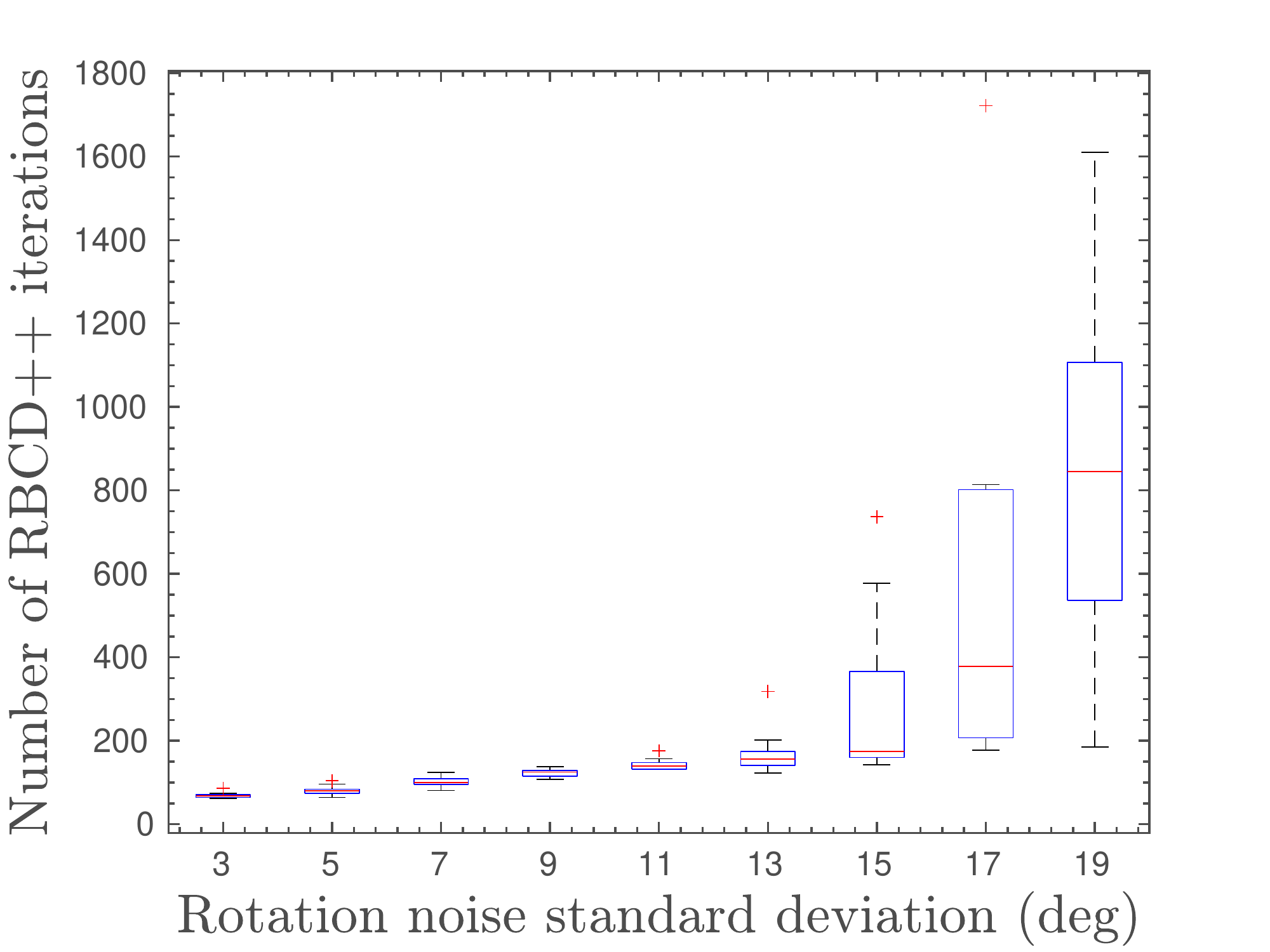}
		\caption{\small Total number of $\ARBCD$ iterations under increasing rotation noise}
		\label{fig:DRS_vary_rotation_noise_local_search_iters}
	\end{subfigure}
	\hspace{1cm}
	\begin{subfigure}[t]{0.25\textwidth}
		\centering
		\includegraphics[trim=5 0 40 20, clip, width=\textwidth]
		{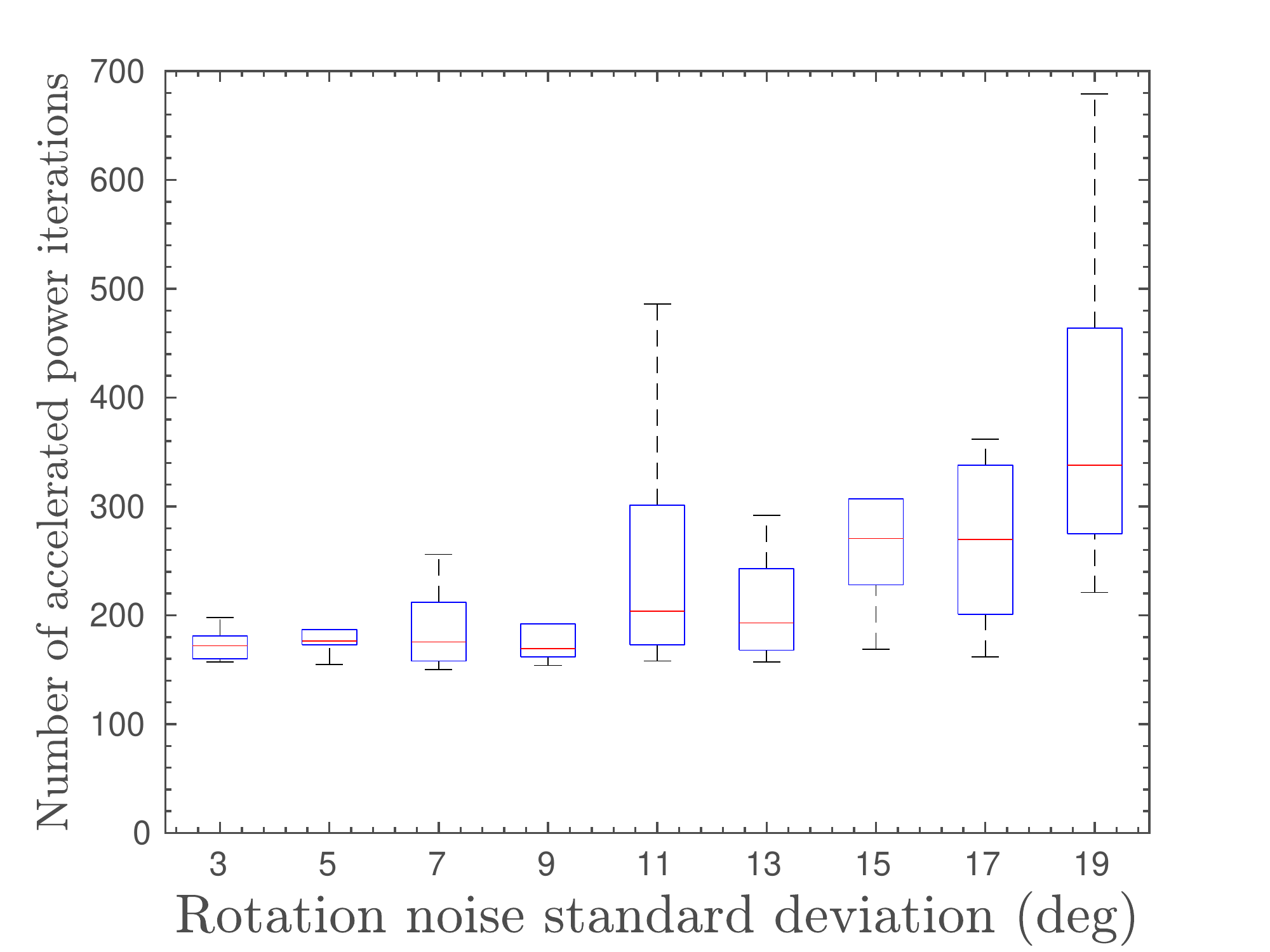}
		\caption{\small Total number of accelerated power iterations under increasing rotation noise}
		\label{fig:DRS_vary_rotation_noise_verification_iters}
	\end{subfigure}
	\\
	\begin{subfigure}[t]{0.25\textwidth}
		\centering
		\includegraphics[trim=5 0 40 5, clip, width=\textwidth]
		{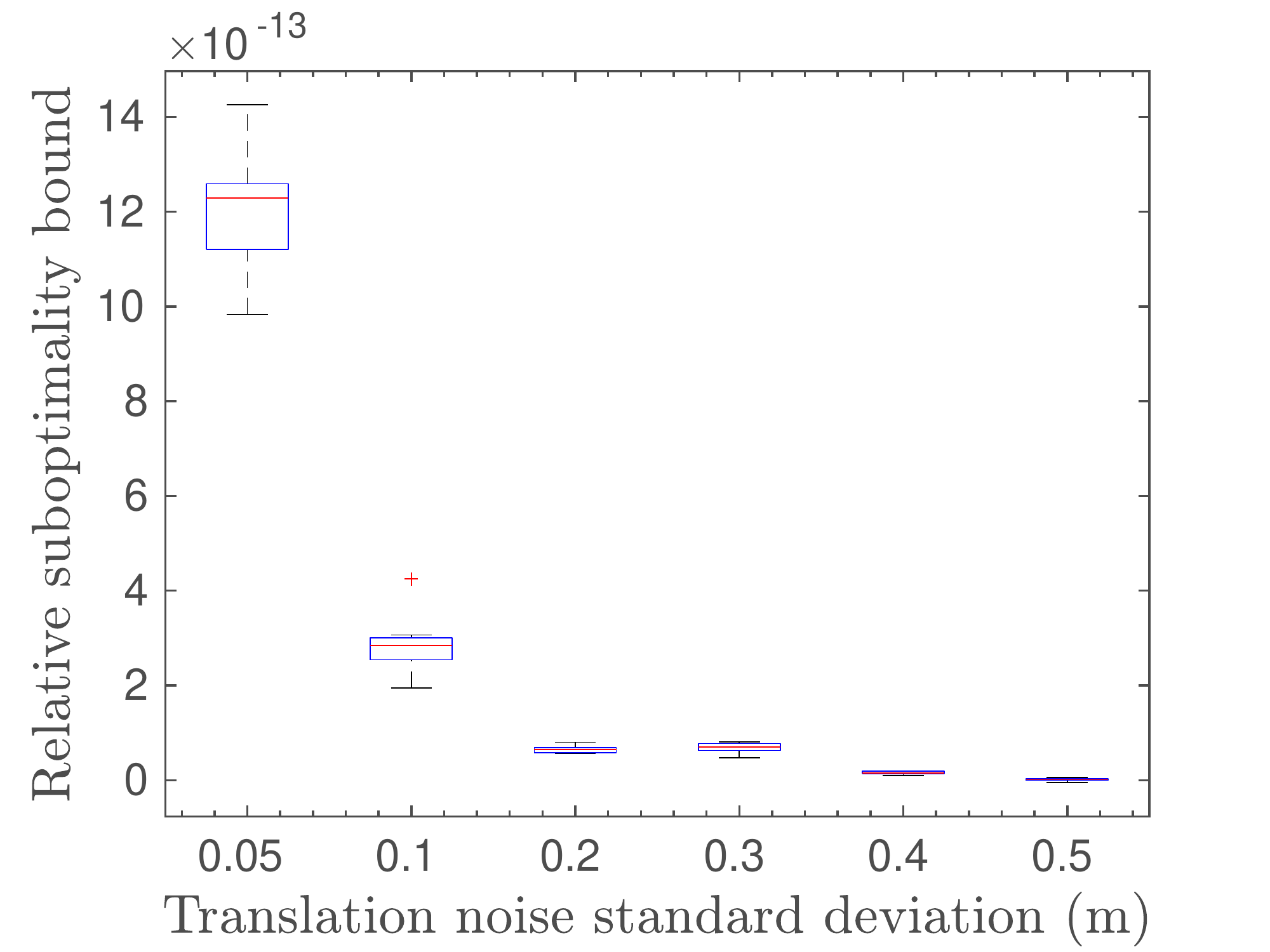}
		\caption{\small Relative suboptimality bound under increasing translation noise}
		\label{fig:DRS_vary_translation_noise_subopt_gap}
	\end{subfigure}
	\hspace{1cm}
	\begin{subfigure}[t]{0.25\textwidth}
		\centering
		\includegraphics[trim=5 0 40 20, clip, width=\textwidth]
		{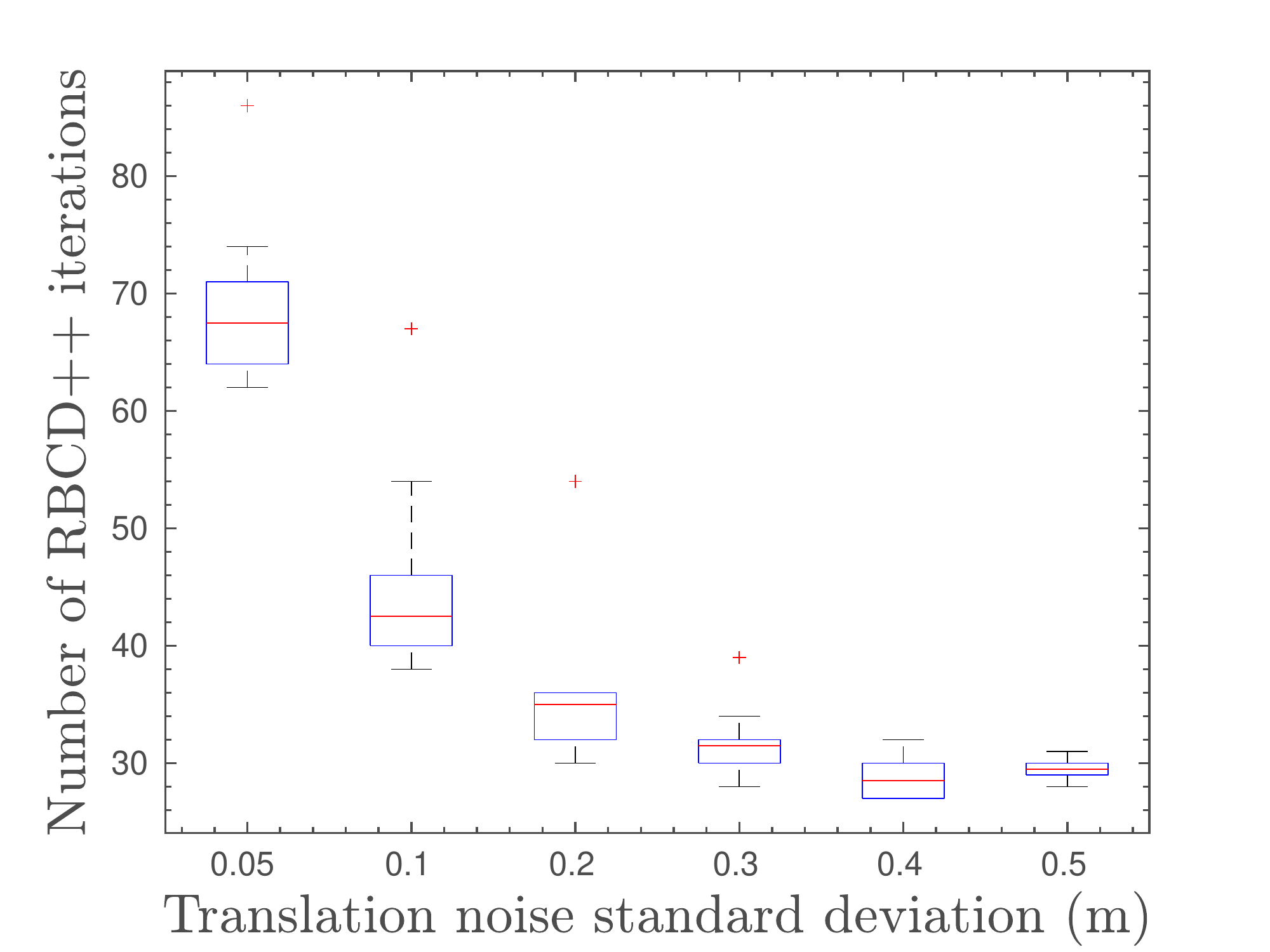}
		\caption{\small Total number of $\ARBCD$ iterations under increasing translation noise}
		\label{fig:DRS_vary_translation_noise_local_search_iters}
	\end{subfigure}
	\hspace{1cm}
	\begin{subfigure}[t]{0.25\textwidth}
		\centering
		\includegraphics[trim=5 0 40 20, clip, width=\textwidth]
		{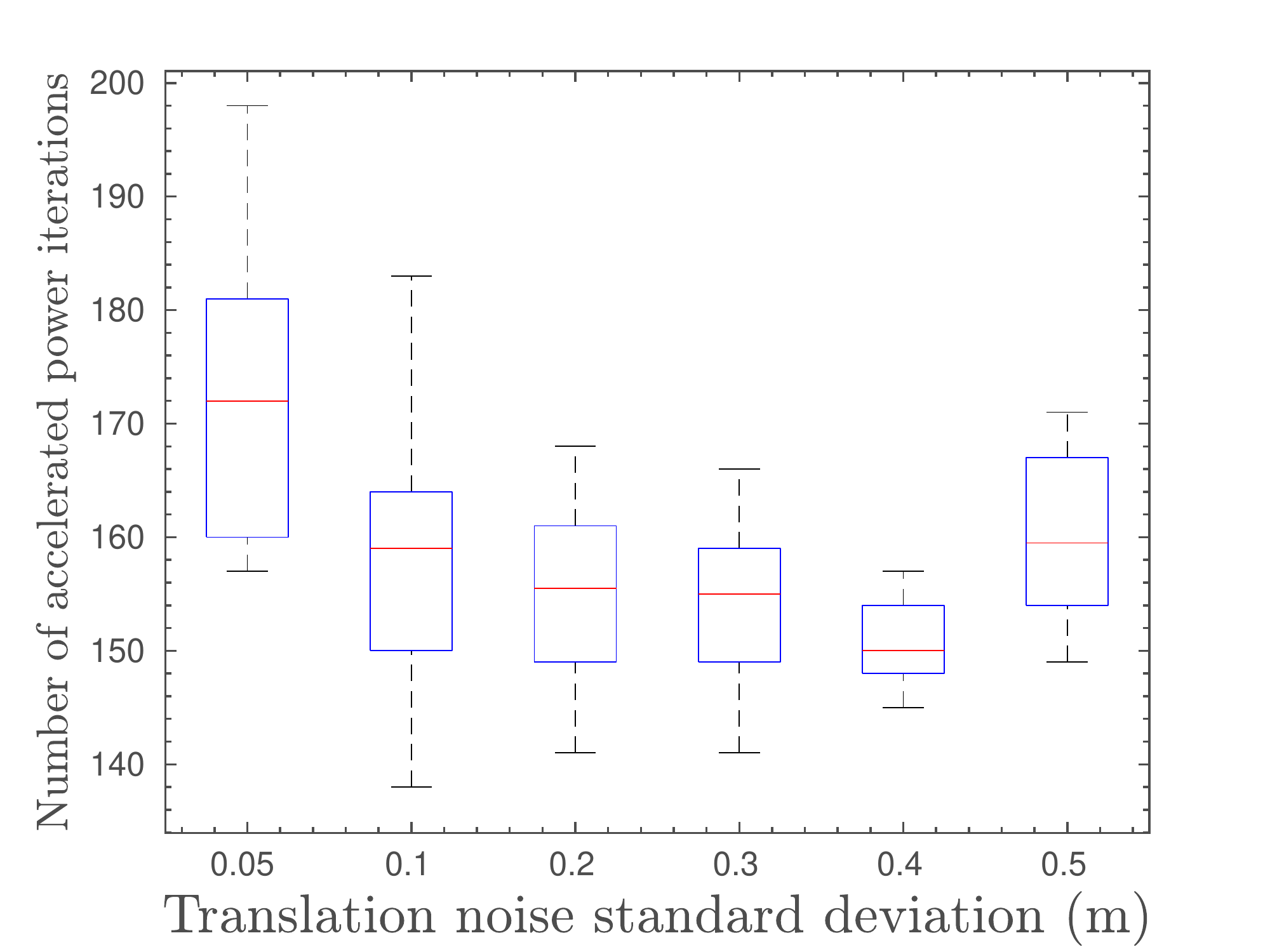}
		\caption{\small  Total number of accelerated power iterations under increasing translation noise}
		\label{fig:DRS_vary_translation_noise_verification_iters}
	\end{subfigure}
	\caption{\small  
		Evaluation of the proposed \AlgName\ (Algorithm~\ref{alg:dpgo}) under increasing measurement noise. 
		At each noise level, we simulate 10 random realizations of the 9-robot scenario shown in Figure~\ref{fig:example_simulation}.
		Left column shows boxplot of relative suboptimality bound $(f(T)-f^\star_\text{SDP})/f^\star_\text{SDP}$.
		Middle and right columns show boxplots of total number of iterations used by $\ARBCD$ and accelerated power method.
		The top row shows results under increasing rotation noise $\sigma_R \in [3,19]$~deg and fixed translation noise $\sigma_t = 0.05$~m.
		The bottom row shows results under increasing translation noise $\sigma_t \in [0.05, 0.5]$~m and fixed rotation noise $\sigma_R = 3$ deg.
	}
	\label{fig:DRS_noise_experiment}
\end{figure} 

We first examine the exactness of the SDP relaxation in the 9-robot scenario shown in Figure~\ref{fig:example_simulation} under increasing measurement noise.
Recall that \AlgName\ returns both a rounded feasible solution $T \in \SE(d)^n$ as well as the optimal value of the SDP relaxation $f^\star_\text{SDP}$.
To evaluate exactness, we record the upper bound on the relative suboptimality of $T$, defined as $(f(T)-f^\star_\text{SDP})/f^\star_\text{SDP}$.
We note that a zero suboptimality bound means that the SDP relaxation is exact and the solution $T$ is a global minimizer. 
As shown in the first column of Figure~\ref{fig:DRS_noise_experiment}, \AlgName\ is capable of finding global minimizers for all translation noise considered in our experiments, and for rotation noise up to 11 degree, which is still much larger than noise magnitude typically encountered in SLAM. 
The middle and right columns of Figure~\ref{fig:DRS_noise_experiment} show the total number of iterations used by $\ARBCD$ and accelerated power method, across all levels of the staircase. 
Interestingly, we observe that changing measurement noise has a greater impact for distributed local search compared to distributed verification. 

\begin{table*}[t]
	\setlength{\tabcolsep}{1.5pt}
	\renewcommand{\arraystretch}{1.2}
	\centering
	\caption{ \small Evaluation on benchmark PGO datasets. Each dataset simulates a CSLAM scenario with five robots.
		On each dataset, we report the objective value achieved by initialization,  centralized SE-Sync \cite{Rosen19IJRR}, the distributed Gauss-Seidel (DGS) with 
		SOR parameter 1.0 as recommended in \cite{Choudhary17IJRR},
		and the proposed \AlgName\ algorithm. 
		For the latter two distributed algorithms, we also report the total number of local search iterations. 
		On each dataset, we highlight the distributed algorithm that (i) achieves lower objective and (ii) uses less local search iterations.
		On all datasets, our approach is able to verify its solution as the global minimizer.
		We note that the numerical difference with SE-Sync on some datasets is due to the looser convergence condition during distributed local search.
	}
	{\small
		\begin{tabular}{|c|c|c||c|c||c|c||c|c|}
			\hline
			\multirow{2}{*}{Dataset} & \multirow{2}{*}{\# Vertices} & \multirow{2}{*}{\# Edges} & \multicolumn{4}{c||}{Objective} & \multicolumn{2}{c|}{Local Search Iterations} \\ \cline{4-9} 
			&       &       & Init.  & SE-Sync \cite{Rosen19IJRR} & DGS \cite{Choudhary17IJRR}  & \AlgName\ & DGS \cite{Choudhary17IJRR} & \AlgName\  \\ 
			\hline \hline
			Killian Court (2D)      & 808   & 827   & 229.0 & 61.15   & {63.52} & \textbf{61.22}      & {27105}  & \textbf{189}        \\ \hline
			CSAIL (2D)              & 1045  & 1171  & 31.50 & 31.47   & {31.49} & \textbf{31.47}      & {\textbf{10}}  & {197}        \\ \hline
			Intel Research Lab (2D) & 1228  & 1483  & 396.6 & 393.7   & {428.89} &  \textbf{393.7}      & {\textbf{10}}  & {187}        \\ \hline
			Manhattan (2D)          & 3500  & 5453  & 369.0 & 193.9   & {242.05} &  \textbf{194.0}      & {1585} & \textbf{785}        \\ \hline
			KITTI 00 (2D)           & 4541  & 4676  & 1194  & 125.7   & {269.87} & \textbf{125.7}      & {\textbf{1485}} & {2750}       \\ \hline
			City10000 (2D)            & 10000 & 20687 & 5395  & 638.6   & {2975.2} & \textbf{638.7}      & {2465} & \textbf{1646}       \\ \hline
			Parking Garage (3D)     & 1661  & 6275  & 1.64  & 1.263   & {1.33}  & \textbf{1.311}      & {\textbf{25}} & {47}         \\ \hline
			Sphere (3D)             & 2500  & 4949  & 1892  & 1687    & {1689}  & \textbf{1687}       & {300}  & \textbf{53}         \\ \hline
			Torus (3D)              & 5000  & 9048  & 24617 & 24227   & {24246} & \textbf{24227}      & {100}  & \textbf{88}         \\ \hline
			Cubicle (3D)            & 5750  & 16869 & 786.0 & 717.1   & {726.69} & \textbf{717.1}      & {\textbf{45}}  & 556        \\ \hline
			Rim (3D)                & 10195 & 29743 & 8177 & 5461    & {5960.4}  & \textbf{5461}       & {\textbf{515}} & 1563       \\ \hline
	\end{tabular}}
	\label{tab:slam_benchmarks}
\end{table*}

Lastly, we evaluate \AlgName\ on benchmark datasets. Figure~\ref{fig:datasets} shows the globally optimal solutions returned by our algorithm.
In Table~\ref{tab:slam_benchmarks},
we compare the performance of \AlgName\ against the centralized certifiable SE-Sync algorithm \cite{Rosen19IJRR}, as well as the state-of-the-art distributed Gauss-Seidel (DGS) algorithm by Choudhary et al. \cite{Choudhary17IJRR}.
For DGS, we set the SOR parameter to 1.0 as recommended by the authors, and for which we also observe stable performance in general.
On all datasets, \AlgName\ is able to verify its solution as the global minimizer. 
We note that on some datasets, the final objective value is slightly higher than SE-Sync.
This is due to the looser convergence condition used in our distributed local search:
for $\ARBCD$ we set the gradient norm threshold to $10^{-1}$, while for SE-Sync we set the threshold to $10^{-6}$ in order to obtain a high-precision reference solution.\footnote{In general it is not reasonable to expect $\RBCD$ or $\ARBCD$ to produce solutions that are as precise as those achievable by SE-Sync in tractable time, since the former are \emph{first-order} methods, while the latter is \emph{second-order}.}
On the other hand, \AlgName\ is clearly more advantageous compared to DGS, 
as it returns a global minimum, often with fewer iterations. 
The performance of DGS is more sensitive to the quality of initialization, as manifested on the \texttt{Killian Court} and \texttt{City10000} datasets. 
In addition, while our local search methods are guaranteed to reduce the objective value at each iteration, DGS does \emph{not} have this guarantee as it is operating on a linearized approximation of the PGO problem. 

\begin{figure}[t]
	\centering
	\begin{subfigure}[t]{0.30\textwidth}
		\centering
		\includegraphics[width=\textwidth]{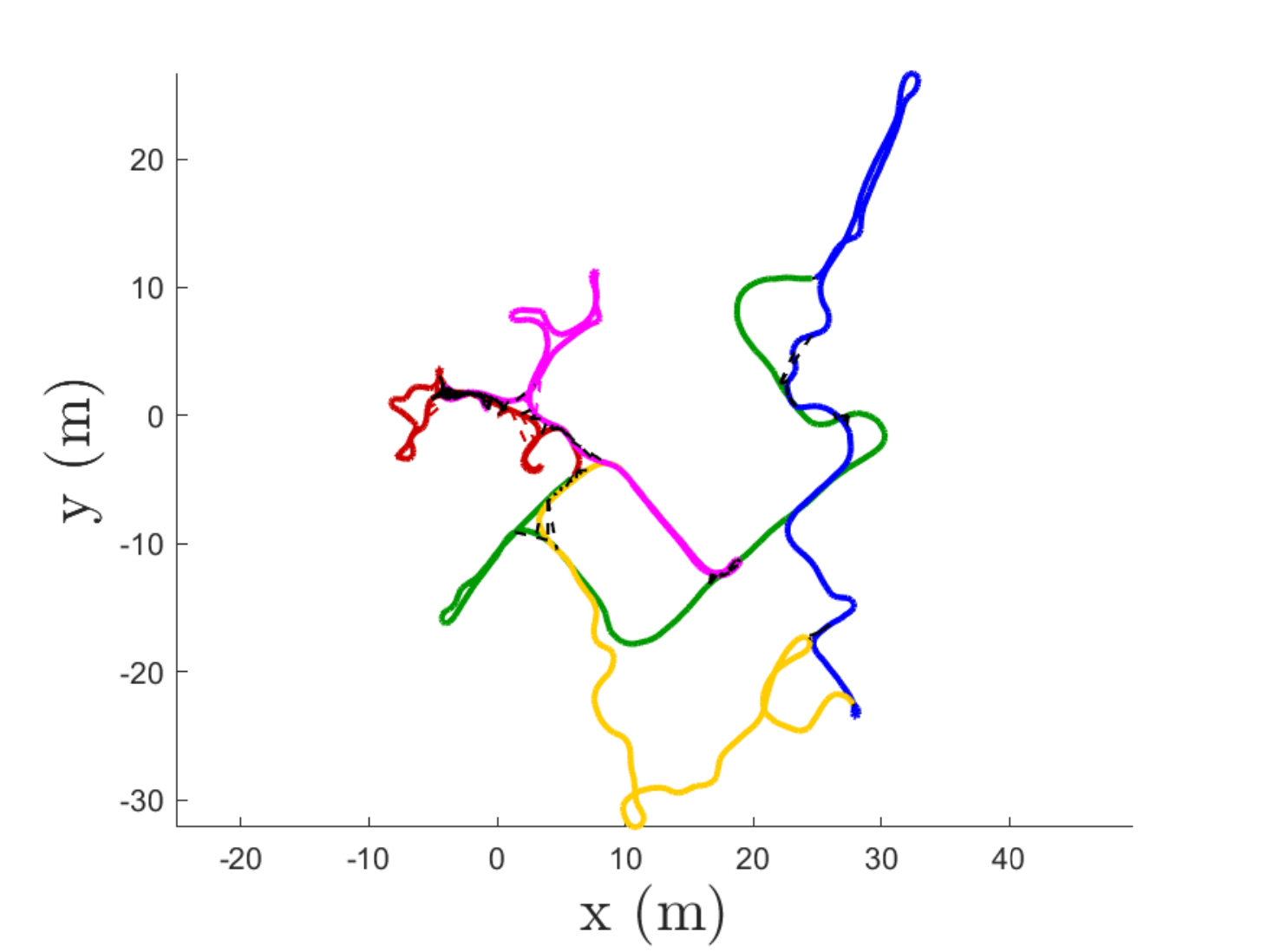}
		\caption{\small CSAIL}
	\end{subfigure}
	~
	\begin{subfigure}[t]{0.30\textwidth}
		\centering
		\includegraphics[width=\textwidth]{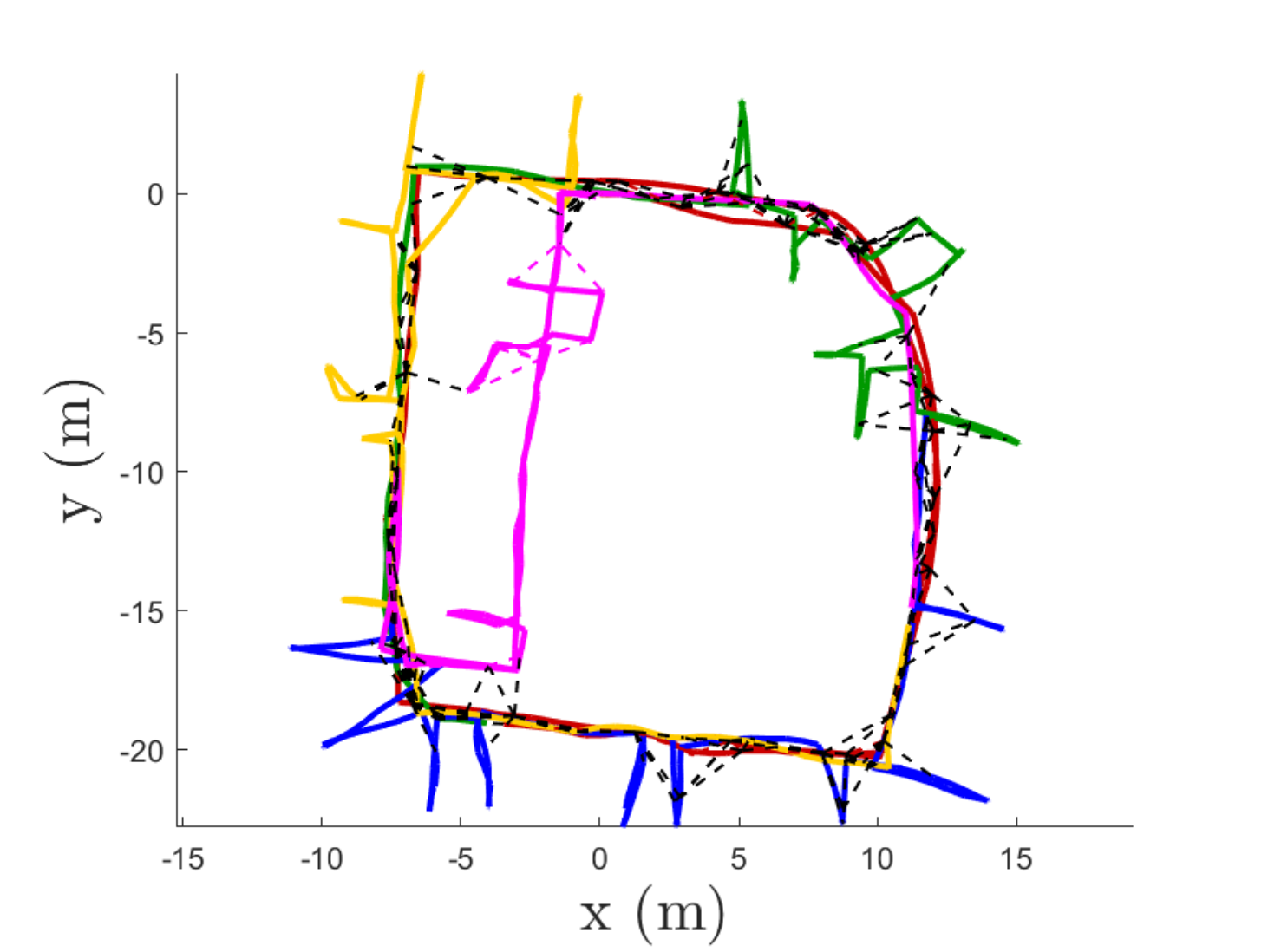}
		\caption{\small Intel Research Lab}
	\end{subfigure}
	~
	\begin{subfigure}[t]{0.30\textwidth}
		\centering
		\includegraphics[width=\textwidth]{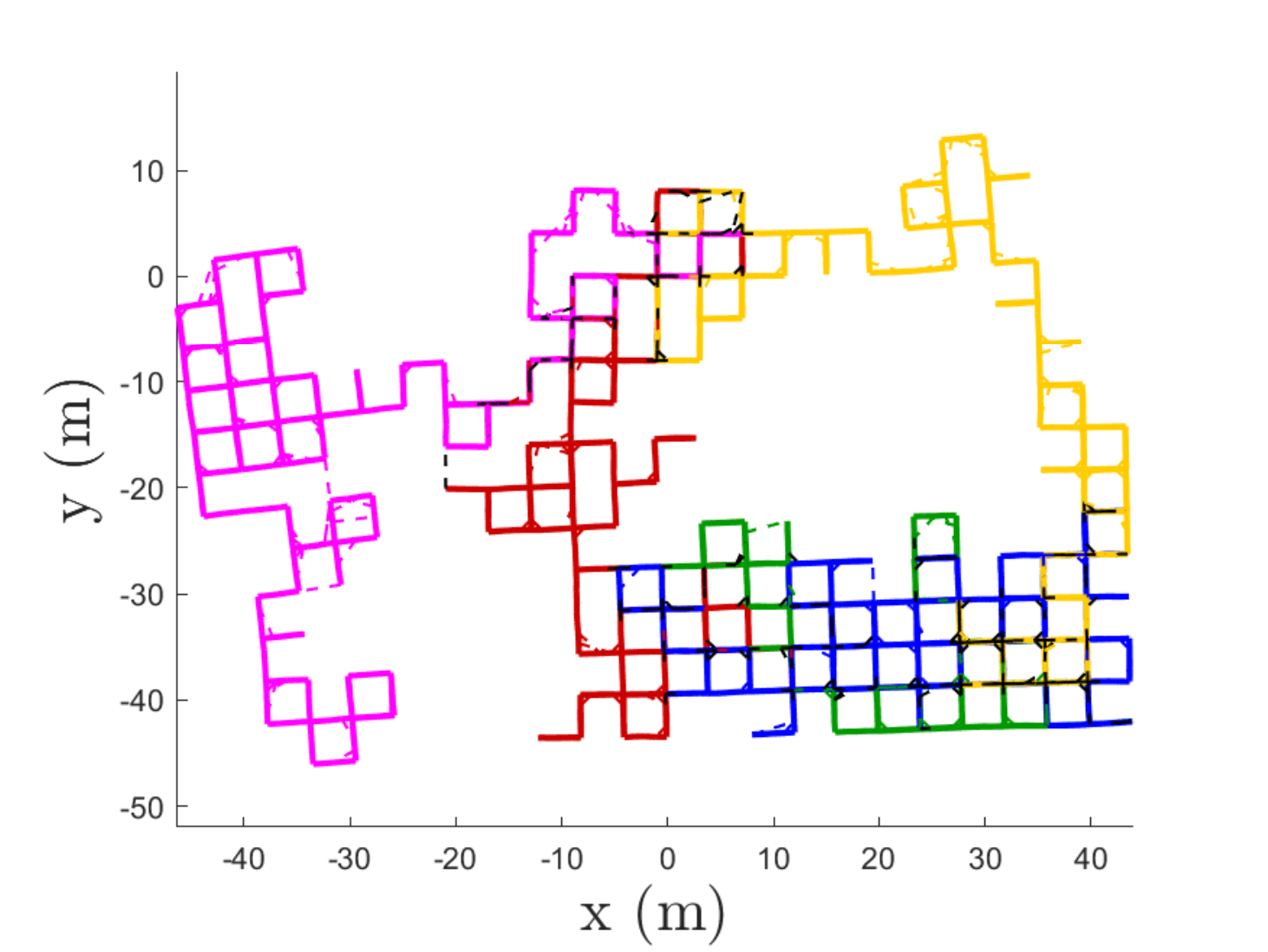}
		\caption{\small Manhattan}
	\end{subfigure}
	\\
	\begin{subfigure}[t]{0.30\textwidth}
		\centering
		\includegraphics[width=\textwidth]{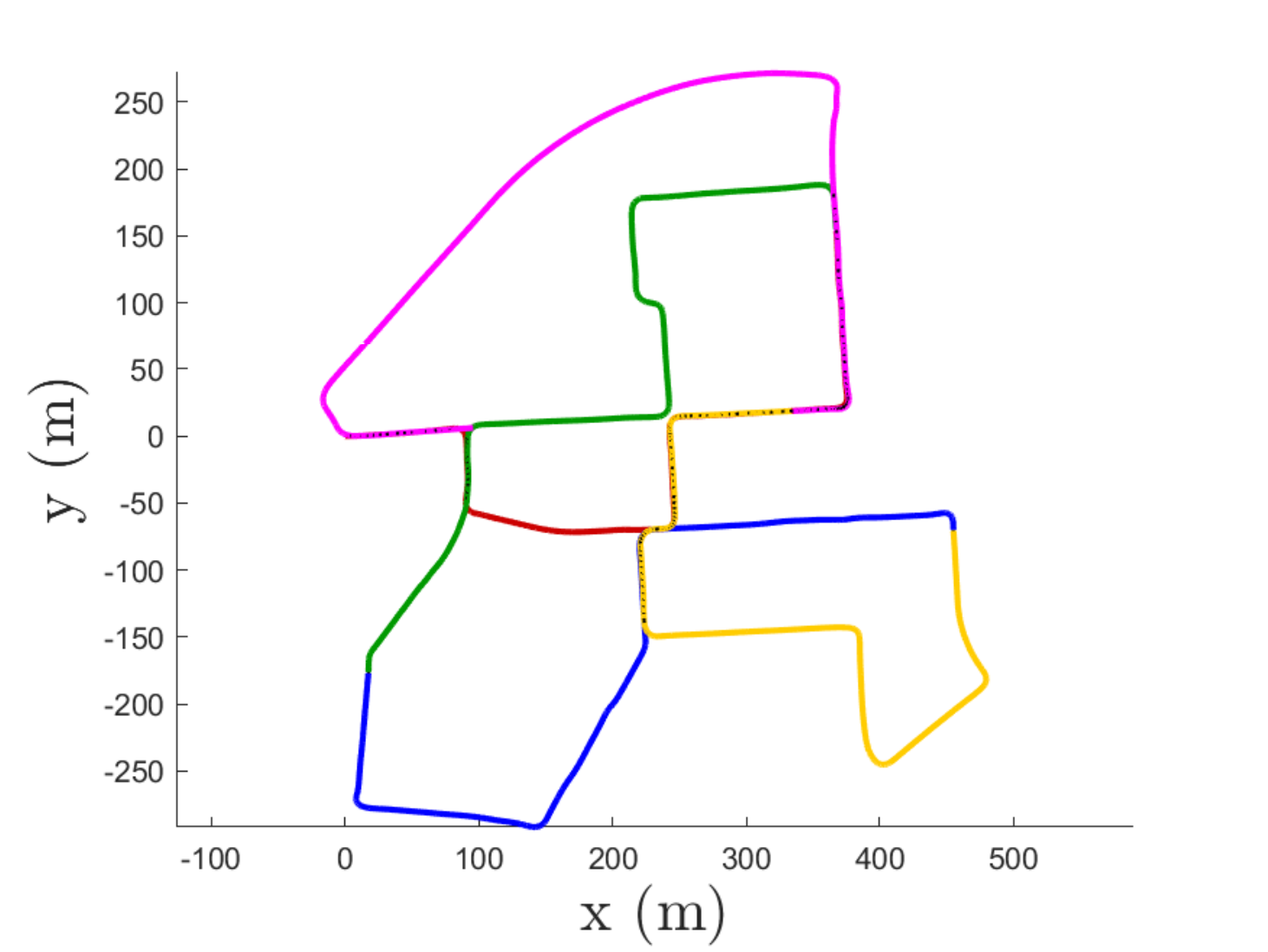}
		\caption{\small KITTI 00}
	\end{subfigure}
	~
	\begin{subfigure}[t]{0.30\textwidth}
		\centering
		\includegraphics[width=\textwidth]{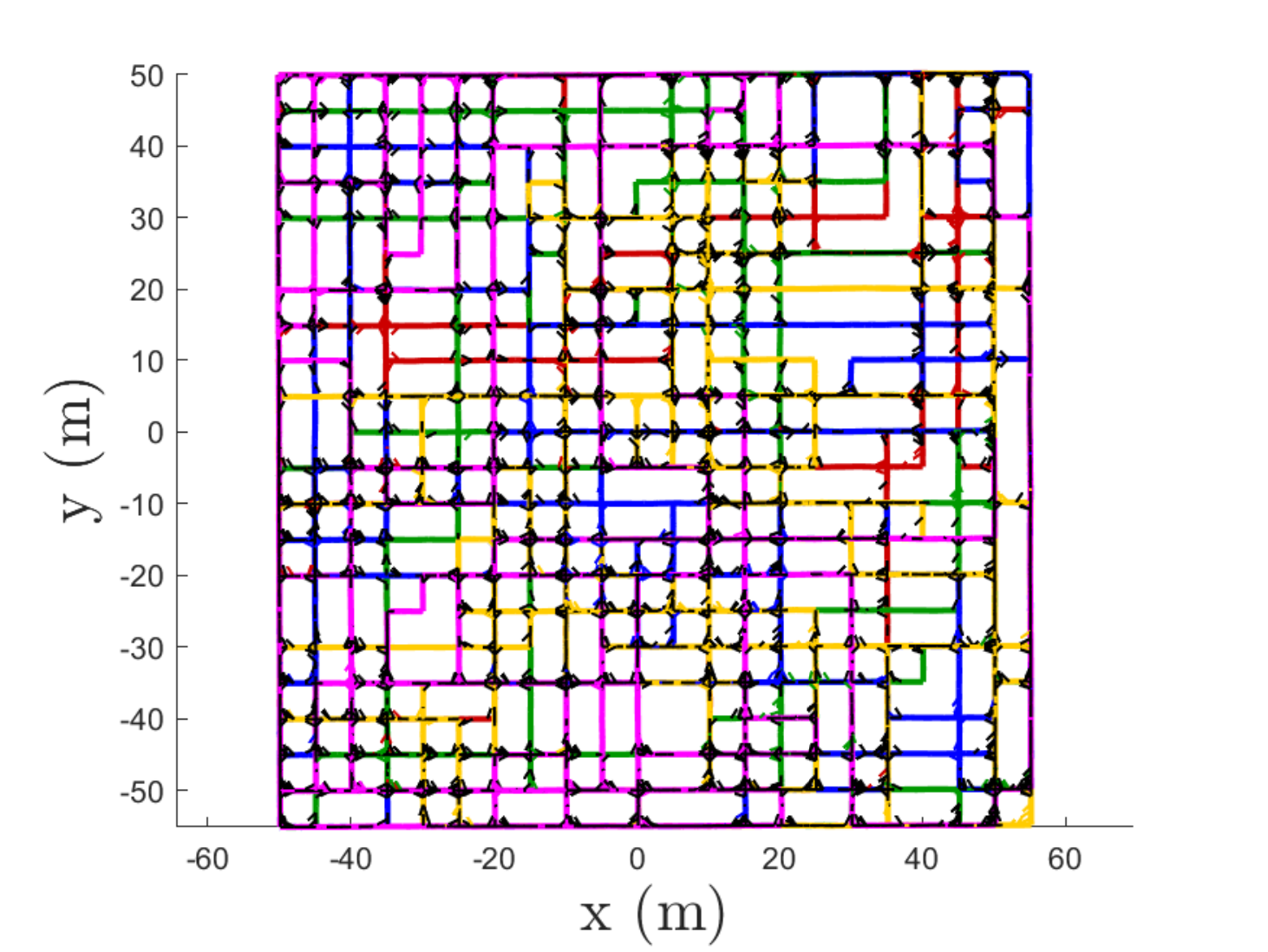}
		\caption{\small City10000}
	\end{subfigure}
	~
	\begin{subfigure}[t]{0.30\textwidth}
		\centering
		\includegraphics[width=\textwidth]{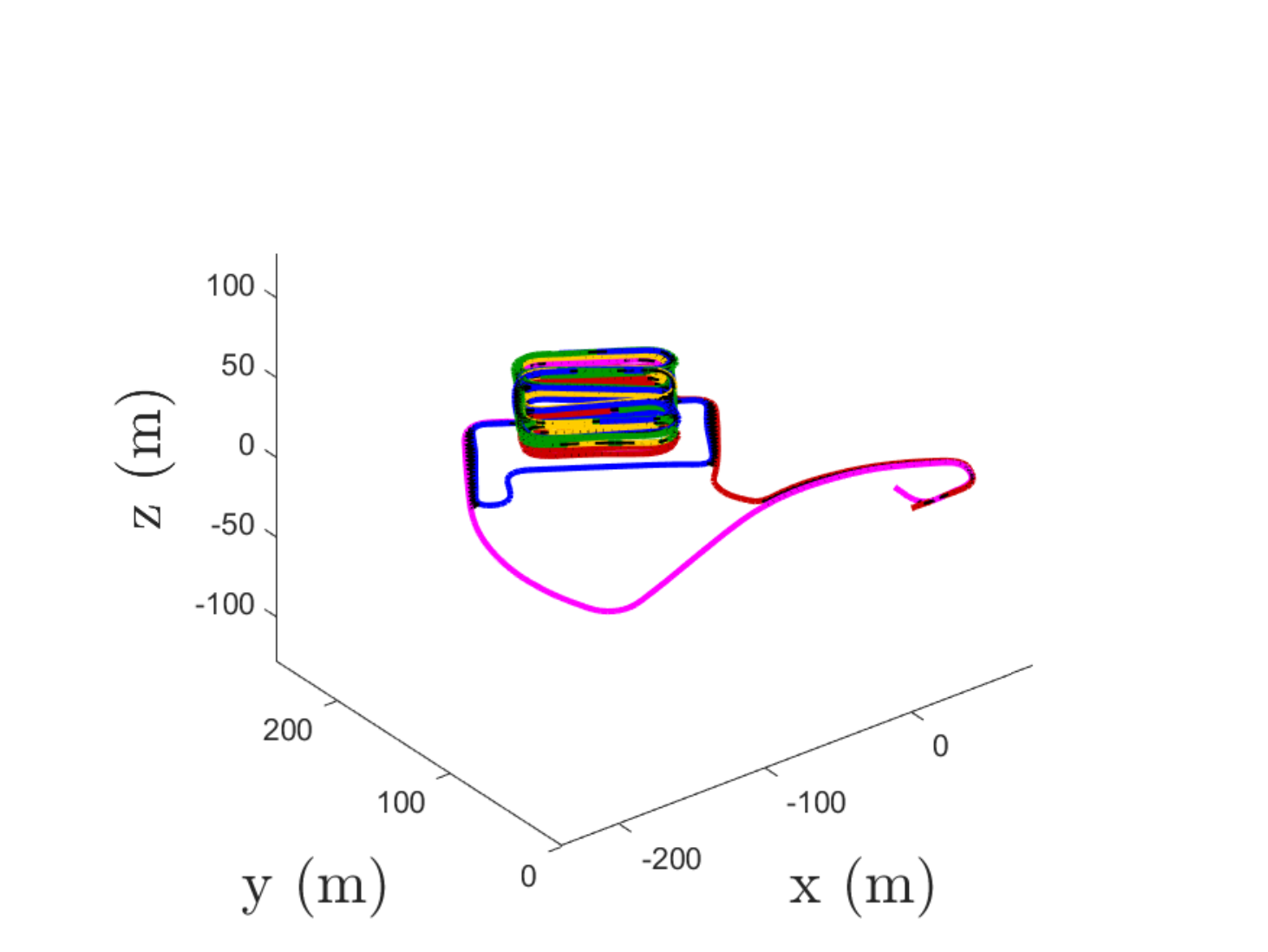}
		\caption{\small Parking Garage}
	\end{subfigure}
	\\
	\begin{subfigure}[t]{0.30\textwidth}
		\centering
		\includegraphics[width=\textwidth]{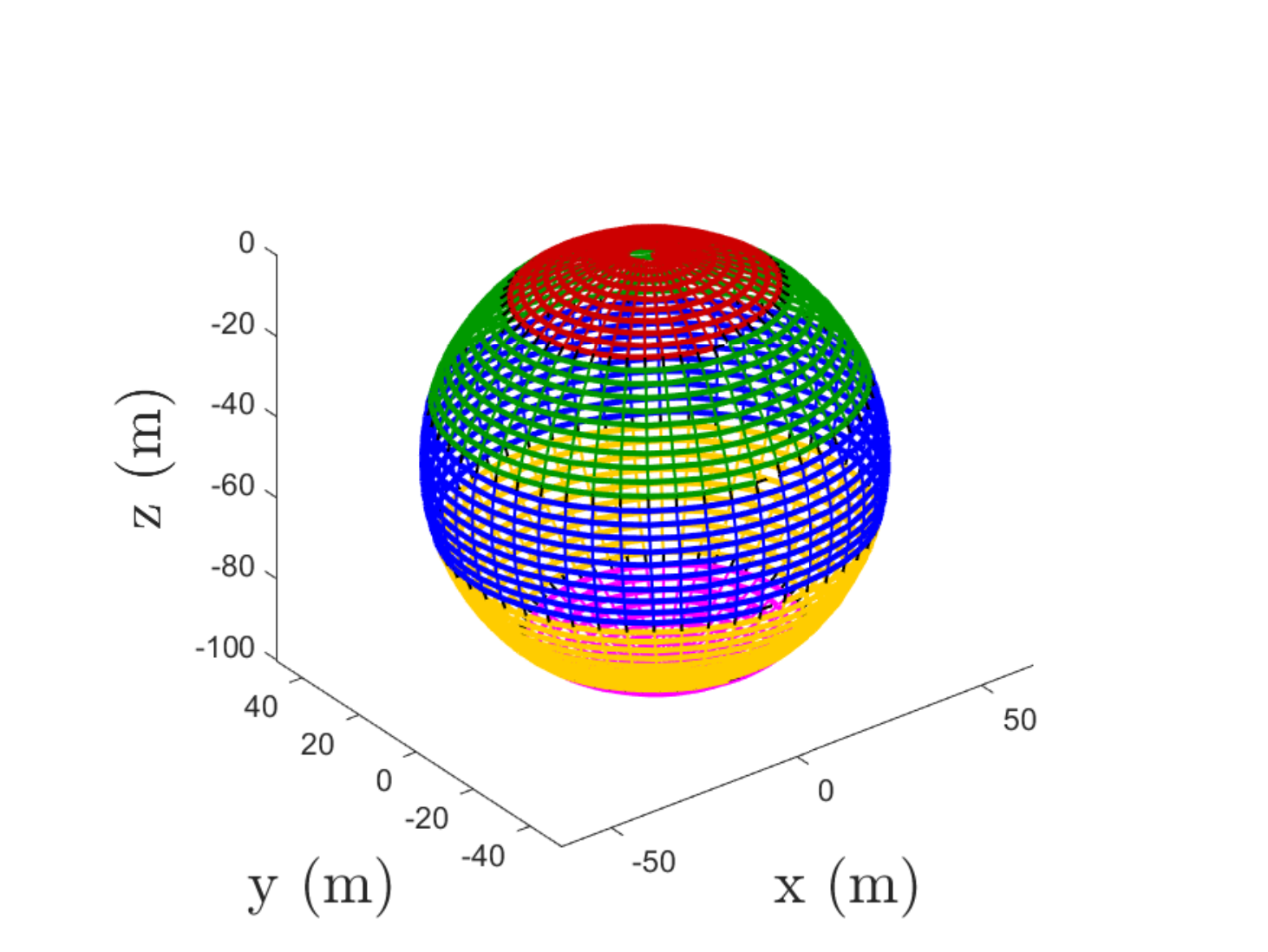}
		\caption{\small Sphere}
	\end{subfigure}
	~
	\begin{subfigure}[t]{0.30\textwidth}
		\centering
		\includegraphics[width=\textwidth]{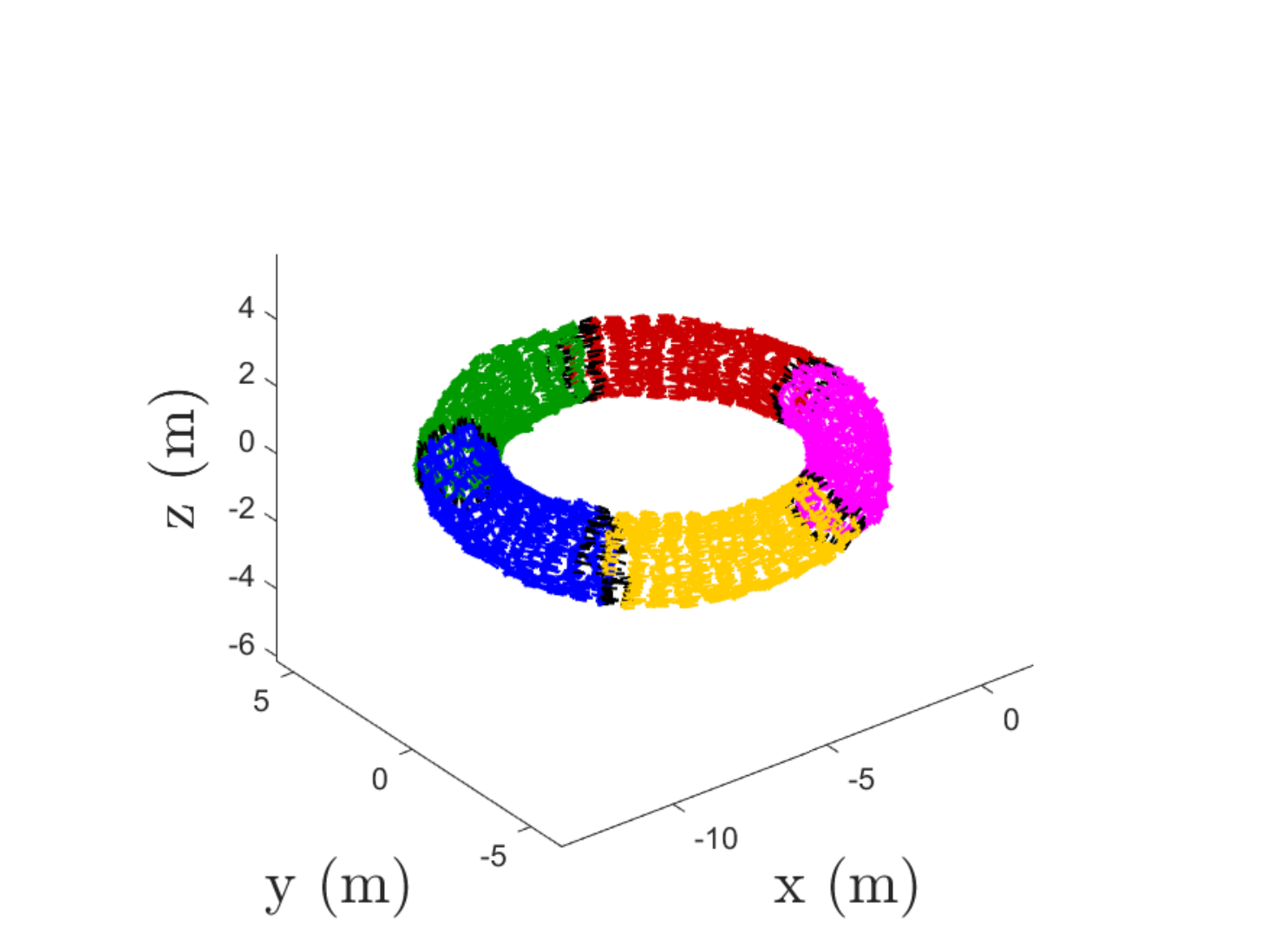}
		\caption{\small Torus}
	\end{subfigure}
	~
	\begin{subfigure}[t]{0.30\textwidth}
		\centering
		\includegraphics[width=\textwidth]{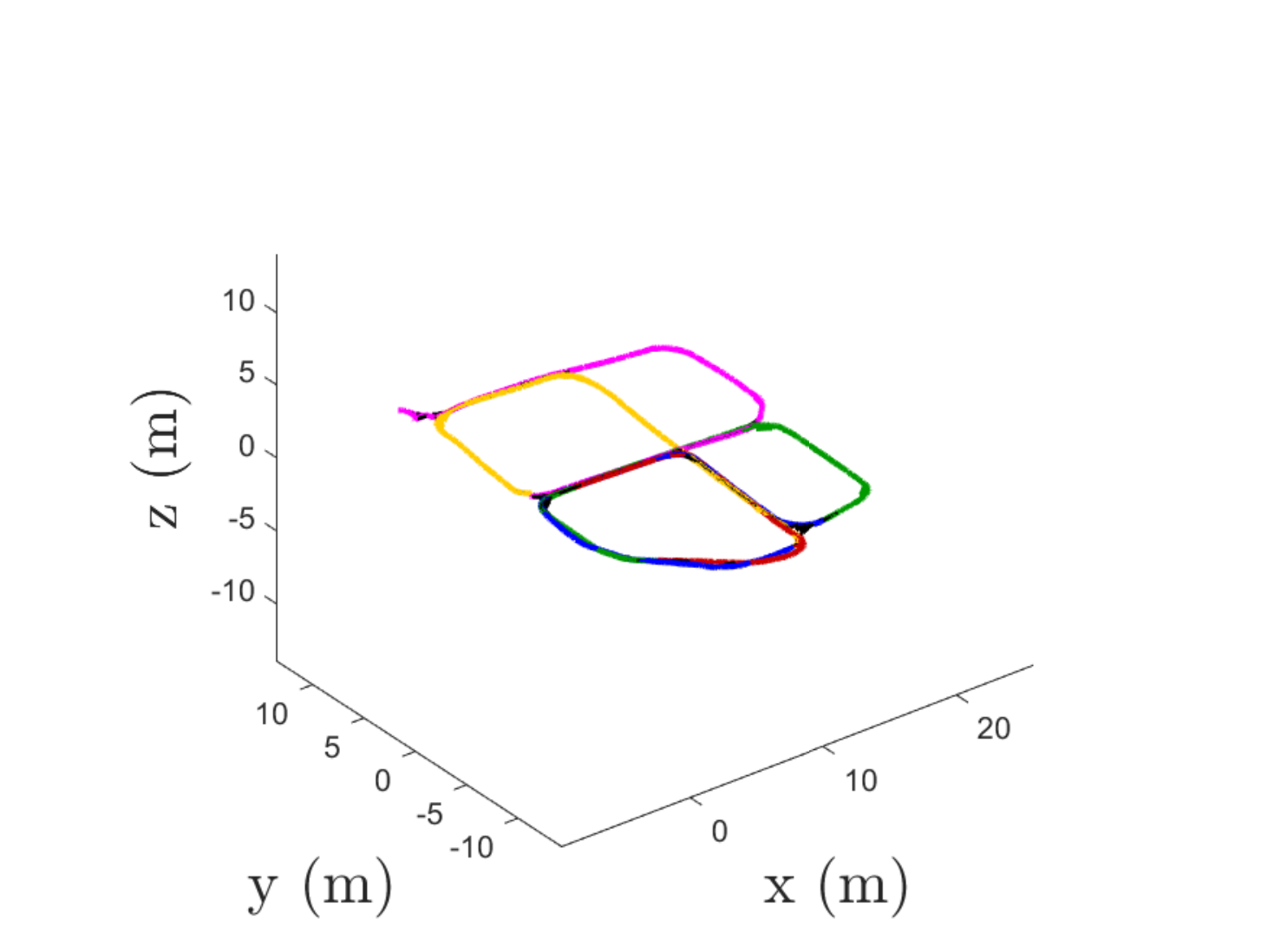}
		\caption{\small Cubicle}
	\end{subfigure}
	\caption{\small  
		Globally optimal estimates returned by \AlgName\ (Algorithm~\ref{alg:dpgo}) on benchmark datasets.
	}
	\label{fig:datasets}
\end{figure}

\begin{figure}[t]
	\centering
	\begin{subfigure}[t]{0.23\textwidth}
		\centering
		\includegraphics[trim=50 10 50 20, clip, width=\textwidth]
		{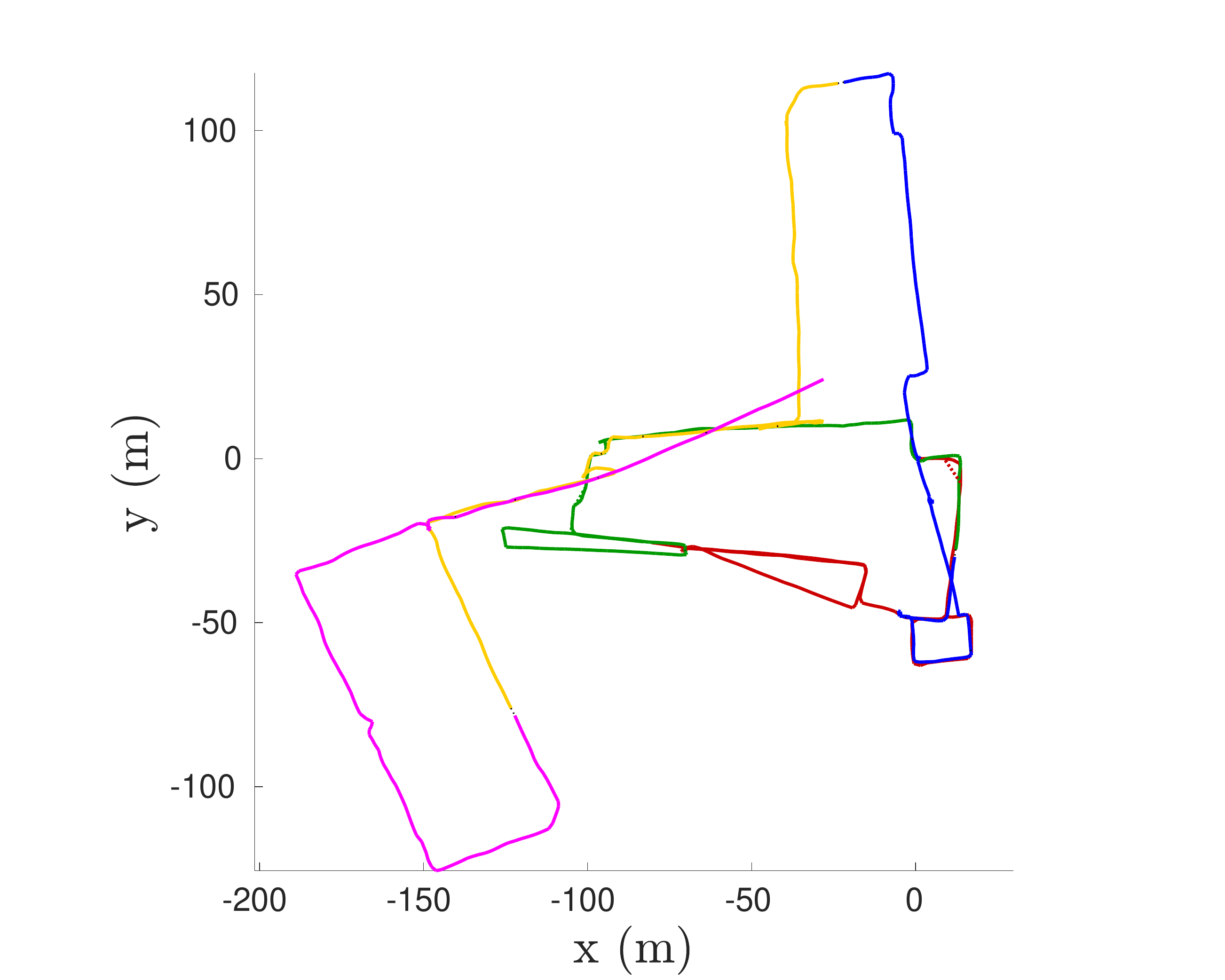}
		\caption{\small  Suboptimal critical point at rank  $r=3$}
		\label{fig:escape_local_min}
	\end{subfigure}
	~
	\begin{subfigure}[t]{0.23\textwidth}
		\centering
		\includegraphics[trim=50 10 50 20, clip, width=\textwidth]
		{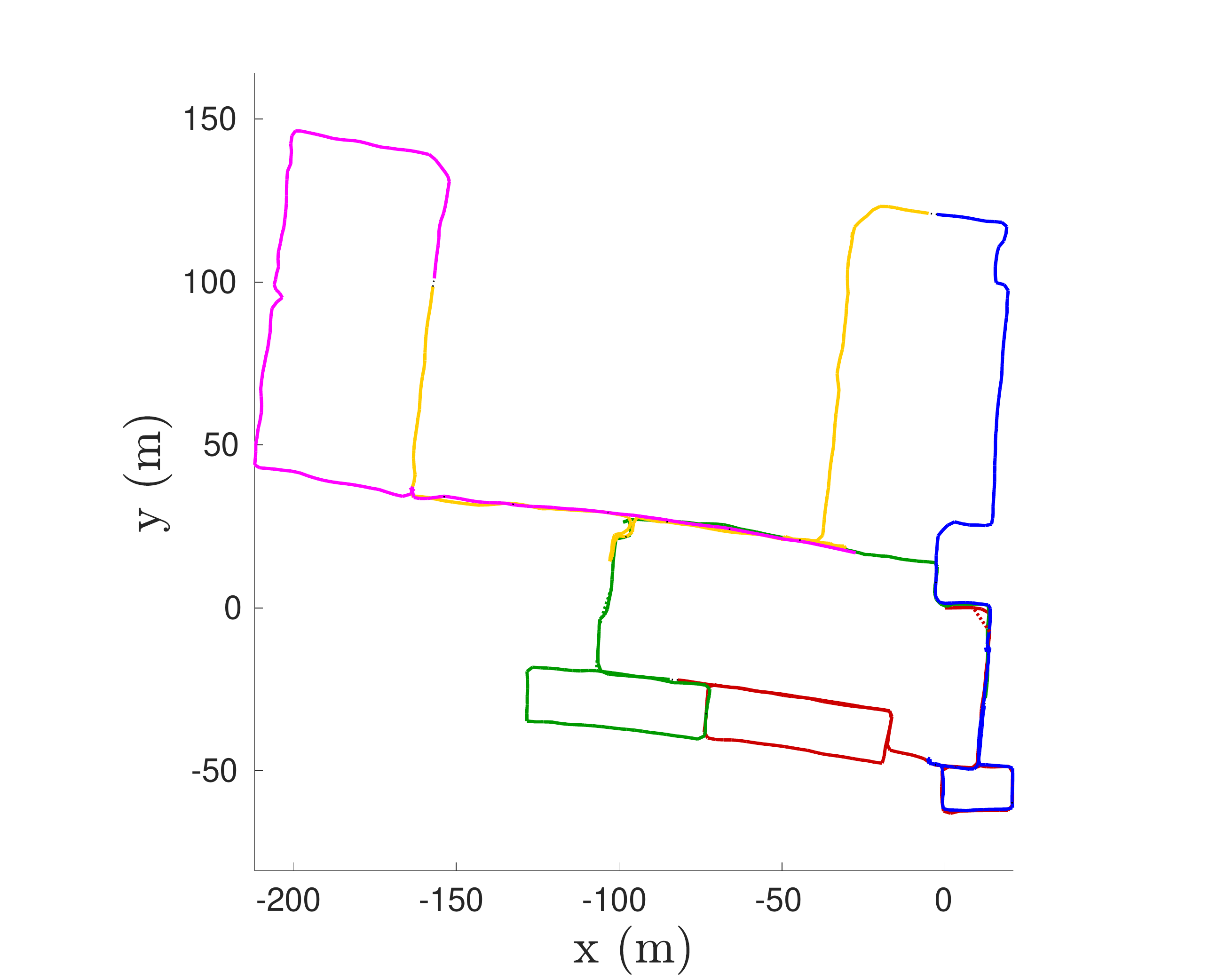}
		\caption{\small Certified global miminizer at rank $r = 4$}
		\label{fig:escape_global_min}
	\end{subfigure}
	~
	\begin{subfigure}[t]{0.23\textwidth}
		\centering
		\includegraphics[trim=5 10 40 20, clip, width=\textwidth]
		{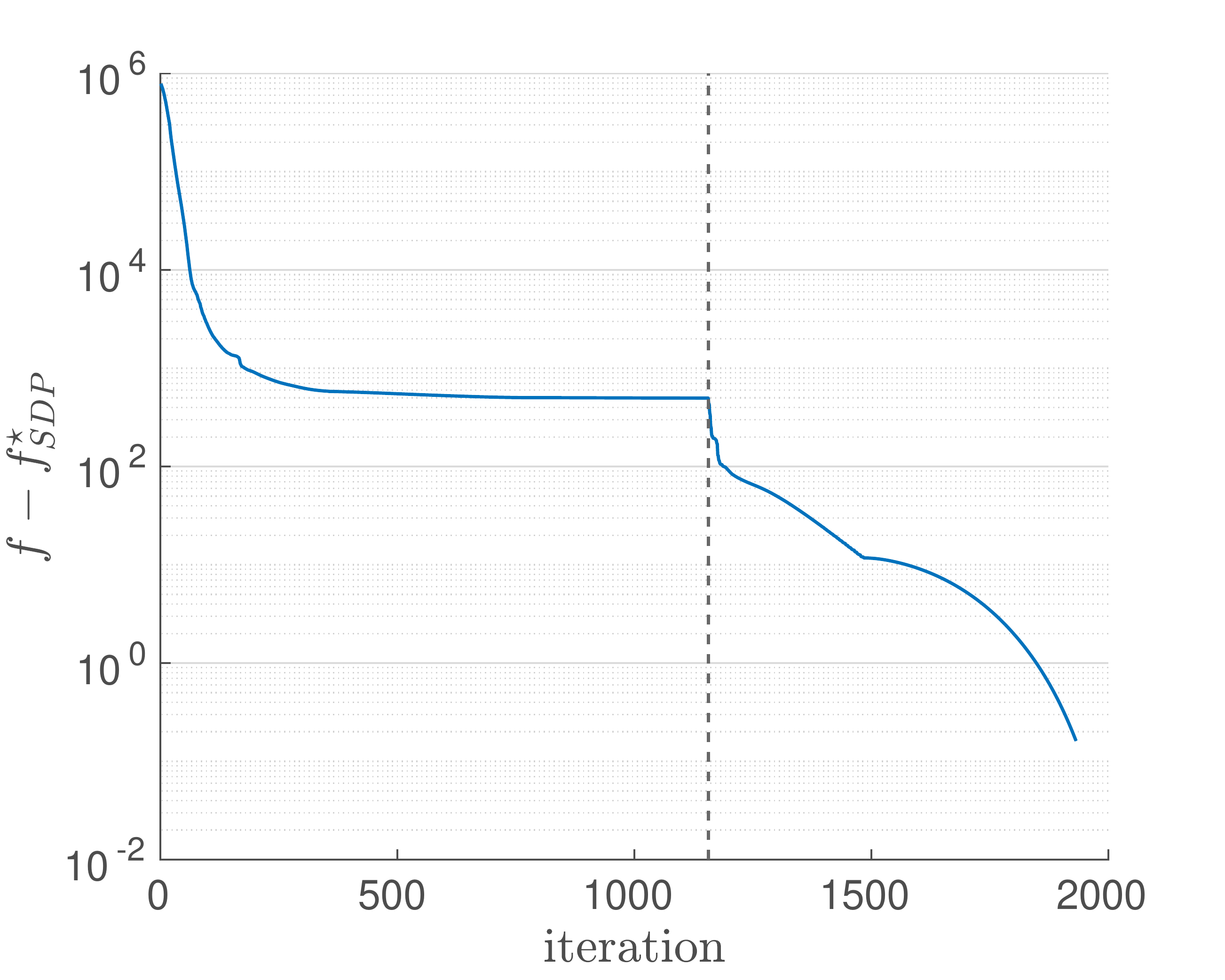}
		\caption{\small Combined optimality gap ($r=3$ and $r=4$) during local search}
		\label{fig:escape_optgap}
	\end{subfigure}
	~
	\begin{subfigure}[t]{0.23\textwidth}
		\centering
		\includegraphics[trim=5 10 40 20, clip, width=\textwidth]
		{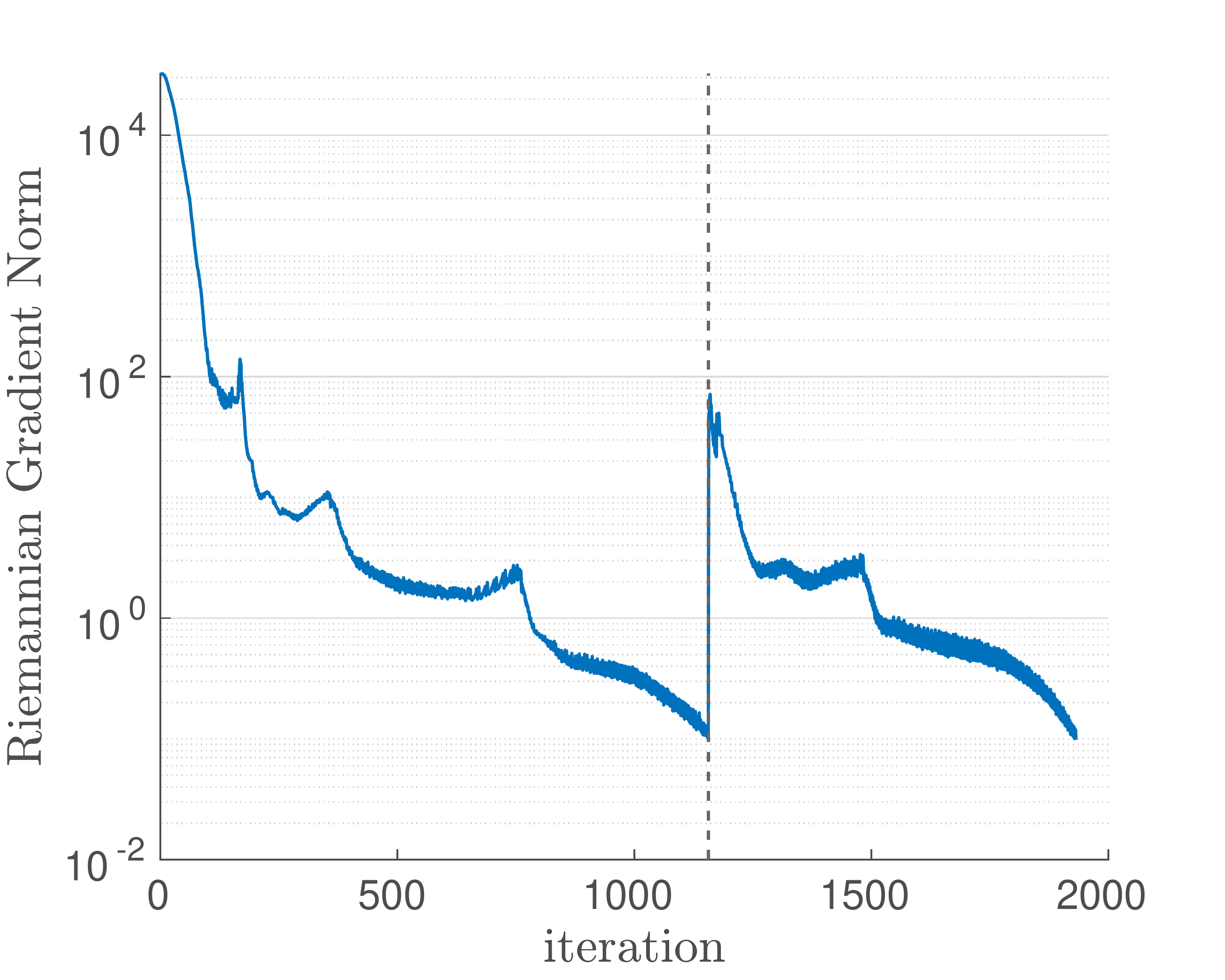}
		\caption{\small  Combined Riemannian gradient norm ($r=3$ and $r=4$) during local search}
		\label{fig:escape_rgradnorm}
	\end{subfigure}
	\caption{\small  
		\AlgName\ (Algorithm~\ref{alg:dpgo}) returns the global minimizer of the \texttt{Killian court} dataset even from random initialization.  
		(a) From random initialization, $\ARBCD$ converges to a suboptimal critical point at rank $r = 3$. 
		(b) Via distributed verification and saddle point escaping, our algorithm is able to escape the suboptimal solution and converges to the global minimizer at rank $r = 4$. 
		(c) Evolution of optimality gap across $r = 3$ and $r = 4$. 
		(d) Evolution of Riemannian gradient norm across $r = 3$ and $r=4$.
		The vertical dashed line indicates the transition from $r=3$ to $r=4$.
	}
	\label{fig:escape_experiment}
\end{figure} 

To further demonstrate the uniqueness of our algorithm as a \emph{global} solver, we show that it is able to converge to the global minimum even from random initialization. 
This is illustrated using the \texttt{Killian court} dataset in Figure~\ref{fig:escape_experiment}. 
Due to the random initialization, the first round of distributed local search at rank $r=3$ converges to a suboptimal critical point. 
This can be seen in Figure~\ref{fig:escape_optgap}, the the optimality gap at $r=3$
(first 1068 iterations) converges to a non-zero value. 
From distributed verification, our algorithm detects the solution as a saddle point and is able to escape and converges to the correct global minimizer at rank $r=4$. 
This can be clearly seen in Figure~\ref{fig:escape_rgradnorm}, where escaping successfully moves the iterate to a position with large gradient norm, from where local search can successfully descend to the global minimizer.


\section{Conclusion}
In this work we proposed the first \emph{certifiably correct} algorithm for \emph{distributed} pose-graph optimization.  Our method is based upon a sparse semidefinite relaxation of the pose-graph optimization problem that we prove enjoys the same exactness guarantees as current state-of-the-art centralized methods \cite{Rosen19IJRR}: namely, that its minimizers are \emph{low-rank} and provide \emph{globally optimal solutions} of the original PGO problem under moderate noise.  To solve large-scale instances of this relaxation in the distributed setting, we leveraged the existence of low-rank solutions to propose a distributed Riemannian Staircase framework, employing Riemannian block coordinate descent as the core distributed optimization method.  We proved that RBCD enjoys a \emph{global} sublinear convergence rate under standard (mild) conditions, and can be significantly accelerated using Nestorov's  scheme.  We also developed the first distributed solution verification and saddle escape algorithms to \emph{certify} the optimality of critical points recovered via RBCD, and to descend from suboptimal critical points if necessary. 
Finally, we provided extensive numerical evaluations, demonstrating that the proposed approach correctly recovers globally optimal solutions 
under moderate noise, and outperforms alternative distributed methods in terms of estimation quality 
and convergence speed.

In the future, we plan to study approaches to further improve iteration complexity on hard problem instances.
Another interesting direction is the design of certifiably correct distributed algorithm that can handle \emph{outlier} measurements in CSLAM and CNL.
While robustness has gained increasing attention in multi-robot SLAM \cite{Mangleson2018PCM,lajoie2019door}, recent \emph{certifiably robust} approaches such as \cite{Lajoie2019RAL} are still restricted to centralized setting due to high computational requirements.

Lastly, similar to other distributed algorithms,
the performance of \AlgName\ is expected to degrade as network conditions deteriorate (e.g., as network topology becomes sparser or delay increases).
To this end, our recent work \cite{Tian20ASAPP} studied convergence of distributed PGO under communication delay.
In general, however, designing distributed SLAM systems that better handle real-world communication challenges remains an important direction for future research.

\subsection*{Acknowledgments}
The authors would like to thank Luca Carlone for fruitful discussions 
that led to this work.
This work was supported in part by the NASA Convergent
Aeronautics Solutions project Design Environment for Novel
Vertical Lift Vehicles (DELIVER), by ONR under BRC award N000141712072, and by
ARL DCIST under Cooperative Agreement Number W911NF-17-2-0181.

\bibliographystyle{IEEEtranN}
\bibliography{slam,optimization,slamLoopClosure}

\clearpage

\begin{appendices}

\section{Exactness of SDP Relaxation}
\label{SDP_equivalence_appendix}

\begin{table*}[t]
	\centering
	\caption{ \small List of problems considered in this work. Here $i \in
			[n]$ where $n$ denotes the total number of poses in the (collective)
			pose graph. The dimension of the problem in denoted
		by $d \in \{2,3\}$. We note that Problem~\ref{prob:se_sdp_marg} and
\ref{prob:se_sync_riemannian_marg} are not directly used in the proposed
approach, but are nonetheless crucial for establishing the performance
guarantees of the SDP relaxation for pose synchronization (Theorem~\ref{thm:sdp_equivalence} and Theorem~\ref{thm:tightness_informal}).}
	\label{tab:problems}
	\setlength{\tabcolsep}{2pt}
	\renewcommand{\arraystretch}{2}
	\footnotesize
	\begin{tabular}{|c||l||c||c||c|}
		\hline
		\hspace{0.3cm}\textbf{\#}\hspace{0.3cm} & \multicolumn{1}{c||}{\textbf{Problem Description}} &
		\hspace{0.2cm}\textbf{Cost Function}\hspace{0.2cm} &
		\textbf{Domain} &
		\textbf{Constraints} \\ \hline \hline
		\ref{prob:se_sync}  & MLE for PGO    &  \eqref{eq:se_sync_cost}                  & $(R_i,t_i) \in
		\SOd(d) \times \Rset^d$  & --       \\ \hline
		\ref{prob:se_sdp}  & Full SDP Relaxation of PGO & $F(\ZT) \triangleq \langle
\ConLapT,\ZT\rangle$   & $\ZT \in \PSD^{n+dn}$     & ${\ZT}_{[i,i](1:d,1:d)} = I_d$           \\ \hline
		\ref{prob:se_sdp_marg}  & Rotation-only SDP 
		Relaxation for PGO & $ F_\text{R}(\ZR) \triangleq \langle\ConLapM,\ZR\rangle$           & $\ZR \in \PSD^{dn}$    &
		${\ZR}_{[i,i]} = I_d$ \\ \hline
		\ref{prob:se_sync_riemannian} & Rank-Restricted Full SDP for PGO
		& $f(X) \triangleq \langle \ConLapT, X^\top X\rangle$
		& $X \in (\Stiefel(d,r) \times \mathbb{R}^r)^n$
		& --           \\ \hline
		\ref{prob:se_sync_riemannian_marg} & 
		Rank-Restricted Rotation-only SDP for PGO & $f_\text{R}(Y) \triangleq \langle\ConLapM,Y^\top Y\rangle$           & $Y \in \Stiefel(d,r)^n$	&  --
		\\ \hline
	\end{tabular}
\end{table*}

In this section, we provide additional discussions of our SDP relaxation (Problem~\ref{prob:se_sdp}) and give a proof of its exactness under low noise. 
To facilitate the discussion, we summarize all problems that are considered in this work in Table~\ref{tab:problems}.
The core idea behind our proof is to establish certain \emph{equivalence} relations with the rotation-only SDP relaxation (Problem~\ref{prob:se_sdp_marg}), which is first developed by Rosen et al. in SE-Sync \cite{Rosen19IJRR}.
To begin, we first define the cost matrix $\ConLapM$ that appears in the rotation-only SDP; see also \cite[Equation~20(b)]{Rosen19IJRR}.
\begin{equation}
\ConLapM \triangleq \ConLap + \widetilde{\Sigma} - \widetilde{V}^\top L(W^\tau)^{\pinv} \widetilde{V},
\label{eq:ConLapM_def}
\end{equation}
In \eqref{eq:ConLapM_def}, $\ConLap \in \PSD^{dn}$ is the rotation connection Laplacian,
and $L(W^\tau) \in \PSD^n$ is the graph Laplacian of the pose graph with edges weighted by the translation measurement weights $\{\tau_{ij}\}$.
The remaining two matrices $\widetilde{V} \in \mathbb{R}^{n \times dn}$ and $\widetilde{\Sigma} \in \mathbb{R}^{dn \times dn}$ are formed using relative translation measurements. 
The exact expressions of these matrices are given in equations (13)-(16) in \cite{Rosen19IJRR} and are omitted here. 
Let us consider the rank-restricted version of the rotation-only SDP defined below.
\begin{problem}[Rotation-only Rank-restricted SDP for Pose Synchronization \cite{Rosen19IJRR}]
	\normalfont
	\begin{equation}
	\underset{Y \in \Stiefel(d,r)^n}{\text{minimize}}
	\quad \langle\ConLapM,Y^\top Y\rangle.
	\end{equation}
	\label{prob:se_sync_riemannian_marg}
\end{problem}

Problem~\ref{prob:se_sync_riemannian_marg} and the \emph{sparse} rank-restricted
relaxation we solve in this paper (Problem~\ref{prob:se_sync_riemannian}) are intimately connected. 
The following lemma precisely characterizes this connection, and also provides
an important tool for proving subsequent theorems in this section.

\begin{lemma}[Connections between Problems~\ref{prob:se_sync_riemannian} and \ref{prob:se_sync_riemannian_marg}]
	\normalfont
	Let 
	$X = \left[	Y_1 \,\, p_1 \,\, \hdots \,\, Y_n \,\, p_n \right] \in \Manifold(r,n)$ be a first-order critical point of  Problem~\ref{prob:se_sync_riemannian}.
	Let $ Y = \left[
			Y_1 \,\, \hdots \,\, Y_n
	\right] \in \Stiefel(d,r)^n$
	and $p = \left[	p_1 \,\, \hdots \,\, p_n \right] \in \Real^{r \times n}$
	be block matrices constructed from the Stiefel and Euclidean elements of $X$, respectively.  Then:
	\begin{enumerate}
		\item [(i)] The translations $p \in \Real^{r \times n}$ satisfy:
		\begin{equation}
		\label{translational_component_of_first_order_critical_point}
		p \in \left\{ -Y \widetilde{V}^\top L(W^\tau)^{\pinv} + c1_n^\top
				\middle\vert c
				\in \Real^r
		\right\}.
		\end{equation}
		\item [(ii)]  $Y$ is a first-order critical point of Problem \ref{prob:se_sync_riemannian_marg}, and $\langle\ConLapT,X^\top X\rangle = \langle\ConLapM,Y^\top
		Y\rangle.$
		\item [(iii)]  Let $\bar{\Lambda}(X)$ denote the symmetric $(d\times d)$-block-diagonal matrix constructed by extracting the nonzero $(d\times d)$ diagonal blocks from the Lagrange multiplier matrix $\Lambda(X)$ defined in \eqref{Lagrange_multipliers}:
		\begin{align}
		\bar{\Lambda}(X) & \in \SBD(d,n) \\
		\bar{\Lambda}(X)_{[i,i]} & \triangleq \Lambda(X)_{[i,i](1:d, 1:d)} \quad \forall i \in [n]
		\label{Lambda_bar_definition}
		\end{align}
		(see also \eqref{eq:SymBlockDiagPlus}).   Then the Lagrange multiplier
		matrix:
		\begin{equation}
		\Lambda_{\text{R}}(Y) \triangleq \SymBlockDiag(\ConLapM Y^\top Y)
		\end{equation}
		for the simplified (rotation-only) Problem \ref{prob:se_sync_riemannian_marg} (cf.\ \cite[eq.\ (107)]{Rosen19IJRR}) satisfies:
		\begin{equation}
		\label{relation_between_Lagrange_multipliers_of_SESync_fact_and_full_PGO_fact}
		\Lambda_\text{R}(Y) = \bar{\Lambda}(X).
		\end{equation}
		\item [(iv)] Let $\ST_{\text{R}}(Y) \triangleq \ConLapM - \Lambda_{\text{R}}(Y)$ denote the certificate matrix for the simplified (rotation-only) SE-Sync relaxation Problem \ref{prob:se_sync_riemannian_marg} \cite[Thm.\ 7]{Rosen19IJRR}.  Then $\ST_{\text{R}}(Y) \succeq 0$ if and only if $\ST(X) \succeq 0$.
		\item [(v)] $X$ is a global minimizer of Problem~\ref{prob:se_sync_riemannian} if and only if $Y$ is a global minimizer to Problem~\ref{prob:se_sync_riemannian_marg}. 
	\end{enumerate}
	\label{lem:rank_restricted_SDP_equivalence}
\end{lemma}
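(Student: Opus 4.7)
The overall strategy is to exploit the \emph{separable} structure of PGO: in $f(X) = \langle \ConLapT, X^\top X \rangle$ the translation components $p$ appear only quadratically and unconstrained, so one can analytically minimize out $p$ given $Y$, recovering exactly the cost, Lagrange multipliers and certificate matrix of Rosen et al.'s rotation-only formulation. All five claims then reduce to bookkeeping with the Schur-complement identity \eqref{eq:ConLapM_def}.

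First I would write $\ConLapT$ in $[(d+1)\times(d+1)]$-block form, isolating its rotation-rotation block $\ConLap$, its translation-translation block (which is $L(W^\tau)$ tensored with $I$, up to reordering), and its rotation-translation block (built from $\widetilde{V}$). Plugging the partition $X = [Y_1\,p_1\,\ldots\,Y_n\,p_n]$ into $\langle \ConLapT, X^\top X\rangle$ and permuting rows/columns so that all rotations precede all translations, the cost becomes
\begin{equation}
	f(X) = \langle \ConLap, Y^\top Y\rangle + 2\langle \widetilde{V}, p^\top Y\rangle + \langle L(W^\tau), p^\top p\rangle + \langle \widetilde{\Sigma}, Y^\top Y\rangle,
\end{equation}
where the $\widetilde{\Sigma}$ term comes from the diagonal correction in the translation-translation block of $\ConLapT$ (exactly as in \cite[Eq.~(13)--(16)]{Rosen19IJRR}). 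For (i), the translations $p$ are Euclidean, so the Riemannian-gradient criticality condition $\rgrad_p f = 0$ is the linear equation $p\, L(W^\tau) + Y \widetilde{V}^\top = 0$. Since $\Gcal$ is connected, $\ker L(W^\tau) = \Span(1_n)$ and $\image(\widetilde{V}^\top) \perp 1_n$, so the general solution is $p = -Y\widetilde{V}^\top L(W^\tau)^\dagger + c\,1_n^\top$, which is exactly \eqref{translational_component_of_first_order_critical_point}.

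For (ii), substituting this $p^\star(Y)$ into $f(X)$ and simplifying with $L(W^\tau)L(W^\tau)^\dagger L(W^\tau)=L(W^\tau)$ collapses the cost to $\langle \ConLap + \widetilde{\Sigma} - \widetilde{V}^\top L(W^\tau)^\dagger \widetilde{V},\, Y^\top Y\rangle = \langle \ConLapM, Y^\top Y\rangle$ by \eqref{eq:ConLapM_def}. This gives cost equality. That $Y$ is then first-order critical for Problem~\ref{prob:se_sync_riemannian_marg} follows because the reduced cost function $\tilde f_\text{R}(Y) \triangleq \min_p f(Y,p)$ coincides with the objective of Problem~\ref{prob:se_sync_riemannian_marg}, and by the envelope theorem $\rgrad \tilde f_\text{R}(Y)$ equals the $Y$-block of $\rgrad f(X)$ (which vanishes).

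For (iii), I would compute $\Lambda(X)$ by expanding $X^\top X\, \ConLapT$ block-by-block and extracting the symmetric top-left $d\times d$ sub-blocks of the diagonal $(d+1)\times(d+1)$ blocks, as prescribed in \eqref{eq:SymBlockDiagPlus}. Using the elimination equation $p\,L(W^\tau) = -Y\widetilde{V}^\top$ to replace every occurrence of $p\,L(W^\tau)$ in the resulting expression produces precisely $\SymBlockDiag(\ConLapM Y^\top Y) = \Lambda_\text{R}(Y)$, proving \eqref{relation_between_Lagrange_multipliers_of_SESync_fact_and_full_PGO_fact}. Claim (iv) is the main technical step: writing $\ST(X) = \ConLapT - \Lambda(X)$ in the same permuted block form, its rotation-translation block is $\widetilde{V}$ and its translation-translation block is $L(W^\tau) \succeq 0$; by (iii) the rotation-rotation block is $\ConLap + \widetilde{\Sigma} - \bar\Lambda(X)$. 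The generalized Schur complement of this translation-translation block is $\ConLap + \widetilde{\Sigma} - \bar\Lambda(X) - \widetilde{V}^\top L(W^\tau)^\dagger \widetilde{V} = \ConLapM - \Lambda_\text{R}(Y) = \ST_\text{R}(Y)$. The standard generalized Schur complement lemma (see, e.g., Albert's theorem) gives $\ST(X)\succeq 0$ iff $L(W^\tau)\succeq 0$, $\image(\widetilde{V}^\top)\subseteq \image(L(W^\tau))$, and the Schur complement is PSD; the first two always hold here (connectedness of $\Gcal$ ensures the image condition), so PSD-ness of $\ST(X)$ is equivalent to $\ST_\text{R}(Y)\succeq 0$.

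Finally, (v) follows by combining (ii) with the SDP-level correspondence already established in Theorem~\ref{thm:sdp_equivalence} and the elimination formula (i): any $Y^\star$ optimal for Problem~\ref{prob:se_sync_riemannian_marg} produces, via $p^\star = -Y^\star \widetilde{V}^\top L(W^\tau)^\dagger$, a feasible $X^\star$ with matching cost, and conversely any minimizer $X^\star$ of Problem~\ref{prob:se_sync_riemannian} projects to a minimizer $Y^\star$ of Problem~\ref{prob:se_sync_riemannian_marg}. The part I expect to require the most care is (iv), because the Schur complement identity involves the Moore-Penrose pseudoinverse of the singular Laplacian $L(W^\tau)$; I need to verify both the image-containment hypothesis of the generalized Schur complement and that the gauge freedom $c\,1_n^\top$ in $p$ does not affect any of the block identities, which it does not because $1_n$ lies in $\ker L(W^\tau)$.
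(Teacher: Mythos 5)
Your proposal follows essentially the same route as the paper's proof: permute the connection Laplacian into a block form separating rotations from translations, solve the unconstrained stationarity equation in $p$ using connectedness of $\Gcal$ and $1_n^\top \widetilde{V} = 0$ (parts (i)--(ii)), identify the Lagrange multiplier blocks (part (iii)), take the generalized Schur complement of $L(W^\tau)$ inside $S(X)$ (part (iv)), and use partial minimization over $p$ (part (v)). Your envelope-theorem phrasing of (ii) is an equivalent repackaging of the paper's direct computation $\nabla_Y f(X) = 2Y\ConLapM = \nabla f_\text{R}(Y)$, and your block-expansion route to (iii) amounts to the same calculation (note only that the term actually appearing there is $p\widetilde{V}$, so you must first write $\widetilde{V} = L(W^\tau)L(W^\tau)^\dagger \widetilde{V}$ before your substitution $pL(W^\tau) = -Y\widetilde{V}^\top$ applies).

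One correction is needed in part (v): you invoke Theorem~\ref{thm:sdp_equivalence} as ``already established,'' but in this paper that theorem is proved \emph{from} the present lemma (Appendix~\ref{sec:equivalence_proof}), so the citation is circular. Fortunately it is also superfluous: the constructive argument you give in the same sentence --- pairing any $Y$ with $p^\star(Y) = -Y\widetilde{V}^\top L(W^\tau)^\dagger + c1_n^\top$ shows $\min_X f = \min_Y f_\text{R}$, and the cost equality of (ii) at the critical point $X$ then transfers global optimality in both directions --- is self-contained and is exactly the paper's proof of (v); simply delete the appeal to Theorem~\ref{thm:sdp_equivalence}. Finally, two transpose slips: the solvability condition in (i) is $\image(\widetilde{V}) \perp 1_n$ (i.e. $1_n^\top\widetilde{V} = 0$), not $\image(\widetilde{V}^\top) \perp 1_n$, and the range condition in (iv) is $\image(\widetilde{V}) \subseteq \image(L(W^\tau))$, equivalently $(I - L(W^\tau)L(W^\tau)^\dagger)\widetilde{V} = 0$, which is the hypothesis the paper verifies when citing the generalized Schur complement theorem.
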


\begin{proof}

	Using the definition of the connection Laplacian matrix $\ConLapT$ in Problem~\ref{prob:se_sync_riemannian} (see \cite[Appendix~II]{Briales17CartanSync}), it can be shown that the cost function in Problem~\ref{prob:se_sync_riemannian} can be expanded into the following,
	\begin{equation}
	\langle\ConLapT,X^\top X\rangle = 
	\sum_{(i,j) \in \DirEdges} \kappa_{ij} \norm{Y_j - Y_i \Rtilde_{ij}}_F^2 
	+ \sum_{(i,j) \in \DirEdges} \tau_{ij} \norm{p_j - p_i - Y_i \ttilde_{ij}}_2^2. 
	\label{eq:expanded_cost}
	\end{equation}
	Using \eqref{eq:expanded_cost}, we may rewrite the cost function in a way that separates the Euclidean (translation) variables from the Stiefel (rotation) variables:
	\begin{equation}
		\langle\ConLapT,X^\top X\rangle 
		= 
		\begin{bmatrix}
		\vectorize(p) \\ 
		\vectorize(Y)
		\end{bmatrix}^\top 
		(M \otimes I_r)
		\begin{bmatrix}
		\vectorize(p) \\ 
		\vectorize(Y)
		\end{bmatrix}
		=
		\tr \left(
				\big[
						p \,\,\, Y
				\big]
		M
				\big[
						p \,\,\, Y
				\big]^\top
		\right).
		\label{eq:separated_cost_function}
	\end{equation}
	\begin{equation}
	M \triangleq \begin{bmatrix}
	L(W^\tau) & \widetilde{V} \\
	\widetilde{V}^\top & \ConLap + \widetilde{\Sigma}
	\end{bmatrix}.
	\label{eq:M_definition}
	\end{equation}
	Above, the $\vectorize(\cdot)$ operator concatenates columns of the input matrix into a single vector. 
	A detailed derivation for \eqref{eq:separated_cost_function} is already presented in \cite[Appendix~B]{Rosen19IJRR} for the case when $r = d$.
	For $r \geq d$, the derivation is largely identical with minor modifications to the dimensions of certain matrices, and thus is omitted. 
	We make an additional remark that the new data matrix $M$ \eqref{eq:M_definition} is related to the original connection Laplacian $Q$ via a permutation of the columns and rows. 
	
	Let us define $f(X) \triangleq \langle\ConLapT,X^\top X\rangle$ and $f_\text{R}(Y)
	\triangleq \langle\ConLapM,Y^\top Y\rangle$. 
	From \eqref{eq:separated_cost_function}, we derive the Euclidean gradients of $f(X)$ with respect to $p$ and $Y$, respectively. 
	\begin{align}
			\nabla_p f(X) &= 2 \left(p L(W^\tau) + Y \widetilde{V}^\top\right), \label{eq:p_gradient} \\
	\nabla_Y f(X) &= 2 \left(p \widetilde{V} + Y (\ConLap +
	\widetilde{\Sigma})\right) \label{eq:Y_gradient}.
	\end{align}
	Similarly, we also have:
	\begin{equation}
			\nabla_Y f_\text{R}(Y) = 2\,Y \ConLapM.
	\end{equation}
	
	\emph{Part (i):}  Since $X$ is a first-order critical point, the Euclidean gradient with respect to the translations must be zero. 
	In light of \eqref{eq:p_gradient}, we need to identify $p$ such that,
	\begin{equation}
		pL(W^\tau) + Y \widetilde{V}^\top = 0. 
	\end{equation}
	Using the general fact that $\vectorize(ABC) = (C^\top \otimes A) \vectorize(B)$, we may vectorize the above system of equations to,
	\begin{equation}
		(L(W^\tau) \otimes I_r) \vectorize(p) + \vectorize(Y \widetilde{V}^\top) = 0. 
		\label{eq:optimal_translation_system_of_equations}
	\end{equation}
	Define $A \triangleq L(W^\tau) \otimes I_r$ and $b \triangleq \vectorize(Y \widetilde{V}^\top)$.
	Since $A$ is the Kronecker product between the Laplacian of a connected graph and the identity matrix, it holds that $\rank(A) = rn-r$ and
	the kernel of $A$ is spanned by the columns of $U \triangleq 1_n \otimes I_r$. 
	We can equivalently express $\kernel(A)$ as (cf. \cite[Equation (76)]{Rosen19IJRR}),
	\begin{equation}
	\kernel(A) = \{Uc \,|  \,c \in \Real^r \} = \{\vectorize(c1_n^\top) \, | \, c \in \Real^r\}.
	\end{equation}
	Let $u = \vectorize(c1_n^\top) \in \kernel(A)$ be an arbitrary null vector of $A$.
	Consider the inner product between $u$ and $b$,
	\begin{equation}
	\langle b, u \rangle = \langle Y \widetilde{V}^\top, c1_n^\top \rangle = \tr(1_n^\top \widetilde{V} Y^\top c) = 0.
	\label{eq:b_orthogonal_to_u}
	\end{equation}
	The last equality is due to the fact that $1_n^\top \widetilde{V} = 0$ by the definition of $\widetilde{V}$; see equation (15) in \cite{Rosen19IJRR}.
	Therefore, we have proven that $b \perp \kernel(A)$. 
	Since $A$ is symmetric positive-semidefinite, it holds that $b \in \text{range}(A)$. 
	Thus, the system of linear equations \eqref{eq:optimal_translation_system_of_equations} admits infinitely many solutions, characterized by the following set,
	\begin{equation}
	\vectorize(p) \in \left\{ 
	-A^{\pinv} b + Uc \, \middle\vert \, c \in \Real^r
	\right\}.
	\label{eq:optimal_translation_vectorized}
	\end{equation}
	Recalling the definitions of $A$ and $b$, we can convert 
	\eqref{eq:optimal_translation_vectorized} back to matrix form (cf.
	\cite[Equation~(21)]{Rosen19IJRR}):
	\begin{equation}
	p \in \left\{-Y \widetilde{V}^\top L(W^\tau)^{\pinv} + c1_n^\top \,
	\middle\vert \, c \in \Real^r \right\}.
	\label{eq:optimal_translation}
	\end{equation}
	
	\emph{Part (ii):}  Substituting the translation expression
	\eqref{eq:optimal_translation} into the Euclidean gradient with respect to $Y$ \eqref{eq:Y_gradient}, we obtain:
	\begin{equation}
	\label{equality_of_Euclidean_gradients_with_respect_to_Y}
	\begin{split}
	\nabla_Y f(X) &= 2 \left((-Y \widetilde{V}^\top L(W^\tau)^{\pinv} +
	c1_n^\top) \widetilde{V} + Y (\ConLap + \widetilde{\Sigma})\right) \\
	&= 2\left(-Y \widetilde{V}^\top L(W^\tau)^{\pinv}\widetilde{V} + Y (\ConLap
	+ \widetilde{\Sigma})\right) \\
	&= 2Y \ConLapM \\
	&= \nabla f_\text{R}(Y),
	\end{split}
	\end{equation}
	where in the second line we again used the fact that the all-$1$s vector $1_n$ belongs to the null space of $\widetilde{V}$.
	Thus, we have shown that Problems \ref{prob:se_sync_riemannian} and \ref{prob:se_sync_riemannian_marg} have the same Euclidean gradient with respect to the Stiefel elements $Y$.
	Since $X$ is a first-order critical point, the Riemannian gradient of $f(X)$ with respect to $Y$ is zero, which implies that: 
	\begin{equation}
	\rgrad_Y f(X) = \proj_Y (\nabla_Y f(X)) = \proj_Y (\nabla f_\text{R}(Y)) = 
	\rgrad f_\text{R} (Y) = 0,
	\end{equation} 
	i.e., $Y$ is a first-order critical point of
	Problem~\ref{prob:se_sync_riemannian_marg}. Finally,  plugging the
	expression of $p$ into $f(X)$ shows that $	\langle\ConLapT,X^\top X\rangle = \langle\ConLapM,Y^\top
	Y\rangle$.
	
	\emph{Part (iii):} 
	The Lagrange multiplier matrices $\Lambda(X)$ and $\Lambda_{\text{R}}(Y)$ for Problem~\ref{prob:se_sync_riemannian} and Problem~\ref{prob:se_sync_riemannian_marg} are:
	\begin{align}
	\Lambda(X)  
	&= \SymBlockDiag_d^{+} \left (X^\top X \ConLapT \right)
	= \frac{1}{2} \SymBlockDiag_d^{+} \left (X^\top \nabla_X f(X)\right), \\
	\Lambda_{\text{R}}(Y) 
	&= \SymBlockDiag\left(Y^\top Y \ConLapM \right)
	= \frac{1}{2} \SymBlockDiag\left(Y^\top \nabla_Y f_{\text{R}}(Y) \right);
	\label{eq:Lagrange_multipliers_SE-Sync}
	\end{align}
	see \eqref{Lagrange_multipliers} and \cite[eq.\ (107)]{Rosen19IJRR}, respectively.  Extracting and aggregating the nonzero diagonal blocks $\bar{\Lambda}(X)$ of $\Lambda(X)$ (cf.\ \eqref{eq:SymBlockDiagPlus}), we obtain:
	\begin{equation}
	\bar{\Lambda}(X) = \frac{1}{2}\SymBlockDiag_d\left(Y^\top \nabla_Y f(X) \right)  = \frac{1}{2}\SymBlockDiag_d\left(Y^\top \nabla_Y f_{\text{R}}(Y) \right) = \Lambda_{\text{R}}(Y),
	\end{equation}
	where we have used \eqref{equality_of_Euclidean_gradients_with_respect_to_Y} for the middle equality.
	
	\emph{Part (iv):}  After permutation, the certificate matrix $\ST(X)$ defined in \eqref{certificate_matrix} can be written in the block form:
	\begin{equation}
	S(X) =  \begin{bmatrix}
	L(W^\tau) & \widetilde{V} \\
	\widetilde{V}^\top & \ConLap + \widetilde{\Sigma} - \bar{\Lambda}(X)
	\end{bmatrix}
	\end{equation}
	(cf.\ \eqref{eq:M_definition}).  Since $L(W^\tau) \succeq 0$ and $(I - L(W^\tau)L(W^\tau)^\dagger)\widetilde{V} = 0$, it follows from \cite[Thm.\ 4.3]{Gallier2010Schur} that $S(X) \succeq 0$ if only if the following generalized Schur complement of $S(X)$ with respect to $L(W^\tau)$ is positive semidefinite:
	\begin{equation}
	\left(\ConLap + \widetilde{\Sigma} - \bar{\Lambda}(X) \right) - \widetilde{V}^\top L(W^\tau)^\dagger \widetilde{V} = \ConLapM - \Lambda_\text{R}(Y) = S_\text{R}(Y),
	\end{equation}
	where we have used the substitutions \eqref{eq:ConLapM_def} and \eqref{relation_between_Lagrange_multipliers_of_SESync_fact_and_full_PGO_fact}.
	
	\emph{Part (v):} 
	We show that Problem~\ref{prob:se_sync_riemannian_marg} is obtained from Problem~\ref{prob:se_sync_riemannian} after analytically eliminating the translations $p$. 
	Consider the problem of minimizing the cost function
	\eqref{eq:separated_cost_function} with respect to translations $p$ only (as
	a function of $Y$). 
	Since this is an unconstrained convex quadratic problem, we can minimize
	this cost first with respect to $p$ by setting the corresponding gradient to zero.
	In part (i) we identified the set of all translations (for a fixed $Y$)
	satisfying this condition; see equation \eqref{eq:optimal_translation}.
	After replacing $p$ in the original cost function
	\eqref{eq:separated_cost_function} with any element from $\{-Y \widetilde{V}^\top L(W^\tau)^{\pinv} + c1_n^\top \,
	\vert \, c \in \Real^r \}$, we obtain the following rotation-only problem,
	\begin{equation}
	\normalfont
	\underset{Y \in \Stiefel(d,r)^n}{\text{minimize}}
	\quad 
	\tr(Y
	(\ConLap + \widetilde{\Sigma} - \widetilde{V}^\top L(W^\tau)^{\pinv} \widetilde{V})
	Y^\top) = 
	\tr(\ConLapM Y^\top Y),
	\end{equation}
	which is exactly Problem~\ref{prob:se_sync_riemannian_marg}. This concludes our proof. 
\end{proof}

\subsection{Proof of Theorem~\ref{thm:sdp_equivalence}}
\label{sec:equivalence_proof}

With Lemma~\ref{lem:rank_restricted_SDP_equivalence} in place, we are ready to prove the equivalence relations between our SDP relaxation (Problem~\ref{prob:se_sdp}) and its rotation-only version (Problem~\ref{prob:se_sdp_marg}), stated in Theorem~\ref{thm:sdp_equivalence} of the main paper. 

\begin{proof}[Proof of Theorem~\ref{thm:sdp_equivalence}]
	We give a constructive proof where we show that from a minimizer $\ZT^\star
	\in \PSD^{n+dn}$ to Problem~\ref{prob:se_sdp} (full SDP relaxation),
	we can recover a minimizer $\ZR^\star \in \PSD^{dn}$ to
	Problem~\ref{prob:se_sdp_marg} (rotation-only SDP relaxation) {with the same rank}, and vice versa. 
	Without loss of generality, let $r^\star = \rank(Z^\star) \geq d$. 
	Consider the rank-$r^\star$ factorization $Z^\star = (X^\star)^\top X^\star$. 
	Since $Z^\star$ is a feasible point for Problem~\ref{prob:se_sdp}, it can be readily verified that $X^\star$ is
	an element of the product manifold $\Manifold(r^\star,n)$
	\eqref{eq:my_manifold}. 
	Let $Y^\star \in \Stiefel(d,r^\star)^n$ be obtained by stacking all rotational components of $X^\star$. 
	We prove that $\ZR^\star = (Y^\star)^\top Y^\star$ is an optimal solution to Problem~\ref{prob:se_sdp_marg}. 
	To see this, first note that $X^\star$ is an optimal solution to the rank-restricted SDP Problem~\ref{prob:se_sync_riemannian}. 
	Therefore, by Lemma~\ref{lem:rank_restricted_SDP_equivalence}, it holds that,
	\begin{equation}
		\langle\ConLapT,\ZT^\star\rangle = \langle\ConLapT,{X^\star}^\top X^\star\rangle
		=\langle\ConLapM,{Y^\star}^\top Y^\star\rangle
		=\langle\ConLapM,\ZR^\star\rangle. 
		\label{eq:thm_sdp_eq1}
	\end{equation}
	Now, suppose $\ZR^\star$ is \emph{not} an optimal solution to Problem~\ref{prob:se_sync_riemannian_marg}. 
	Then there exists $\ZR^\ast$ such that $\langle \ConLapM, \ZR^\ast \rangle <\langle \ConLapM, \ZR^\star \rangle$. 
	Once again, without loss of generality, let $\rank(\ZR^\ast) = r^\ast$ and
	consider the rank-$r$ factorization $\ZR^\ast = {Y^\ast}^\top Y^\ast$ where
	$Y^\ast \in \Stiefel(d,r^\ast)^n$.
	Now suppose $p^\ast$ is an optimal
	value for translations given $Y^\ast$ (see \eqref{eq:optimal_translation}):
	\begin{equation}
			p^\ast \in \left\{-Y^\ast \widetilde{V}^\top L(W^\tau)^{\pinv} + c1_n^\top \,
	\middle\vert \, c \in \Real^r \right\}.
	\end{equation}
	Let $X^\ast \in \Manifold(r^\ast, n)$ be obtained by combining $Y^\ast$ and
	$p^\ast$ and define 
	$Z^\ast \triangleq {X^\ast}^\top X^\ast$. 
	Again by Lemma~\ref{lem:rank_restricted_SDP_equivalence}, it holds that,
	\begin{equation}
	\langle\ConLapT,\ZT^\ast \rangle  = \langle\ConLapT,{X^\ast}^\top X^\ast\rangle
	=  \langle \ConLapM, {Y^\ast}^\top Y^\ast\rangle
		=  \langle \ConLapM,\ZR^\ast \rangle. 
		\label{eq:thm_sdp_eq2}
	\end{equation}	
	The combination of \eqref{eq:thm_sdp_eq1} and \eqref{eq:thm_sdp_eq2} would imply that 
	$\langle \ConLapT, \ZT^\ast \rangle <\langle \ConLapT, \ZT^\star \rangle$, which contradicts the starting assumption that $\ZT^\star$ is an optimal solution. 
	Therefore $\ZR^\star$ must be an optimal solution with rank $r^\star$.
	To conclude the proof, note that using a similar argument, we can construct an optimal solution to
	Problem~\ref{prob:se_sdp} from an optimal solution $\ZR^\star$ to Problem~\ref{prob:se_sdp_marg} with the same rank.

\end{proof}

\subsection{Proof of Theorem~\ref{thm:tightness_informal}}
\label{sec:tightness_proof}

In this subsection, we formally prove the exactness guarantees of the SDP relaxation (Problem~\ref{prob:se_sdp}) used in this work, which is stated in Theorem~\ref{thm:tightness_informal} of the main paper. 

\begin{proof}[Proof of Theorem~\ref{thm:tightness_informal}]
	By \cite[Proposition~2]{Rosen19IJRR}, there exists a constant 
	as a function of the noiseless data matrix of the rotation-only SDP relaxation (Problem~\ref{prob:se_sdp_marg}), denoted as 
	$\beta \triangleq \beta(\underline{\ConLapM})$, such that if
	$\norm{\ConLapM - \underline{\ConLapM}}_2 < \beta$, 
	the rotation-only SDP relaxation (Problem~\ref{prob:se_sdp_marg})
	admits a {unique} solution $\ZR^\MLE = {R^\MLE}^\top R^\MLE$, where
	$R^\MLE \in \SOd(d)^n$ is a globally optimal rotation estimate to PGO (Problem~\ref{prob:se_sync}). Let $Z^\star$ be an arbitrary minimizer to Problem~\ref{prob:se_sdp}. By Theorem~\ref{thm:sdp_equivalence}, it holds that
	$\langle \ConLapT, Z^\star \rangle = \langle \ConLapM, \ZR^\MLE \rangle$. 
	Without loss of generality, let $\rank(Z^\star) = r$ where $r \geq d$. 
	Consider the rank-$r$ factorization $Z^\star = X^\top X$, where $X \in \Manifold(r,n)$.  
	Note that $X$ is a global minimizer to Problem~\ref{prob:se_sync_riemannian}, and hence by Lemma~\ref{lem:rank_restricted_SDP_equivalence}, it holds that, 
	\begin{equation}
		\langle \ConLapT, Z^\star \rangle 
		=
		\langle \ConLapT, X^\top X \rangle 
		= 
		\langle \ConLapM, Y^\top Y \rangle
		= 
		\langle \ConLapM, \ZR^\MLE \rangle. 
		\label{eq:objective_function_equality}
	\end{equation}
	Above, $Y \in \Stiefel(d,r)^n$ extracts the Stiefel elements from $X$. Since Problem~\ref{prob:se_sdp_marg} admits a unique minimizer, \eqref{eq:objective_function_equality}
	implies that $Y^\top Y$ is the same minimizer:
	\begin{equation}
		Y^\top Y = \ZR^\MLE = {R^\MLE}^\top R^\MLE.
		\label{eq:YtopY_is_minimizer}
	\end{equation}
	In addition, \eqref{eq:YtopY_is_minimizer} also implies that $\rank(Y) = \rank(\ZR^\MLE) = d$. We may thus consider the $d$-dimensional (thin) singular value decomposition
	$Y = U_d \Sigma_d V_d^\top$. 
	Let us define
	$\bar{Y} \triangleq \Sigma_d V_d^\top$. 
	Since $U_d \in \Stiefel(d,r)$, it holds that 
	${\bar{Y}}^\top \bar{Y} = {R^\MLE}^\top R^\MLE$,
	and therefore $\bar{Y}$ consists of $n$ orthogonal matrices $\bar{Y} \in \Orthogonal(d)^n$.
	By inspecting the
	first block row of this equality, we may further deduce that, 
	\begin{equation}
	{\bar{Y}_1}^\top \bar{Y}_i = {R^\MLE_1}^\top R^\MLE_i, \; \forall i \in [n]. 
	\label{eq:Ybar_i_expression}
	\end{equation}
	Multiplying both sides in \eqref{eq:Ybar_i_expression} from the left by $U_d \bar{Y}_1$,
	such that the left hand side simplifies to $Y_i$.
	\begin{equation}
	Y_i = U_d \bar{Y}_1 {R^\MLE_1}^\top R^\MLE_i. 
	\label{eq:Yi_expression}
	\end{equation}
	Let us define $A \triangleq  U_d \bar{Y}_1 {R^\MLE_1}^\top$. 
	Since $U_d \in \Stiefel(d,r)$ and $\bar{Y}_1, R_1^\MLE \in \Orthogonal(d)$, it holds that $A \in \Stiefel(d,r)$.
	Combining equality \eqref{eq:Yi_expression} for all $i$ yields the following compact equation. 
	\begin{equation}
	Y = AR^\MLE.
	\label{eq:Y_equals_AR}
	\end{equation}
	
	Let $p \in \Real^{r\times n}$ contains the translations in $X$. Since $X$ is a global minimizer, it is also a first-order critical point. Therefore, we can apply part (i) of Lemma~\ref{lem:rank_restricted_SDP_equivalence} to relate $p$ with $Y$:
	\begin{align}
	p &= -Y \widetilde{V}^\top L(W^\tau)^{\pinv} + c1_n^\top 
	\why{\text{Lemma~\ref{lem:rank_restricted_SDP_equivalence}}} \\
	&= -A R^\MLE \widetilde{V}^\top L(W^\tau)^{\pinv} + c1_n^\top 
	&&  \eqref{eq:Y_equals_AR}\\
	&= A t^\MLE + c1_n^T. 
	\label{eq:p_equals_At}
	\end{align}
	In the last equality \eqref{eq:p_equals_At}, we have defined 
	$t^\MLE \triangleq - R^\MLE \widetilde{V}^\top L(W^\tau)^{\pinv}$. 
	Notice that $t^\MLE$ corresponds to a set of globally optimal translations.
	Finally, we note that the first block-row of the SDP solution $Z^\star = X^\top X$ may be expressed as,
	\begin{align}
	{\ZT^\star}_{(1:d,:)} 
	&=
	(Y_1)^\top
	\begin{bmatrix}
	Y_1& p_1 & \hdots & Y_n & p_n
	\end{bmatrix} \\
	&= 
	(AR^\MLE_1)^\top
	\begin{bmatrix}
	A R^\MLE_1 & A t^\MLE_1 + c & \hdots & A R^\MLE_n & A t^\MLE_n + c
	\end{bmatrix} \\
	&= 
	\big[
	\underbrace{I_d}_{R_1^\star} \quad 
	\underbrace{{R^\MLE_1}^\top t^\MLE_1 + {R^\MLE_1}^\top A^\top c }_{t^\star_1} \quad \hdots 
	\quad \underbrace{{R^\MLE_1}^\top R^\MLE_n}_{R_n^\star} \quad 
	\underbrace{{R^\MLE_1}^\top t^\MLE_n + {R^\MLE_1}^\top A^\top c }_{t^\star_1}
	\big].
	\end{align}
	In particular, 
	${\ZT^\star}_{(1:d,:)}$ can be obtained from $(R^\MLE, t^\MLE)$ 
	via a global rigid body transformation with rotation
	${R^\MLE_1}^\top$ and translation
	${R^\MLE_1}^\top A^\top c$. 
	Due to the global gauge symmetry of PGO, 
	${\ZT^\star}_{(1:d,:)}$ thus also is an optimal solution.

	So far, we have proved that the SDP relaxation (Problem~\ref{prob:se_sdp}) is exact if its rotation-only counterpart (Problem~\ref{prob:se_sdp_marg}) satisfies $\norm{\ConLapM - \underline{\ConLapM}}_2 < \beta$. To conclude the proof, let us consider the matrix $M$ defined in \eqref{eq:M_definition} 
	and its latent value $\underline{M}$ (i.e., constructed using noiseless relative transformation measurements). Note that $M$ and $\underline{M}$ only differ in certain blocks,
	\begin{equation}
		M = 
		\begin{bmatrix}
		\underline{L}(W^\tau) & \underline{\widetilde{V}} + \Delta_{12} \\
		(\underline{\widetilde{V}}+\Delta_{12})^\top & \underline{\ConLap} + \underline{\widetilde{\Sigma}} + \Delta_{22}
		\end{bmatrix}
		=
		\underline{M} + 
		\begin{bmatrix}
		0 &  \Delta_{12} \\
		\Delta_{12}^\top &  \Delta_{22}
		\end{bmatrix}.
	\end{equation}
	Once again, the underline notation denotes the latent value of each data matrix.
	Matrices $\Delta_{12}$ and $\Delta_{22}$ summarize the measurement noise. 
	Notice that the upper left block of $M$ is \emph{not} affected by noise, since by construction it is always the (constant) translation-weighted graph Laplacian.
	Using the notation above, we can also express $\ConLapM$ \eqref{eq:ConLapM_def} as a function of $\Delta_{12}$ and $\Delta_{22}$,
	\begin{equation}
		\ConLapM (\Delta_{12}, \Delta_{22}) =
		 \underline{\ConLap} + \underline{\widetilde{\Sigma}} + \Delta_{22} - (\underline{\widetilde{V}} + \Delta_{12})^\top \underline{L}(W^\tau)^{\pinv} (\underline{\widetilde{V}} + \Delta_{12}). 
	\end{equation}
	Crucially, note that $\ConLapM$ varies \emph{continuously} with respect to the noise terms $\Delta_{12}$ and $\Delta_{22}$. 
	Therefore, as the noise tends to zero (equivalently, as $M$ tends to $\underline{M}$), $\ConLapM$ converges to its latent value $\underline{\ConLapM}$. 
	By definition, this guarantees the existence of a constant $\delta > 0$, such that $\norm{M - \underline{M}}_2 < \delta$ implies $\norm{\ConLapM - \underline{\ConLapM}}_2 < \beta$. 
	Finally, since the connection Laplacian $\ConLapT$ and $M$ are related by a permutation of the rows and columns, it holds that, 
	\begin{equation}
		\norm{\ConLapT - \underline{\ConLapT}}_2 < \delta 
		\implies
		\norm{M - \underline{M}}_2 < \delta 
		\implies
		\norm{\ConLapM - \underline{\ConLapM}}_2 < \beta,
	\end{equation}
	which concludes the proof. 
\end{proof}

\section{Convergence of $\RBCD$ and $\ARBCD$}
\label{apx:convergence_proof}

\subsection{Proof of Lemma~\ref{lem:sufficient_decrease}}
\label{apx:prop2_proof}

\begin{proof}[Proof of Lemma~\ref{lem:sufficient_decrease}]
Our proof largely follows the proof of Lemma~3.11 in \cite{Boumal2018Convergence}.
By Assumption~\ref{as:lipschitz_gradient_pullback}, the reduced cost over each block $f_b$ satisfies \eqref{eq:lipschitz_gradient_pullback} globally with Lipschitz constant $c_b$. 
To simplify our notation, we drop the iteration counter $k$ and use $X_b$ to represent the input into \textsc{BlockUpdate} (Algorithm~\ref{alg:BlockUpdate}).
In addition, define $g_b \triangleq \rgrad f_b(X_b)$.
In the remaining proof, we also use $\eta$ 
to represent a tangent vector in $T_{X_b} \Mcal_b$.
Using the simplified notation, 
consider the first trust-region subproblem solved in \textsc{BlockUpdate} (Algorithm~\ref{alg:BlockUpdate}).
\begin{subequations}
\begin{align}
	\underset{\eta \in T_{X_b} \Mcal_b}{\text{minimize}}
	& \quad \mhat_{b}(\eta) \triangleq f_b(X_{b}) + \langle \eta, g_b \rangle + 
	\frac{1}{2} \langle \eta, H[\eta] \rangle, \\
	\text{subject to} 
	& \quad
	\norm{\eta} \leq \Delta_0.
\end{align}
\label{eq:subproblem_b2}
\end{subequations}
Recall that $\Delta_0$ above is the initial trust-region radius. 
By Assumption~\ref{as:H_bounded}, there exists $c_0 \geq 0$ such that
$\langle \eta, H[\eta] \rangle \leq c_0 \norm{\eta}^2$ for all $\eta \in T_{X_{b}} \Mcal_b$.
Define the constant, 
\begin{equation}
	\lambda_b \triangleq \frac{1}{4} \min\bigg(\frac{1}{c_0}, \, \frac{1}{2c_b +
	2c_0}\bigg) = \frac{1}{8(c_b + c_0)}.
	\label{eq:lambda_b_def}  
\end{equation}
Note that $\lambda_b$ is the same constant that appears in the lower bound of the initial trust-region radius (Assumption~\ref{as:trust_region_radius_bound}).

We show that the required cost decrement in Lemma~\ref{lem:sufficient_decrease} is achieved by taking the so-called \emph{Cauchy step} \cite{Boumal2018Convergence} in the first trust-region subproblem.
By definition, the Cauchy step $\eta^C$ minimizes the model function \eqref{eq:subproblem_b2} along the direction of the negative Riemannian gradient,
i.e., $\eta^C = -\alpha^C g_b$. The step size $\alpha^C \in [0, \Delta_0/\norm{g_b}]$ can be determined in closed-form as,
\begin{equation}
	\alpha^C = \begin{cases}
	\min\bigg(\frac{\norm{g_b}^2}{\langle g_b, H[g_b]\rangle},
	\frac{\Delta_0}{\norm{g_b}}\bigg), &\text{if } \langle g_b, H[g_b]\rangle > 0, \\
	\frac{\Delta_0}{\norm{g_b}}, &\text{otherwise.}
	\end{cases}
\end{equation}
A straightforward calculation (see \cite[Lemma~3.7]{Boumal2018Convergence}) shows that the Cauchy step reduces the model function $\mhat_{b}$ by at least,
\begin{equation}
	\mhat_{b}(0) - \mhat_{b}(\eta^C) \geq \frac{1}{2}\min\bigg(\Delta_0,
	\frac{\norm{g_b}}{c_0}\bigg)\norm{g_b}. 
	\label{eq:model_decrease}
\end{equation}
Next, we need to relate the above model decrement \eqref{eq:model_decrease} with the actual decrement of the cost function $f_b$.  For this,  we show that the following ratio,
\begin{equation}
	\bigg |\frac{\fhat_{b}(0)-\fhat_{b}(\eta^C)}{\mhat_{b}(0)-\mhat_{b}(\eta^C)} - 1 \bigg|
	= 
	\bigg |\frac{\mhat_{b}(\eta^C)-\fhat_{b}(\eta^C)}{\mhat_{b}(0)-\mhat_{b}(\eta^C)}\bigg|
	\label{eq:prop2_ratio}
\end{equation}
is close to zero. 
Note that in \eqref{eq:prop2_ratio} we use the fact that, by definition, $\mhat_{x_b}(0) = \fhat_{x_b}(0) = f_b(X_b)$. 
We derive an upper bound on the numerator of \eqref{eq:prop2_ratio} as follows,
\begin{align}
	|\mhat_{b}(\eta^C)-\fhat_{b}(\eta^C)| 
	&= \big |f_b(X_b) + \langle g_b, \eta^C \rangle + \frac{1}{2}\langle \eta^C, H[\eta^C] \rangle - \fhat_{b}(\eta^C) \big |
	\why{\text{Definition of $\mhat_{x_b}$}} \\
	&\leq \big |f_b(X_b) + \langle g_b, \eta^C \rangle - \fhat_{b}(\eta^C) \big |
	+ \frac{1}{2} \big| \langle \eta^C, H[\eta^C] \rangle \big| 
	\why{\text{Triangle inequality}}\\
	&\leq \frac{1}{2}(c_b + c_0) \norm{\eta^c}^2.
	\label{eq:prop2_numerator}
\end{align}
For the last inequality, we have used the Lipschitz-type gradient condition of $f_b$ for the first term, 
and the boundedness of $H$ for the second term. 
Plugging \eqref{eq:model_decrease} and \eqref{eq:prop2_numerator} into \eqref{eq:prop2_ratio}, we obtain, 
\begin{align}
	\bigg |\frac{\fhat_{b}(0)-\fhat_{b}(\eta^C)}{\mhat_{b}(0)-\mhat_{b}(\eta^C)} - 1 \bigg|
	&\leq \frac{\frac{1}{2}(c_b + c_0)
	\norm{\eta^c}^2}{\frac{1}{2}\min\big(\Delta_0, \frac{\norm{g_b}}{c_0}\big)\norm{g_b}}
	&& {\color{gray!80!black}\text{\eqref{eq:model_decrease} and \eqref{eq:prop2_numerator}}} \\
	&\leq \frac{(c_b + c_0) \Delta_0^2}{\min\big(\Delta_0,
	\frac{\norm{g_b}}{c_0}\big)\norm{g_b}}
	\label{eq:prop2_eq1} 
	\why{\norm{\eta^c} \leq \Delta_0}\\
	&\leq \frac{(c_b + c_0) \Delta_0}{\norm{g_b}}.
	\why{\min(\Delta_0, \norm{g_b}/c_0) \leq \Delta_0}
	\label{eq:prop2_eq2}
\end{align}

In order to proceed, let us first impose an additional assumption that 
the initial trust-region radius $\Delta_0$ is also bounded above by $\Delta_0 \leq 4\lambda_b \norm{g_b}$. 
Towards the end of this proof, we show how this assumption can be safely removed. 
Under this additional assumption, it holds that 
$\Delta_0 \leq 4\lambda_b \norm{g_b} \leq \norm{g_b}/(2c_b + 2c_0)$, and thus \eqref{eq:prop2_eq2} implies,
\begin{equation}
	\bigg |\frac{\fhat_{b}(0)-\fhat_{b}(\eta^C)}{\mhat_{b}(0)-\mhat_{b}(\eta^C)} - 1 \bigg| \leq \frac{1}{2}
	\implies
	\rho \triangleq
	\frac{\fhat_{b}(0)-\fhat_{b}(\eta^C)}{\mhat_{b}(0)-\mhat_{b}(\eta^C)} 
	\geq \frac{1}{2}.
	\label{eq:prop2_eq3}
\end{equation}
In particular, $\rho$ is bigger than the acceptance threshold of $1/4$ in \textsc{BlockUpdate} (see Algorithm~\ref{alg:BlockUpdate}), 
and thus $\eta^C$ is \emph{guaranteed} to be accepted.
Consider the candidate solution corresponding to the Cauchy step $X_{b}^\prime = \Retr_{X_{b}}(\eta^C)$. 
Using the definition of the pullback function, it holds that,
\begin{align}
	f_{b}(X_{b}) - f_b(X_{b}^\prime) 
	& =\fhat_{b}(0)-\fhat_{b}(\eta^C)
	\why{\text{definition of pullback}}  \\
	& \geq \frac{1}{2} \left(\mhat_{b}(0) -
	\mhat_{b}(\eta^C)\right) 
	\why{\text{equation } \eqref{eq:prop2_eq3}}\\
	& \geq \frac{1}{4} \min\bigg(\Delta_0,
	\frac{\norm{g_b}}{c_0}\bigg) \norm{g_b}
	\why{\text{equation $\eqref{eq:model_decrease}$}} \\
	& \geq \frac{1}{4} \min\bigg (\lambda_b,
	\frac{1}{c_0}\bigg) \norm{g_b}^2
	\why{\text{$\Delta_0 \geq \lambda_b\norm{g_b}$ by Assumption~\ref{as:trust_region_radius_bound}}}\\
	& = \frac{1}{4} \lambda_b \norm{g_b}^2.
	\why{\text{$\lambda_b \leq 1/c_0$ by definition}}
\end{align}
Therefore, we have proven that under the additional assumption that  
$\Delta_0 \leq 4\lambda_b \norm{g_b}$, the desired cost reduction can be
achieved simply by taking the Cauchy step in the first trust-region subproblem. As we mention in Section~\ref{sec:bcd}, in practice, we use the truncated conjugate-gradient (tCG) algorithm to improve upon the initial Cauchy step. 
However, since each additional tCG iteration will strictly decrease the model function (see \cite[Proposition~7.3.2]{absil2009optimization}), we can show that the inequality \eqref{eq:prop2_eq2} holds at all times, and thus the desired cost reduction is always achieved.

To complete the proof, we need to show that Algorithm~\ref{alg:BlockUpdate} still achieves the desired cost reduction, even after removing the additional assumption that $\Delta_0 \leq 4\lambda_b \norm{g_b}$.
To do so, note that after dropping this assumption, inequality \eqref{eq:prop2_eq3} might fail to hold 
and as a result the Cauchy step can be rejected in the first iteration.
However, by the mechanism of \textsc{BlockUpdate} (Algorithm~\ref{alg:BlockUpdate}), after each rejection the trust-region radius will be divided by four in the next iteration.
Therefore, in the worst case, the trust-region radius will be within the interval $[\lambda_b \norm{g_b},4\lambda_b \norm{g_b}]$ after
$\Ocal(\log(4\lambda_b\norm{g_b}\Delta_0))$ consecutive rejections,
after which the Cauchy step is guaranteed to be accepted in the next
trust-region subproblem.

\end{proof}

\subsection{Proof of Theorem~\ref{thm:general_convergence}}
\begin{proof}[Proof of Theorem~\ref{thm:general_convergence}]
	We begin by using the sufficient descent property proved in Lemma~\ref{lem:sufficient_decrease}, which states that after each $\textsc{BlockUpdate}$ operation, the reduced cost is decreased by at least, 
	\begin{equation}
		f_{b_k}(X_{b_k}^k) - f_{b_k}(X_{b_k}^{k+1}) 
		\geq  \frac{1}{4}\lambda_{b_k} \norm{\rgrad f_{b_k}(X^k_{b_k})}^2
		=     \frac{1}{4}\lambda_{b_k} \norm{\rgrad_{b_k} f(X^k)}^2.
	\end{equation}
	Recall that by design of the $\RBCD$ algorithm, at each iteration the values of all blocks other than $b_k$ remain unchanged. 
	Denote the values of these fixed blocks as $X^k_c$.
	By definition of the reduced cost $f_{b_k}$, it is straightforward to verify that the decrease in $f_{b_k}$ {exactly equals} the decrease in the global cost $f$,
	\begin{equation}
		f_{b_k}(X_{b_k}^k) - f_{b_k}(X_{b_k}^{k+1}) 
		=
		f(X_{b_k}^k, X_c^k) - f(X_{b_k}^{k+1}, X_c^k)
		=
		f(X^k) - f(X^{k+1}).
	\end{equation}
	The above result directly implies that each iteration of $\RBCD$ reduces the global cost function by at least,
	\begin{align}
	f(X^k) - f(X^{k+1}) = f_{b_k}(X_{b_k}^k) - f_{b_k}(X_{b_k}^{k+1}) \geq \frac{1}{4}\lambda_{b_k} \norm{\rgrad_{{b_k}} f(X^k)}^2.
	\label{eq:sufficient_descent}
	\end{align}
	In the remaining proof, we use \eqref{eq:sufficient_descent} to prove the stated convergence rate for each block selection rule.
	\newline\newline
	\textbf{Uniform Sampling.}
	In this case, the updated block $b_k \in [N]$ is selected uniformly at random at each iteration.
	Conditioned on all previously selected blocks $b_{0:k-1}$, we can take expectation on both sides of \eqref{eq:sufficient_descent} with respect to the current block $b_k$.
	\begin{equation}
		\mathbb{E}_{b_k | b_{0:k-1}} \Big[f(X^k) - f(X^{k+1})\Big]
		\geq \sum_{b \in N} \text{Prob}(b_k = b) \cdot \frac{1}{4} \lambda_{b} \norm{\rgrad_b f(X^k)}^2
	\end{equation}
	 Recall that all block are selected with equal probability, i.e., 
	 $\text{Prob}(b_k = b) = 1/N$ for all $b \in [N]$. 
	 Using this, we arrive at a simplified inequality that 
	 lower bounds the expected cost decrease by the squared gradient norm with respect to all variables $X$.
	\begin{equation}
	\mathbb{E}_{b_k | b_{0:k-1}} \Big[f(X^k) - f(X^{k+1})\Big]
	\geq
	\sum_{b \in N} \frac{1}{N} \cdot \frac{1}{4} \lambda_{b}
	\norm{\rgrad_b f(X^k)}^2
	\geq \frac{\min_{b \in [N]} \lambda_b}{4N} \norm{\rgrad f(X^k)}^2. 
	\label{eq:thm1_pf_eq1}
	\end{equation}
	We can use the above inequality \eqref{eq:thm1_pf_eq1} to bound the expected cost decrease over all iterations from $k=0$ to $k=K$. 
	To do so, we first write the overall cost reduction as a cascading sum, and then apply the law of total expectation on each expectation term that appears inside the sum. 
	\begin{equation}
		f(X^0) - \mathbb{E}_{b_{0:K-1}} f(X^K) = \sum_{k=0}^{K-1} \mathbb{E}_{b_{0:k}} \Big[f(X^k) -
		f(X^{k+1})\Big]
		=\sum_{t=0}^{K-1} \mathbb{E}_{b_{0:k-1}} 
		\Big[\mathbb{E}_{b_{k} | b_{0:k-1}}
		[f(X^k) - f(X^{k+1})]\Big].
	\end{equation}
	Observe that each innermost conditional expectation is already bounded by our previous inequality \eqref{eq:thm1_pf_eq1}.
	Plugging in this lower bound, we arrive at,
	\begin{align}
		f(X^0) - \mathbb{E}_{b_{0:K-1}} f(X^K)
		&\geq 
		\sum_{k=0}^{K-1} \mathbb{E}_{b_{0:k-1}} \bigg [\frac{\min_{b \in [N]} \lambda_b}{4N} \norm{\rgrad f(X^k)}^2 \bigg]  \\
		&\geq 
		K \cdot \frac{\min_{b \in [N]} \lambda_b}{4N} \min_{0 \leq k
			\leq K-1} \mathbb{E}_{b_{0:k-1}}\norm{\rgrad f(X^k)}^2.
	\end{align}
	To conclude the proof, we note that since $f^\star$ is the global minimum, we necessarily have 
	$f(X^0) - f^\star \geq f(X^0) - \mathbb{E}_{b_{0:K-1}} f(X^K)$.
	This directly implies,
	\begin{equation}
		f(X^0) - f^\star \geq
		 K \cdot \frac{\min_{b \in [N]} \lambda_b}{4N} \min_{0 \leq k
		 	\leq K-1} \mathbb{E}_{b_{0:k-1}}\norm{\rgrad f(X^k)}^2.
	\end{equation}
	Rearranging the last inequality gives the desired convergence rate \eqref{eq:general_convergence_uniform}.
	\newline\newline
	\textbf{Importance Sampling.}
	We start with the same procedure of taking conditional expectations on both sides of \eqref{eq:sufficient_descent}. 
	The only difference is that with importance sampling, at each iteration the block is selected with probability proportional to the squared gradient norm, i.e.,
	$p_b = \norm{\rgrad_{b} f(X^k)}^2/\norm{\rgrad f(X^k)}^2$. 
	Using this to expand the conditional expectation gives:
	\begin{align}
			\mathbb{E}_{b_k|b_{0:k-1}}  [f(X^k) - f(X^{k+1})]  &\geq \sum_{b \in [N]} \frac{\norm{\rgrad_{b} f(X^k)}^2}{\norm{\rgrad f(X^k)}^2} \cdot \frac{1}{4} \lambda_{b} \norm{\rgrad_{b} f(X^k)}^2 \\
	& \geq \frac{\min_{b \in [N]} \lambda_b}{4} \cdot \frac{\sum_{b \in [N]} \norm{\rgrad_{b} f(X^k)}^4}{\sum_{b \in [N]} \norm{\rgrad_{b} f(X^k)}^2}  \\
	& = 
	\frac{\min_{b \in [N]} \lambda_b}{4} \cdot \frac{\sum_{b \in [N]} a_b^2 }{\sum_{b \in [N]} a_b}, 
	\label{eq:thm1_pf_eq2}
	\end{align}
	where we define $a_b \triangleq \norm{\rgrad_{b} f(X^k)}^2$ for brevity.
	Applying the Cauchy-Schwarz inequality gives,
	\begin{equation}
		\frac{1}{N^2} \bigg(\sum_{b \in [N]} a_b \bigg)^2 \leq \frac{1}{N} \sum_{b \in [N]} a_b^2.
	\end{equation}	
	Rearranging the above inequality yields, 
	\begin{equation}
		\frac{\sum_{b \in [N]} a_b^2 }{\sum_{b \in [N]} a_b} \geq \frac{1}{N} \sum_{b \in [N]} a_b = \frac{1}{N} \norm{\rgrad f(X^k)}^2.
		\label{eq:thm1_pf_eq3}
	\end{equation}
	Combining \eqref{eq:thm1_pf_eq2} and \eqref{eq:thm1_pf_eq3} gives,
	\begin{equation}
			f(X^k) - \mathbb{E}_{b_k|b_{0:k-1}} f(X^{k+1}) \geq \frac{\min_{b \in [N]} \lambda_b}{4N} \norm{\rgrad f(X^k)}^2. 
	\end{equation}	
	The rest of the proof is identical to the proof for uniform sampling.
	\newline\newline
	\textbf{Greedy Selection.}
	With greedy selection, we can perform a deterministic analysis. 
	Recall that at each iteration, the block with the largest squared gradient norm is selected. 
	Using this information inside our inequality \eqref{eq:sufficient_descent}, we arrive at,
	\begin{align}
		f(X^k) - f(X^{k+1})  &\geq \frac{1}{4}\lambda_{b_k} \norm{\rgrad_{b_k}
			f(X^k) }^2 \\
		&\geq 
		\frac{1}{4} \left(\min_{b \in [N]} \lambda_b\right) \cdot
		\left(\max_{b\in [N]}\norm{\rgrad_{b} f(X^k)}^2\right) \\
		& 
		\geq \frac{1}{4N} \min_{b \in [N]} \lambda_b \cdot \norm{\rgrad
			f(X^k)}^2.
	\end{align}
	A telescoping sum of the above inequalities from $k=0$ to $k=K-1$ gives,
	\begin{align}
	f(X^0) - f^\star 
	&\geq \sum_{k=0}^{K-1} [f(X^k) - f(X^{k+1})] \\
	&\geq \sum_{k=0}^{K-1} \frac{1}{4N} \min_{b \in [N]} \lambda_b \cdot \norm{\rgrad f(X^k)}^2 \\
	&\geq \frac{K}{4N} \cdot \min_{b \in [N]} \lambda_b \cdot \min_{0 \leq k
	\leq K-1} \norm{\rgrad f(X^k)}^2.
	\end{align}
	Rearranging the last inequality gives \eqref{eq:general_convergence_greedy}.
\end{proof}

\subsection{Proof of Theorem~\ref{thm:accelerated_convergence}}
\label{proof_of_convergence_of_ARBCD_sec}
\begin{proof}
	For the purpose of proving global first-order convergence, we only focus on the evolution of the $\{X^k\}$ sequence in $\ARBCD$ (Algorithm~\ref{alg:ARBCD}). 
	After each iteration, there are two possibilities depending on whether the adaptive restart condition (line~\ref{alg:restart_start}) is triggered. 
	If restart is not triggered, then by construction 
	the current iteration must achieve a cost reduction that is at least,
	\begin{equation}
		f(X^k) - f(X^{k+1}) \geq c_1 \norm{\rgrad_{b_k} f(X^k)}^2.
	\end{equation}
	On the other hand, if restart is triggered, then the algorithm would instead update the selected block using the default \textsc{BlockUpdate} method. In this case, using the same argument as the beginning of proof for Theorem~\ref{thm:general_convergence},
	we can establish the following lower bound on global cost reduction.
	\begin{equation}
		f(X^k) - f(X^{k+1}) = f_{b_k}(X_{b_k}^k) - f_{b_k}(X_{b_k}^{k+1}) \geq \frac{1}{4}\lambda_{b_k} \norm{\rgrad_{{b_k}} f(X^k)}^2.
	\end{equation}
	Combining the two cases, we see that each iteration of $\ARBCD$ decreases the global cost by at least, 
	\begin{align}
		f(X^k) - f(X^{k+1}) &\geq \min\left(c_1, \lambda_{b_k}/4\right) \norm{\rgrad_{{b_k}} f(X^k)}^2 \\
		&\geq \min\left(c_1, \min_{b\in [N]}\lambda_{b}/4\right) \norm{\rgrad_{{b_k}} f(X^k)}^2 \\
		&\geq C \norm{\rgrad_{{b_k}} f(X^k)}^2 
		\why{\text{definition of $C$}}.
	\end{align}
	The above inequality serves as a lower bound on the global cost reduction after each iteration. Note that this lower bound is very similar to \eqref{eq:sufficient_descent} used in the proof of Theorem~\ref{thm:general_convergence}. The only difference is the change of constant on the right hand side. 
	From this point on, we can employ the same steps used in the proof of Theorem~\ref{thm:general_convergence} to prove the desired convergence rates for all three block selection rules.
\end{proof}

\subsection{Convergence on Problem~\ref{prob:Rank_restricted_SDP}}
\label{sec:convergence_pgo}

In this section, we address the convergence of \RBCD\ and \ARBCD\ when solving the rank-restricted semidefinite relaxations of PGO (Problem~\ref{prob:Rank_restricted_SDP}).
To apply the general convergence results in Section~\ref{sec:convergence}, we need to check that the required technical conditions are satisfied.
Among these, we note that the Riemannian Hessian operator (which we use as the user-specified map $H$ in Algorithm~\ref{alg:BlockUpdate}) satisfies Assumption~\ref{as:global_radial_linearity} due to the linearity of affine connections \cite[Section~5.2]{absil2009optimization}.
In addition, Assumption~\ref{as:trust_region_radius_bound} can be satisfied by choosing a sufficiently large initial trust-region radius.
Therefore, to establish convergence, we are left to verify Assumption~\ref{as:lipschitz_gradient_pullback} (Lipschitz-type gradients for pullback) and Assumption~\ref{as:H_bounded} (boundedness of Riemannian Hessian).

\subsubsection{Verifying Assumption~\ref{as:lipschitz_gradient_pullback}}
\label{sec:convergence_pgo:lipschitz_gradient}
We now show that Assumption~\ref{as:lipschitz_gradient_pullback} is satisfied in Problem~\ref{prob:Rank_restricted_SDP}, i.e., the reduced cost function \eqref{eq:reduced_cost} corresponding to any block 
has Lipschitz-type gradient for pullbacks along the iterates of \RBCD\ and \ARBCD.
In \cite{Boumal2018Convergence}, \mbox{Boumal et al.} prove a simple condition for a function $f: \Mcal \to \Real$ defined on a matrix submanifold to have Lipschitz-type gradient for pullbacks.
For convenience, we include their result below.

\begin{lemma}[Lemma~2.7 in \cite{Boumal2018Convergence}]
	\normalfont
	Let $\Ecal$ be a Euclidean space and let $\Mcal$ be a compact Riemannian submanifold of $\Ecal$.
	Let $\Retr$ be a retraction on $\Mcal$ (globally defined). If $f: \Ecal \to \Real$ has Lipschitz continuous gradient
	then the pullbacks $f \circ \Retr_x$ satisfy
	\eqref{eq:lipschitz_gradient_pullback} globally with some constant $c_g$
	independent of $x$.
	\label{lem:boumal_lipschitz}
\end{lemma}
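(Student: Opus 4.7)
The plan is to reduce the Lipschitz-type pullback inequality \eqref{eq:lipschitz_gradient_pullback} to the classical quadratic upper bound for functions with Lipschitz gradient on Euclidean space, using compactness of $\Mcal$ to control the ``retraction defect'' $r_x(\eta) \triangleq \Retr_x(\eta) - x - \eta$ uniformly in the base point $x$. Concretely, if $L$ is the Lipschitz constant of $\nabla f$ on $\Ecal$, then the standard descent lemma gives, for any $x \in \Mcal$ and any $\eta \in T_x \Mcal$,
\begin{equation}
\bigl|\fhat(\eta) - f(x) - \langle \nabla f(x), \Retr_x(\eta) - x \rangle\bigr| \;\leq\; \frac{L}{2}\,\lVert \Retr_x(\eta) - x \rVert^2,
\end{equation}
where $\fhat = f \circ \Retr_x$. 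So the task reduces to (i) replacing $\langle \nabla f(x), \Retr_x(\eta) - x \rangle$ by $\langle \rgrad f(x), \eta \rangle$ at the cost of an $O(\lVert \eta \rVert^2)$ error, and (ii) replacing $\lVert \Retr_x(\eta) - x \rVert^2$ by a constant multiple of $\lVert \eta \rVert^2$.

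For (i), I would split $\Retr_x(\eta) - x = \eta + r_x(\eta)$ and use the tangency $\eta \in T_x \Mcal$ together with the identity $\rgrad f(x) = \proj_{T_x}(\nabla f(x))$ from \eqref{eq:egrad_to_rgrad} to conclude $\langle \nabla f(x), \eta \rangle = \langle \rgrad f(x), \eta \rangle$. The residual term is then $\langle \nabla f(x), r_x(\eta) \rangle$, which is bounded in magnitude by $\sup_{x \in \Mcal} \lVert \nabla f(x) \rVert \cdot \lVert r_x(\eta) \rVert$; the supremum is finite by compactness of $\Mcal$ together with continuity of $\nabla f$. For (ii), the triangle inequality gives $\lVert \Retr_x(\eta) - x \rVert \leq \lVert \eta \rVert + \lVert r_x(\eta) \rVert$.

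The crux, and the main obstacle, is therefore establishing a \emph{uniform} estimate of the form $\lVert r_x(\eta) \rVert \leq \alpha \lVert \eta \rVert^2$ valid for all $x \in \Mcal$ and all $\eta \in T_x \Mcal$. The defining property of a retraction yields such a bound only \emph{locally}, via the second-order Taylor expansion of $\Retr_x$ along rays in $T_x \Mcal$; a priori the local constant depends on $x$. To promote this into a global uniform bound, I would argue in two regimes. In the small-$\lVert \eta \rVert$ regime, compactness of $\Mcal$ lets us take a maximum of the local Taylor coefficients over $x$, producing a uniform constant $\alpha_0$ valid on some neighborhood $\lVert \eta \rVert \leq \rho$. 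In the complementary large-$\lVert \eta \rVert$ regime, $\lVert r_x(\eta) \rVert \leq \lVert \Retr_x(\eta) - x \rVert + \lVert \eta \rVert$ is upper-bounded by $\mathrm{diam}(\Mcal) + \lVert \eta \rVert$, which in turn is dominated by a constant multiple of $\lVert \eta \rVert^2$ once $\lVert \eta \rVert \geq \rho$ (using boundedness of $\Mcal$); the two regimes can be reconciled by taking $\alpha$ large enough.

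Assembling the pieces, the bound \eqref{eq:lipschitz_gradient_pullback} will follow with a constant $c_g$ depending only on $L$, $\alpha$, $\mathrm{diam}(\Mcal)$, and $\sup_{x \in \Mcal} \lVert \nabla f(x) \rVert$, all of which are finite and independent of $x$. I expect that the cleanest bookkeeping is to first show $\lVert \Retr_x(\eta) - x \rVert \leq \beta \lVert \eta \rVert$ (rather than $\lVert \eta \rVert^2$) globally on $\Mcal$ as an intermediate step, since squaring this estimate is what handles the $\lVert \Retr_x(\eta) - x \rVert^2$ term directly, and all remaining errors are genuinely $O(\lVert \eta \rVert^2)$ from the $r_x(\eta)$ term.
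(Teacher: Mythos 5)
Your proposal is correct and follows essentially the same route as the paper: the paper cites this result from Boumal et al.\ and, in proving its own generalization (the product-manifold version, Lemma~\ref{lem:extended_lipschitz}), uses exactly your decomposition --- the Euclidean descent lemma, splitting $\Retr_x(\eta)-x$ into $\eta$ plus a defect, the projection identity $\rgrad f = \proj_{T_x}(\nabla f)$, and the uniform retraction bounds $\lVert \Retr_x(\eta)-x\rVert \le \alpha\lVert\eta\rVert$ and $\lVert \Retr_x(\eta)-x-\eta\rVert \le \beta\lVert\eta\rVert^2$ on a compact submanifold. The only cosmetic difference is that the paper imports those two retraction bounds directly from Boumal et al.\ (their eqs.\ (B.3)--(B.4)), whereas you sketch their derivation via the small-/large-$\lVert\eta\rVert$ compactness argument, which is how they are established in the source anyway.
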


In our case, we need a generalized version of Lemma~\ref{lem:boumal_lipschitz} that extend the result to product manifold with Euclidean spaces.

\begin{lemma}[Extension of Lemma~\ref{lem:boumal_lipschitz} to product manifolds with Euclidean spaces]
	\normalfont
	Let $\Ecal_1$ and $\Ecal_2$ be Euclidean spaces, and define $\Ecal \triangleq \Ecal_1 \times \Ecal_2$.
	Let $\Mcal \triangleq \Mcal_1 \times \Ecal_2$, where $\Mcal_1$ is a compact Riemannian submanifold of $\Ecal_1$.
	Given $x = \left[ x_1 \,\, x_2 \right] \in \Mcal$ and
	$\eta = \left[ \eta_1 \,\, \eta_2 \right] \in T_x \Mcal$, 
	define a retraction operator $\Retr_x: T_x \Mcal \to \Mcal$ as:
	$
	\Retr_x (\eta) 	=
	\left[
	\Retr_{x_1} (\eta_1) \,\,
	\Retr_{x_2} (\eta_2)
	\right]
	=
	\left[
	\Retr_{x_1} (\eta_1) \,\,
	x_2 + \eta_2
	\right],
	$
	where $\Retr_{x_1}$ is a globally defined retraction on $\Mcal_1$ and we employ the standard retraction for Euclidean space.
	If $f: \Ecal \to \Real$ has bounded and Lipschitz continuous gradient,
	then the pullbacks $f \circ \Retr_x$ satisfy
	\eqref{eq:lipschitz_gradient_pullback} globally with some constant $c_g$
	independent of $x$; i.e.,
	\begin{equation}
	\big|\fhat_x(\eta) - [f(x) + \langle \eta, \rgrad_x f \rangle]\big| \leq \frac{c_g}{2} \norm{\eta}^2_2.
	\end{equation} 
	\label{lem:extended_lipschitz}
\end{lemma}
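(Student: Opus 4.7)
The plan is to adapt the proof of Lemma~\ref{lem:boumal_lipschitz} from \cite{Boumal2018Convergence} by exploiting the fact that although the Euclidean factor $\Ecal_2$ is non-compact, its retraction is \emph{second-order exact} (in fact, affine), so the only source of higher-order error in the product retraction lives on the compact factor $\Mcal_1$, where it can be bounded uniformly. At the same time, the boundedness hypothesis on $\nabla f$ replaces the role that compactness plays in the original lemma for controlling the linear (first-order) term that is ignored when passing from the Euclidean gradient to the Riemannian one.

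Concretely, I would first decompose the quantity to be estimated by inserting and subtracting the ambient-space linear model at $x$:
\begin{align*}
 \fhat_x(\eta) - f(x) - \langle \eta, \rgrad f(x)\rangle
 &= \bigl[f(\Retr_x(\eta)) - f(x) - \langle \Retr_x(\eta)-x,\, \nabla f(x)\rangle\bigr] \\
 &\quad + \bigl\langle \Retr_x(\eta) - x - \eta,\; \nabla f(x)\bigr\rangle,
\end{align*}
using the fact that $\langle \eta, \rgrad f(x)\rangle = \langle \eta, \nabla f(x)\rangle$ since $\eta \in T_x \Mcal$ and $\rgrad f(x) = \proj_{T_x}(\nabla f(x))$. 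The first bracket is controlled by the standard descent lemma for $L$-smooth functions, giving $\tfrac{L}{2}\|\Retr_x(\eta)-x\|^2$; the second bracket is controlled by $\|\nabla f\|_\infty$ times the retraction error $\|\Retr_x(\eta)-x-\eta\|$.

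The main obstacle is establishing two uniform quadratic bounds on the product retraction despite the non-compactness of $\Ecal_2$:
\begin{equation*}
 \|\Retr_x(\eta) - x - \eta\| \le c_1 \|\eta\|^2, \qquad \|\Retr_x(\eta) - x\|^2 \le c_2 \|\eta\|^2,
\end{equation*}
with constants independent of $x$. I would handle this by separating the two factors: on $\Ecal_2$ the identity $\Retr_{x_2}(\eta_2) = x_2+\eta_2$ makes both left-hand side contributions trivial (the first is $0$; the second is exactly $\|\eta_2\|^2$). On the compact submanifold $\Mcal_1$, Lemma~\ref{lem:boumal_lipschitz} (or, equivalently, the uniform second-order property of any smooth retraction on a compact manifold) yields $\|\Retr_{x_1}(\eta_1)-x_1-\eta_1\| \le c_1'\|\eta_1\|^2$; the companion bound $\|\Retr_{x_1}(\eta_1)-x_1\| \le \alpha\|\eta_1\|$ with $\alpha$ independent of $x_1$ follows by splitting into small $\|\eta_1\|$ (Taylor) and large $\|\eta_1\|$ (use $\|\Retr_{x_1}(\eta_1)-x_1\|\le \mathrm{diam}(\Mcal_1)$, then absorb into a constant multiple of $\|\eta_1\|$). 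Summing the two factors' contributions under the product Riemannian metric $\|\eta\|^2 = \|\eta_1\|^2 + \|\eta_2\|^2$ gives the required uniform bounds on the product.

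Combining these three ingredients yields
\begin{equation*}
 \bigl|\fhat_x(\eta) - f(x) - \langle \eta, \rgrad f(x)\rangle\bigr|
 \le \tfrac{L}{2}\, c_2\, \|\eta\|^2 + \|\nabla f\|_\infty\, c_1\, \|\eta\|^2
 = \tfrac{c_g}{2}\|\eta\|^2,
\end{equation*}
with $c_g = L c_2 + 2 c_1 \|\nabla f\|_\infty$ independent of $x$, which is exactly \eqref{eq:lipschitz_gradient_pullback}. In the downstream application to Problem~\ref{prob:Rank_restricted_SDP} (Appendix~\ref{sec:convergence_pgo}), the boundedness of $\nabla f$ on the non-compact ambient space does not hold globally, but the descent property of \RBCD\ and \ARBCD\ confines the iterates to a sublevel set on which $\nabla f$ is bounded; restricting attention to that sublevel set renders the hypothesis of this lemma applicable.
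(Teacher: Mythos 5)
Your proposal is correct and follows essentially the same route as the paper's proof: both reduce to the Euclidean descent lemma at $y = \Retr_x(\eta)$, both isolate the cross term $\langle \nabla_{x_1} f, \Retr_{x_1}(\eta_1) - x_1 - \eta_1 \rangle$ (which vanishes on the Euclidean factor since its retraction is affine) and control it via the bounded-gradient hypothesis, and both invoke the same uniform compactness bounds $\lVert \Retr_{x_1}(\eta_1)-x_1 \rVert \le \alpha_1 \lVert \eta_1 \rVert$ and $\lVert \Retr_{x_1}(\eta_1)-x_1-\eta_1 \rVert \le \beta_1 \lVert \eta_1 \rVert^2$ from \cite{Boumal2018Convergence}. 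The only cosmetic difference is that you identify $\langle \eta, \rgrad f(x) \rangle = \langle \eta, \nabla f(x) \rangle$ in one stroke via self-adjointness of the tangent projection, whereas the paper performs this identification blockwise; the resulting estimates and the final constant are the same up to bookkeeping.
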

\begin{proof}
	This proof is a straightforward generalization of the proof of Lemma~\ref{lem:boumal_lipschitz}.
	By assumption, the Euclidean gradient $\nabla f$ is Lipschitz continuous, which implies the existence of a constant $L \geq 0$ such that for any $x, y \in \Mcal$, 
	\begin{equation}
	\left| f(y) - \left[f(x) + \langle \nabla_x f, y-x \rangle \right]\right| \leq \frac{L}{2} \norm{y-x}^2.
	\label{eq:euclidean_lipschitz}
	\end{equation}
	The above inequality is true in particular for any $y = \Retr_x(\eta), \eta \in T_x \Mcal$. 
	In this case, the inner product that appears in the LHS of \eqref{eq:euclidean_lipschitz} can be expanded as, 
	\begin{align}
	\langle \nabla_x f, \Retr_x (\eta) - x \rangle 
	&= 
	\left\langle
	\begin{bmatrix}
	\nabla_{x_1} f &
	\nabla_{x_2} f
	\end{bmatrix},
	\begin{bmatrix}
	\Retr_{x_1} (\eta_1) - x_1 &
	(x_2 + \eta_2) - x_2
	\end{bmatrix}
	\right\rangle \\
	&=
	\langle \nabla_{x_1} f, \Retr_{x_1} (\eta_1) - x_1\rangle
	+ 
	\langle \nabla_{x_2} f, \eta_2 \rangle 
	\\
	&= 
	\langle \nabla_{x_1} f, \Retr_{x_1} (\eta_1) - x_1 - \eta_1 + \eta_1\rangle
	+ 
	\langle \nabla_{x_2} f, \eta_2 \rangle 
	\why{\text{Add and subtract $\eta_1$}}\\
	&= 
	\langle \nabla_{x_1} f, \eta_1 \rangle + 
	\langle \nabla_{x_2} f, \eta_2 \rangle + 
	\langle \nabla_{x_1} f, \Retr_{x_1} (\eta_1) - x_1 - \eta_1 \rangle.
	\end{align}
	Next, we use two facts: (1) the Riemannian gradient in Euclidean space is just the standard (Euclidean) gradient;
	and (2) the Riemannian gradient of submanifolds \emph{embedded} in a Euclidean space is the orthogonal projection of the Euclidean gradient onto the tangent space (\cite[Equation~3.37]{absil2009optimization}). 
	With these, the above equality can be further simplified to,
	\begin{align}
	\langle \nabla_x f, \Retr_x (\eta) - x \rangle 
	&= 
	\langle \rgrad_{x_1} f, \eta_1 \rangle + 
	\langle \rgrad_{x_2} f, \eta_2 \rangle + 
	\langle \nabla_{x_1} f, \Retr_{x_1} (\eta_1) - x_1 - \eta_1 \rangle \\
	&= 
	\langle \rgrad_{x} f, \eta \rangle + 
	\langle \nabla_{x_1} f, \Retr_{x_1} (\eta_1) - x_1 - \eta_1 \rangle.
	\label{eq:lem_eq2}
	\end{align}
	Plugging \eqref{eq:lem_eq2} into \eqref{eq:euclidean_lipschitz} gives,
	\begin{align}
	\big| f(y) - [f(x) + \langle \nabla_x f, y-x \rangle]\big|
	&=
	\big|f(\Retr_x(\eta)) - [f(x) + \langle \rgrad_x f, \eta \rangle + \langle
	\nabla_{x_1} f, \Retr_{x_1} (\eta_1) - x_1 - \eta_1 \rangle]\big| \nonumber \\
	& \leq \frac{L}{2} \norm{\Retr_x(\eta)-x}^2.
	\end{align}
	Applying the triangle and Cauchy-Schwarz inequalities and expanding
	$\norm{\Retr_x(\eta)-x}^2$ yields,
	\begin{align}
	|f(\Retr_x(\eta)) - [f(x) + \langle \rgrad_x f, \eta \rangle]| 
	&\leq  \frac{L}{2} \norm{\Retr_x(\eta)-x}^2 + \lvert \langle \nabla_{x_1} f,\Retr_{x_1}(\eta_1) - x_1 - \eta_1 \rangle \rvert \\
	&\leq  \frac{L}{2} \norm{\Retr_x(\eta)-x}^2 + \norm{\nabla_{x_1} f} \norm{\Retr_{x_1} (\eta_1) - x_1 - \eta_1} \\
	&= \frac{L}{2} \norm{\eta_2}^2 + \frac{L}{2} \norm{\Retr_{x_1}(\eta_1)-x_1}^2 + \norm{\nabla_{x_1} f} \norm{\Retr_{x_1} (\eta_1) - x_1 - \eta_1}.
	\label{eq:lem_eq3}
	\end{align}
	Since we assume that the Euclidean gradient is bounded, there exists
	constant $G_1 \geq 0$ such that
	$\norm{\nabla_{x_1} f} \leq G_1$ for all $x_1 \in \Mcal_1$. 
	In equations (B.3) and (B.4) in \cite{Boumal2018Convergence},
	Boumal et al. show that for the compact submanifold $\Mcal_1$, the following inequalities hold,
	\begin{align}
	\norm{\Retr_{x_1}(\eta_1)-x_1} &\leq \alpha_1 \norm{\eta_1}, \label{eq:lem_eq4} \\
	\norm{\Retr_{x_1} (\eta_1) - x_1 - \eta_1} &\leq \beta_1 \norm{\eta_1}^2, \label{eq:lem_eq5}
	\end{align}
	for some $\alpha_1, \beta_1 \geq 0$.
	Plugging \eqref{eq:lem_eq4} and \eqref{eq:lem_eq5} in \eqref{eq:lem_eq3} gives,
	\begin{align}
	|f(\Retr_x(\eta)) - [f(x) + \langle \rgrad_x f, \eta \rangle]| \leq 
	\frac{L}{2} \norm{\eta_2}^2 + 
	\frac{L}{2} \alpha_1^2 \norm{\eta_1}^2 + 
	G_1 \beta_1 \norm{\eta_1}^2.
	\label{eq:lem_eq6}
	\end{align}
	Let $c_g \triangleq \max\Big(\frac{L}{2}, \frac{L \alpha_1^2}{2}+G_1
	\beta_1\Big)$. From \eqref{eq:lem_eq6} it trivially holds that,
	\begin{align}
	|f(\Retr_x(\eta)) - [f(x) + \langle \rgrad_x f, \eta \rangle]| \leq c_g (\norm{\eta_1}^2 + \norm{\eta_2}^2) = c_g \norm{\eta}^2.
	\end{align}
\end{proof}

Let us consider the reduced cost $f_b$ \eqref{eq:reduced_cost} as a function defined on the product manifold $\Stiefel(d,r)^{n_b} \times \Real^{r \times n_b}$, where $n_b$ is the number of poses contained in block $b$.
Note that in the ambient (Euclidean) space, $f_b$ is a quadratic function of $X_b$. Hence, its Euclidean gradient $\nabla f_b(X_b)$ is Lipschitz continuous, with Lipschitz constant given by the maximum eigenvalue of $Q_b$.
Next, we show that within any sublevel set of the global cost function $f$, $\nabla f_b(X_b)$ is also bounded for any block $b \in [N]$. 
Since $\nabla f_b(X_b) = \nabla_b f(X)$ by construction of the reduced cost function, it is equivalent to bound the global Euclidean gradient $\nabla f(X)$. 
Write out the individual rotation and translation components of $X \in \Manifold(r,n)$,
\begin{equation}
X = \begin{bmatrix}
Y_1 & p_1 & \hdots & Y_n  & p_n
\end{bmatrix}.
\end{equation}
For any $X$ in the $\bar{f}$-sublevel-set, expanding $f(X) \leq \bar{f}$ yields,
\begin{equation}
f(X) =
\sum_{(i,j) \in \DirEdges} \kappa_{ij} \norm{Y_j - Y_i \Rtilde_{ij}}_F^2 
+ \sum_{(i,j) \in \DirEdges} \tau_{ij} \norm{p_j - p_i - Y_i \ttilde_{ij}}_2^2
\leq \bar{f}. 
\end{equation}
Since the objective consists of a sum over squared terms, we can obtain a \emph{uniform} upper bound on each single term. Specifically, for each edge $(i,j) \in \DirEdges$, the associated relative translation cost is bounded by, 
\begin{equation}
\norm{p_j - p_i - Y_i \ttilde_{ij}}_2^2 \leq \bar{f} / \min_{(i,j) \in \DirEdges} \tau_{ij}
\implies 
\norm{p_j - p_i - Y_i \ttilde_{ij}}_2 \leq \sqrt{\bar{f} / \min_{(i,j) \in \DirEdges} \tau_{ij}}.
\label{eq:pgo_translation_bound}
\end{equation}
Note that when forming $\nabla f(X)$, the only terms 
that are potentially unbounded correspond to the translation terms
$h_{ij} \triangleq \tau_{ij} \norm{p_j - p_i - Y_i \ttilde_{ij}}_2^2$, 
since the rotation terms are defined on compact search spaces.
The Euclidean gradients of $h_{ij}$ with respect to $Y_i$, $p_i$, $p_j$ are given by,
\begin{align}
	&\nabla_{Y_i} h_{ij} = -2(p_j - p_i - Y_i \ttilde_{ij}) \ttilde_{ij}^\top, \label{eq:Euclidean_gradient_Yi} \\
	&\nabla_{p_i} h_{ij} = -2(p_j - p_i - Y_i \ttilde_{ij}),
	\label{eq:Euclidean_gradient_pi} \\
	&\nabla_{p_j} h_{ij} = 2(p_j - p_i - Y_i \ttilde_{ij}),
	\label{eq:Euclidean_gradient_pj}
\end{align}
By \eqref{eq:pgo_translation_bound}, the norms of \eqref{eq:Euclidean_gradient_Yi}-\eqref{eq:Euclidean_gradient_pj} are bounded, which implies that the overall Euclidean gradient $\nabla f(X)$ is bounded within the sublevel set.
Applying Lemma~\ref{lem:extended_lipschitz}, we have shown that $f_b$ has Lipschitz-type gradient for pullbacks within the sublevel set of $f$. 

Finally, note that both \RBCD\ and \ARBCD\ are by construction \emph{descent methods}, i.e., the sequence of iterates satisfies $f(X^k) \leq f(X^{k-1}) \leq \hdots \leq f(X^1) \leq f^0$ for any $k$. 
For $\RBCD$, this is true since we explicitly enforce that each block update 
should produce a function decrease that is at least a constant fraction of the model decrease (Algorithm~\ref{alg:BlockUpdate}, line~\ref{alg:block_update_termination}).  
For $\ARBCD$, this is also guaranteed by the adaptive restart condition (see Section~\ref{sec:acceleration}).
This property ensures that the sequence of iterates generated by \RBCD\ or \ARBCD\ never leaves the initial sublevel set,
and therefore Assumption~\ref{as:lipschitz_gradient_pullback} is satisfied.

\subsubsection{Verifying Assumption~\ref{as:H_bounded}}
\label{sec:convergence_pgo:H_bounded}
We can follow a similar strategy and show that the Riemannian Hessian operator is bounded at any iterate generated by \RBCD\ and \ARBCD, hence satisfying Assumption~\ref{as:H_bounded}.  
Let $f$ denote the cost function in Problem~\ref{prob:se_sync_riemannian}, and $X^k \in \Manifold(r,n)$ denote the solution at iteration $k$.
From \eqref{eq:riemannian_hessian}, it may be straightforwardly verified that for Problem~\ref{prob:se_sync_riemannian}, the Riemannian Hessian operator has the following explicit form: 
\begin{equation}
\Hess f(X)[\eta] = 2 \proj_{T_X} (\eta \ST(X)).
\label{eq:riemannian_hessian_explicit_form}
\end{equation}
Above, $S(X)$ is the ``dual certificate'' matrix \eqref{certificate_matrix} defined in Theorem~\ref{thm:verification},  repeated below for convenience,
\begin{equation}
	S(X) \triangleq \ConLapT - \SymBlockDiag_d^{+}(X^\top X \ConLapT).
	\label{eq:dual_certificate}
\end{equation}
In particular, note that \eqref{eq:riemannian_hessian_explicit_form} implies the following equality, 
\begin{equation}
	\langle \eta, \Hess f(X) [\eta] \rangle = 2 \langle \eta, S(X) \eta \rangle.
	\label{eq:hessian_and_dual_certificate}
\end{equation}
Following the strategy in Section~\ref{sec:convergence_pgo:lipschitz_gradient}, we now show that $S(X)$ is bounded within the sublevel sets of $f$.
To make the presentation clear, we present the proof in three steps.

\paragraph{Step 1: Eliminating Global Translation Symmetry from $S(X)$.}
To begin, we observe that the connection Laplacian $\ConLapT$ in Problem~\ref{prob:se_sync_riemannian} always has a null vector  \cite{Briales17CartanSync},
\begin{equation}
	\vnull \triangleq 1_n \otimes \begin{bmatrix}
	0_d \\ 1
	\end{bmatrix},
	\label{eq:null_vector}
\end{equation}
where $1_n$ stands for the vector of all ones. 
Intuitively, $\vnull$ arises as a result of the global translation symmetry inherent in pose-graph optimization. 
We note that although $\vnull$ is a single vector, it actually accounts for the \emph{complete} set of translational symmetries which has dimension $r$. 
More precisely, adding a multiple of $\vnull$ on the $k$-th row of $X$ ($1 \leq k \leq r$) 
corresponds to applying a global translation along the $k$-th unit direction. 
Let us consider projecting each row of our decision variable $X$ onto the subspace orthogonal to $\vnull$. In matrix notation, this operation can be written as a matrix product $XP$, where $P$ is a projection matrix defined as follows. 
\begin{equation}
	P \triangleq I - \vnull {\vnull}^\top  / {\norm{\vnull}_2^2}.
\end{equation}
Note that after projection, 
it holds that $X = XP + u \vnull^\top$ for some vector $u \in \Real^r$. 
Substitute this decomposition into the second term of \eqref{eq:dual_certificate},
\begin{align}
	\SymBlockDiag_d^{+}(X^\top X \ConLapT) &= \SymBlockDiag_d^{+}((XP + u \vnull^\top)^\top (XP + u \vnull^\top) \ConLapT) \\
	&=\SymBlockDiag_d^{+}((XP + u \vnull^\top)^\top XP \ConLapT) \\
	&=\SymBlockDiag_d^{+}((XP)^\top XP \ConLapT) +
	  \SymBlockDiag_d^{+}(\underbrace{\vnull u^\top XP \ConLapT}_{A}).
	\label{eq:dual_certificate_centered1}
\end{align}
In \eqref{eq:dual_certificate_centered1}, we have used the linearity of the $\SymBlockDiag_d^{+}$ operator \eqref{eq:SymBlockDiagPlus}.
Consider $A$ defined in \eqref{eq:dual_certificate_centered1} as a $(d+1)$-by-$(d+1)$ block-structured matrix. 
Using the special structure of $\vnull$, we can verify that for each diagonal block of $A$, its top-left $d$-by-$d$ submatrix is always zero. Thus, applying $\SymBlockDiag_d^{+}$ zeros out this term and it holds that,
\begin{equation}
	S(X) = \ConLapT - \SymBlockDiag_d^{+}(X^\top X \ConLapT) = \ConLapT - \SymBlockDiag_d^{+}((XP)^\top XP \ConLapT) = S(XP).
	\label{eq:dual_certificate_centered2}
\end{equation}
Equation~\eqref{eq:dual_certificate_centered2} proves the intuitive result that the dual certificate matrix $S(X)$ is invariant to any global translations applied to $X$. 

\paragraph{Step 2: Bounding $S(X)$ within the sublevel set of $f$.}
We now prove that for all $X$ for which $f(X) \leq \bar{f}$, 
the spectral norm $\norm{S(X)}_2$ is upper bounded by a constant which only depends on $\bar{f}$.
In light of \eqref{eq:dual_certificate_centered2}, for the purpose of bounding $S(X)$, we can assume without loss of generality that
$X = XP$. This simply means that each row of $X$ is orthogonal to $\vnull$, which implies that $\sum_{i=1}^{n} p_i = 0$, i.e., the translations are \emph{centered} at zero.
Recall the bound \eqref{eq:pgo_translation_bound} we have obtained for translation terms in the previous section. 
Using triangle inequality, we can move $Y_i \ttilde_{ij}$ to the right hand side, so that we obtain an upper bound on the relative translation between $p_i$ and $p_j$,
\begin{align}
	\norm{p_j - p_i}_2 
	&\leq \sqrt{\bar{f} / \min_{(i,j) \in \DirEdges} \tau_{ij}} + \norm{Y_i \ttilde_{ij}}_2
	\why{\text{triangle inequality}} \\
	&= \sqrt{\bar{f} / \min_{(i,j) \in \DirEdges} \tau_{ij}} + \norm{\ttilde_{ij}}_2 \why{Y_i \in \Stiefel(d,r)} \\
	&\leq \sqrt{\bar{f} / \min_{(i,j) \in \DirEdges} \tau_{ij}} + \max_{(i,j) \in \DirEdges}\norm{\ttilde_{ij}}_2.
\end{align}
Taking the square of the above bound and summing over all edges in the pose graph, it holds that, 
\begin{equation}
	\sum_{(i,j) \in \DirEdges} \norm{p_j - p_i}_2^2 \leq 
	|\DirEdges| \bigg(\sqrt{\bar{f} / \min_{(i,j) \in \DirEdges} \tau_{ij}} + \max_{(i,j) \in \DirEdges}\norm{\ttilde_{ij}}_2 \bigg)^2 \triangleq c_f.
	 \label{eq:dual_certificate_bound_1}
\end{equation}
Note that the constant $c_f$ only depends on $\bar{f}$ and the input measurements.
Let $\vectorize(p) \in \Real^{rn}$ be the vector formed by concatenating all $p_1, \hdots, p_n$. The left hand side of \eqref{eq:dual_certificate_bound_1} can be written in matrix form as $\vectorize(p)^\top (L \otimes I_r) \vectorize(p)$, where $L$ is the (unweighted) Laplacian of the pose graph $\Gcal = (\Vcal, \DirEdges)$. Since $\Gcal$ is connected, the Laplacian has a rank-1 null space spanned by the all-ones vector $1_n$. Correspondingly, $L \otimes I_r$ has a rank-$r$ null space spanned by the columns of $1_n \otimes I_r$. Crucially, using our assumption that $\sum_{i=1}^n p_i = 0$, it can be readily checked that $\vectorize(p)$ is \emph{orthogonal} to this null space. Therefore, we can obtain a lower bound on $\vectorize(p)^\top (L \otimes I_r) \vectorize(p)$ using the smallest \emph{positive} eigenvalue of $L \otimes I_r$, which coincides with the \emph{algebraic connectivity} of $\Gcal$, denoted as $\lambda_2(L)$,
\begin{equation}
	\vectorize(p)^\top (L \otimes I_r) \vectorize(p) \geq \lambda_2(L) \norm{\vectorize(p)}_2^2. 
\end{equation}
Combine this inequality with \eqref{eq:dual_certificate_bound_1}, we have thus shown that,
\begin{equation}
	\lambda_2(L) \norm{\vectorize(p)}_2^2 \leq c_f. 
\end{equation}
Since $\lambda_2(L)$ is guaranteed to be positive as the graph is connected, we can divide both sides by $\lambda_2(L)$. After taking the square root, we obtain the following bound on the translations,
\begin{equation}
	\norm{\vectorize(p)}_2 \leq \sqrt{ c_f / \lambda_2(L) }. 
	\label{eq:dual_certificate_bound_2}
\end{equation}
Recall that $X$ contains translations in addition to $n$ ``lifted'' rotations $Y_1, \hdots, Y_n \in \Stiefel(d,r)$. Since $Y_i$ is an element of the Stiefel manifold, the norm of $Y_i$ in the ambient space is always a constant. 
This implies that the Frobenius norm $\norm{X}_F$ is bounded. 
Finally, using the fact that $S(X)$ as defined in \eqref{eq:dual_certificate} is a continuous operator together with \eqref{eq:hessian_and_dual_certificate},
it holds that the induced operator norm of $\Hess f(X)$ on the tangent space is bounded within the sublevel set. 

\paragraph{Step 3: Bounding the Riemannian Hessian along iterates of $\RBCD$ and $\ARBCD$. }
Once again, since both \RBCD\ and \ARBCD\ are descent methods by construction, any sequence of iterates will remain in the initial sublevel set. 
Therefore, by the result obtained in \textbf{Step 2}, 
there exists a constant $c_0$ (whose value only depends on $f(X^0)$) that bounds the Riemannian Hessian at all iterations:
\begin{equation}
		\max_{\eta \in T_{X^{k}} \Manifold(r,n), \norm{\eta}=1} |\langle \eta, \Hess f(X^k) [\eta] \rangle| \leq c_0, \; \forall k.
	\label{eq:H_bounded_2}
\end{equation}

To complete the proof, we show that the reduced Riemannian Hessian, $\Hess f_b(X_b^k)$, at an arbitrary block $b$ is also bounded by $c_0$. 
This is needed as during distributed local search, we apply \textsc{BlockUpdate} (Algorithm~\ref{alg:BlockUpdate}) on individual blocks instead of the full problem. 
Suppose for the sake of contradiction that there exists a tangent vector $\eta_b \in T_{X^k_b} \Manifold(r, n_b)$ such that $\norm{\eta_b} = 1$ and $|\langle \eta_b, \Hess f_b(X_b^k) [\eta_b] \rangle| > c_0$. 
Let $\gamma_b: [-\epsilon, \epsilon] \to \Manifold(r, n_b)$ be the corresponding geodesic 
such that $\gamma_b(0) = X_b$ and $\gamma_b'(0) = \eta_b$. 
Define the scalar function $h_b \triangleq f_b \circ \gamma_b$. By standard results in differential geometry (e.g,. see \cite{absil2009optimization}), it holds that
$h_b''(0) = \langle \eta_b, \Hess f_b(X_b^k) [\eta_b] \rangle$.

Using the product structure of our manifold, we can associate $\gamma_b$ with a corresponding geodesic $\gamma: [-\epsilon, \epsilon] \to \Manifold(r,n)$ on the overall manifold such that $\gamma$ agrees with $\gamma_b$ at the selected block coordinate $b$ and stays constant at all other blocks. Consider the tangent vector $\eta \in T_{\gamma(0)} \Manifold(r,n)$. By construction, the blocks of $\eta$ are given by,
\begin{equation}
	\eta_{b'} = \begin{cases}
	\eta_b, & \text{if $b' = b$},  \\
	0, & \text{otherwise.}
	\end{cases}
\end{equation}
As a result, we have $\norm{\eta} = 1$. Define the scalar function $h \triangleq f \circ \gamma$ and we have by construction that $h''(0) = h_b''(0)$. This would then imply,
\begin{equation}
	|h''_b(0)| = |h''(0)| = |\langle \eta, \Hess f(X^k) [\eta] \rangle| > c_0,
\end{equation}
which is a contradiction.

\section{Proof of Theorem \ref{Convergence_of_first_order_Riemannian_staircase_Theorem}}
\label{staircase_convergence_appendix}

In this subsection we establish the convergence properties of the (first-order) distributed Riemannian Staircase (Algorithm \ref{alg:riemannian_staircase}) described in Theorem \ref{Convergence_of_first_order_Riemannian_staircase_Theorem}.

If Algorithm \ref{alg:riemannian_staircase} terminates finitely (case (i)) then
there is nothing to prove, so henceforward let us assume that Algorithm
\ref{alg:riemannian_staircase} generates an infinite sequence $\lbrace X^{(r)}
\rbrace$ of factors in line \ref{alg:rs_lift_rank}.  Our overall strategy is to
exploit the correspondence between the critical points $X^{(r)}$ of Problem
\ref{prob:se_sync_riemannian} (rank-restricted full SDP) and the critical points
$Y^{(r)}$ of Problem \ref{prob:se_sync_riemannian_marg} (rank-restricted
rotation-only SDP) provided by Lemma \ref{lem:rank_restricted_SDP_equivalence}, together with the compactness of the feasible set of the SE-Sync relaxation Problem \ref{prob:se_sdp_marg}, to control the behavior of this sequence. To that end, define:
\begin{equation}
Z^{(r)} \triangleq (X^{(r)})^\top X^{(r)}, \quad \quad Z_{\text{R}}^{(r)} \triangleq (Y^{(r)})^\top Y^{(r)}, \quad \quad \forall r \ge r_0,
\end{equation}
where $Y^{(r)}$ is the first-order critical point of Problem
\ref{prob:se_sync_riemannian_marg} obtained from $X^{(r)}$ as described by Lemma
\ref{lem:rank_restricted_SDP_equivalence}.  
In addition, let $\Lambda^{(r)}$ and $\Lambda_{\text{R}}^{(r)}$ denote the Lagrange multiplier matrices corresponding to $Z^{(r)}$ and $Z_{\text{R}}^{(r)}$, respectively.

Since $\lbrace Z_{\text{R}}^{(r)} \rbrace$ is an infinite sequence contained in the (compact) feasible set of Problem \ref{prob:se_sdp_marg}, it must contain a convergent subsequence $\lbrace  Z_{\text{R}}^{(r_k)} \rbrace$, with limit point $Z_{\text{R}}^\star$. 
Since the Lagrange multiplier $\Lambda_{\text{R}}^{(r_k)}$ is a continuous function of $Z_{\text{R}}^{(r_k)}$ (see \eqref{eq:Lagrange_multipliers_SE-Sync} and \cite[eq.\ (107)]{Rosen19IJRR}), it follows that $\lbrace \Lambda_{\text{R}}^{(r_k)} \rbrace$ likewise converges to a limit $\Lambda_{\textnormal{R}}^\star$. 
By Lemma \ref{lem:rank_restricted_SDP_equivalence}(iii) (cf.\ \eqref{relation_between_Lagrange_multipliers_of_SESync_fact_and_full_PGO_fact}), it follows that the subsequence of Lagrange multipliers $\lbrace \Lambda^{(r_k)} \rbrace$ 
for the full (translation-explicit) problem also converges 
to a limit point $\Lambda^\star$, and consequently so does the sequence of certificate matrices $\ST^{(r_k)} \triangleq \ST(X^{(r_k)})$ computed in line \ref{alg:rs_dual_certificate} of Algorithm \ref{alg:riemannian_staircase}:
\begin{equation}
\label{convergence_of_certificate_matrix}
 \lim_{k \to \infty} S^{(r_k)} = S^\star.
\end{equation}
Now, let us consider two cases, corresponding to whether $S^\star$ is positive semidefinite.

\paragraph{Case 1: $S^\star \succeq 0$.}
It is easy to check that $S^{(r_k)} (X^{(r_k)})^\top = 0$ since $X^{(r_k)}$ is by
definition a
first-order critical point. This shows that $S^{(r_k)}$ has a zero eigenvalue.
Using this property, together with the fact that the eigenvalues of a matrix $S$
are continuous functions of $S$ and that $S^\star \succeq 0$, it holds that
$\lim_{k \to \infty} \lambda_{\textnormal{min}}\left(S(X^{(r_k)}) \right) = 0$,
thus proving \eqref{subsequence_converging_to_nonnegative_min_eig}.

To prove \eqref{subsequence_converging_to_optimal_value}, note that by Lemma
\ref{lem:rank_restricted_SDP_equivalence}(iv), 
the certificate matrix $S_{\text{R}}^\star \triangleq \ConLapM -
\SymBlockDiag\left(\ZR^\star \ConLapM \right)$
associated with the limit point
$Z_{\text{R}}^\star$ is likewise positive semidefinite, and therefore
$Z_{\text{R}}^\star$ is a minimizer of Problem \ref{prob:se_sdp_marg}
\cite[Thm.\ 7]{Rosen19IJRR}. Since $f(X^{(r)}) = \langle \ConLapM,
{Y^{(r)}}^\top Y^{(r)} \rangle = \langle
\ConLapM, Z_{\text{R}}^{(r)}\rangle$ for all $r \ge r_0$ (Lemma \ref{lem:rank_restricted_SDP_equivalence}(ii)), it follows that:
\begin{equation}
\label{convergence_of_objective_values_from_subsequence}
\lim_{k \to \infty} f(X^{(r_k)}) = \lim_{k \to \infty} \langle \ConLapM,
{Y^{(r_k)}}^\top Y^{(r_k)} \rangle =
\lim_{k \to \infty} \langle \ConLapM, Z_{\text{R}}^{(r_k)} \rangle = \langle
\ConLapM,Z_{\text{R}}^\star \rangle = f_{\textnormal{SDP}}^\star.
\end{equation}
But then in fact we must have:
\begin{equation}
\label{convergence_of_objective_values}
\lim_{r \to \infty} f(X^{(r)})
 = f_{\textnormal{SDP}}^\star
\end{equation}
since the sequence of objective values $\lbrace f(X^{(r)}) \rbrace$ is \emph{monotonically decreasing}.  This establishes that Theorem \ref{Convergence_of_first_order_Riemannian_staircase_Theorem}(ii) holds for the case $S^\star \succeq 0$.

\paragraph{Case 2: $S^\star \not \succeq 0$.} To finish the proof, we now show that in fact $S^\star  \not \succeq 0$ cannot occur.  To do so, suppose for contradiction that $\lambda_{\textnormal{min}}(S^\star) < 0$, and define $\mu \triangleq \lvert \lambda_{\textnormal{min}}(S^\star) \rvert$.  Then there exists $k_1 > 0$ sufficiently large that $\lambda_{\textnormal{min}}(S^{(r_k)}) < -\mu / 2$ for all $k > k_1$.  Our aim is to show that this \emph{upper} bound (away from $0$) on the minimum eigenvalue of $S^{(r_k)}$ implies a \emph{lower} bound  $\delta > 0$ on the achievable decrease in the objective value each time the saddle escape procedure in line \ref{alg:rs_escape} of Algorithm \ref{alg:riemannian_staircase} is invoked with any $k > k_1$; since this occurs infinitely many times, this would imply that the optimal value of Problem \ref{prob:SDP_relaxation_for_PGO} is $-\infty$, a contradiction.

We note that the following analysis holds at any rank $r_k$ of  Algorithm~\ref{alg:riemannian_staircase} with $k > k_1$. 
For the ease of reading, however, we will suppress $k$ in our notation and
assume that $k$ and $r_k$ are clear from the context. 
Let $X$ be the critical point generated by the distributed Riemannian Staircase at iteration rank $r_k$ (Algorithm~\ref{alg:riemannian_staircase}, line~\ref{alg:rs_lift_rank}). 
Denote $\lambda$ as the minimum eigenvalue of the corresponding certificate matrix (Algorithm~\ref{alg:riemannian_staircase}, line~\ref{alg:min_eigenpair_computation}), and 
$\dot{X} \in T_X \Manifold(r_k, n)$ as the corresponding second-order descent direction constructed (Algorithm~\ref{alg:riemannian_staircase}, line~\ref{alg:descent_direction}).
Recall that since $k > k_1$, we have $\lambda < -\mu/2$. 

Consider the geodesic emanating from point $X$ with initial velocity $\dot{X} =
\dot{X}_+$ (see Theorem~\ref{thm:verification}). With a slight abuse of notation, we denote this geodesic as $X(t)$ and it may be parameterized as follows,\footnote{Note that this map is in fact well-defined on all of $\Real$ because both the Stiefel manifold and $\Real^n$ are \emph{geodesically complete.}}
\begin{align}
X \colon \Real  \to \Manifold(r_k, n) : t  \mapsto \exp_{X} \left(t \dot{X} \right)
\end{align}
In addition, the cost function $f$ of Problem~\ref{prob:se_sync_riemannian} restricted to $X(t)$:
\begin{align}
g\colon \Real  \to \Real_{\geq 0} : t \mapsto f \circ X(t).
\end{align}
From the integral form of Taylor's Theorem, we have:
\begin{equation}
\label{Taylor_expansion_for_objective_along_geodesic_retraction}
g(t) = g(0) + t \dot{g}(0) + \frac{t^2}{2} \ddot{g}(0) + \int_0^t \frac{\alpha^2}{2} g^{(3)}(\alpha) \: d\alpha.
\end{equation}
Differentiating $g(t)$ and applying the chain rule, we arrive at,
\begin{equation}
\label{first_derivative_of_g}
\dot{g}(t) = \mathrm{D} f(X(t)) [ \dot{X}(t)] = \langle \rgrad f(X(t)),
\dot{X}(t) \rangle,
\end{equation}
where $\langle \cdot, \cdot \rangle$ denotes the Frobenius inner product. 
Differentiating \eqref{first_derivative_of_g} again, we obtain,
\begin{equation}
\label{setup_for_second_derivative_of_g}
\begin{split}
\ddot{g}(t) &= \frac{d}{dt} \big \langle \rgrad f(X(t)), \dot{X}(t) \big \rangle \\
&= \big \langle \frac{d}{dt} \left[ \rgrad f(X(t)) \right], \dot{X}(t) \big \rangle + \big \langle \rgrad f(X(t)), \ddot{X}(t) \big \rangle \\
&= \big \langle \Hess f(X(t))[\dot{X}(t)], \dot{X}(t) \big \rangle + \big \langle  \rgrad f(X(t)), \ddot{X}(t) \big \rangle.
\end{split}
\end{equation}
Equations~\eqref{first_derivative_of_g}-\eqref{setup_for_second_derivative_of_g}
hold by applying standard results for embedded manifolds; see
\cite[Chapters 3 and 5]{boumal2020intromanifolds} and \cite{absil2009optimization}. Furthermore, since $X(t)$ is a geodesic, we may
further show (see, e.g. \cite[Section~5.4]{absil2009optimization}) that its
extrinsic acceleration $\ddot{X}(t)$ is always orthogonal to the tangent space
at $X(t)$. This means that the last term in
\eqref{setup_for_second_derivative_of_g} is identically zero since the
Riemannian gradient at $X(t)$ belongs to the tangent space at that
point. Therefore, the second-order derivative further simplifies to,
\begin{equation}
\label{second_derivative_of_g}
 \ddot{g}(t) = \left \langle \Hess f(X(t))[\dot{X}(t)], \dot{X}(t) \right \rangle
 = 2 \langle \dot{X}(t) S(X(t)), \dot{X}(t) \rangle.
\end{equation}
The second equality makes use of \eqref{eq:hessian_and_dual_certificate}. 
In particular, \eqref{first_derivative_of_g} and \eqref{second_derivative_of_g}
show that $\dot{g}(0) = 0$ (since $X(0) = X$ is a critical point of $f$) and
$\ddot{g}(0) = 2\lambda$ (since $\dot{X}(0) = \dot{X}$ is nothing but $\dot{X}_+$ in
Theorem~\ref{thm:verification} and Remark \ref{saddle_escape_remark}).  It follows from \eqref{Taylor_expansion_for_objective_along_geodesic_retraction} that:
\begin{equation}
\label{quadratic_model_at_first_order_critical_point}
g(0) - g(t) = -\lambda t^2 - \int_0^t \frac{\alpha^2}{2} g^{(3)}(\alpha) \: d\alpha.
\end{equation}
Thus, if we can exhibit scalars $\kappa, \tau > 0$ such that:
 \begin{equation}
 \label{bound_on_third_derivative}
\lvert g^{(3)}(t) \rvert \le \kappa \quad \forall t \in [0,\tau],
 \end{equation}
then \eqref{quadratic_model_at_first_order_critical_point} implies:
\begin{equation}
\label{lower_bound_on_reduction_in_objective_value}
g(0) - g(t) \ge  \underbrace{-\lambda t^2 - \frac{\kappa}{6} t^3}_{\delta(t)}  \quad \forall t \in [0,\tau].
\end{equation}
Since:
\begin{equation}
 \delta'(t) = - 2\lambda t - \frac{\kappa}{2}t^2, \quad \delta''(t) = -2\lambda - \kappa t,
\end{equation}
a straightforward calculation shows that the lower-bound $\delta(t)$ for the reduction in the objective value attained in \eqref{lower_bound_on_reduction_in_objective_value} is maximized by the steplength:
\begin{equation}
 t^* = - \frac{4\lambda}{\kappa},
\end{equation}
and that:
\begin{equation}
 \delta'(t) > 0 \quad \forall t \in (0, t^*).
\end{equation}
In consequence, by choosing a stepsize of: 
\begin{equation}
t = \min \left \lbrace -\frac{4\lambda}{\kappa}, \: \tau \right \rbrace \ge \min \left \lbrace \frac{2\mu}{\kappa}, \: \tau \right \rbrace \triangleq \sigma
\end{equation}
when performing the retraction in the saddle escape procedure, we can guarantee that the objective is decreased by at least:
\begin{equation}
\delta(t) \ge \delta(\sigma) = -\lambda \sigma^2 - \frac{\kappa}{6}\sigma^3 \ge  \frac{\mu}{2} \sigma^2 - \frac{\kappa}{6} \sigma^3 \triangleq \delta > 0
\end{equation}
each time the saddle escape procedure is invoked, producing our desired contradiction.  

It therefore suffices to exhibit fixed constants $\kappa, \tau > 0$  that will satisfy \eqref{bound_on_third_derivative}  for \emph{all} invocations of the saddle escape procedure in line \ref{alg:rs_escape} with $k > k_1$. 
Before proceeding, we make one simplifying assumption that will remain in effect throughout the remainder of the proof: we assume without loss of generality that all iterates $X^{(r)}$ are translated so that the sum of the translational components $p_i^{(r)}$ is $0$, as in Section \ref{sec:convergence_pgo:H_bounded}.\footnote{Note that this map leaves the objective $f$ invariant, and is an isometry of both the ambient Euclidean space and $\Manifold(r,n)$.  Consequently, applying this transformation leaves all objective values and derivative norms invariant -- we perform this transformation simply to achieve a parameterization that is more convenient for calculation.} We now proceed in two stages: first we will identify an upper bound $\kappa$ for the magnitude of $g^{(3)}(t)$ on the sublevel set of the initial value $f^{(0)}$ of $f$, and \emph{then} a minimum distance $\tau$ from a given suboptimal critical point that we can retract along while still remaining inside this sublevel set.

\paragraph{Calculation of $\kappa$.} Recall the following notation for the sublevel sets of the cost function $f$:
\begin{equation}
\label{definition_of_sublevel_set_for_convergence_of_first_order_RS}
\Lcal_f(r, n; \bar{f}) \triangleq \left \lbrace X \in \Manifold(r,n) | f(X) \leq
\bar{f} \right \rbrace.
\end{equation}
Our goal is to identify a fixed constant $\kappa > 0$ that upper bounds the magnitude of $g^{(3)}(t)$ within the \emph{initial} sublevel set, i.e., 
\begin{equation}
\lvert g^{(3)}(t) \rvert \le \kappa \quad \forall X(t) \in \Lcal_f(r, n; f^{(r_0)}), \: r \ge r_0,
\end{equation}
where $f^{(r_0)} \triangleq f(X^{(r_0)})$ is the \emph{initial} value of the objective at the starting point $X^{(r_0)}$ supplied to Algorithm \ref{alg:riemannian_staircase}.  
From the second-order derivative \eqref{second_derivative_of_g}, differentiate once again to obtain an explicit expression for  $g^{(3)}(t)$. 
Here, we make use of the equality derived in
\eqref{eq:hessian_and_dual_certificate} that relates the Riemannian Hessian and the certificate matrix $S(X)$,
\begin{equation}
\begin{split}
g^{(3)}(t) = \frac{d}{dt} \big \langle \Hess f(X(t))[\dot{X}(t)], \: \dot{X}(t) \big \rangle 
= 2  \frac{d}{dt} \big \langle  \dot{X}(t) S(X(t)) , \: \dot{X}(t) \big \rangle.
\end{split}
\end{equation}
Fully expanding the differentiation yields, 
\begin{equation}
\label{setup_for_computation_of_third_derivative}
\begin{split}
g^{(3)}(t) &= 
2 \big \langle \frac{d}{dt} [ \dot{X}(t) S(X(t)) ] , \: \dot{X}(t) \big \rangle +
2 \big \langle \dot{X}(t) S(X(t))  , \: \ddot{X}(t) \big \rangle \\
&=
2 \big \langle \ddot{X}(t) S(X(t)) + \dot{X}(t)\frac{d}{dt} [S(X(t))] , \: \dot{X}(t) \big \rangle +
2 \big \langle \dot{X}(t) S(X(t))  , \: \ddot{X}(t) \big \rangle \\
&=
2 \big \langle \dot{X}(t)\frac{d}{dt} [S(X(t))] , \: \dot{X}(t) \big \rangle +
4 \big \langle \dot{X}(t) S(X(t))  , \: \ddot{X}(t) \big \rangle 
\end{split}
\end{equation}
where in the last step, we have used the symmetry of the certificate matrix $S(X(t))$ to combine terms. We proceed to bound the two inner products in \eqref{setup_for_computation_of_third_derivative} separately. 

To bound the first inner product, we first use the Cauchy-Schwarz inequality followed by applying the submultiplicative property of the Frobenius norm:
\begin{equation}
	\big \lvert \big \langle \dot{X}(t)\frac{d}{dt} [S(X(t))] , \: \dot{X}(t) \big \rangle \big \rvert
	\leq 
	\big \lVert \dot{X}(t)\frac{d}{dt} [S(X(t))] \big \rVert_F 
	\big \lVert \dot{X}(t) \big \rVert_F
	\leq 
	\big \lVert \frac{d}{dt} [S(X(t))] \big \rVert_F 
	\big \lVert \dot{X}(t) \big \rVert_F^2.
\end{equation}
By construction of the descent direction \eqref{second_order_descent_direction}, the velocity vector at $t=0$ satisfies $\lVert \dot{X}(0) \rVert_F = 1$. Since geodesics have constant speed, it thus holds that $\lVert \dot{X}(t) \rVert_F = 1$ for all $t$. Therefore, it suffices to bound the derivative of $S(X(t))$. 
Using the chain rule and repeatedly applying the submultiplicative property
yields,
\begin{equation}
\label{setup_for_bounding_first_inner_product}
\begin{split}
\left \lVert \frac{d}{dt} \left[ S(X(t)) \right] \right \rVert_F &= \left \lVert \SymBlockDiag_d^{+} \left(\dot{X}(t)\transpose X(t) Q + X(t)\transpose \dot{X}(t)Q \right)  \right \rVert_F \\
& \le \left \lVert \SymBlockDiag_d^{+} \right \rVert_{Fop}  \left \lVert \dot{X}(t)\transpose X(t) Q + X(t)\transpose \dot{X}(t) Q  \right \rVert_F \\
&\le 2 \left \lVert \SymBlockDiag_d^{+} \right \rVert_{Fop} \lVert Q \rVert_F  \lVert X(t) \rVert_F \lVert \dot{X}(t) \rVert_F.
\end{split}
\end{equation}
Above, $\left \lVert \SymBlockDiag_d^{+} \right \rVert_{Fop}$ denotes the operator norm of the linear map $\SymBlockDiag_d^{+}$ with respect to the Frobenius norm.  A closer examination of the definition of $\SymBlockDiag_d^{+}$ in \eqref{eq:SymBlockDiagPlus} reveals that in fact:\footnote{This conclusion follows from the observation that $\SymBlockDiag_d^{+}$ sets off-diagonal block elements of its argument to zero, and extracts the symmetric parts of the on-diagonal blocks; consequently $\SymBlockDiag_d^{+}$  cannot increase the Frobenius norm of its argument, so that $ \lVert \SymBlockDiag_d^{+} \rVert_{Fop} \le 1$.  Equality follows from the fact that $\SymBlockDiag_d^{+}$ \emph{fixes} any symmetric block-diagonal matrix with the sparsity pattern appearing in \eqref{eq:SymBlockDiagPlus}.}
\begin{equation}
\label{Frobenius_norm_of_SymBlockDiag}
\lVert \SymBlockDiag_d^{+} \rVert_{Fop} = 1.
\end{equation}
Furthermore, since  $X(t) \in \Lcal_f(r, n; f^{(r_0)})$ and is assumed to be
centered, applying the results in Appendix~\ref{sec:convergence_pgo:H_bounded} (Step 2) shows that, 
\begin{equation}
\label{Upper_bound_for_norm_of_Xt}
\begin{split}
\lVert X(t) \rVert_F &= \sqrt{\lVert Y(t) \rVert_F^2 + \lVert p(t) \rVert_F^2} \le \sqrt{dn + \frac{c_f}{\lambda_2 (L)}}.
\end{split}
\end{equation}
Finally, using that fact that the geodesic has constant speed $\lVert \dot{X}(t) \rVert_F = 1$, we establish the following constant bound for \eqref{setup_for_bounding_first_inner_product},
\begin{equation}
\label{bound_on_first_inner_product}
\big \lvert \big \langle \dot{X}(t)\frac{d}{dt} [S(X(t))] , \: \dot{X}(t) \big \rangle \big \rvert \leq
2 \lVert Q \rVert_F \sqrt{dn + \frac{c_f}{\lambda_2 (L)}}. 
\end{equation}

Now, let us return to \eqref{setup_for_computation_of_third_derivative} to bound the second inner product. Once again, we start by invoking the Cauchy-Schwarz inequality and the submultiplicative property of the Frobenius norm, 
\begin{equation}
	\label{setup_for_bounding_second_inner_product}
	\begin{split}
	\big \lvert \big \langle \dot{X}(t) S(X(t))  , \: \ddot{X}(t) \big \rangle \big \rvert \leq 
	\left \| \dot{X}(t) \right \|_F \left \| S(X(t)) \right \|_F 
	\left \| \ddot{X}(t) \right \|_F
	= \left \| S(X(t)) \right \|_F 
	\left\| \ddot{X}(t) \right\|_{F}.
	\end{split}
\end{equation}
Since $X(t) \in \Lcal_f(r, n; f^{(r_0)})$, applying the result of
Appendix~\ref{sec:convergence_pgo:H_bounded} (Step 2) shows that there exists a constant $c_s > 0$ such that $\lVert S(X(t)) \rVert_F \leq c_s$. Next, we obtain an upper bound on the extrinsic acceleration. To this end, we note that in order to bound $\lVert \ddot{X}(t) \rVert_F$, it suffices to bound the extrinsic acceleration of each Stiefel element $\lVert \ddot{Y}_i(t) \rVert_F$ (since translations have zero extrinsic acceleration). 
Here, we use a result that gives an explicit characterization of $\ddot{Y}_i(t)$
along a geodesic on the Stiefel manifold \cite[Eq.\ (2.7)]{Edelman1999}:
\begin{equation}
	\ddot{Y}_i(t) + Y_i(t)[\dot{Y}_i(t)^\top \dot{Y}_i(t)] = 0.
\end{equation}
From the above characterization, it holds that,
\begin{equation}
	\lVert \ddot{Y}_i(t) \rVert_F \leq 
	\lVert Y_i(t) \rVert_F 
	\lVert \dot{Y}_i(t) \rVert_F^2
	\leq 
	\lVert Y_i(t) \rVert_F 
	=
	\sqrt{d},
\end{equation}
where in the second inequality, we use the fact that $\lVert \dot{Y}_i(t) \rVert_F \leq \lVert \dot{X}(t) \rVert_F = 1$. Therefore, $\lVert \ddot{X}(t) \rVert_F$ is upper bounded as follows, 
\begin{equation}
	\lVert \ddot{X}(t) \rVert_F^2 = \sum_{i=1}^n \lVert \ddot{Y}_i(t) \rVert_F^2 \leq dn
	\implies
	\lVert \ddot{X}(t) \rVert_F \leq \sqrt{dn}. 
\end{equation}
Combining these results yields a constant bound for the second inner product \eqref{setup_for_bounding_second_inner_product}:
\begin{equation}
	\label{bound_on_second_inner_product}
	\big \lvert \big \langle \dot{X}(t) S(X(t))  , \: \ddot{X}(t) \big \rangle \big \rvert
	\leq c_s \sqrt{dn}. 
\end{equation}
Finally, using the upper bounds \eqref{bound_on_first_inner_product} and \eqref{bound_on_second_inner_product} in \eqref{setup_for_computation_of_third_derivative} yields the final fixed upper bound $\kappa$,
\begin{equation}
	\begin{split}
	\lvert g^{(3)}(t) \rvert &= 
	2 \big \lvert \big \langle \dot{X}(t)\frac{d}{dt} [S(X(t))] , \: \dot{X}(t) \big \rangle  \big \rvert +
	4 \big \lvert \big \langle \dot{X}(t) S(X(t))  , \: \ddot{X}(t) \big \rangle \big \rvert \\
	&\leq 4 \lVert Q \rVert_F \sqrt{dn + \frac{c_f}{\lambda_2 (L)}} + 
	4 c_s \sqrt{dn} \\
	&\triangleq
	\kappa.
	\end{split}
\end{equation}

\paragraph{Calculation of $\tau$:}  Now we derive a $\tau > 0$ such that $X(t) \in \Lcal_f(r_k, n; f^{(r_0)})$ for all $t \in [0, \tau]$ and $k \ge k_1$.  Our approach is based upon deriving a simple upper bound for the magnitude of the change in objective value between two points $X_1$ and $X_2$. To that end, consider:
\begin{equation}
\label{difference_in_objective_value_in_terms_of_distance_between_points}
\begin{split}
\lvert f(X_1) - f(X_2) \rvert &= \left \lvert \tr(Q X_1^\top X_1) - \tr(QX_2^\top X_2) \right \rvert \\
&= \left \lvert \tr\left[Q \left(X_1^\top X_1 - X_2^\top X_2 \right)\right] \right \rvert \\
&\le \lVert Q \rVert_F \left \lVert X_1^\top X_1 - X_2^\top X_2 \right \rVert_F.
\end{split}
\end{equation}
Defining:
\begin{equation}
 \Delta \triangleq X_2 - X_1,
\end{equation}
we may substitute $X_2 = X_1 + \Delta$ in \eqref{difference_in_objective_value_in_terms_of_distance_between_points} to obtain:
\begin{equation}
\label{bound_for_function_value_in_terms_of_X1_and_Delta}
\begin{split}
 \lvert f(X_1) - f(X_2) \rvert &\le \lVert Q \rVert_F \left \lVert X_1^\top X_1 - (X_1 + \Delta)^\top (X_1 + \Delta) \right \rVert_F \\
 &= \lVert Q \rVert_F  \left \lVert X_1^\top \Delta + \Delta^\top X_1 + \Delta^\top \Delta  \right \rVert_F  \\
 &\le \lVert Q \rVert_F  \left( 2 \lVert X_1 \rVert_F \lVert \Delta \rVert_F + \lVert \Delta \rVert_F^2 \right).
 \end{split}
\end{equation}
We will use inequality \eqref{bound_for_function_value_in_terms_of_X1_and_Delta}
to derive a \emph{lower} bound $\tau$ on the admissible steplength $t$ of the
retraction applied at a critical point $X^{(r_k)}$ while still remaining inside
the original sublevel set $\Lcal_f(r_k, n; f^{(r_0)})$. Given the fact that
$f^{(r_k)} < f^{(r_0)}$ and since, by definition, $X(t) \in \Lcal_f(r_k, n; f^{(r_0)})$
iff $f(X(t)) \le f^{(r_0)}$, it suffices to ensure that:
\begin{equation}
\label{upper_bound_on_admissible_change_in_function_value}
\lvert f^{(r_k)} - f(X(t))\rvert \le f^{(r_0)} - f^{(r_k)}.
\end{equation}
Applying inequality \eqref{bound_for_function_value_in_terms_of_X1_and_Delta} with $X_1 = X^{(r_k)}$, $X_2 = X(t)$ and $\Delta = X(t) - X^{(r_k)}$, 
\begin{equation}
\label{bound_on_change_in_function_value_in_terms_of_Delta}
\lvert f^{(r_k)} - f(X(t)) \rvert \le  \lVert Q \rVert_F  \left( 2 \left\|
X^{(r_k)} \right\|_F \lVert \Delta \rVert_F + \lVert \Delta \rVert_F^2 \right).
\end{equation}
From \eqref{upper_bound_on_admissible_change_in_function_value} and \eqref{bound_on_change_in_function_value_in_terms_of_Delta}, for $X(t)$ to remain in the sublevel set, it suffices to ensure that,
\begin{equation}
	\lVert Q \rVert_F  \left( 2 \left\| X^{(r_k)} \right\|_F \lVert \Delta \rVert_F + \lVert \Delta \rVert_F^2 \right)
	\leq 
	f^{(r_0)} - f^{(r_k)}.
\end{equation}
After rearranging and simplification, we can arrive at the following equivalent condition:
\begin{equation}
\label{bounding_deltaF_intermediate}
\lVert X(t) - X^{(r_k)} \rVert_F =
\lVert \Delta \rVert_F \le   \sqrt{ \left \lVert X^{(r_k)} \right \rVert_F^2 + \frac{f^{(r_0)} - f^{(r_k)}} {\lVert Q \rVert_F} } - \left \lVert X^{(r_k)} \right \rVert_F.
\end{equation}
It is worth noting that each invocation of escaping procedure strictly reduces the objective,
and thus here $f^{(r_0)} > f^{(r_k)}$. We identify a constant lower bound for the right hand side of \eqref{bounding_deltaF_intermediate}. To this end, let us view the right hand side as a function of $\lVert X^{(r_k)} \rVert_F$. It can be straightforwardly verified that this function is \emph{nonincreasing}. 
Since by \eqref{Upper_bound_for_norm_of_Xt} we have $\lVert X^{(r_k)} \rVert_F \leq \sqrt{dn + \frac{c_f}{\lambda_2 (L)}}$, it follows that the right hand side of \eqref{bounding_deltaF_intermediate} is always lower bounded the following constant,
\begin{equation}
\label{tau_definition}
\tau \triangleq  \sqrt{ dn + \frac{c_f}{\lambda_2(L)} + \frac{f^{(r_0)} - f^{(r_{k_1})}} {\lVert Q \rVert_F} }  - \sqrt{dn + \frac{c_f}{\lambda_2(L)}}.
\end{equation}
Recalling \eqref{bounding_deltaF_intermediate}, we have shown that $X(t)$ remains in the sublevel set if the following holds:
\begin{equation}
	\lVert X(t) - X^{(r_k)} \rVert_F \leq \tau.
\end{equation}
Note that the above bound is expressed in terms of the chordal distance between $X(t)$ and $X^{(r_k)}$. To complete the proof, we note that the geodesic distance $d \left(X^{(r_k)}, X(t) \right) = t$ is always at least as large as the chordal distance. This means that for all $t \in [0, \tau]$, we have:
\begin{equation}
	\lVert X(t) - X^{(r_k)} \rVert_F \leq d \left(X^{(r_k)}, X(t) \right) = t \leq \tau,
\end{equation}
and thus $X(t) \in \Lcal_f(r_k, n; f^{(r_0)})$, as claimed.

\end{appendices}

\end{document}